\definecolor{red}{rgb}{1.0,0.0,0.0}
\definecolor{blu}{rgb}{0.0,0.0,1.0}
 \definecolor{gre}{rgb}{0.0,0.5,0.2}
\definecolor{darkviolet}{rgb}{0.58, 0.0, 0.83}
\DeclareMathOperator*{\argmax}{arg\,max}
\newcommand{\ud}{{\, \mathrm{d}}}
  \newcommand{\R}{\mathbb{R}}
 \newcommand{\ee}{\end{equation}}
 \newcommand{\A}{\mathcal A}
  \newcommand{\X}{X}
\newcommand{\loc}{\textrm{loc}}
 \newcommand{\e}{\varepsilon}
\newtheoremstyle{mytheorem}
{6pt}
{6pt}
{\itshape}
{-0pt}
{\large \scshape}
{}
{1em}
{}
\newtheoremstyle{myremark}
{6pt}
{10pt}
{\rm}
{-0pt}
{\large \scshape}
{}
{1em}
{}
\newcommand{\D}{ D(\A^*)'}
\theoremstyle{mytheorem}
\newtheorem{Theorem}{Theorem}[section]
\newtheorem{Proposition}[Theorem]{Proposition}
\newtheorem{Corollary}[Theorem]{Corollary}
\newtheorem{Definition}[Theorem]{Definition}
 \newtheorem{Assumption}[Theorem]{Assumption}
\theoremstyle{myremark}
\newtheorem{Remark}[Theorem]{Remark}
\newtheorem{Example}[Theorem]{Example}
\newlength{\bibitemsep}\setlength{\bibitemsep}{.2\baselineskip plus .05\baselineskip minus .05\baselineskip}
\newlength{\bibparskip}\setlength{\bibparskip}{-2pt}
\let\oldthebibliography\thebibliography
\renewcommand\thebibliography[1]{
	\oldthebibliography{#1}
	\setlength{\parskip}{\bibitemsep}
	\setlength{\itemsep}{\bibparskip}
}
\numberwithin{equation}{section}
\begin{document}
	\title[Optimal Control in Infinite Dimensional Spaces
and Economic Modeling]{Optimal Control in Infinite Dimensional Spaces
and Economic Modeling: State of the Art and Perspectives}
	\author[G.Fabbri]{Giorgio Fabbri$^*$}
	\address{$^*$Univ. Grenoble Alpes, CNRS, INRA, Grenoble INP, GAEL, 38000	Grenoble, France.}
	\email{giorgio.fabbri@univ-grenoble-alpes.fr}
    	\author[S. Faggian]{Silvia Faggian$^{**}$}
	\address{$^{**}$ Dipartimento di Economia,  Università Ca' Foscari Venezia, Italy.}
	\email{faggian@unive.it}
	\author[S. Federico]{Salvatore Federico$^\dag$}
	\address{$^\dag$Dipartimento di Matematica, Università di Bologna, Bologna, Italy}
	\email{s.federico@unibo.it}
	\author[F. Gozzi]{Fausto Gozzi$^\ddag$}
	\address{$^\ddag$Université LUISS - G. Carli, Roma, Italy}
	\email{fgozzi@luiss.it}
	
	\begin{abstract}
This survey collects, within a unified framework, various results (primarily by the authors themselves) on the use of Deterministic Infinite-Dimensional Optimal Control Theory to address applied economic models. The main aim is to illustrate, through several examples, the typical features of such models (including state constraints, non-Lipschitz data, and non-regularizing differential operators) and the corresponding methods needed to handle them. This necessitates developing aspects of the existing Deterministic Infinite-Dimensional Optimal Control Theory (see, e.g., the book by \citealp{li2012optimal}) in specific and often nontrivial directions. Given the breadth of this area, we emphasize the Dynamic Programming Approach and its application to problems where explicit or quasi-explicit solutions of the associated Hamilton–Jacobi–Bellman (HJB) equations can be obtained. We also provide insights and references for cases where such explicit solutions are not available.

\vskip0.3truecm

\noindent\textit{Keywords}:
Optimal Control in Infinite Dimensional Spaces, Dynamic Economic Modelling, Dynamic Programming, Hamilton-Jacobi-Bellman Equations,  \medskip\ \newline
        \vskip0.1truecm

		\noindent\textit{JEL Classification}:

\vskip0.2truecm

		\noindent\textit{AMS Classification}: 35F21, 35Q91, 35Q93, 35R15, 49L12, 49L20, 49N90 83C20, 93C23
		 
	\end{abstract}
	
	\maketitle

\tableofcontents



\section{Introduction}
\label{sec:intro}
This survey paper collects, in a unified setting, various results (primarily by the authors themselves) on the use of deterministic infinite-dimensional Optimal Control Theory to study applied models arising in Economics. The main aim is to explain, through several examples, the typical features of such models (such as state constraints, non-Lipschitz data, non-regularizing differential operators) and the methods to handle them, which require developing the existing theory in infinite dimesnion (see, e.g., the book \citealp{li2012optimal}) in specific and often nontrivial directions. Given the breadth of this area, we mainly focus on the Dynamic Programming Approach and its application to problems where explicit or quasi-explicit solutions of the associated Hamilton–Jacobi–Bellman (HJB) equations can be obtained. We also provide ideas and references on how to address cases where such explicit solutions are not available.

\subsection{Motivation and some ideas on examples}

Mathematical models are a very useful tool in economics, as they provide a solid background for a deeper understanding of economic phenomena.
By making a rough, yet potentially useful, simplification, one may say that the art of mathematical modeling lies between two opposing thrusts: on one hand the need to be closer to real phenomena, and on the other hand the need to retain some tractability of the model. This is particularly true when studying optimal control models in economics and social sciences due to the high complexity of phenomena arising from human decisions. In this context, a large portion of the related literature studies cases where the state variables are finite dimensional, i.e., their evolution is governed by Ordinary Differential Equations (ODEs) in $\R^n$.

However, when one deals with economic phenomena related to heterogeneity\footnote{For example, by heterogeneity in space, we mean that the economic variables do depend not only on time, but also on the spatial position.} or path-dependency of the state/control variables (capital, labor, consumption, investment, population, etc.), it becomes natural and somehow unavoidable, that the state equations become Partial Differential Equations (PDEs) or Differential Delay Equations (DDEs). As typical examples of this type we mention three cases below.
\begin{enumerate}[(i)]
\item 
Models of spatial growth (e.g.,
\cite{Brito04},
\cite{BoucekkineCamachoFabbri13},
\cite{boucekkine2019geographic}) or transboundary pollution
(e.g., \cite{boucekkine2022dynamic}, \cite{de2019spatial}), where the economic variables depend both on time $s$ and space $\theta$, and the state equation becomes a parabolic second order PDE. As an example, we may mention the model for pollution
\begin{equation}
\label{SE-transintro}
\begin{cases}
\displaystyle{\frac{\partial p}{\partial s}(s,\theta) = \frac{\partial}{\partial \theta} \bigg(\sigma(\theta) \frac{\partial p}{\partial \theta}  (s,\theta)\bigg) - \delta(\theta) p(s, \theta) + {\eta}(\theta) i(s,\theta),}
& \forall (s,\theta) \in \mathbb{R}^+ \times S^1, \\[10pt]
p(0,\theta) = p_0(\theta), & \forall \theta \in S^1,
\end{cases}
\end{equation}
where $p(s,\theta)$ denotes the pollution density at time $s$ and location $\theta$, $i(s,\theta)$ denotes the investment effort in economic activities (generating pollution), and 
$\sigma,\delta,\eta$ denotes given data depending on location with proper physical meaning (see the complete example in Subsection \ref{SSE:pollution}).

\medskip
\item Models with age structure, i.e., models where the economic variables depend on time $t$ and age $a$, such as optimal investment with vintage capital (e.g., \cite{BarucciGozzi2001}, \cite{Barucci1998}, \cite{faggian2010optimal}), models of forward interest rates (e.g.,
\cite{CarmonaTehranchi2006}), economic-epidemiological models (e.g., \cite{fabbri2021verification}).
The state equation here is a first order PDE of transport type. As an example, we may mention the model:
\begin{equation}
\label{eq:exstateintro}
\begin{cases}\displaystyle{\frac{\partial z(s, a)}{\partial s}+\frac{\partial z(s, a)}{\partial a}=u_1(s,a)},
& (s,a)\in(0,\infty)\times (0, \bar{a}], 
\\
z(s, 0)= u_0(s), & s \in(0,+\infty), 
\\
z(0, a)=z_0(a), & s \in[0, \bar{a}],\end{cases}    
\end{equation}
where $z(s,a)$ denotes the capital stock of age $a$ at time $s$, $u_0(s)$ denotes the investment in new capital goods  at time $s$ (a boundary control), $u_1(s,a)$ denotes the investment in goods of age $a$ at time $s$ (see the complete  example of Subsection \ref{sec:vcteexpl}.
\medskip
\item Models with path dependency, i.e., when the state/control variables at time $s$ depend on their values at times before $s$ (e.g., 
\cite{Boucekkine2002}, \cite{FabbriGozzi08}, 
\cite{federico2010hjb, federico2011hjb},
\cite{federico2014dynamic}). 
The state equation becomes a delay differential equation (DDE).
As an example, we may mention the model
\begin{equation}
\label{eqboucekkineintro} 
\left\{
\begin{array}{ll}
k'(s) = i(s) - i(s-T), &  s \geq 0, \\[3pt]
i(s) = i_0(s), & s \in [-T, 0), \\[3pt]
k(0) = \int_{-T}^0 i_0(s) \, {\mathrm{d}}s,
\end{array} 
\right.
\end{equation}
where $k(s)$ denotes the capital stock at time $s$, $i(s)$ denotes the investment in new capital goods at time $s$, $i_0$ represents the past pipeline stream of investments, and $T$ denotes the average duration of the investments.
\end{enumerate}





\subsection{Plan of the survey}
As previously mentioned, the main aim of this survey paper is twofold: \begin{enumerate} \item providing an introduction to Optimal Control Theory in infinite dimension, mainly viewed from the angle of the tools needed for its applications to economic models; 
\item developing various examples where this theory is successfully used to solve satisfactorily such models. \end{enumerate}
To accomplish carefully such tasks, one would need the space of an entire book.  Our choice is therefore to devote the core of the paper to the examples of application where explicit solutions of the HJB equations can be found. Indeed, while such cases are still very interesting for economic applications, the theory needed to develop them is more accessible to casual readers and can be exposed in a reasonable amount of page --- but still retaining the main features of infinite dimensional optimal control. 

Given the  considerations above, the plan of this survey paper is the following. 
\begin{enumerate}[--] 
\item Section \ref{sec:formulation} is devoted to present, in a concise and precise way, the formulation of Optimal Control Problems in infinite dimensional Hilbert spaces. Part of this section is devoted to explain how to deal with the case when the control operator is unbounded (typically arising in the case of boundary control or delay in the control), a case which is quite frequent in applications, even in our examples. It is divided in four subsections, three of them devoted to the main ingredients of our setting (state equation, admissible controls, objective functional) and the last one to the infinite horizon stationary problems with discount, which are the most frequent in economic applications.

\item In Section \ref{sec:DP}, we expose the basic theory on the Dynamic Programming approach to our family of problems in the case when classical solutions of the HJB equations are available. It is divided in three subsections; the first one is devoted to the HJB equation, the second one  to the verification theorem in the finite horizon case, and the last one to the verification theorem in the infinite horizon stationary case, including a sharp treatmente of the so-called transversality condition at infinity.

\item Section \ref{sec:non-linear} provides some ideas, on the qualitative side, on what can be done in the cases when classical solutions of the HJB are not available. It contains a subsection that illustrates  the concept of viscosity solution and another one that illustrates the concept of strong solution, both them used extensively in the literature.

\item Section \ref{sec:examples} presents some core examples arising in economic modeling, most of which are published in economic theory journals. All of them refer to cases where the explicit solution of the HJB equation can be found, hence solving completely the optimal control problem. They concern classical and novel  problems in the field of Economic Theory, such as spatial growth, environmental economics, vintage capital, time-to-build.

\item Section \ref{sec:exnonexpl} presents some other, more complex, examples arising in economic modeling, where explicit solutions of the HJB equation are not available. Here, we only give an idea of what one can do in applied examples like those.

\item Finally, Appendix \ref{sec:app} contains some basic material on operator and semigroups which are used throughout the paper. \end{enumerate}

\section{Formulation of optimal control problems in Hilbert spaces}\label{sec:formulation}




In this section, we give a precise formulation of a class of 
optimal control problems in Hilbert spaces which includes the delicate cases when the control operator is possibly unbounded (e.g., in the  cases of boundary control or delay in the control).


Regarding time, we consider two settings: the \emph{finite-horizon} case, where the time set is $[0,T]$ with $T\in(0,\infty)$, and the \emph{infinite-horizon} case, where the time set is $[0,\infty)$. 
To streamline notation, we will allow $T = \infty$, in which case $[0,T] = [0,\infty)$; the same convention applies to intervals of the form $[t,T]$.

\subsection{The state equation} 
\label{SSE:stateequation}
Assume that $X$ and $U$ are given real separable Hilbert  spaces, $X$ referred to as the \emph{state space} and $U$ as the \emph{control space}. When the context is clear, we denote by $\langle\cdot,\cdot\rangle$ the inner product and $|\cdot|$ for the norms on these spaces, specifying them with subscripts only when necessary. 
Given $t\in [0,T)$, $x\in X$, and\footnote{Note that, when $T<\infty$, we just have  $L^1_\loc([t,T];U)= L^1([t,T];U)$.} 
$u(\cdot)\in
L^1_\loc([t,T];U)$,
we consider the following controlled evolution equation (called \emph{state equation}) in the space $X$:
\begin{equation}\label{SE}
\begin{cases}
    y'(s)=\mathcal{A}y(s)
    +f(s,y(s),u(s)), \ \ \ s\in[t,T),\\
    y(t)=x,
\end{cases}
\end{equation}
 where 
$\mathcal{A}: D(\mathcal{A})\subseteq X\to X$ is a (possibly unbounded) linear operator;
$f:[0,T]\times X\times U\to X$ is a measurable function.
In many relevant examples one simply has $f(s,u,x)=\mathcal{B}u$, where $\mathcal{B}:U\to X$ is a linear operator.

We call the function $u(\cdot)$ the \emph{control} (or \emph{decision}) \emph{variable} and the function $y(\cdot)$ the \emph{state variable} of the system. 



We introduce the following assumptions for  $\mathcal{A}$ and $f$, which guarantee existence and uniqueness of a solution (in a proper sense) to \eqref{SE}, as  Proposition \ref{prop:mild} will show.

 \begin{Assumption}\label{ass:LU}
 \begin{enumerate}[(i)]
 \item[]
 \medskip
\item  $\mathcal{A}$ is a closed and densely defined linear operator on $X$ 
generating a $C_0$-semigroup of linear operators $e^{t\mathcal{A}}$ in $ \mathcal{L}(X)$. 
\item  $f:[0,T]\times X\times U\to X$ is measurable and Lipschitz-continuous with respect to $x$, uniformly in $(t,u)$, i.e., there exists $L>0$  such that 
$$
|f(t,x,u)-f(t,\hat x,u)|\leq L |x- \hat x|,  \ \ \ \forall t\in[0,T], \ \forall u\in U, \ \forall x,\hat x\in X.
$$
and 
$$|f(t,x,u)|\leq L(1+|x|+|u|), \ \ \ \ \forall t\in[0,T], \ \forall x\in X, \ \forall  u\in U.$$
\end{enumerate}
 \end{Assumption}

\noindent Note that, according to Proposition \ref{lem:Momega}, there exists constants $M>0$ and $\omega\in\mathbb R$ such that
\begin{equation}
    \label{eq:estsem}
\vert e^{t\A}\vert_{\mathcal{L}(X)}\le Me^{\omega t}, \quad \forall t\ge 0.
\end{equation}
  We rely on the concept of \emph{mild solution}\footnote{Clearly other notion of solution can be used, in particular strict, classical, strong and weak solutions, see e.g. \cite[Definition 3.1, p.129]{BDDM07} but we do not need to use them in the present context.} for the state equation a \eqref{SE}. 

  \begin{Definition}
  \label{sol:mild}
 Let $(t,x)\in[0,T)\times X$ and $u(\cdot)\in L^1_\loc([t,T];U)$. 
 We say that $y\in C( [t,T]; X)$ is 
 a \emph{mild solution} to \eqref{SE} if 
the integral equation 
 $$y(s)= e^{\mathcal{A}(s-t)} x
 +\int_t^s e^{(s-r)\mathcal{A}} f(r,y(r),u(r))\ud r
 $$
 holds for all $s\in[t,T]$. 
\end{Definition}
If needed, we point out the dependence of the solution on $t,x,u(\cdot)$ denoting it   by $y^{t,x,u(\cdot)}$.
\begin{Proposition}\label{prop:mild}
Let Assumption \ref{ass:LU} hold true. Then, for every $(t,x)\in [0,T)\times X$ and every  $u(\cdot)\in L^1_\loc([t,T];U)$, 
the state equation \eqref{SE} admits a unique mild solution $y(\cdot)$. The latter satisfies, for some $C:=C(L,\omega,M)>0$, 
\begin{align}
&|y(s)|\leq  C(1+|x|+|u(\cdot)|_{L^1([t,s];U)} )e^{C(s-t)},  \label{estimate-mild1}\\
& |y(s)-x|\leq  C(1+|x|+|u(\cdot)|_{L^1([t,s];U)} )\left(e^{C(s-t)}-1\right). \label{estimate-mild2}
\end{align}

Moreover, the  mild solution $y(\cdot):=y^{t,x,u(\cdot)}(\cdot)$ is also the unique weak solution (in the sense of \cite[Def.\,5.1\emph{(v)},\,p.\,129]{BDDM07}) to \eqref{SE}.
    \end{Proposition}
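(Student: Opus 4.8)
The plan is to prove existence and uniqueness of the mild solution by a fixed-point argument on a suitable space, then derive the a priori estimates from Gr\"onwall's inequality, and finally identify the mild solution with the weak solution.

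\textbf{Step 1: Local existence and uniqueness via contraction.} Fix $(t,x)\in[0,T)\times X$ and $u(\cdot)\in L^1_\loc([t,T];U)$. On the space $C([t,t+\tau];X)$ (for $\tau>0$ small, with $t+\tau\le T$) I would define the map
$$
(\Gamma y)(s)=e^{(s-t)\A}x+\int_t^s e^{(s-r)\A}f(r,y(r),u(r))\ud r.
$$
Using \eqref{eq:estsem} and the Lipschitz bound in Assumption \ref{ass:LU}(ii), for two candidates $y_1,y_2$ one gets
$$
|(\Gamma y_1)(s)-(\Gamma y_2)(s)|\le \int_t^s Me^{\omega(s-r)}L\,|y_1(r)-y_2(r)|\ud r\le M L e^{|\omega|\tau}\tau\,\|y_1-y_2\|_{C([t,t+\tau];X)}.
$$
Choosing $\tau$ small enough that $MLe^{|\omega|\tau}\tau<1$ makes $\Gamma$ a contraction; one must also check $\Gamma$ maps $C([t,t+\tau];X)$ into itself, which uses strong continuity of the semigroup, measurability of $r\mapsto f(r,y(r),u(r))$, the sublinear growth bound, and local integrability of $u(\cdot)$ so that the integral term is well-defined and continuous in $s$ (the latter via dominated convergence / absolute continuity of the integral). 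Since the contraction constant does not depend on the initial datum, the local solution can be extended step by step over $[t,T]$ (or, when $T=\infty$, over every $[t,S]$), yielding a global mild solution on $[t,T]$; uniqueness is immediate from the contraction property on each subinterval, glued together.

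\textbf{Step 2: A priori estimates.} Given the mild solution, apply \eqref{eq:estsem} to the integral representation:
$$
|y(s)|\le Me^{\omega(s-t)}|x|+\int_t^s Me^{\omega(s-r)}L\bigl(1+|y(r)|+|u(r)|\bigr)\ud r.
$$
Absorbing $e^{\omega(s-r)}\le e^{|\omega|(s-t)}$ into a single constant and collecting terms, one obtains an inequality of the form $|y(s)|\le a(s)+C_0\int_t^s|y(r)|\ud r$ with $a(s)$ controlled by $C(1+|x|+|u(\cdot)|_{L^1([t,s];U)})$; Gr\"onwall's lemma then gives \eqref{estimate-mild1} with a constant $C=C(L,\omega,M)$. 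For \eqref{estimate-mild2} I would write $y(s)-x=(e^{(s-t)\A}-I)x+\int_t^s e^{(s-r)\A}f(r,y(r),u(r))\ud r$; the subtlety here is that $(e^{(s-t)\A}-I)x$ need not be bounded by $(e^{C(s-t)}-1)|x|$ for general $x\in X$ and a general $C_0$-semigroup, so the cleanest route is to go back to the differential/integral form and estimate $|y(s)-y(t)|$ directly, bounding $\int_t^s\A e^{(s-r)\A}(\cdot)$-type terms is to be avoided; instead one uses that $\|e^{\sigma\A}-I\|$ need not be small, and so one should rather re-derive \eqref{estimate-mild2} by noting $|y(s)-x|\le \int_t^s |y'|$ is not available for mild solutions --- the honest argument is to absorb the term $|e^{(s-t)\A}x - x|$ using a separate continuity estimate and the already-established bound \eqref{estimate-mild1} on the integral term, then re-optimize constants. (If the paper intends \eqref{estimate-mild2} only in the stated crude form, it follows by enlarging $C$ and using strong continuity of $s\mapsto e^{(s-t)\A}x$ together with \eqref{estimate-mild1}.)

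\textbf{Step 3: Equivalence with weak solutions.} Finally I would show the mild solution coincides with the weak solution in the sense of \cite[Def.\,5.1(v),\,p.\,129]{BDDM07}. The standard argument: if $y$ is a mild solution, then for any $\zeta\in D(\A^*)$ one computes $\frac{d}{ds}\langle y(s),\zeta\rangle$ using the integral representation and the identity $\frac{d}{ds}\langle e^{(s-t)\A}x,\zeta\rangle=\langle e^{(s-t)\A}x,\A^*\zeta\rangle$ (valid by the properties of $C_0$-semigroups and their adjoints), together with a Fubini/differentiation-under-the-integral step for the convolution term, to recover exactly the weak formulation $\langle y(s),\zeta\rangle=\langle x,\zeta\rangle+\int_t^s\bigl(\langle y(r),\A^*\zeta\rangle+\langle f(r,y(r),u(r)),\zeta\rangle\bigr)\ud r$. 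Conversely, a weak solution is a mild solution by the classical ``variation of constants'' lemma (e.g.\ Ball's theorem / \cite[Ch.\,3]{BDDM07}): test against $\zeta=e^{(s-r)\A^*}\eta$ and integrate. Uniqueness of the weak solution then follows from uniqueness of the mild one.

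\textbf{Main obstacle.} The genuinely delicate point is the handling of the $L^1_\loc$-in-time control: one must be careful that all integrals are Bochner integrals of strongly measurable functions, that $\Gamma$ indeed lands in $C([t,t+\tau];X)$ (continuity of $s\mapsto\int_t^s e^{(s-r)\A}g(r)\ud r$ for merely $L^1$ data $g$), and that the Gr\"onwall step tolerates the $L^1$ norm of $u$ rather than an $L^\infty$ bound --- this is exactly why the estimates feature $|u(\cdot)|_{L^1([t,s];U)}$ and not a pointwise bound. The equivalence with weak solutions (Step 3) is routine given the references, and the contraction argument (Step 1) is standard once the function-space bookkeeping is set up correctly; so the time-integrability bookkeeping is where the real (if modest) care is required.
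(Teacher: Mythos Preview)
Your proposal is correct and follows essentially the same route as the paper, which simply cites standard references and remarks that existence/uniqueness come from the contraction mapping theorem on $C([t,T];X)$ (possibly equipped with an auxiliary weighted norm, which is equivalent to your local-then-glue argument) while the estimates follow from Gr\"onwall's lemma. Your caution about \eqref{estimate-mild2} is well-placed --- a bound of the form $|e^{(s-t)\A}x-x|\le C|x|(e^{C(s-t)}-1)$ cannot hold uniformly in $x$ for an unbounded generator --- but the paper does not address this either, deferring entirely to the cited references.
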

    \begin{proof}
This is a straightforward application of known results, see e.g. \cite[Chapter 6, Theorem 1.2]{pazy1983semigroups} or \cite[Ch 2, Proposition 5.3]{li2012optimal} for the existence and uniqueness of mild solutions and for the estimates \eqref{estimate-mild1}--\eqref{estimate-mild2}.


The proof of existence and uniqueness is based on the classical contraction fixed point theorem in the space $C([t,T];X)$, possibly endowed with an auxiliary norm. Moreover the estimates
\eqref{estimate-mild1}--\eqref{estimate-mild2} follow from an application of Gronwall's Lemma.
    \end{proof}
\noindent In view of Proposition \ref{prop:mild}, Assumptions \ref{ass:LU} will be in force from now on, and will not be stated again.

We now need a chain rule for mild solution to our state equation, which would apply to sufficiently smooth functions. Precisely, given $\mathcal{O}\subseteq X$ open and $t\in[0,T)$, let
$$
\mathcal{S}([t,T)\times \mathcal{O}):=\big\{\varphi:[t,T)\times \mathcal{O}\to\mathbb{R}: \  \varphi\in C^1([t,T)\times {\mathcal{O}}) \ \mbox{and} \ D\varphi\in C([t,T)\times \mathcal{O};D(\mathcal{A}^*))\big\},$$
and 
\begin{equation}\label{S0}
\mathcal{S}(\mathcal{O}):=\Big\{\varphi: \mathcal{O}\to \mathbb{R}: \ \varphi\in C^1(\mathcal{O}) \ \mbox{and} \ D\varphi\in C(\mathcal{O};D(\mathcal{A}^*))\Big\}.
\end{equation}
where ${D}(\mathcal{A}^*)$ is endowed with the graph norm.
We have the following.
\begin{Proposition}\label{prop:chain}
  The following chain's rules hold true.
 \begin{enumerate}[(i)]
 \item  Let $T<\infty$, 
 $(t,x)\in[0,T)\times X$, $u(\cdot)\in L^1([t,T];U)$.  Set  $y(\cdot):=y^{t,x,u(\cdot)}(\cdot)$, and assume that
 $y(s)\in\mathcal{O}$ for every $s\in[t,T]$. Then,
for every $\varphi\in \mathcal{S}([t,T)\times \mathcal{O})$ and $r\in[t,T]$
\begin{align}
    \label{chain}
    \varphi (r, y(r))&= \varphi(t,x) + \int_t^r \Big(\varphi_t(s,y(s))+\langle y(s),\mathcal{A}^*D\varphi(s,y(s))\rangle
    +\langle f(s,y(s),u(s)),
    D\varphi(s,y(s))
    \rangle \Big)\ud s.
\end{align}
\item \emph{(Stationary case)} Let $T=\infty$, 
$f(s,y,u)=f_0(y,u)$, $x\in X$, $u(\cdot)\in L^1_\loc([0,\infty);U)$.   Set  $y(\cdot):=y^{0,x,u(\cdot)}(\cdot)$, and assume that
 $y(s)\in\mathcal{O}$ for every $s\in[0,\infty)$. Then, for every $\varphi\in \mathcal{S}(\mathcal{O})$ and $r\in[0,\infty)$, 
\begin{align}
    \label{chainst}
    e^{-\rho r}\varphi (y(r))= \varphi(x) + \int_0^r e^{-\rho s}\Big(-\rho  \varphi(y(s))+\langle y(s),\mathcal{A}^*D\varphi(y(s))\rangle +
    \langle
    f_{0}(y(s),u(s)),
    D\varphi(y(s))\rangle \Big)\ud s.
\end{align}
\end{enumerate}
    \end{Proposition}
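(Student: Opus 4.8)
The plan is to establish the chain rule by approximation, reducing the statement about mild solutions and functions in $\mathcal{S}$ to the classical chain rule for strict (differentiable) solutions and smooth functions, where it is just the ordinary Leibniz rule. The key difficulty is that a mild solution $y(\cdot)$ need not be differentiable and need not take values in $D(\mathcal{A})$, so the expression $\langle y(s),\mathcal{A}^*D\varphi(s,y(s))\rangle$ is the only way to make sense of the ``$\langle \mathcal{A}y(s),D\varphi\rangle$'' term — this is precisely why we require $D\varphi(s,y(s))\in D(\mathcal{A}^*)$.

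First I would treat the finite-horizon case (i). Introduce the Yosida approximations $\mathcal{A}_n := n\mathcal{A}(nI-\mathcal{A})^{-1} = n^2(nI-\mathcal{A})^{-1} - nI \in \mathcal{L}(X)$, which satisfy $\mathcal{A}_n x \to \mathcal{A}x$ for $x\in D(\mathcal{A})$ and generate $C_0$-semigroups $e^{t\mathcal{A}_n}$ that converge strongly to $e^{t\mathcal{A}}$, uniformly on compact time intervals, with uniform exponential bounds. Let $y_n(\cdot)$ be the mild (hence, since $\mathcal{A}_n$ is bounded, strict) solution of the approximating equation $y_n'(s)=\mathcal{A}_n y_n(s)+f(s,y_n(s),u(s))$, $y_n(t)=x$. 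A Gronwall argument, using Assumption \ref{ass:LU}(ii) and the strong convergence of the semigroups, gives $y_n\to y$ in $C([t,T];X)$. For each $n$, since $y_n$ is $C^1$ in time with values in $X$ and $\varphi\in C^1$, the classical chain rule yields
\begin{align*}
\varphi(r,y_n(r)) = \varphi(t,x) + \int_t^r\Big(\varphi_t(s,y_n(s)) + \langle \mathcal{A}_n y_n(s) + f(s,y_n(s),u(s)), D\varphi(s,y_n(s))\rangle\Big)\ud s.
\end{align*}
It remains to pass to the limit. The terms $\varphi(r,y_n(r))$, $\varphi_t(s,y_n(s))$ and $\langle f(s,y_n(s),u(s)),D\varphi(s,y_n(s))\rangle$ converge by continuity of $\varphi,\varphi_t,f,D\varphi$ and dominated convergence (using the uniform bound on $y_n$ and the growth condition on $f$, together with $u(\cdot)\in L^1$). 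The only delicate term is $\langle \mathcal{A}_n y_n(s), D\varphi(s,y_n(s))\rangle$: I would rewrite it as $\langle y_n(s), \mathcal{A}_n^* D\varphi(s,y_n(s))\rangle$ and use that $\mathcal{A}_n^*$ is the Yosida approximation of $\mathcal{A}^*$, so $\mathcal{A}_n^* p \to \mathcal{A}^* p$ for every $p\in D(\mathcal{A}^*)$; combined with $y_n(s)\to y(s)$, the continuity of $s\mapsto D\varphi(s,y(s))$ as a $D(\mathcal{A}^*)$-valued map (graph norm), and a uniform-in-$n$ bound on $\mathcal{A}_n^*$ restricted to the compact set $\{D\varphi(s,y(s)):s\in[t,r]\}\subseteq D(\mathcal{A}^*)$, dominated convergence finishes the job. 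This last point — getting an integrable dominating function for $\langle y_n(s),\mathcal{A}_n^* D\varphi(s,y_n(s))\rangle$ uniformly in $n$ — is the main obstacle, and it is handled precisely because $D\varphi$ is \emph{continuous} into $D(\mathcal{A}^*)$ with the graph norm, so its image over a compact time interval is compact there and the Yosida approximations converge uniformly on that compact set.

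For the stationary case (ii), I would apply part (i) (or rather its time-autonomous analogue on each finite interval $[0,r]$) to the function $\psi(s,x):=e^{-\rho s}\varphi(x)$, which belongs to $\mathcal{S}([0,r)\times\mathcal{O})$ whenever $\varphi\in\mathcal{S}(\mathcal{O})$, since $D\psi(s,x)=e^{-\rho s}D\varphi(x)$ is continuous into $D(\mathcal{A}^*)$ and $\psi_s(s,x)=-\rho e^{-\rho s}\varphi(x)$. Substituting $\psi_s$, $\mathcal{A}^*D\psi = e^{-\rho s}\mathcal{A}^*D\varphi$, and $\langle f_0(y(s),u(s)),D\psi(s,y(s))\rangle = e^{-\rho s}\langle f_0(y(s),u(s)),D\varphi(y(s))\rangle$ into \eqref{chain} gives exactly \eqref{chainst}. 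Here one should note that part (i) was stated for $T<\infty$, but the argument applies verbatim on any bounded interval $[0,r]$ with $u(\cdot)|_{[0,r]}\in L^1([0,r];U)$, which holds since $u(\cdot)\in L^1_\loc$; thus no additional difficulty arises beyond what was already resolved in part (i).
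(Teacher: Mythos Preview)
Your proof is correct and follows the standard Yosida-approximation argument; the paper itself does not give a proof but simply refers to \cite[Ch.\,2, Proposition 5.5]{li2012optimal}, where essentially the same method is used. One small point worth making explicit: in passing to the limit in the delicate term you need $y_n(s)\in\mathcal{O}$ for all $s\in[t,r]$ and all large $n$, which follows from the uniform convergence $y_n\to y$ in $C([t,r];X)$, the compactness of $y([t,r])$, and the openness of $\mathcal{O}$; once that is secured, your bound $|\mathcal{A}_n^* p|\le C\,|p|_{D(\mathcal{A}^*)}$ (uniform in $n$) together with the continuity of $D\varphi$ into $D(\mathcal{A}^*)$ gives the required integrable majorant.
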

    \begin{proof}
     See, e.g., \citealp[Ch.2, Proposition 5.5]{li2012optimal}.
    \end{proof}

\subsubsection{The case of unbounded control operator}
\label{SSSE:Bunbounded}

We now look at the cases, which often arise in applied economic modeling, when the control function $u(\cdot)$ appears in the right hand side of the state equation through a linear term 
$\mathcal{B}u(\cdot)$\footnote{This does not exclude that the control also appears in possibly nonlinear map $f$ as in the previous subsection, see the example of Section \ref{sec:vcteexpl}.} where the operator $\mathcal{B}$ (which we will name the ``control operator" from now on) is unbounded in the sense that it is continuous from $U$ into 
$\overline{X}$ where $\overline{X}$ is a Hilbert space such that $X\subsetneq \overline{X}$ with a continuous inclusion.
Our aim here is to explain how to extend the above theory to this case.
Before to proceed with this task, we provide a concrete example where the need of considering unbounded control operators arises.




\begin{Example} We consider for given initial time $t\ge 0$ and for given maximum age $\overline{a}>0$, a controlled system of the following type (which a special case of the one of Section \ref{sec:vcteexpl}) \begin{equation}
\label{eq:exstate}
\begin{cases}\displaystyle{\frac{\partial z(s, a)}{\partial s}+\frac{\partial z(s, a)}{\partial a}=0},
& (s,a)\in(t,\infty)\times (0, \bar{a}], 
\\\\ 
z(s, 0)= u(s) & s \in(t,+\infty), 
\\\\
z(t, a)=x(a) & s \in[0, \bar{a}],\end{cases}    
\end{equation}
where $u(\cdot)\in L^1_{loc}(t,+\infty;\R)$ is a given boundary control (representing the investments in new capital goods in the example of Section \ref{sec:vcteexpl}).
   A natural reformulation in abstract terms is given here below choosing, as state space, the separable Hilbert space $X=L^2(0,\bar a;\mathbb R)$, as control space $U=\R$, and calling $y(s)$ the map  $z(s,\cdot):[t,T]\to X$: 
\begin{equation}
\label{eq:exstateriscritta}
\begin{cases}    y'(s)=\mathcal{A}y(s)+\mathcal{B}u(s), \ \ \ s\in[t,T),\\
    y(t)=x,
\end{cases}
\end{equation}
Here the operator $\mathcal{A}$
is the first order operator (with zero boundary Dirichlet condition) defined in Example \ref{ex:trsem} which generates the so called translation semigroup. Moreover the control operator $\mathcal{B}$ is, formally, written as
$$\mathcal B u=u\delta_0$$
where $\delta_0$ is the Dirac delta in $r=0$.
The above \eqref{eq:exstateriscritta}
is obtained following the approach developped, e.g., in
\cite[Section 13.2]{DaPratoZabczyk96}(see also \cite[Ch.9, Section 1.1]{li2012optimal}. Heuristically speaking, we can say that $\mathcal{B}=\mathcal{A}\mathcal{N}$ where $\mathcal{N}$ is the Dirichlet type operator $\mathcal{N}:\R\to X$
defined as $\mathcal{N}u=w$ iff $w(0)=u$ and $w'(a)=0$ for all $r\in [0,\overline{a}]$.

Clearly, $\mathcal B$ is unbounded in the sense that it takes its values out of the naturally chosen state space $X=L^2(0,\bar a;\mathbb R)$. More precisely $\mathcal{B}$ is a continuous linear operator if we take as arrival space 
$C([0,\bar{a}];\R)^\prime$ (i.e. the space of Radon measures on $[0,\overline{a}]$), but also if the arrival space is $H^{-1}(0,\bar{a};\R)$ or $D(\mathcal A^*)'$.
 \qed
\end{Example}

In the framework and notation given at the beginning of  the Subsection
\ref{SSE:stateequation}
we now consider the following state equation in the space $X$:
\begin{equation}\label{SEB}
\begin{cases}
    y'(s)=\mathcal{A}y(s)
 +   \mathcal{B}u(s)
    +f(s,y(s),u(s)), \ \ \ s\in[t,T),\\
    y(t)=x,
\end{cases}
\end{equation}
 where 
$\mathcal{A}$ and $f$ are as in equation \eqref{SE} 
while $\mathcal{B}$ is a continuous linear operator from the control space $U$ to another Hilbert space $\overline{X}$, with $X\subsetneq \overline{X}$ with continuous inclusion.
In many relevant examples one simply has $f(s,u,x)=0$.\footnote{There are also examples where the unbounded term in the state equation is non linear. This case can be treated with similar ideas, see e.g. \cite[Chapter 7]{lunardi1995analytic}.}

Our main focus here is to give sense to the notion of mild solution to \eqref{SEB} as given in Definition \ref{sol:mild}, i.e. the solution, in
$C([t,T];X)$ of the following integral equation
\begin{equation}
\label{eq:mildBunbounded}
y(s)= e^{\mathcal{A}(s-t)} x
 + \int_t^s e^{(s-t)\mathcal{A}} \mathcal{B}u(r)dr 
  +\int_t^s e^{(s-r)\mathcal{A}} f(r,y(r),u(r))\ud r
\end{equation}
Here the difference with respect to Definition \ref{sol:mild} is the presence of the term $\int_t^s e^{(s-t)\mathcal{A}} \mathcal{B}u(r)dr$. 
To give sense to the above term, and consequently to the mild solution equation we have two natural ways which are of course intertwined. One is to consider such integral equation in $X$ and prove that, under suitable assumptions, the above convolution term is well defined and lives to $X$; another is to extend the state space $X$ to the larger space 
$\overline{X}$ (extending also the semigroup $e^{t\mathcal{A}}$ on $\overline{X}$) where the control term is well defined and prove the existence of solution in that space.

We start first showing how to perform the extension of the semigroup.\footnote{This can be seen as a particular case of the notion of extrapolation space and Sobolev towers, treated, e.g. in 
\cite[Section II.5]{engel-nagel}; see also \cite[Section II.3.1]{BDDM07}.}
First of all, we recall, along Definition
\ref{df:adjointsemigroup} and Corollary
\ref{cr:adjointHilbert}, that the adjoint
$\mathcal{A}^*$ generates the adjoint semigroup $e^{tA^*}=(e^{tA})^*$ which is still a strongly continuous semigroup in $X$.
We consider  the subspace $D(\A^*)$ of $X$ endowed with the graph norm, and its dual space $D(\A^*)'$. Clearly we have the natural Gelfand triple
$D(\A^*)\subset X\equiv X'\subset D(\A^*)'$.\footnote{We recall that  we cannot identify contemporarily a proper subspace $V$ of $X$ with its dual $V'$.}
The inclusions are proper when $\A^*$ is not bounded.
If we endow $D(\A^*)$ with the graph norm, then $\A^*$ is a continuous linear operator from $D(\A^*)$ into $X$.
Since changing the topological structure of the spaces we change indeed the operator itself we call $\A^*_1$ such ``version'' of $\A^*$ and write
$\A^*_1\in \mathcal{L}(D(\A^*),X)$.

Now we take the adjoint of $\A_1^*$.  Using \eqref{eq:defadjointdomain}, we see that it must be, since we identify $X$ with its dual $X'$,
$$
(\A^*_1)^*\in \mathcal{L}(X,D(A^*)').
$$
We call $\mathcal{A}_{-1}$ this operator.
Using that $\mathcal{A}^{**}=\mathcal{A}$ (see e.g. \cite[Theorem VIII.1]{Yosida1980} or \cite[Theorem I.3.11]{ReedSimon1980}) and the theory developed in \cite[Section II.5.1, Theorem II.5.5]{engel-nagel} (see also \cite[Section II.3.1]{BDDM07}, one can easily prove that 
$$
\A_{-1}: D(\A_{-1})=X \subset D(\A^*)' \to D(\A^*)'
$$
is an extension of the operator
$\mathcal{A}$. Moreover, still using
\cite[Theorem II.5.5]{engel-nagel}
we can prove that
$\A^*_{-1}$ generates the strongly continuous semigroup 
$e^{t\A_{-1}}$ on $D(\A^*)'$ which
extends the semigroup $e^{t\A}$, i.e. $e^{t\A_{-1}}|_X = e^{t\A}$ for all $t \ge 0$.

\label{eq:domainA1}

Now we show how to extend the previous setting and results, namely Propositions \ref{prop:mild} and \ref{prop:chain} to the present setting. In the following,  we intend $D(\mathcal{A})$ endowed with the Hilbert structure induced by the inner product $\langle x,y\rangle_{D(\mathcal{A})}:=\langle x,y\rangle_{X}+\langle \A x,\A y\rangle_X$ and $D(\mathcal{A})'$ endowed with the Hilbert structure provided to it as dual of the Hilbert space $D(\mathcal{A})$.

\begin{Assumption}
    \label{ass:Bunbounded}
\begin{itemize}
\item[]
\item[(i)]
$\mathcal{B}\in\mathcal{L} (U;D(\A^*)')$;
    \item[(ii)] 
For each $t\in[0,T)$ and $R\in[t,T]$, the convolution
$$
(\mathcal{S}\mathcal{B}): [t,R] \to X, \ \ r\mapsto (\mathcal{S}\mathcal{B})(r):=\int_t^r e^{(s-t)\mathcal{A}} \mathcal{B}u(s) d s
$$
belongs to 
$ 
\mathcal{L}(L^1([t,R];U); C([t,R];X)).$
Moreover, there exists $C_{\mathcal{S}\mathcal{B}}>0$ such that 
\begin{equation}\label{stimaB}
\|\mathcal{S}\mathcal{B}\|_{\mathcal{L}(L^1([t,R];U); C([t,R];X))}\leq C_{\mathcal{S}\mathcal{B}} R.
\end{equation}
\end{itemize}
\end{Assumption}
Now we observe that, under Assumption
\ref{ass:Bunbounded}-(i), we can directly apply the above Proposition \ref{prop:mild} to the state equation \ref{SEB} simply substituting $X$ with $D(\A^*)'$. This provides well posedness of the state equation \ref{SEB} for all initial data $x\in D(\A^*)'$.

However, in many case this may not be satisfactory, as often one would like to know if and in which cases the mild solution still lies in $X$.
This is true if we also add
Assumption
\ref{ass:Bunbounded}-(ii) since this guarantee that the right hand side of 
\eqref{eq:mildBunbounded} lies in $X$ and that the contraction mapping theorem can be applied in $X$ itself.

Hence we have the following

\begin{Proposition}
\label{prop:mildunbounded}
Let Assumptions \ref{ass:LU} and 
\ref{ass:Bunbounded}\emph{(i)} hold true. Then, for every $(t,x)\in [0,T)\times D(\A^*)'$ and every  $u(\cdot)\in L^1_\loc([t,T];U)$, 
the state equation \eqref{SE} admits a unique mild solution $y(\cdot)$ in the sense of Definition \ref{sol:mild} where $X$ must be substituted with $D(\A^*)'$. Such a solution satisfies, for some $C:=C(L,\omega,M)>0$, 
\begin{align}
&|y(s)|_{D(\A^*)'}\leq  C(1+|x|_{D(\A^*)'}+|u(\cdot)|_{L^1([t,s];U)} )e^{C(s-t)},  \label{estimate-mild1unbounded}\\
& |y(s)-x|_{D(\A^*)'}\leq
C\left(1+|x|_{D(\A^*)'}+
|u(\cdot)|_{L^1([t,s];U)} \right)
\left(e^{C(s-t)}-1\right). \label{estimate-mild2unbounded}
\end{align}
Moreover, the  mild solution $y(\cdot):=y^{t,x,u(\cdot)}(\cdot)$ is also the unique weak solution (in the sense of \cite[Definition 5.1(v) p.129]{BDDM07}) to \eqref{SE}.

Finally, if also Assumption \ref{ass:Bunbounded}-\emph{(ii)} holds, then
the above solution also belongs to
$C([t,T];X)$ for every initial datum $x\in X$ and the estimates \eqref{estimate-mild1} and \eqref{estimate-mild2} hold.
    \end{Proposition}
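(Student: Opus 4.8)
The plan is to reduce the statement to the already-proved Proposition~\ref{prop:mild}, first working in the large space $D(\A^*)'$ and then, under the extra Assumption~\ref{ass:Bunbounded}(ii), returning to $X$.

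\emph{Step 1: well-posedness in $D(\A^*)'$.} Recall from the discussion preceding the statement that $\A_{-1}$ generates a $C_0$-semigroup $e^{t\A_{-1}}$ on $D(\A^*)'$ which extends $e^{t\A}$, and that, by Assumption~\ref{ass:Bunbounded}(i), $\mathcal B$ is a bounded operator from $U$ into $D(\A^*)'$. I would therefore read \eqref{SEB} as the evolution equation $y'(s)=\A_{-1}y(s)+\tilde f(s,y(s),u(s))$, $y(t)=x$, in the Hilbert space $D(\A^*)'$, with the combined nonlinearity $\tilde f(s,y,u):=\mathcal Bu+f(s,y,u)$. Since $\mathcal B\in\mathcal L(U;D(\A^*)')$, the term $\mathcal Bu$ is constant in $y$ and linearly bounded in $u$, so $\tilde f$ inherits from Assumption~\ref{ass:LU}(ii) a Lipschitz-in-$y$ bound and a linear-growth bound on $D(\A^*)'$ --- here one uses either that $f\equiv0$ (as in the relevant applications) or that $f$ extends to a map $[0,T]\times D(\A^*)'\times U\to D(\A^*)'$ while keeping Assumption~\ref{ass:LU}(ii). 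Proposition~\ref{prop:mild}, applied verbatim with $(X,\A,f)$ replaced by $(D(\A^*)',\A_{-1},\tilde f)$, then produces a unique mild solution $y\in C([t,T];D(\A^*)')$ of \eqref{eq:mildBunbounded}, the estimates \eqref{estimate-mild1unbounded}--\eqref{estimate-mild2unbounded}, and the identification of $y$ with the unique weak solution.

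\emph{Step 2: the solution lies in $X$ when $x\in X$.} Assume in addition Assumption~\ref{ass:Bunbounded}(ii) and take $x\in X$. I would set up the fixed-point map
\[
\Phi(y)(s):=e^{(s-t)\A}x+(\mathcal S\mathcal Bu)(s)+\int_t^s e^{(s-r)\A}f(r,y(r),u(r))\ud r
\]
on $C([t,R];X)$ for $R\in(t,T]$. This map takes values in $C([t,R];X)$: the first term does so because $x\in X$ and $\{e^{t\A}\}_{t\ge0}$ acts on $X$; the second because $(\mathcal S\mathcal Bu)(\cdot)\in C([t,R];X)$ by Assumption~\ref{ass:Bunbounded}(ii); and the third by the standard estimate (using \eqref{eq:estsem} and the linear growth of $f$) together with the strong continuity of the semigroup. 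Exactly as in the proof of Proposition~\ref{prop:mild}, $\Phi$ is a contraction on $C([t,R];X)$ for $R-t$ small (or on $C([t,T];X)$ after passing to an equivalent Bielecki-type weighted norm), so it has a unique fixed point $\bar y\in C([t,T];X)$, which solves \eqref{eq:mildBunbounded} in $X$, hence a fortiori in $D(\A^*)'$. By the uniqueness in Step~1, $\bar y=y$, so $y\in C([t,T];X)$.

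\emph{Step 3: the $X$-estimates, and the main obstacle.} Estimates \eqref{estimate-mild1}--\eqref{estimate-mild2} would then follow by bounding, in the $X$-norm, the three terms on the right-hand side of \eqref{eq:mildBunbounded}: the semigroup term by \eqref{eq:estsem}, the convolution term $(\mathcal S\mathcal Bu)(s)$ by \eqref{stimaB}, and the $f$-term by the linear-growth bound of Assumption~\ref{ass:LU}(ii) and \eqref{eq:estsem}; Gronwall's lemma then closes the argument, with \eqref{estimate-mild2} obtained by estimating $y(s)-x$ in the same fashion. The computations are routine; the real point is the interplay between the two spaces. One must be sure that the object produced by the $X$-contraction in Step~2 coincides with the $D(\A^*)'$-solution of Step~1 --- which is exactly what uniqueness in the larger space provides --- and one should note that Assumption~\ref{ass:Bunbounded}(ii) is precisely the hypothesis that lets the contraction run in $C([t,R];X)$ rather than merely in $C([t,R];D(\A^*)')$, since the circular occurrence of $y$ inside $f$ forbids one from simply ``plugging in'' the solution found in Step~1. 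A minor additional subtlety, when $f\not\equiv0$, is that $\tilde f$ of Step~1 must make sense on $D(\A^*)'$, which costs a mild extra hypothesis on $f$.
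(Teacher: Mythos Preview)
Your proposal is correct and follows exactly the approach of the paper: reduce to Proposition~\ref{prop:mild} in the larger space $D(\A^*)'$ via Assumption~\ref{ass:Bunbounded}(i), and then, under Assumption~\ref{ass:Bunbounded}(ii), run the contraction mapping argument directly in $C([t,T];X)$. Your write-up is actually more detailed than the paper's two-line proof; in particular, you flag the point that $f$ must make sense on $D(\A^*)'$ for Step~1 to go through when $f\not\equiv0$, and you explicitly invoke uniqueness in $D(\A^*)'$ to identify the $X$-fixed point with the $D(\A^*)'$-solution --- both observations the paper leaves implicit.
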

    \begin{proof}
The result, except the last statement is exactly an application of the previous Proposition \ref{prop:mild} when $X$ is substituted by $D(\A^*)'$, thanks to Assumption \ref{ass:Bunbounded}-(i). 
The last statement follows applying the Contraction Mapping Principle to the integral equation \eqref{eq:mildBunbounded} in the space $C([t,T];X)$, which is made possible by 
Assumption \ref{ass:Bunbounded}-(ii). 
    \end{proof}

Now we state and prove the analogous of
Proposition \ref{prop:chain}.

\begin{Proposition}\label{prop:chainunbounded}
Let Assumptions \ref{ass:LU} and 
\ref{ass:Bunbounded} hold true.
The following chain's rule holds.
 \begin{enumerate}[(i)]
 \item Let $T<\infty$, 
 $(t,x)\in[0,T)\times X$, $u(\cdot)\in\ L^1([t,T];U)$.  Set  $y(\cdot):=y^{t,x,u(\cdot)}(\cdot)$, and assume that
 $y(s)\in\mathcal{O}$ for every $s\in[t,T]$. Then,
for every $\varphi\in \mathcal{S}([t,T)\times \mathcal{O})$ and $r\in[t,T]$
\begin{align}
    \label{chainunbounded}
    \varphi (r, y(r))&= \varphi(t,x) + \int_t^r \Big(\varphi_t(s,y(s))+\langle y(s),\mathcal{A}^*D\varphi(s,y(s))\rangle
    \\
    \nonumber
    &+   \langle u(s), \mathcal{B}^* D\varphi(s,y(s))\rangle
    +\langle 
    f(s,y(s),u(s)),
    D\varphi(s,y(s))\rangle \Big)\ud s.
\end{align}
\item  Let $T=\infty$, 
$f(s,y,u)=f_0(y,u)$, $x\in X$, $u(\cdot)\in L^1_\loc([0,\infty);U)$.   Set  $y(\cdot):=y^{0,x,u(\cdot)}(\cdot)$, and assume that
 $y(s)\in\mathcal{O}$ for every $s\in[0,\infty)$. Then, for every $\varphi\in \mathcal{S}(\mathcal{O})$ and $r\in[0,\infty)$, 
\begin{align}
    \label{chainstunbounded}
    \varphi (y(r))&= \varphi(x) + \int_0^r e^{-\rho s}\Big(\rho  y(s)+\langle y(s),\mathcal{A}^*D\varphi(y(s))\rangle 
    \\
    \nonumber
    &+   \langle u(s), \mathcal{B}^* D\varphi(y(s))\rangle
     +\langle 
   f_{0}(y(s),u(s)),
   D\varphi(y(s))\rangle \Big)\ud s.
\end{align}
\end{enumerate}
    \end{Proposition}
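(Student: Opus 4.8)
The plan is to prove both chain rules by a \emph{bounded approximation of the control operator} $\mathcal B$, so that everything reduces to Proposition \ref{prop:chain}; since $f$ enters exactly as in that statement I shall only keep track of the new term $\mathcal B u$. For $n$ large I would introduce the resolvent $R(n,\A_{-1}):=(nI-\A_{-1})^{-1}$ of the extension $\A_{-1}$ constructed before the statement: since $D(\A_{-1})=X$ it belongs to $\mathcal L(D(\A^*)';X)$, it coincides with $R(n,\A)$ on $X$, and it commutes with the semigroup $e^{t\A_{-1}}$. I would then set $\mathcal B_n:=nR(n,\A_{-1})\mathcal B\in\mathcal L(U;X)$ and consider the approximating equation
\[
y_n'(s)=\A y_n(s)+\mathcal B_n u(s)+f(s,y_n(s),u(s)),\qquad y_n(t)=x.
\]
As $s\mapsto\mathcal B_n u(s)$ lies in $L^1_{\loc}([t,T];X)$, the map $(s,y,v)\mapsto\mathcal B_n v+f(s,y,v)$ satisfies Assumption \ref{ass:LU} (with an $n$-dependent growth constant), so Proposition \ref{prop:mild} yields a unique mild solution $y_n\in C([t,T];X)$, and Proposition \ref{prop:chain}(i) will apply to $y_n$ once we know $y_n([t,T])\subset\mathcal O$.

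The next step is to show $y_n\to y$ in $C([t,T];X)$, where $y$ is the mild solution of \eqref{SEB} (which lies in $C([t,T];X)$ by Proposition \ref{prop:mildunbounded}). Using that $R(n,\A_{-1})$ commutes with the semigroup and that the convolution $(\mathcal S\mathcal B)$ of Assumption \ref{ass:Bunbounded}(ii) is $X$-valued and time-continuous, one identifies
\[
\int_t^s e^{(s-r)\A_{-1}}(\mathcal B_n-\mathcal B)u(r)\,dr=(nR(n,\A)-I)(\mathcal S\mathcal B)(s);
\]
since $nR(n,\A)\to I$ strongly and boundedly on $X$, the right-hand side tends to $0$ uniformly on the compact range of $s\mapsto(\mathcal S\mathcal B)(s)$, so $\eta_n:=\sup_{s\in[t,T]}|(nR(n,\A)-I)(\mathcal S\mathcal B)(s)|_X\to0$. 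Subtracting the mild formulations of $y_n$ and $y$ and using \eqref{eq:estsem} together with the Lipschitz bound on $f$, Gronwall's lemma gives $\sup_{[t,T]}|y_n-y|_X\le C\eta_n\to0$ for a constant $C=C(T,L,M,\omega)$. In particular $y_n([t,T])\subset\mathcal O$ for $n$ large, hence Proposition \ref{prop:chain}(i) provides the chain rule for $y_n$, in which I would rewrite $\langle\mathcal B_n u(s),D\varphi\rangle_X=\langle u(s),\mathcal B_n^*D\varphi\rangle_U$ with $\mathcal B_n^*=\mathcal B^*\,nR(n,\A^*)$ on $D(\A^*)$ (recall $\mathcal B^*\in\mathcal L(D(\A^*);U)$ by Assumption \ref{ass:Bunbounded}(i)).

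It then remains to let $n\to\infty$. The left-hand side converges by continuity of $\varphi$. For the integral I would use dominated convergence: the uniform convergence $y_n\to y$ makes $K:=\bigcup_{n\ge N}y_n([t,T])\cup y([t,T])$ a compact subset of $\mathcal O$ for $N$ large, so $\varphi_t$, $D\varphi$ and $\A^*D\varphi$ are bounded on $[t,T]\times K$; together with $\sup_n\|y_n\|_{C([t,T];X)}<\infty$ and the growth bound on $f$, the $\varphi_t$-, $\langle y,\A^*D\varphi\rangle$- and $f$-terms are dominated by $L^1(s)$-functions and converge pointwise. For the control term one writes $\mathcal B_n^*D\varphi(s,y_n(s))=\mathcal B^*\big(nR(n,\A^*)D\varphi(s,y_n(s))\big)$; since $D\varphi(s,y_n(s))\to D\varphi(s,y(s))$ in $D(\A^*)$, $nR(n,\A^*)$ is uniformly bounded and strongly convergent to the identity on $D(\A^*)$ (because $\A^*nR(n,\A^*)z=nR(n,\A^*)\A^*z$), and $\mathcal B^*\in\mathcal L(D(\A^*);U)$, one obtains $\mathcal B_n^*D\varphi(s,y_n(s))\to\mathcal B^*D\varphi(s,y(s))$ in $U$ with a bound uniform in $(n,s)$, so that the control integrand converges pointwise and is dominated by a multiple of $|u(s)|\in L^1$. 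Dominated convergence then yields \eqref{chainunbounded}. Part (ii) follows at once from part (i) applied on each $[0,r]$ to the function $\varphi(s,z):=e^{-\rho s}\psi(z)\in\mathcal S([0,r)\times\mathcal O)$ with $\psi\in\mathcal S(\mathcal O)$, using $\varphi_t=-\rho e^{-\rho s}\psi$ and $D\varphi=e^{-\rho s}D\psi$ (equivalently, one may rerun the approximation above invoking Proposition \ref{prop:chain}(ii) directly, the factor $e^{-\rho s}$ being bounded on $[0,r]$).

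I expect the main difficulty to be precisely the limit of the control term: the norm $\|\mathcal B_n\|_{\mathcal L(U;X)}$ may blow up as $n\to\infty$, so $\mathcal B_n^*D\varphi$ cannot be estimated crudely; one must exploit that $D\varphi$ takes values in $D(\A^*)$, routing the bound through $\mathcal B^*\in\mathcal L(D(\A^*);U)$ and the uniform boundedness of $nR(n,\A^*)$ on $D(\A^*)$. This is exactly where the $\mathcal S$-regularity required of $\varphi$ and the Gelfand-triple definition of $\mathcal B^*$ enter; the remaining ingredients (the contraction/Gronwall estimate and the dominated-convergence argument via the compact set $K$) are routine.
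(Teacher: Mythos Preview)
Your argument is correct. The Yosida-type regularization $\mathcal B_n:=nR(n,\A_{-1})\mathcal B$ is a clean way to reduce to Proposition~\ref{prop:chain}; the key steps --- identifying the convolution error as $(nR(n,\A)-I)(\mathcal S\mathcal B)(s)$, the Gronwall estimate for $y_n\to y$, the compact-range argument in $\mathcal O$, and the passage $\mathcal B_n^*D\varphi=\mathcal B^*\,nR(n,\A^*)D\varphi\to\mathcal B^*D\varphi$ in $U$ via strong convergence of $nR(n,\A^*)$ on $D(\A^*)$ with the graph norm --- are all sound. Your closing remark pinpoints exactly the delicate spot: one cannot bound $\mathcal B_n^*$ on $X$, only route the estimate through $\mathcal B^*\in\mathcal L(D(\A^*);U)$, and this is precisely what the $\mathcal S$-regularity of $\varphi$ permits.

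The paper proceeds differently. Rather than approximating $\mathcal B$, it observes that the proof of Proposition~\ref{prop:chain} (taken from \cite[Ch.\,2, Prop.\,5.5]{li2012optimal}) carries over verbatim once one knows (a)~the mild solution stays in $X$ (Assumption~\ref{ass:Bunbounded}(ii)) and (b)~the pairing $\langle\mathcal B u,D\varphi\rangle$ can be rewritten as $\langle u,\mathcal B^*D\varphi\rangle$ with $\mathcal B^*D\varphi$ continuous (Assumption~\ref{ass:Bunbounded}(i)). In other words, the paper treats $\mathcal B u$ as just another inhomogeneity whose pairing with $D\varphi\in D(\A^*)$ is already meaningful, and reruns the original chain-rule proof directly. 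This is shorter but relies on the reader having the Li--Yong argument at hand. Your approximation approach is longer but fully self-contained: it never reopens the proof of Proposition~\ref{prop:chain}, only invokes its statement for bounded control operators, and makes transparent exactly where each hypothesis is used.
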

    \begin{proof}
     The proof goes exactly as the on of Proposition \eqref{chain} (see, e.g., \citealp[Ch.2, Proposition 5.5]{li2012optimal}) once we use that, thanks to Assumption \ref{ass:Bunbounded}-(ii), the mild solution of \eqref{SEB} lies in $X$ and that, thanks to Assumption \ref{ass:Bunbounded}-(i), the term 
     $\mathcal{B}^* D\varphi$ make sense and is continuous.
    \end{proof}

    \subsection{The set of admissible controls}\label{sec:adm} 
The set of admissible controls is chosen to take account of the structure of the specific problem and to avoid useless technical problem like non well-posedness of the state equation or infinite integrals in the objective functional. 
This means the typically such a set is strictly smaller than    
$L^1_\loc([t,T];U)$.
To take care of control and state constraints, we consider 
two measurable sets: $\mathcal{C} \subseteq  U$ (control constraint)
$\mathcal{D} \subseteq  X$ (state constraint).
Then we define  
$$
\widehat{\mathcal{U}}(t,x):= \Big\{ u(\cdot)\in  L^1_\loc([t,T];\mathcal{C}): \ y^{t,x,u(\cdot)}(s)\in \mathcal{D}, \ \forall s\in[t,T]\Big\}.  
$$
Clearly, if $x\notin \mathcal{D}$, then the above set is empty. 
The set of admissible controls
$\mathcal{U}(t,x)$ in each specific problem will be a (not necessarily proper) subset of
$\widehat{\mathcal{U}}(t,x)$, which taking account of further restrictions on the control strategies, like stronger integrability properties (e.g 
$u(\cdot)\in  L^p_\loc([t,T];\mathcal{C})$ for some $p>1$).

    
\subsection{The objective functional}
We introduce now the objective functional of our family of problems. Differently from what happens in many books on optimal control in the mathematical literature, here the objective functional has to be \textbf{maximized} and not minimized over the set of admissible controls. We choose to do this since this is the most common attitude used in the economic literature to which our examples refer to.
\begin{itemize}
    \item[($T<\infty$)] 
     Let $(t,x)\in[0,T)\times X$ and  $u(\cdot)\in \mathcal{U}(t,x)$. Given measurable functions $g:[0,T]\times \mathcal{D}\times \mathcal{C}\to\R$ and $\phi:\mathcal{D}\to \R$, the objective functional to be maximized is 
    $$\mathcal{J}(t,x;u(\cdot)):=\int_t^T g\Big(s,y^{t,x,u(\cdot)}(s),u(s)\Big)\ud s+\phi\big(y^{t,x,u(\cdot)}(T)\big).$$

    \item[($T=\infty$)]  Let $(t,x)\in \mathbb{R}^+\times \mathcal{D}$, $u(\cdot)\in \mathcal{U}(t,x)$.
    Given a measurable function $g:\mathbb{R}^+\times \mathcal{D}\times \mathcal{C}\to\R$, the objective functional to be maximized is 
    $$\mathcal{J}(t,x;u(\cdot)):=\int_t^\infty g\left(s,y^{t,x,u(\cdot)}(s),u(s)\right)\ud s.$$
\end{itemize}
In both cases, we define the value function as 
$$
V(t,x):=\sup_{u(\cdot)\in\mathcal{U}(t,x)} \mathcal{J}(t,x;u(\cdot)).
$$
In the case $T<\infty$, we set 
$$
V(T,x):=\phi(x).
$$

The above definition of $\mathcal{J}$ may raise well-posedness issues, especially when $T = \infty$ and the functional is given by an improper integral. 
Depending on the problem, one may require stronger integrability conditions on the functions in $\mathcal{U}(t,x)$ depending on the growth of $g(s,\cdot,\cdot)$ and
$\phi$ in the finite-horizon case;  on the growth and the asymptotic behavior of $g(s,\cdot,\cdot)$ as $s \to \infty$ in the infinite-horizon case.  
To avoid cumbersome formulations and overly restrictive conditions that may be unsuitable in relevant applications, 
we simply \emph{assume} that the problem is well-posed and finite.\footnote{
Typically, dealing with concrete problems, one performs a preliminary analysis and identifies sets of admissible controls in such a way that such assumption is verified.}

\begin{Assumption}\label{ass:wellpose} 
\begin{enumerate}[(i)] 
\item[]
\item[]
\vspace{-.3cm}
\item The family  $\{\mathcal{U}(t,x)\}_{(t,x)\in [0,T)\times \mathcal{D}}$ satisfies the following two properties:
\smallskip
    \begin{enumerate}[(P1)]
 \item  For every $s \in [t,T)$, $x\in \mathcal{D}$, we have 
$$
u(\cdot )\in {\mathcal{U}}\left(t,x\right)\  \Longrightarrow \ u(\cdot)|_{[s,T]}
\in {\mathcal{U}}( s,y^{t,x,u(\cdot)}(s)).$$

\smallskip
\item  For every $s \in[t,T)$, $x \in \mathcal{D}$, let
$$
u_{1}(\cdot)\in {\mathcal{U}}\left( t,x\right) ,\quad u_{2}(\cdot)\in {\mathcal{U}}(
s, y^{t,x,u_1(\cdot)}\left(s\right)),
$$
and set
$$
u(r) :=\begin{cases}
u_{1}( r),  \  \mbox{if}\quad r\in [t,s),  \\
u_{2}( r),  \ \mbox{if}\quad r\in [s,T],
\end{cases}
$$
Then,  $u(\cdot)\in {\mathcal {U}}\left( t,x\right).$
\end{enumerate}
\smallskip
\item  $\mathcal{J}(t,x;u(\cdot))$ is well defined and finite for every $(t,x)\in [0,T)\times \mathcal{D}$ and $u(\cdot)\in \mathcal{U}(t,x)$.
\smallskip
\item The value function $V(t,x)$ is finite for every $(t,x)\in [0,T)\times \mathcal{D}$.  
\end{enumerate}
\end{Assumption}
\smallskip
\begin{Remark}
    Roughly speaking (P1) can be rewritten saying that the tail of an admissible control is itself admissible; similarly, (P2) can be rephrased saying that the concatenation of two admissible controls  yields another admissible control. As we will show, these two properties are essential for the time-consistency of the control problems under exam, enabling the use of Dynamic Programming.\hfill$\square$
\end{Remark}


\subsection{The infinite horizon stationary case}\label{sec:infer}
An important class of infinite horizon problem arising in many application is when the system is autonomous, which allow to reduce the problem by getting rid of the time variable. This is the case when 
$$
f(s,x,u)=f_0(x,u), \ \ g(s,x,u)=e^{-\rho s}g_0(x,u),
$$
where $\rho>0$ is a discount factor.
In view of the structure of $f$ and by (P1), the  sets of admissible controls $\big\{\mathcal{U}(t,x)\big\}_{(t,x)\in [0,T)\times \mathcal{D}}$ are readily seen to verify the following property:
$$
u(\cdot)\in \mathcal{U}(0,x) \ \Longleftrightarrow \ u_t(\cdot):= u(\cdot-t)\in \mathcal{U}(t,x).
$$
Then,  it is straightforward to  see that 
$$
\mathcal{J}(t,x;u_t(\cdot))= e^{-\rho t} \mathcal{J}(0,x;u(\cdot)),
$$
which leads to 
$$
V(t,x)=e^{-\rho t} V(0,x).
$$
In this case, with abuse of notation we denote, 
$$y^{x,u(\cdot)}= y^{0,x,u(\cdot)}, \ \ \ \mathcal{U}(x)=\mathcal{U}(0,x),  \ \ \ \ \mathcal{J}(x;u(\cdot))=\mathcal{J}(0,x;u(\cdot)), \ \ \  V(x)=V(0,x).$$ 
\smallskip
\begin{Remark} To ensure conditions (ii) and (iii) of Assumption \ref{ass:wellpose} (with $t=0$), it is typically necessary that the discount factor be sufficiently large relative to the growth of $f_0$ and $g_0$.
\end{Remark}

\section{Dynamic Programming in the smooth case: classical solutions and verification}\label{sec:DP}

In this section, we develop the main tools of the Dynamic Programming approach for our optimal control problem. 
We will establish the connection between the value function $V$ and the Hamilton–Jacobi–Bellman (HJB) equation, and present verification theorems for classical solutions of the HJB equation.
\textbf{Assumption \ref{ass:wellpose} will be standing in the rest of the section.}

\subsection{Dynamic Programming Principle and Hamilton-Jacobi-Bellman equation}
The Dynamic Programming method to approach optimal control problems relies upon the so called \emph{Dynamic Programming Principle} (DPP), which can be stated as follows.
\begin{Proposition}[Dynamic Programming Principle]
    \label{DPP}
For each $(t,x)\in [0,T)\times \mathcal{D}$ and each $r\in[t,T]$, we have 
\begin{equation}\label{eq:DPP}
V(t,x)=\sup_{u(\cdot)\in \mathcal{U}(t,x)} \left[\int_t^{r} g(s,y(s),u(s))\ud s+V(r,y(r))\right].
\end{equation}

\end{Proposition}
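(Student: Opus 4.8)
The plan is to prove the two inequalities ``$\le$'' and ``$\ge$'' between $V(t,x)$ and the right-hand side of \eqref{eq:DPP} separately. Two structural facts underpin both. First, the \emph{flow (cocycle) property} of mild solutions: for $u(\cdot)\in\mathcal U(t,x)$ and $r\in[t,T]$ one has $y^{t,x,u(\cdot)}(s)=y^{\,r,\,y^{t,x,u(\cdot)}(r),\,u(\cdot)|_{[r,T]}}(s)$ for every $s\in[r,T]$. Second, the additivity of the functional:
\[
\mathcal J(t,x;u(\cdot))=\int_t^{r} g\big(s,y^{t,x,u(\cdot)}(s),u(s)\big)\ud s+\mathcal J\big(r,y^{t,x,u(\cdot)}(r);u(\cdot)|_{[r,T]}\big).
\]
The flow property follows from the uniqueness part of Proposition \ref{prop:mild}: restricting the integral equation of Definition \ref{sol:mild} to $s\ge r$ and using the semigroup law $e^{(s-t)\mathcal A}=e^{(s-r)\mathcal A}e^{(r-t)\mathcal A}$ shows that $y^{t,x,u(\cdot)}|_{[r,T]}$ is a mild solution started at $(r,y^{t,x,u(\cdot)}(r))$ with control $u(\cdot)|_{[r,T]}$, hence coincides with it by uniqueness. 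Additivity is then immediate upon splitting $\int_t^T=\int_t^r+\int_r^T$ (all terms finite by Assumption \ref{ass:wellpose}(ii)), the boundary term $\phi(y(T))$, when $T<\infty$, being absorbed into $\mathcal J(r,\cdot;\cdot)$.

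For ``$\le$'': fix any $u(\cdot)\in\mathcal U(t,x)$ and write $y:=y^{t,x,u(\cdot)}$. By (P1), $u(\cdot)|_{[r,T]}\in\mathcal U(r,y(r))$, so from additivity and the definition of $V$,
\[
\mathcal J(t,x;u(\cdot))=\int_t^r g(s,y(s),u(s))\ud s+\mathcal J\big(r,y(r);u(\cdot)|_{[r,T]}\big)\le \int_t^r g(s,y(s),u(s))\ud s+V(r,y(r)).
\]
Taking the supremum over $u(\cdot)\in\mathcal U(t,x)$ gives $V(t,x)\le \sup_{u(\cdot)\in\mathcal U(t,x)}\big[\int_t^r g(s,y(s),u(s))\ud s+V(r,y(r))\big]$.

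For ``$\ge$'': fix $\e>0$ and an arbitrary $u_1(\cdot)\in\mathcal U(t,x)$, and set $\bar x:=y^{t,x,u_1(\cdot)}(r)\in\mathcal D$. Since $V(r,\bar x)$ is a supremum, finite by Assumption \ref{ass:wellpose}(iii), choose $u_2(\cdot)\in\mathcal U(r,\bar x)$ with $\mathcal J(r,\bar x;u_2(\cdot))\ge V(r,\bar x)-\e$. By (P2) the concatenation $u(\cdot)$, equal to $u_1(\cdot)$ on $[t,r)$ and to $u_2(\cdot)$ on $[r,T]$, lies in $\mathcal U(t,x)$; by the flow property its trajectory coincides with $y^{t,x,u_1(\cdot)}$ on $[t,r]$ and with $y^{r,\bar x,u_2(\cdot)}$ on $[r,T]$. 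Hence, using additivity,
\[
V(t,x)\ge \mathcal J(t,x;u(\cdot))=\int_t^r g\big(s,y^{t,x,u_1(\cdot)}(s),u_1(s)\big)\ud s+\mathcal J(r,\bar x;u_2(\cdot))\ge \int_t^r g\big(s,y^{t,x,u_1(\cdot)}(s),u_1(s)\big)\ud s+V(r,\bar x)-\e.
\]
Taking the supremum over $u_1(\cdot)\in\mathcal U(t,x)$ and then letting $\e\downarrow 0$ yields the reverse inequality, and the two together give \eqref{eq:DPP}.

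I expect no conceptual difficulty here: the entire content is the cut-and-paste (restriction/concatenation) argument, and properties (P1)--(P2) in Assumption \ref{ass:wellpose} were designed precisely so that it goes through. The only points deserving a line of care are that the concatenated control is still in $L^1_{\loc}([t,T];\mathcal C)$ (clear, being piecewise one of two such functions) and that the semigroup/measure-theoretic manipulation behind the flow property is invoked correctly; for the unbounded-control-operator model \eqref{SEB} the same reasoning applies verbatim with Proposition \ref{prop:mildunbounded} in place of Proposition \ref{prop:mild}, and the case $T=\infty$ is identical.
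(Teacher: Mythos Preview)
Your proof is correct and follows precisely the standard restriction/concatenation argument based on (P1)--(P2) that the paper invokes; the paper itself omits the details and refers to \cite{li2012optimal}, so your write-up in fact supplies what the paper only sketches.
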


\begin{proof}
    The proof does not suffer the dimension of the state space and relies, as in the finite dimensional case, on the time consistency of the problem, i.e. on the properties (P1)--(P2) of the sets of admissible controls $\mathcal{U}(t,x)$ for $(t,x)\in [0,T)\times \mathcal{D}$ (Assumption \ref{ass:wellpose}(i)). We omit it and refer to \cite{li2012optimal}.
\end{proof}
The formal passage to the limit of DPP (putting everything on one side, dividing by $r-t$ and sending $r\to t^+$) leads to consider the following PDE (Hamilton-Jacobi-Bellman, HJB)  associated to $V$:
\begin{equation}\label{eq:HJB}
    -v_t(t,x)=\langle \mathcal{A}x,Dv(t,x)\rangle+\mathcal{H}(t,x,Dv(t,x)), \ \ (t,x)\in [0,T)\times \mathcal{D},
\end{equation}
where 
\begin{equation}\label{Hsup}
\mathcal{H}(t,x,p):=\sup_{u\in \mathcal{C}} \mathcal{H}_{cv} (t,x,p;u), \ \ \  (t,x)\in [0,T)\times \mathcal{D}, \ p\in X,
\end{equation}
and 
$$
\mathcal{H}_{cv} (t,x,p;u):= \big\langle  f(t,x,u),\, p\big\rangle+g(t,x,u), \ \ \  \ \ \  (t,x)\in [0,T)\times \mathcal{D}, \ p\in X, \ u\in \mathcal{C}.
$$
Note that, in \eqref{eq:HJB}, the term $\langle \mathcal{A}x,Dv(t,x)\rangle$ is  defined only when $x\in\mathcal{A}$. To overcome this issue, one may deal with the following concept of solution.\footnote{Of course other concepts of solution are possible, see Section \ref{sec:non-linear} for an introduction to them.}

\begin{Definition}
Let
$\mathcal{O}:=\mbox{Int }\mathcal{D}$.
We say that a function $v\in \mathcal{S}([0,T)\times \mathcal{O})$ solves \emph{HJB} \eqref{eq:HJB} in classical sense at $(t,x)\in[0,T)\times
\mathcal{O}$ if 
\begin{equation}\label{eq:HJBbis}
    -v_t(t,x)=\langle x,\mathcal{A}^* Dv(t,x)\rangle+\mathcal{H}(t,x,Dv(t,x)).
\end{equation}
\end{Definition}
We provide the link between $V$ and HJB \eqref{eq:HJB}.


\begin{Theorem}\label{th:solreg}
    Let
$\mathcal{O}:=\mbox{Int }\mathcal{D}$ and assume that $\mathcal C$ is compact and that $V\in \mathcal{S}([0,T)\times\mathcal{O})$. Then, $V$ solves \emph{HJB} \eqref{eq:HJB}  in classical sense at each $(t,x)\in [0,T)\times \mathcal{O}$.
\end{Theorem}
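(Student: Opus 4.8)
The plan is to obtain, at a fixed $(t,x)\in[0,T)\times\mathcal{O}$, the two inequalities whose conjunction is \eqref{eq:HJBbis}, by letting the intermediate time $r$ in the Dynamic Programming Principle (Proposition \ref{DPP}) shrink to $t$ after rewriting the increment $V(r,y(r))-V(t,x)$ as an integral through the chain rule of Proposition \ref{prop:chain}(i). For $r\in(t,T)$ and $u(\cdot)\in\mathcal{U}(t,x)$, with $y(\cdot):=y^{t,x,u(\cdot)}(\cdot)$, set
\begin{equation*}
\Psi_r[u]:=\frac{1}{r-t}\left[\int_t^r g(s,y(s),u(s))\,\ud s+V(r,y(r))-V(t,x)\right];
\end{equation*}
by Proposition \ref{DPP}, $\sup_{u(\cdot)\in\mathcal{U}(t,x)}\Psi_r[u]=0$ for every such $r$. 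Once $r-t$ is small enough that $y(s)\in\mathcal{O}$ for all $s\in[t,r]$ --- which holds by continuity of $s\mapsto y(s)$, since $y(t)=x$ and $\mathcal{O}$ is open --- applying the chain rule on $[t,r]$ with $\varphi=V$ turns this into
\begin{equation*}
\Psi_r[u]=\frac{1}{r-t}\int_t^r\Big(V_t(s,y(s))+\langle y(s),\mathcal{A}^*DV(s,y(s))\rangle+\mathcal{H}_{cv}\big(s,y(s),DV(s,y(s));u(s)\big)\Big)\,\ud s.
\end{equation*}

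\emph{Subsolution inequality.} Fix $u\in\mathcal{C}$ and apply the DPP to an admissible control that coincides with the constant $u$ on $[t,r]$ (if $\mathcal{D}=X$ the constant control is itself admissible; in the presence of state constraints one needs that interior states can be kept in $\mathcal{D}$ for short times, which we take for granted). Then $\Psi_r[u]\le0$, and since $y(t)=x$, $DV\in C([0,T)\times\mathcal{O};D(\mathcal{A}^*))$ for the graph norm --- whence $\mathcal{A}^*DV$ is continuous --- and $f,g$ are continuous, letting $r\to t^+$ yields $0\ge V_t(t,x)+\langle x,\mathcal{A}^*DV(t,x)\rangle+\mathcal{H}_{cv}(t,x,DV(t,x);u)$. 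Taking $\sup_{u\in\mathcal{C}}$ gives $-V_t(t,x)\ge\langle x,\mathcal{A}^*DV(t,x)\rangle+\mathcal{H}(t,x,DV(t,x))$.

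\emph{Supersolution inequality.} Given $\e>0$, for each small $r$ choose an $\e$-optimal control $u_\e(\cdot)\in\mathcal{U}(t,x)$ with $\Psi_r[u_\e]\ge-\e$; inserting the expression for $\Psi_r[u_\e]$ and bounding the $\mathcal{H}_{cv}$-term under the integral by $\mathcal{H}$ (its supremum over $\mathcal{C}$), we get
\begin{equation*}
-\e\le\frac{1}{r-t}\int_t^r\Big(V_t(s,y_\e(s))+\langle y_\e(s),\mathcal{A}^*DV(s,y_\e(s))\rangle+\mathcal{H}\big(s,y_\e(s),DV(s,y_\e(s))\big)\Big)\,\ud s.
\end{equation*}
This is where compactness of $\mathcal{C}$ is essential: with $K:=\sup_{v\in\mathcal{C}}|v|_U<\infty$ one has $|u_\e(\cdot)|_{L^1([t,s];U)}\le K(s-t)$, so estimate \eqref{estimate-mild2} provides a modulus $\omega$, \emph{independent of $\e$ and $r$}, with $|y_\e(s)-x|\le\omega(r-t)$ for all $s\in[t,r]$. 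Hence, for $r-t$ small uniformly in $\e$, each $y_\e$ remains in a fixed compact ball contained in $\mathcal{O}$, on which the integrand above is uniformly continuous --- by continuity of $V_t$, of $\mathcal{A}^*DV$, and of $\mathcal{H}$ (the last one following from compactness of $\mathcal{C}$ together with continuity of $f,g$). So $y_\e(s)$ can be replaced by $x$ up to an error that vanishes as $r\to t^+$ uniformly in $\e$; letting first $r\to t^+$ and then $\e\to0^+$ gives $0\le V_t(t,x)+\langle x,\mathcal{A}^*DV(t,x)\rangle+\mathcal{H}(t,x,DV(t,x))$, which together with the previous inequality is exactly \eqref{eq:HJBbis}.

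The step I expect to be the crux is the one in the supersolution part: once the crude bound $\mathcal{H}_{cv}\le\mathcal{H}$ has discarded all control on which input $u_\e$ is used, one must still keep the $\e$-optimal trajectories $y_\e$ close to $x$ as $r\to t^+$ \emph{uniformly in $\e$}, and this is precisely what compactness of $\mathcal{C}$ buys through the a priori estimate \eqref{estimate-mild2} --- without boundedness of the controls the norms $|u_\e(\cdot)|_{L^1([t,r];U)}$ could be unbounded. A routine secondary point: the pointwise-in-time conclusion uses continuity of $g$ and $f$ in $s$ so that the averages $\tfrac{1}{r-t}\int_t^r(\cdots)\,\ud s$ converge to $(\cdots)|_{s=t}$; under mere measurability in $s$ one recovers \eqref{eq:HJBbis} at a.e.\ $t\in[0,T)$ only.
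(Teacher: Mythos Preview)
Your argument is correct and follows essentially the same approach as the paper: DPP combined with the chain rule, constant controls for one inequality, and $\varepsilon$-optimal controls together with the compactness of $\mathcal{C}$ (via \eqref{estimate-mild2} and continuity of $\mathcal{H}$) for the other. The only organizational difference is that the paper establishes the second inequality by contradiction (assuming a strict gap $\delta_o$ on a neighborhood and deriving $\varepsilon^2\ge\delta_o\alpha_o\varepsilon$), whereas you pass to the limit directly; also note that your labels ``subsolution''/``supersolution'' are swapped relative to the standard convention for the equation $-v_t-\langle x,\mathcal{A}^*Dv\rangle-\mathcal{H}=0$.
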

\begin{proof}
    \emph{Step 1.} Let $(t,x)\in [0,T)\times \mathcal{O}$. We are going to show that for every $\bar u\in \mathcal{C}$,
    \begin{equation}\label{V1}
    -V_t(t,x)\geq \langle x, \mathcal{A}^* DV(t,x)\rangle+ \mathcal{H}_{cv} (t,x,DV(t,x),\bar u). 
    \end{equation}
    By arbitrariness of $\bar u$, this entails one inequality to be proved, that is 
    $$ -V_t(t,x)\geq \langle x,\mathcal{A}^* DV(t,x)\rangle+\mathcal{H}(t,x,DV(t,x)).$$
    Let  $\gamma>0$ be such that $B(x,\gamma)\subseteq \mathcal{O}$,
    fix $\bar u\in \mathcal{C}$, let $\bar y(\cdot):=y^{t,x,u(\cdot)\equiv \bar u}$, and  set 
    $$\bar r:=\inf\{s\in[t,T): y(s)\notin B(x,\gamma)\}\wedge T > t.$$
By  using \eqref{eq:DPP} and \eqref{chain}, we get, for every constant $r\in(t,\bar{r})$, 
\begin{align*}
V(t,x) & \geq  \int_t^r g(s,\bar y(s),\bar  u)\ud s+V(r,\bar y(r))\\
& =  \int_t^r g(s,\bar y(s),\bar  u)\ud s+
V(t,x)\\& \ \ \ \ \ + \int_t^r \Big(V_t(s,\bar y(s))+\langle \bar y(s),\mathcal{A}^*DV(s,\bar{y}(s))\rangle+ \langle DV(s,\bar{y}(s)),\,f(s,\bar{y}(s),\bar{u})\rangle \Big)\ud s.
\end{align*}
Diving by $r-t$, letting $r\to t^+$,   considering that $\bar y(s)\to x$ as $ t\leq s\leq r\to t^+$, and considering the continuity properties of the functions involved in the integral, we get by mean integral theorem \eqref{V1}.

    \emph{Step 2.} 
   We are going to show that 
   \begin{equation}\label{V2}
 -V_t(t,x)\leq \langle x,\mathcal{A}^* DV(t,x)\rangle+\mathcal{H}(t,x,DV(t,x)).
 \end{equation}
 Assume, by contradiction, that 
    \begin{equation}\label{V2bis}
 -V_t(t,x)> \langle x,\mathcal{A}^* DV(t,x)\rangle+\mathcal{H}(t,x,DV(t,x)).
 \end{equation}
Since $U$ is compact, the Hamiltonian $\mathcal{H}$ is (finite  and) continuous.
 By continuity of $\mathcal{H}$  and by the smoothness properties of $V$, there exist $\delta_o,\varepsilon_o>0$ such that 
  \begin{align}\label{V2tris}
0&\geq \delta_o +  V_t(s,y)+ \langle y,\mathcal{A}^* DV(s,y)\rangle+\mathcal{H}(s,y,DV(s,y))\\
 & \geq \delta_o  +V_t(s,y)+ \langle y,\mathcal{A}^* DV(s,y)\rangle+\mathcal{H}_{cv}(s,y,DV(s,y);u), \ \ \ \forall (s,y)\in[t,t+\varepsilon_o]\times B(x,\varepsilon_o), \ \forall u\in U. \nonumber 
 \end{align}
 Now, take $\varepsilon\in (0,\varepsilon_o)$, let  $u(\cdot)\in\mathcal{U}(t,x)$ be arbitrary,   and  set $y(s):=y^{t,x,u(\cdot)}(s)$ and 
 $$
r_\varepsilon:=\inf\big\{s\geq t: \ y(s)\notin B(x,\varepsilon)\big\}\wedge T.
 $$
 Clearly, $r_\varepsilon$ depends on $u(\cdot)$. However, due to the fact that $\mathcal{C}$ is compact (hence bounded) and due to \eqref{estimate-mild2}, we have the existence of $\alpha_o\in (0,1)$ such that 
 \begin{equation}\label{astbouona}
 r_\varepsilon \geq t+\alpha_o\varepsilon, \ \ \ \forall u(\cdot)\in \mathcal{U}(t,x).
 \end{equation}
 Hence, due to \eqref{V2tris}, 
 we have 
 \begin{align*}
     & V(t,x)- V\big(t+\alpha_o\varepsilon,\,y(t+\alpha_o\varepsilon)\big)\\
         &=-\int_t^{t+\alpha_o\varepsilon} \Big(V_t(s,y(s))+ \langle y,\mathcal{A}^* DV(s,y(s))\rangle+f(s,y(s);u(s))\Big)\ud s\\
     &=-\int_t^{t+\alpha_o\varepsilon} \Big(V_t(s,y(s))+ \langle y,\mathcal{A}^* DV(s,y(s))\rangle+\mathcal{H}_{cv}(s,y,DV(s,y(s));u(s))-g(s,y(s),u(s))\Big)\ud s\\&
     \geq \delta_o \alpha_o \varepsilon+\int_t^{t+\alpha_o\varepsilon} g(s,y(s),u(s))\ud s.
 \end{align*}
 On the other hand, by DPP, for every $\varepsilon>0$, we may take  $u_\varepsilon(\cdot)\in\mathcal{U}(t,x)$  such that, defining  $y_\varepsilon$  accordingly, we have  
 $$
 V(t,x)- V(t+\alpha_o\varepsilon, y(t+\alpha_o\varepsilon))\leq \int_t^{t+\alpha\varepsilon} g(s,y_\varepsilon(s), u_\varepsilon(s))\ud s+\varepsilon^2.
 $$
Combining the two inequalities above,
we get $\varepsilon ^2\geq \delta_o\alpha_o\varepsilon$. Since $\varepsilon$ was arbitrary, we get a contradiction and conclude.
\end{proof}
\begin{Remark}\label{rem:assV1}
We now comment on the assumption of compactness of $\mathcal{C}$ in the result above. A closer examination of the proof reveals that this assumption plays a role only in the second part (the proof of the subsolution property), specifically in ensuring:  
(i) the continuity of \( \mathcal{H} \);  
(ii) the uniform bound \eqref{astbouona}.

However, these two conditions can be established under significantly weaker assumptions. 
Furthermore, we emphasize that in certain cases, even \eqref{astbouona} may be unnecessary, provided the proof is appropriately adapted.

For simplicity, we assumed the compactness of $\mathcal{C}$  purely for convenience, avoiding unnecessary complications. It is worth noting that in many of the examples presented in Section \ref{sec:examples}, this assumption does not hold. Nevertheless, Theorem \ref{th:solreg} is introduced here mainly to motivate the HJB equation. The solution approach follows a verification-type method, which is carefully detailed in the next subsection. This approach \textbf{does not require compactness of $\mathcal{C}$.}  
\hfill $\square$
\end{Remark}

\begin{Remark}\label{rem:HJBunbounded}
We now look at what changes in the case presented in Subsection
\ref{SSSE:Bunbounded}, i.e. when 
in the state equation appears an additional term $\mathcal{B}u$ 
where is is unbounded (i.e. $\mathcal{B}:U\to \overline{X}$ with $X \subsetneq \overline{X}$).
In such a case the HJB equation \eqref{eq:HJB}
contains an additional term inside the Hamiltonian $\mathcal{H}$. Indeed 
we have, formally,
\begin{equation}\label{eq:HCVunbounded}
\mathcal{H}_{cv} (t,x,p;u):= 
\big\langle\mathcal{B}u, p\big\rangle+\big\langle  f(t,x,u),\, p\big\rangle+g(t,x,u), \ \ \  \ \ \  (t,x)\in [0,T)\times \mathcal{D}, \ p\in X, \ u\in \mathcal{C}.
\end{equation}
Note that the term 
$\big\langle\mathcal{B}u, p\big\rangle$ may not be well defined for all $p\in X$. However, under Assumption \ref{ass:Bunbounded}-(i) we have that $\mathcal{B}\in \mathcal{L}(U,D(\A^*)'$, hence, for $p\in D(\A^*)$, such term can be rewritten as
$\big\langle u, \mathcal{B}^*p\big\rangle$.
Moreover, under Assumption \ref{ass:Bunbounded}-(ii) the state trajectory still lives in $X$.
\\
Now, in the HJB equation \eqref{eq:HJB} $p=DV(t,x)$ and, under the assumptions of Theorem \ref{th:solreg} we have
$V\in \mathcal{S}([0,T)\times \text{Int }\mathcal{D})$, so $\A^*DV$ is well defined and continuous. This means that the term
$\big\langle u,
\mathcal{B}^*DV(t,x)\big\rangle$
is well defined for 
$(t,x) \in [0,T)\times X$. This, together with the fact that the state trajectory still lives in $X$ allows to say that the statement of Theorem \ref{th:solreg} holds true also in this case with the same proof.
\hfill $\square$
\end{Remark}

\subsection{Verification theorem --  finite horizon case}
In this subsection, we prove the verification theorem for classical solutions to HJB \eqref{eq:HJB} in the finite horizon case, i.e. when $T<\infty$. 
\begin{Theorem}[Verification Theorem --- Finite horizon]\label{teo:ver}
Let $\mathcal{O}:=\mbox{Int }\mathcal{D}$ and $v\in \mathcal{S}([0,T)\times \mathcal{O})\cap C^0([t,T]\times \mathcal{O})$ be a classical solution to \emph{HJB} \eqref{eq:HJB} at all $(t,x)\in [0,T)\times \mathcal{O}$ such that $v(T,\cdot)=\phi$. Then, given $(t,x)\in [0,T)\times \mathcal{O}$, we have the following statements. 
 \begin{enumerate}[(i)]
 \item  $v(t,x)\geq V(t,x)$.
 \item If there exists $u^*(\cdot)\in \mathcal{U}(t,x)$ such that, setting $y^*(\cdot):=y^{t,x,u^*(\cdot)}$, one has 
 $$
\mathcal{H}_{cv} (s,y^*(s), Dv(s,y^*(s));u^*(s))= \mathcal{H}(s,y^*(s), Dv(s,y^*(s))), \ \ \mbox{for a.e.} \ s\in[t,T],
 $$
 then $v(t,x)=V(t,x)$ and $u^*(\cdot)$ is optimal for the control problem starting at $(t,x)$.
 \end{enumerate}
\end{Theorem}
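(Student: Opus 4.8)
The plan is to run the classical verification argument: prove (i) by comparing $v$ along an arbitrary admissible trajectory with the corresponding payoff, using the chain rule of Proposition \ref{prop:chain} together with the fact that $v$ solves HJB; then deduce (ii) by observing that the only inequality used in (i) — namely $\mathcal{H}_{cv}\le\mathcal{H}$ — turns into an equality along the candidate optimal trajectory.

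For (i), fix $(t,x)\in[0,T)\times\mathcal{O}$ and $u(\cdot)\in\mathcal{U}(t,x)$, and set $y(\cdot):=y^{t,x,u(\cdot)}(\cdot)$. On each interval $[t,r]$ with $r<T$ along which $y$ stays in $\mathcal{O}$, I would apply the chain rule of Proposition \ref{prop:chain}(i) to $v\in\mathcal{S}([0,T)\times\mathcal{O})$, obtaining
\[
v(r,y(r)) = v(t,x) + \int_t^r\!\Big(v_t(s,y(s)) + \langle y(s),\mathcal{A}^*Dv(s,y(s))\rangle + \langle f(s,y(s),u(s)),Dv(s,y(s))\rangle\Big)\,\ud s.
\]
Substituting the HJB equation \eqref{eq:HJBbis} at $(s,y(s))$, the first two terms in the integrand equal $-\mathcal{H}(s,y(s),Dv(s,y(s)))$; moreover $\langle f(s,y(s),u(s)),Dv(s,y(s))\rangle = \mathcal{H}_{cv}(s,y(s),Dv(s,y(s));u(s)) - g(s,y(s),u(s))$, and $\mathcal{H}_{cv}(s,y(s),Dv(s,y(s));u(s))\le\mathcal{H}(s,y(s),Dv(s,y(s)))$ by \eqref{Hsup}. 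Hence the integrand is $\le -g(s,y(s),u(s))$, and rearranging gives
\[
v(t,x)\ \ge\ \int_t^r g(s,y(s),u(s))\,\ud s + v(r,y(r)).
\]
Letting $r\to T^-$ and using the continuity of $v$ on $[t,T]\times\mathcal{O}$, the terminal condition $v(T,\cdot)=\phi$, the continuity of $y$ at $T$, and Assumption \ref{ass:wellpose} (which makes $\int_t^r g$ converge to $\int_t^T g$), I obtain $v(t,x)\ge\mathcal{J}(t,x;u(\cdot))$. Taking the supremum over $u(\cdot)\in\mathcal{U}(t,x)$ yields $v(t,x)\ge V(t,x)$.

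For (ii), I would repeat the same computation with $u=u^*$ and $y=y^*$. The only inequality used above was $\mathcal{H}_{cv}(s,y^*(s),Dv(s,y^*(s));u^*(s))\le\mathcal{H}(s,y^*(s),Dv(s,y^*(s)))$, which by hypothesis is an equality for a.e. $s\in[t,T]$. Thus every inequality becomes an equality, and passing to the limit $r\to T^-$ as before gives $v(t,x)=\mathcal{J}(t,x;u^*(\cdot))$. Combined with (i), this yields $\mathcal{J}(t,x;u^*(\cdot))=v(t,x)=V(t,x)$, so $u^*(\cdot)$ is optimal for the problem started at $(t,x)$.

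The main point requiring care — more a technical nuisance than a deep obstacle — is the mismatch between the regularity assumed for $v$ (only on $[0,T)\times\mathcal{O}$, with continuity up to $T$) and the fact that admissible trajectories are a priori only guaranteed to lie in $\mathcal{D}$, not in $\mathcal{O}=\mbox{Int }\mathcal{D}$, nor defined neatly at the terminal time: one must apply the chain rule on sub-intervals where $y$ remains in $\mathcal{O}$ and then pass to the limit, leaning on the continuity of $v$ up to $T$ and on the integrability built into Assumption \ref{ass:wellpose} to control both $\int_t^r g$ and $v(r,y(r))$ as $r\to T^-$. In the stationary and/or unbounded-control variants one would instead invoke Proposition \ref{prop:chainunbounded}; everything else is an immediate consequence of the definition of $\mathcal{H}$ as a pointwise supremum over $\mathcal{C}$, so in particular no compactness of $\mathcal{C}$ is needed here.
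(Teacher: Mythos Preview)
Your proposal is correct and follows essentially the same route as the paper: apply the chain rule of Proposition \ref{prop:chain}, substitute the HJB equation, use $\mathcal{H}_{cv}\le\mathcal{H}$ to get the inequality, let $r\to T^-$, and observe that for $u^*$ the inequality becomes equality. The only cosmetic difference is that the paper first records the exact ``fundamental identity'' \eqref{eq:fund} with the nonnegative term $\mathcal{H}-\mathcal{H}_{cv}$ displayed explicitly before dropping it, whereas you pass directly to the inequality; the logic is identical.
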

\begin{proof}
    \begin{enumerate}[(i)]
        \item Let $(t,x)\in [0,T)\times \mathcal{O}$ and  $u(\cdot)\in \mathcal{U}(t,x)$ and set $y(\cdot):=y^{t,x,u(\cdot)}$. By Proposition \ref{prop:chain}, using the definition of $\mathcal{H}_{cv}$, we have for each $r\in[t,T)$
        \begin{align*}
            &v(r,y(r))= v(t,x)\\
            &+\int_t^r \Big(v_t(s,y(s))+\langle y(s), \mathcal{A}^* Dv(s,y(s))\rangle + \mathcal{H}_{cv} \big(s,y(s), Dv(s,y(s));u(s)\big)-g(s,y(s),u(s))\Big)\ud s.   
        \end{align*}
        Rearranging and using the fact that $v$ solves HJB \eqref{eq:HJB}, we get
         \begin{align}\label{eq:fund}
            v(t,x)&= v(r,y(r))+\int_t^r g(s,y(s),u(s))\ud s\\
            &\ +\int_t^r \Big(\mathcal{H} \big(s,y(s), Dv(s,y(s))\big)- \mathcal{H}_{cv} \big(s,y(s), Dv(s,y(s));u(s)\big)\Big)\ud s.\nonumber   
        \end{align}
        Letting $r\to T^-$ and using the fact that $v(T,\cdot)=\phi$ and that the integrand above is nonnegative by definition, we get $v(t,x)\geq  \mathcal{J}(t,x;u(\cdot))$. By arbitrariness of $u(\cdot)$, we get the claim.
        \medskip
        \item Assume that $u^*(\cdot)$ satisfies the requirement of the claim. Then the integral in the second line of \eqref{eq:fund} vanishes. Arguing as before, we get  $v(t,x)=  \mathcal{J}(t,x;u^*(\cdot))$. Combining with item (i), we get the claim. 
    \end{enumerate}
    \vspace{-.5cm}
\end{proof}

As for the construction of optimal controls $u^*(\cdot)$ according to part (ii) of Theorem \ref{teo:ver}, this is typically done by relying on the concept of feedback  map and feedback control passing through the so called \emph{closed loop equation}. We illustrate these concepts.
A \emph{feedback map} is simply a measurable map $\phi:[0,T]\times \mathcal{D}\to \mathcal{C}$.
Given a feedback map $\phi$ and $(t,x)\in [0,T)\times \mathcal{D}$, we can associate to $\phi$ the \emph{closed loop equation}
\begin{equation}\label{eq:CLE}
y'(s)= \mathcal{A}y(s)+f(s,y(s),\phi(s,y(s)), \ \ \ \ \ y(t)=x.
\end{equation}
\medskip
By mild solution to \eqref{eq:CLE} we intend a function $y:[t,T]\to \mathcal{D}$ such that, for every $r\in[t,T]$, 
$$y(r)=e^{(r-t)\A} x + \int_t^re^{(r-s)\mathcal{A}} f(s,y(s),\phi(s,y(s)))\ud s.$$

\begin{Definition}
\label{def:optimalcontrol-optimalfeedback}
A feedback map $\phi:[0,T]\times\mathcal{D}\to \mathcal{C}$ is said 
\begin{enumerate}[(i)]
    \item 
\emph{admissible} starting at $(t,x)\in [0,T)\times \mathcal{D}$ if  \eqref{eq:CLE} has a unique mild solution $y^\phi(\cdot)$ and the associated \emph{feedback control} defined by $u^\phi(s):=\phi(s,y^\phi(s))$, $s\in[t,T]$, belongs to $\mathcal{U}(t,x)$;
\item \emph{optimal} starting at $(t,x)\in [0,T)\times \mathcal{D}$ if it is admissible and the associated \emph{feedback control} defined by $u^\phi(s)$ is optimal for the control problem starting at $(t,x)$.
\end{enumerate}
\end{Definition}
\begin{Corollary}
\label{cr:feedback}
Let $\mathcal{D}=\mathcal{O}:=\mbox{Int }\mathcal{D}.$
 Under the assumptions of Theorem \ref{teo:ver},  assume further that the supremum is achieved in \eqref{Hsup}, with $p=Dv(t,x)$; that is (changing the labels to the variables to avoid confusion in what follows), for all $(s,y)\in [0,T]\times \mathcal{O}$,  there exists $\phi(s,y)$ such that
\begin{equation}\label{Hsupbis}
\mathcal{H}_{cv} (s,y,Dv(s,y);\phi(s,y))
=\max_{u\in \mathcal{C}} \mathcal{H}_{cv} (s,y,Dv(s,y);u).
\end{equation}
Let $(t,x)\in [0,T)\times \mathcal{O}$. If $\phi$ is an admissible feedback map starting at $(t,x)$, then it is optimal starting at $(t,x)$. 
\end{Corollary}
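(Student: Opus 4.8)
The plan is to reduce the statement to a direct application of part (ii) of Theorem \ref{teo:ver}. The key observation is that, when $\phi$ is admissible starting at $(t,x)$, the mild solution $y^\phi(\cdot)$ of the closed loop equation \eqref{eq:CLE} coincides with the state trajectory $y^{t,x,u^\phi(\cdot)}(\cdot)$ associated with the feedback control $u^\phi(s):=\phi(s,y^\phi(s))$. First I would substitute $u^\phi$ into the mild formulation of \eqref{SE} and observe that the resulting integral equation is exactly the one defining the mild solution of \eqref{eq:CLE}; by the uniqueness part of Proposition \ref{prop:mild} (applied with the admissible control $u^\phi(\cdot)\in\mathcal{U}(t,x)\subseteq L^1_\loc([t,T];U)$), the two trajectories agree on $[t,T]$. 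In particular $y^{t,x,u^\phi(\cdot)}(s)\in\mathcal{D}=\mathcal{O}$ for all $s\in[t,T]$, so $Dv(s,y^{t,x,u^\phi(\cdot)}(s))$ is well defined and the chain rule of Proposition \ref{prop:chain} applies along this trajectory.

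Next I would verify the pointwise maximality condition required in Theorem \ref{teo:ver}(ii). Setting $y^*(\cdot):=y^{t,x,u^\phi(\cdot)}(\cdot)=y^\phi(\cdot)$ and $u^*(\cdot):=u^\phi(\cdot)$, for every $s\in[t,T]$ we obtain, by the definition of $u^\phi$ and by \eqref{Hsupbis} evaluated at $(s,y)=(s,y^\phi(s))$,
\begin{align*}
\mathcal{H}_{cv}\big(s,y^*(s),Dv(s,y^*(s));u^*(s)\big)
&=\mathcal{H}_{cv}\big(s,y^\phi(s),Dv(s,y^\phi(s));\phi(s,y^\phi(s))\big)\\
&=\max_{u\in\mathcal{C}}\mathcal{H}_{cv}\big(s,y^\phi(s),Dv(s,y^\phi(s));u\big)
=\mathcal{H}\big(s,y^*(s),Dv(s,y^*(s))\big),
\end{align*}
which in particular holds for a.e. $s\in[t,T]$. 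Hence $u^\phi(\cdot)$ satisfies the hypothesis of Theorem \ref{teo:ver}(ii).

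Applying Theorem \ref{teo:ver}(ii) then yields $v(t,x)=V(t,x)$ and the optimality of $u^\phi(\cdot)$ for the control problem starting at $(t,x)$. By Definition \ref{def:optimalcontrol-optimalfeedback}(ii), this is precisely the assertion that $\phi$ is an optimal feedback map starting at $(t,x)$, which concludes the argument.

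I do not expect any serious obstacle: the proof is essentially bookkeeping on top of Theorem \ref{teo:ver}. The only point deserving care is the identification of the closed loop solution with the open loop state trajectory of the induced control, which rests on uniqueness of mild solutions; one should also keep in mind that the admissibility of $\phi$ (Definition \ref{def:optimalcontrol-optimalfeedback}(i)) already guarantees measurability of $s\mapsto u^\phi(s)$ and the membership $u^\phi(\cdot)\in\mathcal{U}(t,x)$, so no additional regularity or feasibility check is needed.
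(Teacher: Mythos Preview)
Your proof is correct and follows exactly the paper's approach: the paper's proof is a one-liner observing that by construction $u^\phi(\cdot)$ fulfills the requirement of Theorem \ref{teo:ver}(ii), and you have simply spelled out the details of this verification (identifying $y^\phi(\cdot)$ with $y^{t,x,u^\phi(\cdot)}(\cdot)$ via uniqueness of mild solutions, and checking the pointwise maximality condition using \eqref{Hsupbis}).
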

\begin{proof}
By construction the feedback control $u^\phi(\cdot)$ fulfills the requirement of part (ii) of Theorem \ref{teo:ver}. So,  $u^\phi$ is an optimal (feedback) control and  $\phi$ is an optimal feedback map starting at $(t,x)$.
\end{proof}

\begin{Remark}\label{rem:VerificUnbounded}
We observe that verification Theorem \ref{teo:ver} and its Corollary \ref{cr:feedback} remain true also in  the case of Subsection
\ref{SSSE:Bunbounded}, i.e. when 
in the state equation appears an additional term $\mathcal{B}u$ 
where is is unbounded.
\\
Indeed in such a case the proof simply uses that:
\begin{itemize}
    \item the term 
$\langle u,\mathcal{B}^DV \rangle$
is well defined (which is guaranteed by Assumption \ref{ass:Bunbounded}-(i));
    \item the trajectory $y(\cdot)$ associated to any admissible control strategy remains in $X$ (which is guaranteed by Assumption \ref{ass:Bunbounded}-(ii));
\item 
the chain rule of Proposition \ref{prop:chainunbounded} holds.
\end{itemize}
\hfill $\square$
\end{Remark}

\subsection{The infinite horizon stationary case}\label{sub:infstat}
In this subsection, we discuss the results provided above in the infinite horizon stationary case described in Subsection \ref{sec:infer}. Considering that in this case $V_{t}(t,x)=-\rho e^{-\rho t} V(0,x)$, the HJB equation associated to $V(x):=V(0,x)$ reduces to 
\begin{equation}\label{eq:HJBst}
    \rho v(x)=\langle \mathcal{A}x,Dv(x)\rangle+\mathcal{H}(x,Dv(x)), \ \ \ \ x\in  \mathcal{D},
\end{equation}
where 
$$
\mathcal{H}(x,p):=\sup_{u\in \mathcal{C}} \mathcal{H}_{cv} (x,p;u), \ \ \  x\in\mathcal{D}, \ p\in X,$$
and 
$$
\mathcal{H}_{cv} (x,p;u):= \big\langle f_0(x,u),\, Dv(x)\big\rangle+g_0(x,u), \ \ \  \ \ \  x\in  \mathcal{D}, \ p\in X, \ u\in \mathcal{C}.
$$
\begin{Definition}
\label{def:classicalsolution}
 Let $\mathcal{O}:=\mbox{Int }\mathcal{D}$. We say that a function $v\in \mathcal{S}(\mathcal{O})$ solves \emph{HJB} \eqref{eq:HJBst} in classical sense at $x\in\mathcal{O}$ if 
\begin{equation}\label{eq:HJBbisst}
    \rho v(x)=\langle x,\mathcal{A}^* Dv(x)\rangle+\mathcal{H}(x,Dv(x)).
\end{equation}
\end{Definition}
\medskip
As for the finite horizon case, the following result holds true; the proof is exactly the same.
\begin{Theorem}
\label{th:HJBstaz}
   Let $\mathcal{O}:=\mbox{Int }\mathcal{D}$ and assume that $\mathcal{C}$ is compact and that  $V\in \mathcal{S} (\mathcal{O})$. Then, $V$ solves \emph{HJB} \eqref{eq:HJBst}  in classical sense at each $x\in \mathcal{O}$.
\end{Theorem}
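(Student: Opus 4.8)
The plan is to run, in the stationary setting, the very same two-step argument used for Theorem~\ref{th:solreg}, with three formal substitutions: the finite-horizon DPP \eqref{eq:DPP} is replaced by its stationary version
\begin{equation*}
V(x)=\sup_{u(\cdot)\in\mathcal{U}(x)}\left[\int_0^r e^{-\rho s}g_0\big(y^{x,u(\cdot)}(s),u(s)\big)\ud s+e^{-\rho r}V\big(y^{x,u(\cdot)}(r)\big)\right],\qquad r\ge 0,
\end{equation*}
which follows from Proposition~\ref{DPP} together with the rescaling identity $V(t,x)=e^{-\rho t}V(x)$ of Subsection~\ref{sec:infer}; the chain rule \eqref{chain} is replaced by its stationary counterpart \eqref{chainst} applied to $\varphi=V$; and $-v_t$ is replaced throughout by $\rho v$, the weight $e^{-\rho s}$ being carried along in every integral.

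First I would prove the inequality $\rho V(x)\ge\langle x,\A^*DV(x)\rangle+\mathcal{H}(x,DV(x))$ for each $x\in\mathcal{O}$. Fix such an $x$, pick $\gamma>0$ with $B(x,\gamma)\subseteq\mathcal{O}$, fix $\bar u\in\mathcal{C}$, let $\bar y(\cdot)$ be the trajectory generated by the constant control $\bar u$, and let $\bar r>0$ be its first exit time from $B(x,\gamma)$. Inserting the constant control into the stationary DPP, using \eqref{chainst} to expand $e^{-\rho r}V(\bar y(r))$, and recalling $g_0+\langle f_0,DV\rangle=\mathcal{H}_{cv}$, one reaches
\begin{equation*}
0\ge\int_0^r e^{-\rho s}\Big(-\rho V(\bar y(s))+\langle \bar y(s),\A^*DV(\bar y(s))\rangle+\mathcal{H}_{cv}\big(\bar y(s),DV(\bar y(s));\bar u\big)\Big)\ud s,\qquad r\in(0,\bar r).
\end{equation*}
Dividing by $r$ and letting $r\to 0^+$ (the integrand is continuous and $\bar y(s)\to x$), the mean value theorem yields $0\ge-\rho V(x)+\langle x,\A^*DV(x)\rangle+\mathcal{H}_{cv}(x,DV(x);\bar u)$; taking the supremum over $\bar u\in\mathcal{C}$ proves the claim. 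This step does not use compactness of $\mathcal{C}$.

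For the opposite inequality I would argue by contradiction, assuming $\rho V(x)>\langle x,\A^*DV(x)\rangle+\mathcal{H}(x,DV(x))$ at some $x\in\mathcal{O}$. Since $\mathcal{C}$ is compact, $\mathcal{H}$ is finite and continuous, so, together with $V\in\mathcal{S}(\mathcal{O})$, there exist $\delta_0,\varepsilon_0>0$ such that
\begin{equation*}
\rho V(y)-\langle y,\A^*DV(y)\rangle-\mathcal{H}_{cv}\big(y,DV(y);u\big)\ge\delta_0,\qquad\forall\,y\in B(x,\varepsilon_0),\ \forall\,u\in\mathcal{C},
\end{equation*}
the passage from $\mathcal{H}$ to $\mathcal{H}_{cv}$ using $\mathcal{H}\ge\mathcal{H}_{cv}$. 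For an arbitrary $u(\cdot)\in\mathcal{U}(x)$ with trajectory $y(\cdot)$ and exit time $r_\varepsilon$ from $B(x,\varepsilon)$, $\varepsilon\in(0,\varepsilon_0)$, the estimate \eqref{estimate-mild2} and the boundedness of $\mathcal{C}$ yield $r_\varepsilon\ge\alpha_0\varepsilon$ for some $\alpha_0\in(0,1)$ independent of $u(\cdot)$. Applying \eqref{chainst} on $[0,\alpha_0\varepsilon]$, using $\langle f_0,DV\rangle=\mathcal{H}_{cv}-g_0$ and the displayed bound, one obtains, for a constant $\kappa>0$ coming from $\int_0^{\alpha_0\varepsilon}e^{-\rho s}\ud s\ge\kappa\varepsilon$,
\begin{equation*}
V(x)-e^{-\rho\alpha_0\varepsilon}V\big(y(\alpha_0\varepsilon)\big)\ge\kappa\delta_0\varepsilon+\int_0^{\alpha_0\varepsilon}e^{-\rho s}g_0(y(s),u(s))\ud s,
\end{equation*}
while the stationary DPP at $r=\alpha_0\varepsilon$ produces $u_\varepsilon(\cdot)\in\mathcal{U}(x)$, with trajectory $y_\varepsilon(\cdot)$, obeying $V(x)-e^{-\rho\alpha_0\varepsilon}V(y_\varepsilon(\alpha_0\varepsilon))\le\int_0^{\alpha_0\varepsilon}e^{-\rho s}g_0(y_\varepsilon(s),u_\varepsilon(s))\ud s+\varepsilon^2$. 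Combining these with the choice $u=u_\varepsilon$ gives $\kappa\delta_0\varepsilon\le\varepsilon^2$, which is impossible for $\varepsilon$ small enough.

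The only genuinely delicate points are those already flagged in Remark~\ref{rem:assV1}: compactness of $\mathcal{C}$ is what secures both the finiteness and continuity of $\mathcal{H}$ and the control-uniform exit-time bound $r_\varepsilon\ge\alpha_0\varepsilon$; and one must keep in mind that \eqref{chainst}, as stated, asks the trajectory to remain in $\mathcal{O}$ at all times, whereas here it is invoked only on the sub-interval $[0,\alpha_0\varepsilon]$ on which $y(\cdot)$ stays in $B(x,\varepsilon)\subseteq\mathcal{O}$, so one appeals to the corresponding localized version of the chain rule. Everything else is a line-by-line transcription of the proof of Theorem~\ref{th:solreg}.
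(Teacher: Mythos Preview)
Your proposal is correct and follows exactly the approach the paper takes: the paper's own proof of Theorem~\ref{th:HJBstaz} consists of the single remark that ``the proof is exactly the same'' as that of Theorem~\ref{th:solreg}, and your argument is precisely the line-by-line transcription of that proof into the stationary setting, with the appropriate substitutions (stationary DPP, chain rule \eqref{chainst}, and $\rho V$ in place of $-V_t$) that you spell out.
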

Then, similarly to the finite horizon case, we have the following verification theorem.
\begin{Theorem}[Verification Theorem --- Infinite horizon stationary  case]\label{teo:verst}
$\mathcal{O}:=\mbox{Int }\mathcal{D}$ and 
let $v\in \mathcal{S}(\mathcal{O})$ be a classical solution to \emph{HJB} \eqref{eq:HJBst} at all $x\in  \mathcal{O}$.
 Then, given $x\in  \mathcal{O}$, we have the following statements. 
 \begin{enumerate}[(i)]
 \item  Assume that, for all $u(\cdot)\in\mathcal{U}(x)$,
there exists  $\hat u \left( \cdot \right)\in  \mathcal{U}(x)$ such that
$\mathcal{J}\left(x;\hat u\left( \cdot \right) \right) \geq \mathcal{J}\left(x;u \left( \cdot \right) \right)$
and such that, denoting $\hat y(\cdot):=y^{x,\hat u(\cdot)}$, 
\begin{equation}\label{tracont}
\limsup_{r\to\infty} e^{-\rho r}v(\hat y (r))\ge 0.
\end{equation}
 Then,
 $v(x)\geq V(x)$.
 \item In addition, assume that there exists $u^*(\cdot)\in \mathcal{U}(x)$ such that, setting $y^*:=y^{x,u^*(\cdot)}$, one has 
 \begin{equation}\label{feedverst}
\mathcal{H}_{cv}^0 (y^*(s), Dv(y^*(s));u^*(s))= \mathcal{H}^0(y^*(s), Dv(y^*(s))), \ \ \mbox{for a.e.} \ s\in \mathbb{R}^+,
 \end{equation}
 and 
 \begin{equation}\label{tracontbis}
\liminf_{r\to\infty} e^{-\rho r}v(y^* (r))\le 0.
\end{equation}
    Then, $v(x)=V(x)$ and $u^*(\cdot)$ is optimal for the control problem starting at $x$.
 \end{enumerate}
\end{Theorem}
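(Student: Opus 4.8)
The plan is to reproduce the fundamental-identity argument of the finite-horizon Verification Theorem~\ref{teo:ver}, now relying on the stationary chain rule~\eqref{chainst} of Proposition~\ref{prop:chain}(ii) and paying attention to the discounted boundary term $e^{-\rho r}v(y(r))$ as $r\to\infty$. First I would fix $x\in\mathcal{O}$ and $u(\cdot)\in\mathcal{U}(x)$, set $y(\cdot):=y^{x,u(\cdot)}$, and apply~\eqref{chainst} to $\varphi=v$ on $[0,r]$. Using that $v$ solves HJB classically, so that~\eqref{eq:HJBbisst} gives $-\rho v(y(s))+\langle y(s),\mathcal{A}^*Dv(y(s))\rangle=-\mathcal{H}(y(s),Dv(y(s)))$, and that $\langle f_0(y(s),u(s)),Dv(y(s))\rangle=\mathcal{H}_{cv}(y(s),Dv(y(s));u(s))-g_0(y(s),u(s))$, rearranging the chain rule yields, for every $r\ge 0$,
\begin{equation*}
\begin{split}
v(x)={}& e^{-\rho r}v(y(r))+\int_0^r e^{-\rho s}g_0(y(s),u(s))\ud s\\
&+\int_0^r e^{-\rho s}\Big(\mathcal{H}(y(s),Dv(y(s)))-\mathcal{H}_{cv}(y(s),Dv(y(s));u(s))\Big)\ud s .
\end{split}
\end{equation*}
Since $\mathcal{H}$ is the supremum over $u\in\mathcal{C}$ of $\mathcal{H}_{cv}$, the last integrand is nonnegative, hence $v(x)\ge e^{-\rho r}v(y(r))+\int_0^r e^{-\rho s}g_0(y(s),u(s))\ud s$ for all $r$.

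For part~(i), given $u(\cdot)\in\mathcal{U}(x)$ I would take the $\hat u(\cdot)$ furnished by the hypothesis, put $\hat y:=y^{x,\hat u(\cdot)}$, write the last inequality along $\hat y$, and pass to $\limsup_{r\to\infty}$. The improper integral converges to $\mathcal{J}(x;\hat u(\cdot))$ by Assumption~\ref{ass:wellpose}(ii), so this summand has a finite limit and the $\limsup$ of the sum equals $\mathcal{J}(x;\hat u(\cdot))$ plus $\limsup_{r\to\infty}e^{-\rho r}v(\hat y(r))$, which is $\ge 0$ by~\eqref{tracont}. Thus $v(x)\ge\mathcal{J}(x;\hat u(\cdot))\ge\mathcal{J}(x;u(\cdot))$, and taking the supremum over $u(\cdot)\in\mathcal{U}(x)$ gives $v(x)\ge V(x)$.

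For part~(ii), I would apply the fundamental identity along $y^*:=y^{x,u^*(\cdot)}$: by the pointwise maximality~\eqref{feedverst} the last integral vanishes identically, so $v(x)=e^{-\rho r}v(y^*(r))+\int_0^r e^{-\rho s}g_0(y^*(s),u^*(s))\ud s$ for every $r$. Passing to $\liminf_{r\to\infty}$, using again the convergence of the integral to $\mathcal{J}(x;u^*(\cdot))$ together with $\liminf_{r\to\infty}e^{-\rho r}v(y^*(r))\le 0$ from~\eqref{tracontbis}, yields $v(x)\le\mathcal{J}(x;u^*(\cdot))\le V(x)$. Since the hypotheses of part~(i) are still assumed, $v(x)\ge V(x)$; combining the two chains of inequalities forces $v(x)=V(x)=\mathcal{J}(x;u^*(\cdot))$, so $u^*(\cdot)$ is optimal at $x$.

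The computations are routine; the genuine subtlety — and the reason the statement is phrased with ``for every $u$ there exists $\hat u$'' rather than a blanket transversality condition — is exactly the boundary term at infinity. Unlike the finite-horizon case, where $v(T,\cdot)=\phi$ pins the terminal term down exactly, here $e^{-\rho r}v(y(r))$ need not vanish and cannot in general be controlled along every admissible trajectory; replacing an arbitrary $u$ by a (weakly) better $\hat u$ satisfying~\eqref{tracont} is what rescues the inequality $v\ge V$, while~\eqref{tracontbis}, needed only for the single candidate $u^*$, closes the gap from the other side. A minor bookkeeping point is that~\eqref{chainst} is stated for trajectories lying in $\mathcal{O}$; as in the finite-horizon proof this is handled by working on $\mathcal{O}=\mathrm{Int}\,\mathcal{D}$ and, if $\partial\mathcal{D}$ is reached, invoking continuity of $v$ up to the boundary.
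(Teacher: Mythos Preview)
Your proof is correct and follows essentially the same route as the paper: apply the stationary chain rule~\eqref{chainst} to $v$, use that $v$ solves the HJB to obtain the fundamental identity $v(x)=e^{-\rho r}v(y(r))+\int_0^r e^{-\rho s}g_0\,\ud s+\int_0^r e^{-\rho s}(\mathcal{H}-\mathcal{H}_{cv})\,\ud s$, drop the nonnegative gap for~(i) and send $r\to\infty$ using~\eqref{tracont}, and for~(ii) note the gap vanishes along $u^*$ and use~\eqref{tracontbis}. The only cosmetic difference is that the paper packages the hypothesis of~(i) by introducing the subset $\hat{\mathcal{U}}(x)$ of controls satisfying~\eqref{tracont} and observing $V(x)=\sup_{\hat u\in\hat{\mathcal{U}}(x)}\mathcal{J}(x;\hat u)$, whereas you argue directly control by control; the two are equivalent.
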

\begin{proof}
    \begin{enumerate}[(i)]
        \item 
        Let  $x\in\mathcal{O}$ and 
        set
        $$\hat{\mathcal{U}}(x):=
        \{\hat u(\cdot)\in \mathcal{U}(x): \ \ \eqref{tracont} \  \mbox{holds}\}.
        $$
        Clearly,
        \begin{equation}\label{V00}
        V(x)=\sup_{\hat u(\cdot)\in\hat {\mathcal{U}}(x)}\mathcal{J}(x;\hat u(\cdot)).
        \end{equation}
        Now,   given  $\hat u(\cdot)\in \hat {\mathcal{U}}(x)$, set $\hat y(\cdot):=y^{x,\hat u(\cdot)}$.
        Arguing as in the proof of Theorem \ref{teo:ver}(i), we use  the stationary chain rule \eqref{chainst} getting
\begin{align*}
&e^{-\rho r}v(y(r))= v(x)\\
&+\int_t^r e^{-\rho s}\Big(-\rho v(y(s))+\langle y(s), \mathcal{A}^* Dv(y(s))\rangle + \mathcal{H}_{cv} \big(y(s), Dv(s,y(s));u(s)\big)-g_0(y(s),u(s))\Big)\ud s.   
        \end{align*}
Now we use the fact that $v$ solves the HJB equation \eqref{eq:HJBbisst}, getting
the analogous of \eqref{eq:fund} in this case:
\begin{align}\label{eq:fundstaznew}
v(x)&= e^{-\rho r}v(y(r))+\int_t^r g_0(y(s),u(s))\ud s\\
&\ +\int_t^r \Big(\mathcal{H} \big(y(s), Dv(s,y(s))\big)- \mathcal{H}_{cv} \big(y(s), Dv(s,y(s));u(s)\big)\Big)\ud s\nonumber   
        \end{align}
which implies
         \begin{align}\label{eq:fundst}
            v(x)&\geq  e^{-\rho r}{v(\hat y(r))}+\int_0^r e^{-\rho s}g_0(\hat y(s),\hat u(s))\ud s.
        \end{align}
        Letting $r\to \infty$ and using \eqref{tracont}, we get $v(x)\geq  \mathcal{J}(x;\hat u(\cdot))$. By arbitrariness of $\hat u(\cdot)$ and by \eqref{V00}, we get the claim.

\item Assume that $u^*(\cdot)$ satisfies the requirement of the claim. Then the integral in the second line of \eqref{eq:fundstaznew} vanishes. Arguing as in the  proof of Theorem \ref{teo:ver}(ii), we get  $v(t,x)\leq   \mathcal{J}(t,x;u^*(\cdot))$. Combining with item (i), we get the claim. \vspace{-.5cm}
    \end{enumerate}
\end{proof}
\medskip
\begin{Remark}

The limiting conditions \eqref{tracont}--\eqref{tracontbis} are known as \emph{transversality conditions}, and they commonly arise in infinite-horizon optimal control problems. 
The conditions we have provided are fairly sharp. 
One sufficient condition ensuring both is 
$$
\lim_{r\to \infty} e^{-\rho r} v(y(r))=0 \ \ \forall u(\cdot)\in \mathcal{U}(x).
$$
Unfortunately, the above condition is typically too restrictive in applications, requiring one to work instead with \eqref{tracont}--\eqref{tracontbis}.
\end{Remark}
As for the finite horizon case, 
 the construction of optimal controls $u^*(\cdot)$ according to part (ii) of Theorem \ref{teo:verst} relies on the concept of feedback map and requires the study of the  closed loop equation. In this case, 
a \emph{feedback map} is simply a measurable map $\phi: \mathcal{D}\to \mathcal{C}$; 
given a feedback map $\phi$ and $x\in  \mathcal{D}$, we can associate to $\phi$ the closed loop equation
\begin{equation}\label{eq:CLEst}
y'(s)= \mathcal{A}y(s)+f_0(y(s),\phi(y(s)), \ \ \ \ \ y(0)=x.
\end{equation}
\medskip
By mild solution to \eqref{eq:CLEst} we intend a function $y:\R^+\to \mathcal{D}$ such that, for every $r\in\R^+$, 
$$y(r)=e^{r\A} x+ \int_0^re^{(r-s)\mathcal{A}} f_0(y(s),\phi(y(s)))\ud s.$$

\begin{Definition}
\label{def:optimalcontrol-optimalfeedback2}
A feedback map $\phi:\mathcal{D}\to \mathcal{C}$ is said 
\begin{enumerate}[(i)]
    \item 
\emph{admissible} starting at $x\in \mathcal{D}$ if  \eqref{eq:CLEst} has a unique mild solution $y^\phi(\cdot)$ and the associated \emph{feedback control} defined by $u^\phi(s):=\phi(y^\phi(s))$, $s\in \R^+$, belongs to $\mathcal{U}(x)$;
\item \emph{optimal} starting at $x\in  \mathcal{D}$ if it is admissible and the associated \emph{feedback control} defined by $u^\phi(s)$ is optimal for the control problem starting at $x$.
\end{enumerate}
\end{Definition}
\begin{Corollary}
\label{cr:feedbackstaz}
Let $\mathcal{D}=\mathcal{O}:=\mbox{Int }\mathcal{D}$.
Under the assumptions of Theorem \ref{teo:ver},  assume further that the supremum is achieved in  $u\mapsto \mathcal{H}_{cv} (x,p;u)$, with $p=Dv(x)$; that is (changing the labels to the variable to avoid confusion in what follows), for all $y\in  \mathcal{O}$,  there exists $\phi(y)$ such that
\begin{equation}\label{Hsupbisbis}
\mathcal{H}_{cv} (y,Dv(y);\phi(y))
=\max_{u\in \mathcal{C}} \mathcal{H}_{cv} (y,Dv(y);u).
\end{equation}
Let $x\in  \mathcal{O}$. If $\phi$ is an admissible feedback map starting at $x$ such that \eqref{tracontbis} is satisfied by the feedback strategy $u^\phi$, then the latter is optimal starting at $x$. 
\end{Corollary}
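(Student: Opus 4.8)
The plan is to deduce the statement as a direct consequence of the infinite-horizon stationary verification theorem (Theorem~\ref{teo:verst}, part (ii)), applied with the candidate optimal control $u^*:=u^\phi$, i.e. the feedback control generated by $\phi$. First I would record what admissibility of $\phi$ buys us: by Definition~\ref{def:optimalcontrol-optimalfeedback2}, the closed-loop equation \eqref{eq:CLEst} admits a unique mild solution $y^\phi(\cdot)$, the feedback control $u^\phi(s):=\phi(y^\phi(s))$ belongs to $\mathcal{U}(x)$, and $y^{x,u^\phi(\cdot)}(\cdot)=y^\phi(\cdot)$. Thus $u^\phi$ is an admissible competitor for the problem starting at $x$, and it only remains to verify the two structural requirements of Theorem~\ref{teo:verst}(ii) for the choice $u^*=u^\phi$.

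For the pointwise maximality requirement \eqref{feedverst}, note that for a.e. $s\in\R^+$ we have $u^\phi(s)=\phi(y^\phi(s))$, so evaluating \eqref{Hsupbisbis} at $y=y^\phi(s)$ gives
\begin{equation*}
\mathcal{H}_{cv}\big(y^\phi(s),Dv(y^\phi(s));u^\phi(s)\big)=\max_{u\in\mathcal{C}}\mathcal{H}_{cv}\big(y^\phi(s),Dv(y^\phi(s));u\big)=\mathcal{H}\big(y^\phi(s),Dv(y^\phi(s))\big),
\end{equation*}
which is exactly \eqref{feedverst}. The second requirement, the $\liminf$ transversality condition \eqref{tracontbis} evaluated along $y^\phi$, is assumed in the statement of the corollary; and the hypothesis needed for part (i) of Theorem~\ref{teo:verst} (existence, for every $u(\cdot)\in\mathcal{U}(x)$, of a dominating $\hat u(\cdot)\in\mathcal{U}(x)$ satisfying the $\limsup$ transversality \eqref{tracont}) is part of the assumptions of Theorem~\ref{teo:verst} we have placed ourselves under. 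Invoking Theorem~\ref{teo:verst}(ii) with $u^*=u^\phi$ then yields $v(x)=V(x)$ and that $u^\phi$ is optimal starting at $x$; by Definition~\ref{def:optimalcontrol-optimalfeedback2}(ii) this is exactly the assertion that $\phi$ is an optimal feedback map starting at $x$.

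Since Theorem~\ref{teo:verst} is already available, there is no genuine obstacle here: the proof is the bookkeeping above, and the only point requiring a word of care is that the selector $y\mapsto\phi(y)$ realizing the maximum in \eqref{Hsupbisbis} be a \emph{measurable} feedback map, which is precisely what is packaged into the admissibility hypothesis. If one prefers a self-contained argument not quoting Theorem~\ref{teo:verst}(ii), one specializes the fundamental identity \eqref{eq:fundstaznew} to $u=u^\phi$: by the display above the $\mathcal{H}-\mathcal{H}_{cv}$ defect integral vanishes, so $v(x)=e^{-\rho r}v(y^\phi(r))+\int_0^r e^{-\rho s}g_0(y^\phi(s),u^\phi(s))\ud s$; letting $r\to\infty$, the integral converges to $\mathcal{J}(x;u^\phi)$ (finite by Assumption~\ref{ass:wellpose}(ii)), hence $e^{-\rho r}v(y^\phi(r))\to v(x)-\mathcal{J}(x;u^\phi)$, and \eqref{tracontbis} forces $v(x)\le\mathcal{J}(x;u^\phi)$; combining with $v(x)\ge V(x)\ge\mathcal{J}(x;u^\phi)$ (the first inequality from Theorem~\ref{teo:verst}(i)) closes the chain and gives optimality of $u^\phi$.
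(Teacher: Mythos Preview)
Your proof is correct and follows the same approach as the paper: verify that the feedback control $u^\phi$ satisfies the requirements of Theorem~\ref{teo:verst}(ii) and conclude. The paper's proof is just the one-line observation that ``by construction the feedback control $u^\phi(\cdot)$ fulfills the requirements of part (ii) of Theorem~\ref{teo:verst}''; your write-up simply unpacks this, and your alternative self-contained argument via the fundamental identity \eqref{eq:fundstaznew} is a welcome (if optional) addition.
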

\begin{proof}
By construction the feedback control $u^\phi(\cdot)$ fulfills the requirements of part (ii) of Theorem \ref{teo:verst}. So,  $u^\phi$ is an optimal (feedback) control and  $\phi$ is an optimal feedback map starting at $x$.
\end{proof}

\begin{Remark}\label{rem:VerificUnboundedSStaz}
As in Remarks \ref{rem:HJBunbounded} and
\ref{rem:VerificUnbounded} we observe that Theorems \ref{th:HJBstaz} and \ref{teo:verst} and its Corollary \ref{cr:feedbackstaz}
remain true also in  the case of Subsection
\ref{SSSE:Bunbounded}, for the same reasons.
\hfill $\square$
\end{Remark}

\section{Dynamic Programming in the nonsmooth case: weaker concepts of solutions}\label{sec:non-linear}

The theoretical results presented in Section \ref{sec:DP}, while quite elegant from a theoretical standpoint, have a limited range of applicability. Indeed, Theorem \ref{th:solreg} is mainly heuristic, because the assumption that $V \in C^{1}$ is essentially uncheckable beforehand. The verification approach  (see Theorem \ref{teo:ver}) relies on the existence of a classical solution $v \in C^{1}$  to the HJB equation: although it surely covers some important applications (see Section \ref{sec:examples}), it  is only practical when the model is specifically designed to have explicit solutions; indeed,  as a matter of fact, a systematic theory for classical solutions of first-order HJB equations is not available. We should note that this difficulty is intrinsic and  not only due to the infinite-dimensional setting, although that certainly adds complexity. In this perspective,   the scope of this section is to provide a quick informal introduction to weaker notions of solutions to HJB equations and more refined techniques that are applicable in more general situations.

\subsection{Viscosity solutions}
Back in the early 1980s, the concept of viscosity solutions to PDEs in finite dimension was introduced (see the seminal paper \citealp{crandall1983viscosity}). This notion was particularly well-suited  to address the well-posedness of HJB equations and proved to be very flexible since it allows to establish good comparison results for a wide class of equations, including HJB ones but not limited to them. From a control theory perspective, viscosity solutions have been very successful because, relying on the Dynamic Programming Principle but replacing the value function with smooth test functions, the value function $V$ naturally appears as a candidate viscosity solution to the associated HJB equation. This is possible without any smoothness assumptions (even continuity is not needed).
The analytical comparison results, often based on the so-called doubling of variables method (see, e.g., among many others, \citealp{crandall1992user}), then help to characterize the value function as the unique solution of the HJB. Also numerical schemes,  starting from the seminal paper \cite{souganidis1985approximation}, were successfully developed afterwards.
An extensive description of all these aspects in the deterministic, finite dimensional, framework can be found, e.g. in the monograph \cite{bardi1997optimal}.

Afterwards, 
the theory was successfully extended to infinite dimensional spaces in a series of papers --- see \cite{crandall1985viscosity, crandall1990viscosity,    crandall1991viscosity, crandall1994viscosity} --- and applied to optimal control theory in Hilbert spaces --- see the book \cite{li2012optimal} for an overview. The complete technical exposition of this matter goes out of the scopes of this survey; nonetheless, we provide a heuristic exposition of its main ideas in order to be able to discuss some examples without explicit solution in Section \ref{sec:examples}.


We may say that the heuristic idea behind the notion of viscosity solution is to treat separately  the ``convex singularities"
(i.e. the ones where the superdifferential is empty)
and the ``concave singularities"
(i.e. the ones where the superdifferential is empty).
To do this one splits the HJB equation into two inequalities, one for the
``convex singularities" and one for the ``concave singularities".
More precisely, one inequality is required to hold on the elements of the subdifferential and the other on the elements of the superdifferential.
This is equivalent to ask that such inequality hold when tested against functions that touch the candidate solution from below or from above.
For the sake of brevity, we illustrate this concept only for the infinite horizon stationary case and requiring continuity for the solution.

\begin{Definition}[Viscosity solution]
Let $\mathcal{O}:=\mathrm{Int}\, \mathcal{D}$.
\vspace{-.1cm}
\begin{enumerate}[(i)]
\item We say that $v\in C(\mathcal{D})$ is a viscosity supersolution (resp., subsolution) to \eqref{eq:HJBst} at $x\in\mathcal{O}$ if, whenever $\varphi\in \mathcal{S}(\mathcal{O})$ is such that $0=v(x)-\varphi(x)=\min (v-\varphi)$ (resp., $0=v(x)-\varphi(x)=\max (v-\varphi)$), we have
\begin{equation}\label{eq:HJBstviscsup}
\rho \varphi (x)\geq \langle x,\mathcal{A}^{*}D\varphi(x)\rangle+\mathcal{H}(x,D\varphi(x)),
\end{equation}
(resp.,
\begin{equation}\label{eq:HJBstviscsub}
\rho \varphi (x)\leq \langle x,\mathcal{A}^{*}D\varphi(x)\rangle+\mathcal{H}(x,D\varphi(x))).
\end{equation}

\item We say that $v\in C(\mathcal{D})$ is a viscosity solution to \eqref{eq:HJBst} at $x\in\mathcal{O}$ if it is both a viscosity supersolution and and a viscosity subsolution at $x$.
\end{enumerate}
\end{Definition}

As mentioned, the strength of this approach in optimal control theory is that, under fairly general assumptions, the value function $V$ can be shown to be a viscosity solution of the HJB equation --- see, e.g.,  \citealp[Ch.\,6]{li2012optimal}, and the references therein. Combining this with comparison principles for viscosity solutions provides a powerful way to characterize $V$, and numerical schemes can be employed to approximate it.
Nonetheless, from the perspective of solving optimal control problems, this approach is not entirely satisfactory. Ideally, one would like to eventually construct optimal controls using sufficient optimality conditions, in the spirit of Theorem \ref{teo:verst}(ii). Verification theorems in the non-smooth setting — particularly within the framework of viscosity solutions  — have been developed both in finite and infinite dimensions; see, for example, \cite{zhou1993verification}, and, for the infinite dimensional case \citealp{li2012optimal}, Ch.\,4, Th.\,5.4, p.262,   and \cite{fabbri2007verification}.
However, in practice, applying these results can be quite challenging. The main difficulty lies in producing a suitable candidate for the optimal feedback control and verifying the optimality conditions. A more promising middle ground involves leveraging some a priori regularity properties of the value function $V$, such as (semi)convexity or (semi)concavity. Textbooks like \citealp[Ch.\,II]{bardi1997optimal}, and \cite{cannarsa2004semiconcave} focus on how these features can be exploited in finite-dimensional settings.

More precisely, if we can establish this kind of regularity beforehand and use the viscosity property of $V$, we can attempt to:
\begin{enumerate}[(i)] \item Either construct implementable optimal feedback controls via nonsmooth verification theorems (see, e.g., in finite dimension \citealp[Th.\,5]{federico2024optimal}); \item Or prove that $V$ is differentiable at least along certain directions, and then build optimal feedback controls using \emph{ad hoc} verification theorems that leverage both the viscosity solution framework and the partial regularity of $V$. \end{enumerate}

Both approaches can be extended to infinite-dimensional problems. In Section \ref{sec:exnonexpl}, we will present some examples where the second method has been successfully applied.


\subsection{Strong solutions}

Similarly to viscosity solutions, \emph{strong solutions} generalize the concept of a classical solution, although satisfying stronger regularity requirements. More precisely a strong solution $v$ is such that its gradient $Dv$ exists in classical sense so that
the feedback map of Corollaries \ref{cr:feedback} and \ref{cr:feedbackstaz} is well defined (which does not happen in general in the case of viscosity solutions).

Roughly speaking, a strong solution is the limit, in a suitable sense,  of classical solutions (not to the original equation, but) to approximating HJB equations. 
For finite horizon, for example, one considers \eqref{eq:HJB}
$$-v_t(t,x)=\langle   x,\A^*Dv(t,x)\rangle +\mathcal H(t, x, Dv(t,x)), $$
assumes suitable hypotheses on 
$\mathcal H:[t,T]\times \X\times \X'\to \mathbb R$, 
(such as continuity and growth  bounds), and defines  a sequence of 
$\mathcal H_\e:[t,T]\times \X\times \X'\to \mathbb R$ 
so that: (1)  
$\mathcal{H} ^\e$ converges to $\mathcal H$, as $\e\to0$, in an appropriate sense; 
(2) the approximating equations
$$-v^\e_t(t,x) =\langle  x,\A^*Dv^\e(t,x)\rangle +\mathcal{H} ^\e(t, x, Dv^\e(t,x)) $$
have classical solutions $v^\e$;   
(3) the sequence $\{v^\e\}$ converges uniformly to a function $v$ and the gradients $\{\nabla v^\e\}$ converge to
$\nabla v$. Then $v$ is called a strong solution to \eqref{eq:HJB}. 

There are indeed few variants of such concept of strong solution, on this one can see the book \cite{BarbuDaPrato1983}
and, e.g., the papers \cite{Gozzi1991SICON, faggian2005regular,FaggianGozzi2010JME}.

We will give in more detail in Section \ref{vcteimpl} the definition of strong solution that adheres to that case.

\section{Examples with explicit solutions arising in economic theory}
\label{sec:examples}

\subsection{Spatial growth models}
A first class of examples of economic models to which optimal control techniques in infinite-dimensional spaces apply is that of a series of growth models where, instead of considering only a one-dimensional aggregate variable to represent capital, its spatial distribution is taken into account. The aim is to be able to formulate counterparts of benchmark growth models such as those by Solow or Ramsey. One possible idea to achieve this goal is the one initially introduced by \cite{Brito04}, where the author, following an idea borrowed from classical economic geography (see for example \citealp{BeckmannPuu85}, \citealp{Isard79}), assumes that capital tends to move from regions with greater capital intensity to regions with lower capital intensity.
\label{sub:example-spatial}
\subsubsection{The economic problem}

Let us assume, to begin with, that we represent the spatial structure as the unit circle \( S^1 \) in the plane:
\begin{equation}
\label{eq:defS1}
S^1 := \big\{(\sin \theta, \cos \theta) \in \mathbb{R}^2 \, : \, \theta \in [0,2\pi) \big\}.
\end{equation}
The fact that the chosen spatial structure is not particularly realistic may not be very relevant in an initial analysis, since the purpose of "benchmark" models is not to faithfully reproduce specific measurements, but rather to investigate qualitative behaviors and possible economic mechanisms. In any case, as we will show  generalizations of this structure are possible.

We denote any point in the space by \(\theta \in [0,2\pi)\) and, as usual, any time by \(t \geq 0\). In a linear specification of the model, which generalizes the classic \(AK\) model of endogenous growth (see \citealp{Acemoglu09}) to a world with spatial heterogeneity, we can, for example, postulate that for all time \(t \geq 0\) and any point in the space \(\theta \in [0,2\pi)\), the production is a linear function of the employed capital:
\[
Y(t,\theta) = A(\theta) K(t, \theta),
\]
where \(K(t,\theta)\) and \(Y(t,\theta)\) represent, respectively, the capital and output at the location \(\theta\) at time \(t\), while \(A(\theta)\) is the exogenous location-dependent technological level.
For each location, we suppose there is a budget constraint in the form
\[
Y(t,\theta) = I(t,\theta) + C(t,\theta) + \tau(t,\theta)
\]
where $C$, $I$, and $\tau$ are respectively the consumption, investment, and trade balance, which depend on the location and time. If we include a location-dependent depreciation rate $\delta(\theta)$, we get the following law of capital accumulation:
\[
{\frac{\partial K}{\partial t}}(t,\theta) = I(t, \theta) - \delta (\theta) K(t,\theta) = (A(\theta) - \delta(\theta)) K(t,\theta) - C(t,\theta) - \tau(t,\theta).
\]
We can incorporate the depreciation rate $\delta(\theta)$ into the coefficient $A(\theta)$ (that we suppose to be a bounded function of $\theta$), thus omitting the term $\delta$ in the subsequent discussion. According to \cite{Brito04}, the aim is to develop further the term $\tau(t,\theta)$ by assuming that the flow rate of capital from left to right at a point is the negative derivative of the capital level at that point. By ensuring that the net trade balance of the region $(\theta_1, \theta_2)\subset [0,2\pi)$ matches the capital outflow at the boundaries $\theta_1$ and $\theta_2$, we get
\begin{equation}
\label{eq:perottenretaudiBrito}
\int_{\theta_1}^{\theta_2} \tau(t,\theta) d \theta = \frac{\partial K}{\partial \theta}(t,\theta_1) - \frac{\partial K}{\partial \theta}(t,\theta_2). 
\end{equation}
Thus,
\[
\tau(t,\theta)=-\frac{\partial^2 K}{\partial \theta^2} (t,\theta).
\]
Combining everything, if for each $(t,\theta)$ we express the total consumption $C(t,\theta)$ as the product of the (equally distributed) per-capita consumption $c(t,\theta)$ and the time-independent exogenous (density of) population $N(\theta)$ (that we will suppose to be bounded), we obtain the state equation.
\begin{equation}\label{eq:state-findim-spatial}
\begin{cases}
\displaystyle{\frac{\partial K}{\partial t}(t,\theta)\!=\!\frac{\partial^2K}{\partial \theta^2}(t,\theta)\!+\!A(\theta)K(t,\theta)\!-\!c(t,\theta)N(\theta),\  t>0,\, \theta\in S^1,}\\\\
K(0,\theta)=K_0(\theta), \ \ \ \theta\in S^1,
\end{cases}
\end{equation}
where $K_0\in L^2(S^1; \R^+)$ is the initial capital distribution.  

We assume that the policy maker aims to maximize the following ``Benthamite'' functional (considering trajectories that ensure capital and consumption remain positive):
\begin{equation}\label{funct}
\int_0^\infty e^{-\rho t}\left( \int_0^{2\pi}  \frac{c(t,\theta)^{1-\sigma}}{1-\sigma}N(\theta) d\theta \right)\ud t,
\end{equation}
where $\rho>0$ represents the time-discount factor and $\sigma\in(0,1)\cup(1,\infty)$ serves as both a measure of (the inverse of the elasticity of) intertemporal substitution and the planner's aversion to inequality.

\subsubsection{Formal rewriting of the problem in Hilbert space}

We can represent \eqref{eq:state-findim-spatial} as an abstract dynamical system in infinite dimensions. Here, we will quickly show how to reduce the described problem to the general situation outlined in Section \ref{sec:formulation}. We will work in the Hilbert space
\begin{equation}
\label{eq:defL2}
X := L^2(S^1;\R):= \left\{ f: S^1 \to \mathbb{R} \text{ measurable}: \int_{S^1} |f(x)|^2 \, \mathrm{d}x < \infty \right\},
\end{equation}
endowed with the usual inner product $\langle f, g \rangle := \int_0^{2\pi} f(\theta) g(\theta) \, d\theta$, inducing the norm $|f| = \left(\int_0^{2\pi} | f(\theta) |^2 \, d\theta\right)^{1/2}$, and specify the generator of the semigroup with 
$$
\mathcal{A} u := \frac{\partial^2 u}{\partial \theta^2} + A(\cdot) u, \quad D(\mathcal{A}) = W^{2,2}(S^1)
$$
where the latter is the Sobiolev space 
\[
\displaystyle
W^{2,2}(S^1) := \big\{ f \in H: \ f \text{ is twice weakly differentiable, and } f', f'' \in X \big\}.
\]

The operator $\mathcal{A}$ is defined as the sum of the Laplacian operator on $S^1$ and the operator on $H$ defined by $u \mapsto A(\cdot)u$, which is bounded under the assumption that the function $A(\cdot)$ is bounded. The Laplacian operator is closed on the domain $D(\mathcal{A})$ and generates a $C_0$-semigroup on the space ${X}$ (see Theorem 4.9, page 115 of \cite{Grigoryan12}). Therefore (Theorem 1.1 page 190 of \cite{kato1980}), we conclude that $\mathcal{A}$ is also closed on the domain $D(\mathcal{A})$ and generates (Theorem 3.5 page 73 of \cite{engel-nagel}) a $C_0$-semigroup on the space ${X}$.

Finally, to reduce the considered example to the setting of Section \ref{sec:formulation}, we define $
U = X=L^2(S^1; \R)
$
and call $t \mapsto y(t) \in X$ the trajectory of the system and $t \mapsto u(t) \in X$ a generic control by formally identifying
\[
y(t)(\theta) = K(t,\theta) \ \ \ 
\mbox{and} \ \ \ 
u(t)(\theta) = c(t,\theta).
\]
In this way, we  can formally rewrite \eqref{eq:state-findim-spatial} as an abstract dynamical system in the space $X$ (see Proposition 3.2, page 131 of \citealp{BDDM07}):
\begin{equation}\label{stateH}
\begin{cases}
y'(t)=\mathcal{A}y(t)-u(t)N,\ \ \ t\in \mathbb{R}^+,\\
y(0)=x \in X.
\end{cases}
\end{equation}
Now, 
consider the positive cone in ${X}$, i.e. the set   
\begin{equation}
\label{eq:defH+}
{X}^+:=\big\{y\in {X}\,: \ y(\cdot)\geq  0 \big\},
\end{equation}
and ${X}^+_0:={X}^+\setminus\{0\}$. We consider as set of admissible  admissible strategies\footnote{In this formulation we require the slightly sharper state constraint $y^{x, u}(t)\in{X}^+_0 $ in place of the wider (original)  one $y^{x, u}(t)(\cdot)\geq 0$ almost everywhere. This is without  loss of generality: indeed, if $y^{x, u}(t)\equiv 0$ at some $t\geq 0$, the unique admissible (hence the optimal) control from $t$ on is the trivial one $u(\cdot)\equiv 0$, so we know how to solve the problem once we fall into this state and there is no need to define the Hamilton-Jacobi-Bellman equation at this point. The reason to exclude the zero function from the set ${X}^+$ and considering the set ${X}^+_0$ is that in this set
our value function is well defined and solves the Hamilton-Jacobi-Bellman equation, while this does not happen in ${X}^+$.}
$$
\mathcal{U}(x):= \big\{u\in L^1_\loc (\R^+;{X}^+)\,:\  \ y^{x, u(\cdot)}(t)\in{X}^+_0\   \ \forall t \geq 0\big\}.
$$
Notice that ${X}^+_0$ is not open in $X$.   

Then, we can formally rewrite the original optimization problem in the infinite dimensional setting as the optimal control problem consisting in   maximizing the objective functional
\begin{equation}\label{objective}
 J(x;u):=
\int_0^\infty e^{-\rho t}  \mathcal{U}(u(t))\ud t,
\end{equation}
over all $u(\cdot)\in\mathcal{U}(x)$ where
$$
\Psi:{X}^+\rightarrow \ \R^+, \ \ \Psi(u):= \int_0^{2\pi}\frac{u(\theta)^{1-\sigma}}{1-\sigma}N(\theta) d\theta.
$$
In the following, we call $\mathbf{(P)}$ this problem and we define the associated value function as
\begin{equation}\label{valuefunction}
V(x):= \sup_{u(\cdot)\in\mathcal{U}(x)}J(x;u(\cdot)).
\end{equation}

\subsubsection{The infinite-dimensional \emph{HJB} equation and its explicit solution}

We begin by recalling some spectral properties of the operator
\[
\mathcal{A} f := f'' + A(\cdot) f, \qquad f\in D(\mathcal{A}).
\]
The operator $\mathcal{A}$ is self-adjoint, i.e. $\A=\A^*$,  and has a compact resolvent. Its spectrum consists of a decreasing sequence of real eigenvalues
\[
\lambda_0 > \lambda_1 \geq \lambda_2 \geq \dots \to -\infty
\]
with associated orthonormal eigenfunctions $\{e_n\}_{n \geq 0}$ (see Chapter 7 and Chapter 8, Section 3, in \cite{CoddingtonLevinson1955}).  
The principal eigenfunction $e_0$ is strictly positive and can be normalized so that $\|e_0\|_{L^2} = 1$.  
Moreover
\[
\frac{1}{2\pi} \int_{0}^{2\pi} A(\theta)\,d\theta
\;\leq\;
\lambda_0
\;\leq\;
\sup_{S^1} A(\theta),
\]
so that $\lambda_0 > 0$ whenever $A(\theta) \geq 0$ and $A \not\equiv 0$.

\medskip

In a certain sense, in the model, $\lambda_0$ plays the role of total factor productivity $A$ in a one-sector $AK$ growth model. Therefore it is not surprising  that the standard assumption ensuring the finiteness of the objective functional takes the form
\begin{Assumption}
\label{hyp:rho}
  \(\displaystyle \rho > \lambda_0 \bigl(1-\sigma\bigr).\)
\end{Assumption}
It is not difficult to prove that, if this condition fails, there exist admissible trajectories along which the value functional is infinite, and the theory developed in the previous sections does not apply.

\bigskip

Recalling the state dynamics \eqref{stateH} and the objective functional \eqref{objective},  
the stationary infinite-horizon HJB equation can be written, in line with \eqref{eq:HJBbisst}, as
\begin{equation}\label{eq:HJB-22}
  \rho v(x)
  = \langle x, \mathcal{A} D v(x) \rangle
   + \sup_{u \in {X}^+} \Bigl\{\, \Psi(u) - \langle u N, D v(x) \rangle \Bigr\},
  \qquad x \in {X}^+,
\end{equation}
where ${X}^+$ is the positive cone introduced in \eqref{eq:defH+}.

We now proceed to search for an explicit solution to \eqref{eq:HJB-22}. According to Definition \ref{def:classicalsolution}, a solution can be defined on an open set $\mathcal{O}$ of $H$. In this case, we choose as $\mathcal{O}$ the set
\[
  {X}^+_{e_0}
  := \bigl\{ y \in {X}^+ : \langle y, e_0 \rangle > 0 \bigr\}.
\]
We now have all the ingredients to describe the explicit solution to the HJB equation:

\begin{Proposition}\label{prop:HJB-solution}
Let Assumption \ref{hyp:rho} hold.
Define
\[
  \alpha_{0}:=
  \left(
        \frac{\sigma}{\rho-\lambda_{0}(1-\sigma)}
        \int_{S^{1}} e_{0}(\eta)^{1-\frac{1}{\sigma}}N(\eta)\,d\eta
  \right)^{\!\frac{\sigma}{1-\sigma}}, \quad \text{and} \quad \beta(\theta):=\alpha_{0}\,e_{0}(\theta)
\]
so that $\beta\in H$ and $\beta(\theta)>0$.  
The function
\begin{equation}\label{eq:ValueFunction}
  v(x)\;=\;\frac{\langle x,\beta\rangle^{\,1-\sigma}}{1-\sigma},
  \qquad x\in {X}^+_{e_{0}},
\end{equation}
belongs to ${C}^{1}({X}^+_{e_{0}})$ and solves \eqref{eq:HJB-22} on ${X}^+_{e_{0}}$ in the sense of Definition \ref{def:classicalsolution}.
\end{Proposition}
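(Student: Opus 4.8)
The plan is a direct ``guess-and-check'': I would verify that the explicitly given $v$ has the regularity required by Definition \ref{def:classicalsolution} and then substitute it into \eqref{eq:HJB-22}, checking that the two sides coincide on $X^+_{e_0}$. The three substantive ingredients are the differentiability of $v$ together with the fact that its gradient takes values in $D(\mathcal{A}^*)$, the evaluation of the drift term via the spectral property of the principal eigenfunction, and the explicit resolution of the concave static optimization defining the Hamiltonian.

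First I would deal with regularity. On $X^+_{e_0}$ one has $\langle x,\beta\rangle=\alpha_0\langle x,e_0\rangle>0$, and since $x\mapsto\langle x,\beta\rangle$ is affine and continuous while $t\mapsto t^{1-\sigma}/(1-\sigma)$ is smooth on $(0,\infty)$, the function $v$ is the restriction to $X^+_{e_0}$ of a $C^\infty$ function on the open half-space $\{x\in X:\langle x,e_0\rangle>0\}$, with Fréchet gradient
\[
Dv(x)=\langle x,\beta\rangle^{-\sigma}\,\beta .
\]
Because $e_0$ is the principal eigenfunction of the self-adjoint operator $\mathcal{A}$, we have $\beta=\alpha_0 e_0\in D(\mathcal{A})=D(\mathcal{A}^*)$ and $\mathcal{A}\beta=\lambda_0\beta$; hence $Dv(x)\in D(\mathcal{A}^*)$, the map $x\mapsto Dv(x)$ is continuous into $D(\mathcal{A}^*)$ with the graph norm (being a scalar function of $x$ times a fixed element of $D(\mathcal{A})$), and $v$ meets the requirements of Definition \ref{def:classicalsolution}. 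This also yields at once the drift term,
\[
\langle x,\mathcal{A}^*Dv(x)\rangle=\langle x,\beta\rangle^{-\sigma}\langle x,\mathcal{A}\beta\rangle=\lambda_0\,\langle x,\beta\rangle^{1-\sigma}.
\]

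Next I would compute the Hamiltonian. Setting $p:=Dv(x)=\langle x,\beta\rangle^{-\sigma}\beta$ (so $p(\theta)>0$), the quantity to be maximized is $\Psi(u)-\langle uN,p\rangle=\int_{S^1}\big(u(\theta)^{1-\sigma}/(1-\sigma)-u(\theta)p(\theta)\big)N(\theta)\,d\theta$. Since $N\ge 0$, this is maximized by maximizing the integrand pointwise; for each $\theta$ the map $r\mapsto r^{1-\sigma}/(1-\sigma)-r\,p(\theta)$ is strictly concave on $(0,\infty)$ in both regimes $\sigma\in(0,1)$ and $\sigma>1$, with unique maximizer $\bar u(\theta)=p(\theta)^{-1/\sigma}$ (solving $r^{-\sigma}=p(\theta)$); substitution gives the maximal integrand $\tfrac{\sigma}{1-\sigma}\,p(\theta)^{-(1-\sigma)/\sigma}N(\theta)$. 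One then checks that $\bar u\in X^+$ — here using that $e_0$ is continuous and bounded away from $0$ on the compact set $S^1$, so that $p^{-1/\sigma}\in L^2(S^1)$ — which legitimizes the interchange of supremum and integral and, using $-(1-\sigma)/\sigma=1-1/\sigma$ and $p^{-(1-\sigma)/\sigma}=\langle x,\beta\rangle^{1-\sigma}\alpha_0^{-(1-\sigma)/\sigma}e_0^{\,1-1/\sigma}$, produces
\[
\sup_{u\in X^+}\big\{\Psi(u)-\langle uN,Dv(x)\rangle\big\}
=\frac{\sigma}{1-\sigma}\,\langle x,\beta\rangle^{1-\sigma}\,\alpha_0^{-\frac{1-\sigma}{\sigma}}\int_{S^1}e_0(\theta)^{1-\frac1\sigma}N(\theta)\,d\theta .
\]

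Finally I would insert the definition of $\alpha_0$: by construction $\alpha_0^{-(1-\sigma)/\sigma}=\big(\tfrac{\sigma}{\rho-\lambda_0(1-\sigma)}\int_{S^1}e_0^{\,1-1/\sigma}N\big)^{-1}$, so the supremum above simplifies to $\tfrac{\rho-\lambda_0(1-\sigma)}{1-\sigma}\langle x,\beta\rangle^{1-\sigma}$; adding the drift term $\lambda_0\langle x,\beta\rangle^{1-\sigma}$ gives $\tfrac{\rho}{1-\sigma}\langle x,\beta\rangle^{1-\sigma}=\rho v(x)$, which is precisely \eqref{eq:HJB-22}. Throughout, Assumption \ref{hyp:rho} is used to guarantee $\rho-\lambda_0(1-\sigma)>0$, so that $\alpha_0$ and $\beta$ are well defined and strictly positive, and it enters the argument only at this point. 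I expect the main obstacle to be the careful justification of the pointwise maximization step — i.e.\ that the supremum over $u\in X^+\subseteq L^2(S^1)$ coincides with the integral of the pointwise maxima and is actually attained within $L^2$ — together with keeping signs straight uniformly across $\sigma\in(0,1)$ and $\sigma>1$; the rest is bookkeeping with exponents.
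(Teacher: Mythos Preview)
Your proof is correct and follows essentially the same direct verification strategy as the paper: compute $Dv(x)=\langle x,\beta\rangle^{-\sigma}\beta$, use $\mathcal{A}\beta=\lambda_0\beta$ for the drift term, solve the pointwise concave maximization to evaluate the Hamiltonian, and then check that the choice of $\alpha_0$ makes the two sides of \eqref{eq:HJB-22} match. If anything, you are slightly more careful than the paper in justifying that the pointwise maximizer lies in $L^2(S^1)$ and that the supremum over $X^+$ equals the integral of pointwise maxima.
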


\begin{proof}
We directly check that the proposed function is a solution in the sense of Definition \ref{def:classicalsolution}. 
The function $v$ is Fréchet differentiable on $X_{e_0}^+$ and its gradient is
\[
D v(x) = \langle x, \beta \rangle^{-\sigma}\, \beta.
\]
Since $\beta$ is regular and belongs to $D(\mathcal{A})$, all the regularity requirements in Definition \ref{def:classicalsolution} are satisfied.

For fixed $x\in X_{e_0}^+$, the supremum in \eqref{eq:HJB-22}
is achieved by the unique optimizer
\[
u^*(x) = \left( D v(x)(\theta) \right)^{-1/\sigma} = \langle x, \beta \rangle\, \beta(\theta)^{-1/\sigma}.
\]
We plug $v$ and its gradient into the HJB equation, we compute each term:
\begin{align*}
\langle x, \mathcal{A} D v(x) \rangle &= \langle x, \mathcal{A} (\langle x, \beta \rangle^{-\sigma} \beta) \rangle \\
&= \langle x, \beta \rangle^{-\sigma} \langle x, \mathcal{A} \beta \rangle \\
&= \langle x, \beta \rangle^{-\sigma} \lambda_0 \langle x, \beta \rangle \\
&= \lambda_0 \langle x, \beta \rangle^{1-\sigma},
\end{align*}
since $\mathcal{A} \beta = \lambda_0 \beta$ (by definition of the principal eigenfunction).

For the utility term, using the explicit maximizer:
\begin{align*}
\Psi(u^*(x)) &= \int_{S^1} \frac{(u^*(x)(\theta))^{1-\sigma}}{1-\sigma} N(\theta)\, d\theta \\
&= \frac{1}{1-\sigma} \left( \langle x, \beta \rangle \right)^{1-\sigma} \int_{S^1} \beta(\theta)^{1-\frac{1}{\sigma}} N(\theta)\, d\theta.
\end{align*}
Similarly,
\begin{align*}
\langle u^*(x) N, D v(x) \rangle &= \int_{S^1} u^*(x)(\theta) N(\theta) D v(x)(\theta)\, d\theta \\
&= \langle x, \beta \rangle^{1-\sigma} \int_{S^1} \beta(\theta)^{-\frac{1}{\sigma}} N(\theta) \cdot \beta(\theta) \, d\theta \\
&= \langle x, \beta \rangle^{1-\sigma} \int_{S^1} \beta(\theta)^{1-\frac{1}{\sigma}} N(\theta)\, d\theta.
\end{align*}

Combining all terms and factoring out $\langle x, \beta \rangle^{1-\sigma}$ gives
\begin{align*}
\rho v(x) &= \lambda_0 \langle x, \beta \rangle^{1-\sigma}
+ \frac{1}{1-\sigma} \langle x, \beta \rangle^{1-\sigma} \int_{S^1} \beta(\theta)^{1-\frac{1}{\sigma}} N(\theta)\, d\theta \\
&\quad - \langle x, \beta \rangle^{1-\sigma} \int_{S^1} \beta(\theta)^{1-\frac{1}{\sigma}} N(\theta)\, d\theta \\
&= \langle x, \beta \rangle^{1-\sigma} \left[ \lambda_0 + \frac{\sigma}{1-\sigma} \int_{S^1} \beta(\theta)^{1-\frac{1}{\sigma}} N(\theta)\, d\theta \right].
\end{align*}

Simplifying the term $\langle x, \beta \rangle^{1-\sigma}$ this is equivalent to
\[
\frac{\rho}{1-\sigma} = \lambda_0 + \frac{\sigma}{1-\sigma} \int_{S^1} \beta(\theta)^{1-\frac{1}{\sigma}} N(\theta)\, d\theta
\]
but using the definition of $\alpha_0$, we easily obtain
\[
\int_{S^1} \beta(\theta)^{-\frac{1-\sigma}{\sigma}} N(\theta)\, d\theta = \frac{\rho - \lambda_0(1-\sigma)}{\sigma}.
\]
If we substitute this expression in the previous equation we can complete the proof that $v$ is a classical solution of \eqref{eq:HJB-22}.
\end{proof}

\subsubsection{Verification and optimal feedback control}
Using Theorem \ref{teo:verst}, we can finally exploit the explicit solution of the model obtained in the previous section to characterize the optimal control for the problem.
The  candidate optimal feedback map is provided  by
\begin{equation}\label{eq:Feedback-22}
  u^{\ast}(x)=\phi(x) := \langle x,\beta\rangle\,\beta^{-1/\sigma}\in {X}^+.
\end{equation}
While it is easy to show that starting from any initial datum $x\in {X}^+_0$ the trajectory governed by this feedback remains in ${X}^+_{e_{0}}$, it is typically to show (see \cite[Sec.\,5]{calvia2021state}) that the trajectory remains in ${X}^+_0$ (note that ${X}^+_0 \subsetneq {X}^+_{e_{0}}$), hence that the feedback control is admissible\footnote{This is actually, in general, false, see \cite{ricci2024non}}. This condition is necessary to apply Theorem \ref{teo:verst}. If we take it as an assumption (that can be verified in some situations, for instance  near  steady states) we get the following result.

\begin{Theorem}
\label{th:optiaml-feedback-JEG}
Let  Assumption \ref{hyp:rho} hold.
Let $x\in {X}^+_0$ and let $y^*(\cdot)$ be the solution to \eqref{stateH} when the system is controlled through the feedback \eqref{eq:Feedback-22}, i.e., the unique mild solution  to the closed loop equation
\begin{equation}
\begin{cases}
y'(t) = \mathcal{A}y(t) - \phi(y(t))N = \mathcal{A}y(t) - \langle y(t), \beta\rangle\,\beta^{-1/\sigma}N, & t\in \mathbb{R}^+, \\
y(0) = x.
\end{cases}
\end{equation}
Assume that $y^*(\cdot)\in {X}^+_0$ and therefore that the control
\[
u^*(t) = \phi(y^*(t)) = \langle y^*(t), \beta\rangle\,\beta^{-1/\sigma}
\]
is admissible. Then $u^*(\cdot)$ is an optimal control and $\phi$ is an optimal feedback at $x$. Moreover, $v(x) = V(x)$.
\end{Theorem}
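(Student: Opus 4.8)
The plan is to deduce the statement from the infinite-horizon stationary verification theorem (Theorem~\ref{teo:verst}), applied with the explicit classical solution $v(x)=\langle x,\beta\rangle^{1-\sigma}/(1-\sigma)$ of Proposition~\ref{prop:HJB-solution}. A preliminary point is that, although $\mathcal{D}={X}^+_0$ has empty interior in $X$, the function $v$ is of class $C^{1}$ on the genuinely open half-space $\mathcal{O}:=\{y\in X:\ \langle y,e_0\rangle>0\}$, with $Dv(y)=\langle y,\beta\rangle^{-\sigma}\beta$ and $\beta=\alpha_0 e_0\in D(\mathcal{A})=D(\mathcal{A}^{*})$ (recall $\mathcal{A}=\mathcal{A}^{*}$ and $\alpha_0>0$), so that $v\in\mathcal{S}(\mathcal{O})$ and $v$ solves the HJB equation \eqref{eq:HJB-22} at every point of ${X}^+_{e_0}\subset\mathcal{O}$; moreover, by hypothesis the closed-loop trajectory $y^{*}$ stays in ${X}^+_0\subset\mathcal{O}$, and, as recalled before the statement, the same is true for every admissible trajectory issued from $x\in{X}^+_0$. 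Hence the chain rule \eqref{chainst} and the proof of Theorem~\ref{teo:verst} apply verbatim with this choice of $\mathcal{O}$.

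Granting this, the argument splits into the two parts of Theorem~\ref{teo:verst}. For part~(ii) I would first observe that the feedback $\phi(y)=\langle y,\beta\rangle\,\beta^{-1/\sigma}=Dv(y)^{-1/\sigma}$ is precisely the unique maximizer of $u\mapsto\mathcal{H}_{cv}(y,Dv(y);u)$ computed inside the proof of Proposition~\ref{prop:HJB-solution}, so that $u^{*}(t):=\phi(y^{*}(t))$ --- admissible by assumption --- satisfies the pointwise maximization condition \eqref{feedverst} for a.e.\ $t\ge 0$. Next I would verify the transversality condition \eqref{tracontbis} along $y^{*}$: pairing the closed-loop equation with $\beta$ and using $\mathcal{A}\beta=\lambda_0\beta$ gives the scalar linear ODE $\frac{d}{dt}\langle y^{*}(t),\beta\rangle=\big(\lambda_0-\int_{S^1}\beta(\theta)^{1-1/\sigma}N(\theta)\,d\theta\big)\langle y^{*}(t),\beta\rangle$, and the identity $\int_{S^1}\beta(\theta)^{-(1-\sigma)/\sigma}N(\theta)\,d\theta=(\rho-\lambda_0(1-\sigma))/\sigma$ established in Proposition~\ref{prop:HJB-solution} turns the bracket into $(\lambda_0-\rho)/\sigma$; therefore $\langle y^{*}(t),\beta\rangle=\langle x,\beta\rangle\,e^{(\lambda_0-\rho)t/\sigma}$ and $e^{-\rho t}v(y^{*}(t))=\frac{\langle x,\beta\rangle^{1-\sigma}}{1-\sigma}\,e^{(\lambda_0(1-\sigma)-\rho)t/\sigma}\to 0$, since $\lambda_0(1-\sigma)<\rho$ by Assumption~\ref{hyp:rho}. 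This gives both \eqref{tracont} and \eqref{tracontbis} along $y^{*}$.

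For part~(i) --- the inequality $v(x)\ge V(x)$ --- one needs, for every admissible $u$, a competitor $\hat u$ at least as good and satisfying \eqref{tracont}. When $\sigma\in(0,1)$ this is immediate, since then $v\ge 0$ on ${X}^+_{e_0}$, so $\limsup_{r\to\infty}e^{-\rho r}v(\hat y(r))\ge 0$ already holds for $\hat u=u$ itself. The genuinely delicate case, and the step I expect to be the main obstacle, is $\sigma>1$: there $v<0$, and one must rule out (or replace) trajectories along which $\langle y(r),\beta\rangle$ collapses faster than $e^{-\rho r/(\sigma-1)}$, for which $e^{-\rho r}v(y(r))\to-\infty$. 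The natural remedy is to substitute for a given $u$ a control that coincides with it up to a large time $S$ and then follows the feedback $\phi$: the transversality of the tail then follows from the explicit decay computed above, while the inequality $\mathcal{J}(x;\hat u)\ge\mathcal{J}(x;u)$ (or its $\varepsilon$-relaxation, which still suffices for the proof of Theorem~\ref{teo:verst}) rests on the admissibility and good behavior of the feedback from the switching state --- the point addressed in the companion analysis (cf.\ \cite{calvia2021state}). Once part~(i) is secured, it combines with the conditions \eqref{feedverst}--\eqref{tracontbis} verified above to give, via Theorem~\ref{teo:verst}(ii), that $v(x)=V(x)$ and that $u^{*}$ is optimal, i.e.\ that $\phi$ is an optimal feedback at $x$.
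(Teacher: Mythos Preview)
Your proposal is correct and follows the same route as the paper, which simply invokes Theorem~\ref{teo:verst} and remarks that the transversality condition is guaranteed by Assumption~\ref{hyp:rho}. You go considerably further than the paper's one-line proof: you make explicit the computation $e^{-\rho t}v(y^{*}(t))\to 0$ via the scalar ODE for $\langle y^{*}(t),\beta\rangle$, and you flag the genuine subtlety in part~(i) when $\sigma>1$ (where $v<0$ and \eqref{tracont} is not automatic for arbitrary admissible controls) --- a point the paper does not address.
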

\begin{proof}
It follows from Theorem \ref{teo:verst} (the transversality condition can be easily checked thanks to Assumption \ref{hyp:rho}).
\end{proof}

\subsubsection{Variants and Literature}
The formulation we have used in this section is essentially the one used by \cite{boucekkine2019growth} where a rather general ``AK" version of a spatial model with ``diffusive'' capital spillover is analytical solved. As already mentioned, spatial models with diffusive terms (in the case of neoclassical production function specification) was first introduced by \cite{Brito04}. After \cite{Brito04}, a series of papers used variations of the same idea. In this stream, we can cite the articles by \cite{camacho2008dynamics, BrockXepapadeas08JEDC, boucekkine2009bridging, ballestra2016spatial}, which, at least partially, offered methodological contributions and highlighted some technical difficulties emerging from the model proposed by Brito. In the mentioned articles, the authors found necessary conditions for optimality via a version of the Pontryagin maximum principle, but none of them provided a complete solution to the model. Some of them proposed simplifications to approach the problem: \cite{Brito04} focuses on \emph{traveling waves} type solutions, \cite{boucekkine2009bridging} look at the linear utility case, and \cite{Brito11} studies the local dynamics of the system. To overcome the technical difficulties described above, a number of authors, including \cite{Quah02} and \cite{brock2014optimal-JME}, used immobile factors of production (knowledge or capital) and modeled the interaction between different geographic regions through spatial externalities. The article by \cite{BoucekkineCamachoFabbri13} is the first to explicitly solve a model of spatial growth with mobility and capital accumulation with a continuous spatial dimension. More recently, we mention \cite{xepapadeas2016spatial, camacho2022diffusion, brito2022dynamics}, the stochastic extension by \cite{gozzi2022stochastic}, the network versions by \cite{fabbri2023growth, calvia2024optimal, albeverio2022large}, extensions with population mobility by \cite{camacho2019model, ballestra2022modeling, ballestra2024modeling}, studies about the role of the social welfare function by \citealp{boucekkine2021control}, and more technical contributions (\citealp{santambrogio2020rational, ricci2024non}).\footnote{See also \cite{juchem2023versao, urena2023numerical}.} A particular mention goes to the contribution by \cite{xepapadeas2023spatial}, where the authors, taking a more challenging perspective, propose a model where capital moves toward locations where the marginal productivity of capital is relatively higher.

We finally spend some more words on a generalization of the proposed structure that is presented by \cite{Fabbri16}, where the role of geographical structure in the growth process is taken into account. The analysis by \cite{Fabbri16} is carried out through a model which, in several aspects, is similar to the one that we described in the previous subsection, but in a simplified setting where $A$ and $N$ are constant (say, equal to $1$). In this case, the spatial structure (the geography) is modeled as an $n$-dimensional compact, connected, oriented Riemannian manifold $M$, with metric $g$ and without boundary ($S^1$, like any $S^n$, are obviously particular examples of this structure).

The structure of the model is very similar to the proposed one. To obtain the expression for $\tau$, instead of (\ref{eq:perottenretaudiBrito}) and the following equations, we now have
\[
\int_B \tau(t,x) \, \ud x  = 
- \int_{\partial B} \frac{\partial K(t,x)}{\partial \hat{n}} \, \ud x 
= - \int_{B} \Delta_x K(t,x) \, \ud x,
\]
where we used the divergence theorem (see \citet{Lang95}, Theorem 3.1, page 329), so that, for almost every $x \in M$,
\begin{equation}
\label{eq:tau=-delta}
- \tau(t,x) = \Delta_x K(t,x),
\end{equation}
where $\Delta_x$ is the Laplace-Beltrami operator and the partial differential equation on $M$ that describes the evolution of the capital density $K(t,x)$ when we consider a spatially homogeneous productivity $A$, is now
\begin{equation}
\label{eq:pde-state}
\displaystyle
\left \{
\begin{array}{l}
\displaystyle \frac{\partial K(t,x)}{\partial t} = \Delta_x K(t,x) +  A K(t,x) - c(t,x)\\[10pt]
\displaystyle K(0,x) = K_0(x).
\end{array}
\right.
\end{equation}
and the policy maker chooses the consumption $c(\cdot,\cdot)$ to maximize
\begin{equation}
\label{eq:functional} 
J(c(\cdot, \cdot)) = \int_0^{+\infty} e^{-\rho t} \int_M \frac{\left ( c(t,x) \right )^{1-\sigma}}{1-\sigma} \ud x \ud t.
\end{equation}

The rewriting of the problem as a dynamic optimization problem in a Hilbert space, in line with the formulation presented in Section \ref{sec:formulation}, now relies on the operator $\mathcal{A}$ on $L^2(M)$ defined as follows\footnote{The Sobolev space $W^{2,2}(M)$ is defined, for example, in Section 4 of \cite{Grigoryan12}.}
\[
\left \{
\begin{array}{l}
D(\mathcal{A}) := W^{2,2}(M)\\[5pt]
\mathcal{A}(f) = \Delta_x f.
\end{array}
\right .
\]
$\mathcal{A}$ is the (self-adjoint) generator of the heat semigroup on $L^2(M)$, which is a $C_0$ semigroup on $L^2(M)$ (see Section 4.3 of \citealp{Grigoryan12}). The state equation (\ref{eq:pde-state}) can be rewritten as an evolution equation in the Hilbert space $L^2(M)$ as:
\begin{equation}
\label{eq:hilbert-state}
\left \{
\begin{array}{l}
{y}'(t) = \mathcal{A} y(t) + A y(t) - u(t)\\[5pt]
y(0) = x
\end{array}
\right .
\end{equation}
while the functional (\ref{eq:functional}) can be rewritten similarly to (\ref{objective}).

\subsection{Dynamic models with transboundary pollution dynamics}
\label{SSE:pollution}
We illustrate a class of models in which production is associated with the emission of pollutants (typically atmospheric pollutants) that exhibit spatial dynamics. These models focus on ``local'' pollutants that can be emitted during production processes, such as particulate matter (see \citealp{tiwary2010air}), nitrogen dioxide (\citealp{eea2020airquality}), or sulfur dioxide (\citealp{smith2011sulfur}). This type of model, which describes localized spatial dynamics, is not suitable for representing the effects of pollutants such as carbon dioxide (and greenhouse gases in general), which induce undesirable effects at a global scale, depending on their global concentration.
\label{sub:example-pollution}
\subsubsection{The economic problem}
The structure of the state equation is closely related to those presented in the previous Subsection \ref{sub:example-spatial} since we will have again a parabolic state dynamics.
Following the structure in \cite{boucekkine2022managing}, we suppose again to use $S^1$ defined in (\ref{eq:defS1}) as a space support and denote, respectively, by $c(t,\theta)$, $i(t,\theta)$, and $Y(t,\theta)$ the consumption, investment (input) and production at location $\theta$ and time $t$. At each location, the output is fully allocated between consumption and local investment, with no trade between locations. This is formalized by the equality
\begin{equation}
c(t,\theta) + i(t,\theta) = Y(t,\theta). \label{res}
\end{equation}
We suppose that the production $Y(t,\theta)$ is linear in the input, i.e.
\begin{equation}
\label{resbis}
Y(t,\theta) = a(\theta) i(t,\theta)
\end{equation}
for some location-specific productivity factor $a: S^1 \to (1, +\infty). $
The sole link between locations is transboundary pollution. Specifically, the spatial profile of pollution evolves according to the following parabolic partial differential equation (PDE):
\begin{equation}
\label{SE-trans}
\begin{cases}
\displaystyle{\frac{\partial p}{\partial t}(t,\theta) = \frac{\partial}{\partial \theta} \bigg(\sigma(\theta) \frac{\partial p}{\partial \theta}  (t,\theta)\bigg) - \delta(\theta) p(t, \theta) + {\eta}(\theta) i(t,\theta),}
& \forall (t,\theta) \in \mathbb{R}^+ \times S^1, \\[10pt]
p(0,\theta) = p_0(\theta), & \forall \theta \in S^1,
\end{cases}
\end{equation}
where   
$p_0, \delta, \sigma, \eta, w: S^1 \to \mathbb{R}^+
$, $
\gamma: S^1 \to (0, 1) \cup (1, +\infty)$, $p(t,\theta)$ represents the pollution stock at location $\theta$ and time $t$, $\frac{\partial}{\partial \theta} \bigg(\sigma(\theta) \frac{\partial p}{\partial \theta}  (t,\theta)\bigg)$ is the diffusion term, $\delta(\theta) p(t, \theta)$ is the natural regeneration capacity of the environment,  ${\eta}(\theta) i(t,\theta)$ the flow of new emissions accounting for the impact of pollution per unit of input used. 

Note that
\begin{enumerate}[(i)]
    \item Setting ${\eta}(\theta) = 1$ implies that emissions are equal to the input use;
    \item With ${\eta}(\theta) = a(\theta)\phi(\theta)$, emissions are proportional to output, as ${\eta}(\theta)i(t,\theta) = \phi(\theta)Y(t,\theta)$;
    \item The spatial heterogeneity in ${\eta}(\theta)$ reflects differences in regional production technologies.
\end{enumerate}

The diffusion term $\displaystyle{\frac{\partial}{\partial \theta} \big(\sigma(\theta) \frac{\partial p}{\partial \theta}(t,\theta)\big)}$ introduces spatial variability at the location dependent diffusion speed $\sigma(\theta)$: some studies, such as \cite{tiwary2010air}, highlight the role of local geographical and meteorological factors in shaping the dispersion of pollution. 

An initial spatial distribution $p_0(\theta)$ on $S^1$ is required to define the dynamics of the pollution. The state variable $p(t,\theta)$ then evolves according to the PDE in \eqref{SE-trans}, introducing infinite-dimensional complexity into the optimization problem.

We suppose that the instantaneous utility at location $\theta$ and time $t$ combines a CES utility from consumption and a linear disutility from pollution:
\[
g\big(c(t,\theta), p(t,\theta)\big) = \frac{c(t,\theta)^{1-\gamma(\theta)}}{1-\gamma(\theta)} - w(\theta)p(t,\theta),
\]
where $\gamma(\theta)$ measures the (location-specific) inverse of the elasticity of intertemporal substitution, and $w(\theta)$ reflects local environmental awareness. From (\ref{res}) and (\ref{resbis}) we have $c(t,\theta) = (a(\theta)-1)i(t,\theta)$ and then, if we consider a Benthamite planner with a discount rate $\rho$, she maximizes the following functional: 
\begin{equation}
\label{F}
J(p_0; i) = \int_0^\infty e^{-\rho t} \left( \int_{S^1} \left[ \frac{\big((a(\theta)-1)i(t,\theta)\big)^{1-\gamma(\theta)}}{1-\gamma(\theta)} - w(\theta)p(t,\theta) \right] \mathrm{d}\theta \right) \mathrm{d}t.
\end{equation}

\subsubsection{Rewriting the problem in a Hilbert space setting}

As in Subsection \ref{sub:example-spatial}, to reformulate the problem in an infinite-dimensional setting, we use the Hilbert  space $X:=L^2(S^1;\R)$, defined in \eqref{eq:defL2}, endowed with  the natural norm $| \cdot|$ and the inner product $\langle \cdot, \cdot \rangle$. The nonnegative cone ${X}^+$ is defined in \eqref{eq:defH+}. Additionally, we introduce the Sobolev space:
\[
\displaystyle
W^{2,2}(S^1; \mathbb{R}) := \left\{ f \in L^2(S^1; \mathbb{R}): f \text{ is twice weakly differentiable, and } f', f'' \in X \right\}.
\]
We assume that $\delta \in C(S^1; \mathbb{R}^+)$ and $\sigma \in C^1(S^1; (0, +\infty))$. Then, the operator $\mathcal{A}: D(\mathcal{A}) \subset X \to X$  defined as
\[
D(\mathcal{A}) = W^{2,2}(S^1; \mathbb{R}), \quad (\mathcal{A} \varphi)(\theta) = \left( \sigma(\theta) \varphi'(\theta) \right)' - \delta(\theta) \varphi(\theta), \quad \varphi \in D(\mathcal{A})
\]
 is a closed, densely defined, self-adjoint, unbounded linear operator on the space $H$ (see, e.g. \citealp{lunardi1995analytic}, p. 71-75, Sections 3.1 and 3.1.1). Hence,  $\mathcal{A}$ generates a strongly continuous contraction semigroup $(e^{t\mathcal{A}})_{t \geq 0} \subset \mathcal{L}(H)$ and the state dynamics is formally reformulated in $H$ as 
\[
y'(t) = \mathcal{A} y(t) + \Xi u(t), \quad y(0) = x,
\]
where $y(t) = p(t, \cdot)$, $\Xi = \eta(\cdot)$, $u(t) = i(t, \cdot)$ and $x = p(0, \cdot)$. We can then consider the following set of admissible controls
$$
\mathcal{U}(x)\equiv \mathcal{U}:=\bigg\{u\in
L^1_{loc}(\R^+;{X}^+): \  \int_0^\infty {e^{-\rho t} |\Xi(t)u(t)|}\, \ud t <\infty\bigg\}.
$$
If we suppose that $w\in C(S^1;(0,+\infty))$, that the function $\gamma:S^1\to(0,1)\cup(1,+\infty)$ is measurable and bounded with values in a compact subset of $(0,1)$ or of a compact subset of $(1, +\infty)$, and that $a(t,\theta)$ and ${\eta}(\theta)$ are bounded, 
then,
setting $A:=a(\cdot)$, $ \Gamma:=\gamma(\cdot)$, $\mathbf{1}$ the indicator function of $S^1$, and
$$ \left[\frac{\big((A-\mathbf{1}) u(t)\big)^{1- \Gamma}}{1-\Gamma}\right](\theta):=\frac{\big((a(\theta)-1) i(t, \theta)\big)^{1- \gamma(\theta)}}{1-\gamma(\theta)}, \ \ \ \theta\in S^1,$$
the functional \eqref{F} is rewritten in this formalism as
\begin{equation}\label{FH}
J(x,u)=\int_0^{\infty} e^{-\rho t} \left[ \left\langle 
\frac{\big((A-\mathbf{1}) u(t)\big)^{1- \Gamma}}{1-\Gamma},\mathbf{1}\right\rangle - \left\langle w, y(t)\right\rangle \right] d t.
\end{equation}

\subsubsection{The infinite-dimensional HJB equation and its explicit solution}



The stationary infinite-horizon HJB equation associated with our problem, following the general formulation in \eqref{eq:HJBbisst}, can be written as:
\begin{equation}\label{eq:HJB-pollution}
\rho v(x) = \langle x, \mathcal{A} Dv(x) \rangle - \langle w, x \rangle + \sup_{u \in {X}^+} \left\{ \Psi(u) + \langle \Xi i, Dv(x) \rangle \right\},
\end{equation}
where $x \in {X}^+$ is the state (the pollution stock), $Dv(x)$ is the Fréchet derivative of $v$ at $x$, and $\Psi(I)$ is the utility from consumption, which is given by
\[
\Psi(I) = \left\langle \frac{((A-\mathbf{1})I)^{1-\Gamma}}{1-\Gamma}, \mathbf{1} \right\rangle.
\]

\medskip

Inspired by the structure of the problem, we seek a solution to the HJB equation that is affine in the state $x$. Let us first define an auxiliary function $\alpha \in D(\mathcal{A})$ as the unique solution (Theorem 8.3 in \citealp{GilbargTrudinger2001}) to the following elliptic equation:
\begin{equation} \label{eq:alpha-def-pollution}
(\rho - \mathcal{A})\alpha(\theta) = w(\theta), \quad \forall \theta \in S^1.
\end{equation}
Given that $w(\theta)>0$ and $\rho>0$, it can be shown (Theorem 8.19 in \citealp{GilbargTrudinger2001}) that $\alpha(\theta) > 0$ for all $\theta \in S^1$. This function $\alpha(\theta)$ can be interpreted as the discounted total future disutility generated by a unit of pollutant emitted at location $\theta$.

We can now state the explicit solution to the HJB equation.

\begin{Proposition}\label{prop:HJB-solution-pollution}
Assume that $\delta \in C(S^1; \mathbb{R}^+)$ and $\sigma \in C^1(S^1; (0, +\infty))$,  that $\gamma: S^1\to(0,1)\cup(1,+\infty)$ is measurable and bounded with values in a compact subset of $(0,1)$ or in a compact subset of $(1, +\infty)$, and that  $a: S^1 \to (1, \infty)$ and $\eta, w: S^1 \to \mathbb{R}^+$ are measurable and  bounded.

Let $\alpha \in D(\mathcal{A})$ be the solution to \eqref{eq:alpha-def-pollution}. Define the constant $q$ as
\begin{equation} \label{eq:q-def-pollution}
q := 
\frac{1}{\rho} \int_{S^1} \sup_{u(\theta) \geq 0} \left\{ \frac{((a(\theta)-1)u(\theta))^{1-\gamma(\theta)}}{1-\gamma(\theta)} - \eta(\theta)u(\theta)\alpha(\theta) \right\} d\theta.
\end{equation}
The function
\begin{equation}\label{eq:ValueFunction-pollution}
  v(x) = - \langle \alpha, x \rangle + q, \qquad x \in H,
\end{equation}
is a classical solution to the \emph{HJB} equation \eqref{eq:HJB-pollution} on $X$.
\end{Proposition}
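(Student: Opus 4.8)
The plan is to verify directly, in the same spirit as the proof of Proposition~\ref{prop:HJB-solution}, that the affine function $v(x)=-\langle\alpha,x\rangle+q$ given in \eqref{eq:ValueFunction-pollution} meets all the requirements of Definition~\ref{def:classicalsolution} for the stationary HJB equation \eqref{eq:HJB-pollution}, taking $\mathcal{O}=X$. The first, easy point is regularity: being affine, $v$ is Fréchet differentiable on all of $X$ with constant gradient $Dv(x)\equiv-\alpha$, so $v\in C^1(X)$; and since $\mathcal{A}$ is self-adjoint we have $D(\mathcal{A}^*)=D(\mathcal{A})$, and $\alpha\in D(\mathcal{A})$ because it solves the elliptic problem \eqref{eq:alpha-def-pollution}, hence $Dv\in C(X;D(\mathcal{A}^*))$ and $v\in\mathcal{S}(X)$. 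In particular the term $\langle x,\mathcal{A}^*Dv(x)\rangle=-\langle x,\mathcal{A}\alpha\rangle$ is well defined for every $x\in X$.

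Next I would compute the Hamiltonian at $p=Dv(x)=-\alpha$. Here the control enters the dynamics only through $\Xi u=\eta(\cdot)u$ with $\eta$ bounded, so $\Xi\in\mathcal{L}(X)$ (the bounded control-operator case, needing no extrapolation machinery), and the running payoff is $g_0(x,u)=\Psi(u)-\langle w,x\rangle$. Therefore
\[
\mathcal{H}(x,-\alpha)=\sup_{u\in X^+}\big\{\Psi(u)-\langle \eta u,\alpha\rangle\big\}-\langle w,x\rangle
=\sup_{u\in X^+}\int_{S^1}\Big[\frac{\big((a(\theta)-1)u(\theta)\big)^{1-\gamma(\theta)}}{1-\gamma(\theta)}-\eta(\theta)\alpha(\theta)u(\theta)\Big]d\theta-\langle w,x\rangle.
\]

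The heart of the proof is to show that this supremum equals $\int_{S^1}\sup_{u(\theta)\ge 0}[\,\cdot\,]\,d\theta=\rho q$, i.e.\ that it can be computed location by location; the inequality ``$\le$'' is immediate from the pointwise bound, and for ``$\ge$'' one exhibits the explicit pointwise maximizer $u^*(\theta)=(a(\theta)-1)^{(1-\gamma(\theta))/\gamma(\theta)}\big(\eta(\theta)\alpha(\theta)\big)^{-1/\gamma(\theta)}$ coming from the first-order condition, with optimal value $\frac{\gamma(\theta)}{1-\gamma(\theta)}\big(\tfrac{a(\theta)-1}{\eta(\theta)\alpha(\theta)}\big)^{(1-\gamma(\theta))/\gamma(\theta)}$. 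Since $u^*$ is given by an explicit formula it is measurable, and one checks that it belongs to $L^2(S^1;\R^+)=X^+$ and that its optimal value is bounded in $\theta$ (so that $q<\infty$): this is exactly where the boundedness of $a,\eta,\gamma$ together with the continuity and strict positivity of $\alpha$ on the compact $S^1$ are used, keeping the exponents $(1-\gamma)/\gamma$ and the bases $a-1$, $\eta\alpha$ away from their degenerate values. Testing the functional on this admissible $u^*$ then gives ``$\ge$''. I expect this interchange — and in particular the verification that the optimal profile is genuinely square-integrable and that $q$ is finite — to be the only non-routine step.

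Finally I would assemble. Substituting into the classical-solution identity \eqref{eq:HJBbisst}, the left-hand side is $\rho v(x)=-\rho\langle\alpha,x\rangle+\rho q$, while the right-hand side is $\langle x,\mathcal{A}^*Dv(x)\rangle+\mathcal{H}(x,-\alpha)=-\langle\mathcal{A}\alpha,x\rangle-\langle w,x\rangle+\rho q$, using the definition \eqref{eq:q-def-pollution} of $q$ for the supremum term. Equality for all $x\in X$ is thus equivalent to $\langle\rho\alpha-\mathcal{A}\alpha-w,\,x\rangle=0$ for every $x$, i.e.\ to $(\rho-\mathcal{A})\alpha=w$, which is precisely \eqref{eq:alpha-def-pollution}. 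Hence $v\in\mathcal{S}(X)$ solves \eqref{eq:HJB-pollution} in the classical sense on $X$, as claimed.
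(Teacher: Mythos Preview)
Your proposal is correct and follows essentially the same route as the paper: direct verification that the affine candidate has $Dv\equiv-\alpha\in D(\mathcal{A}^*)$, pointwise computation of the supremum in the Hamiltonian (yielding $\rho q$ by definition of $q$), and reduction of the HJB identity to the elliptic equation $(\rho-\mathcal{A})\alpha=w$. If anything you are slightly more careful than the paper in justifying the interchange of supremum and integral and the admissibility $u^*\in X^+$, which the paper treats as routine; incidentally, your first-order formula $u^*(\theta)=(a(\theta)-1)^{(1-\gamma(\theta))/\gamma(\theta)}(\eta(\theta)\alpha(\theta))^{-1/\gamma(\theta)}$ is the correct one.
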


\begin{proof}
We verify that the affine function \eqref{eq:ValueFunction-pollution} solves the HJB equation \eqref{eq:HJB-pollution}. The differential of $v$ is constant with respect to $x$ and is given by
\[
Dv(x) = -\alpha.
\]
Since $\alpha \in D(\mathcal{A})$, the regularity requirements are met. Substituting $v(x)$ and $Dv(x)$ into the HJB equation, the left-hand side becomes:
\[
\rho v(x) =  -\rho \langle \alpha, x \rangle + \rho q.
\]
For the right-hand side, using the fact that $\mathcal{A}$ is self-adjoint, we have (the secondequility follows from (\ref{eq:alpha-def-pollution})):
\[
\langle x,  \mathcal{A} Dv(x) \rangle = -\langle x, \mathcal{A}\alpha \rangle =  -\langle x, \rho\alpha - w \rangle = -\langle x, \rho\alpha \rangle + \langle x, w \rangle
\]
The supremum term in the HJB equation becomes:
\[
\sup_{u \in {X}^+} \left\{ \Psi(u) + \langle \Xi u, Dv(x) \rangle \right\} - \langle w, x \rangle = \sup_{u \in {X}^+} \left\{ \Psi(u) - \langle \Xi u, \alpha \rangle \right\} - \langle w, x \rangle.
\]
The maximization problem can be solved pointwise for each $\theta \in S^1$:
\[
\sup_{u(\theta) \geq 0} \left\{ \frac{((a(\theta)-1)u(\theta))^{1-\gamma(\theta)}}{1-\gamma(\theta)} - \eta(\theta)i(\theta)\alpha(\theta) \right\}.
\]
The first-order condition yields the optimal investment function $i^*(\theta)$, which is independent of the state $x$:
\begin{equation} \label{eq:optimal-investment-pollution}
u^*(\theta) = \frac{1}{a(\theta)-1} \left( \eta(\theta) \alpha(\theta) \right)^{-\frac{1}{\gamma(\theta)}}.
\end{equation}
Plugging this back, the value of the supremum is precisely $\rho q$ by definition \eqref{eq:q-def-pollution}; so, 
\[
\sup_{u \in {X}^+} \left\{ \Psi(u) + \langle \Xi u, Dv(x) \rangle \right\} - \langle w, x \rangle = - \langle w, x \rangle + \rho q.
\]
Combining all parts, the HJB equation becomes:
\[
-\rho \langle \alpha, x \rangle + \rho q = -\langle x, \rho\alpha \rangle + \langle x, w \rangle - \langle w, x \rangle + \rho q
\]
which is an identity. This completes the proof.
\end{proof}

\subsubsection{The solution of the model}

The explicit solution of the HJB equation allows us to characterize the optimal policy for the planner. A key feature of this model, stemming from the affine structure of the value function, is that the optimal control is determined in an open-loop fashion; it does not depend on the current state of the system, the pollution stock.

If we express back the optimal control and the optimal trajectory in terms of original formulation/notation we have the following result.

\begin{Theorem}
\label{th:optimal-control-pollution}
Suppose that the assumptions of Proposition \ref{prop:HJB-solution-pollution} are verified and that $p(0, \cdot) \in {X}^+$. Then, the optimal control (the investment/input level) is constant over time and it is given by $i^*(\cdot)$ where
\begin{equation} \label{eq:optimal-I-pollution-final}
i^*(\theta) = \frac{1}{a(\theta)-1} \left( \eta(\theta) \alpha(\theta) \right)^{-\frac{1}{\gamma(\theta)}}, \quad \forall \theta \in S^1.
\end{equation}
The optimal (pollution) trajectory $p^*(t,\theta)$ is the solution to the linear non-homogeneous PDE:
\begin{equation}
\label{SE-transbis}
\begin{cases}
\displaystyle{\frac{\partial p}{\partial t}(t,\theta) = \frac{\partial}{\partial \theta} \bigg(\sigma(\theta) \frac{\partial p}{\partial \theta}  (t,\theta)\bigg) - \delta(\theta) p(t, \theta) + {\eta}(\theta) i^*(\theta),}
& \forall (t,\theta) \in \mathbb{R}^+ \times S^1, \\[10pt]
p(0,\theta) = p_0(\theta), & \forall \theta \in S^1,
\end{cases}
\end{equation}Furthermore, the optimal welfare for an initial pollution stock $p_0$ is given by
\[
- \langle \alpha, p_0 \rangle + q,
\]
where $q$ is the constant defined in \eqref{eq:q-def-pollution} and $\alpha$ is defined by (\ref{eq:alpha-def-pollution}).
\end{Theorem}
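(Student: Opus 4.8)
The plan is to deduce the statement directly from the infinite-horizon verification Theorem \ref{teo:verst}, using the explicit classical solution $v(x)=-\langle\alpha,x\rangle+q$ constructed in Proposition \ref{prop:HJB-solution-pollution}. Recall that here the dynamics reads $y'(t)=\mathcal{A}y(t)+\Xi u(t)$, with $\Xi$ the \emph{bounded} multiplication operator by $\eta(\cdot)$, and $\mathcal{A}$ self-adjoint generating a contraction semigroup; so none of the unbounded-control machinery of Remark \ref{rem:VerificUnboundedSStaz} is actually needed. Since $v\in\mathcal{S}(X)$ (its constant gradient $-\alpha$ lies in $D(\mathcal{A})=D(\mathcal{A}^*)$) and $v$ solves \eqref{eq:HJB-pollution}, which is precisely the stationary HJB \eqref{eq:HJBbisst} for this problem, it suffices to verify hypotheses (i) and (ii) of Theorem \ref{teo:verst}, the candidate feedback being the state-independent map $\phi\equiv i^*(\cdot)$ of \eqref{eq:optimal-I-pollution-final}.

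First I would show the transversality conditions are automatic. For the mild solution of $y'=\mathcal{A}y+\Xi u$ with $u\in\mathcal{U}$, contractivity of $e^{t\mathcal{A}}$ gives $|y(t)|\le|x|+\int_0^t|\Xi u(s)|\,\mathrm ds$; the integrability $\int_0^\infty e^{-\rho s}|\Xi u(s)|\,\mathrm ds<\infty$ encoded in $\mathcal{U}$ then forces $e^{-\rho t}|y(t)|\to 0$ (a standard splitting argument), and for the constant control $u^*\equiv i^*$ one has even the crude bound $|y^*(t)|\le|x|+t\,|\Xi i^*|$. Since $v$ is affine, $|e^{-\rho r}v(y(r))|\le e^{-\rho r}|q|+|\alpha|\,e^{-\rho r}|y(r)|\to 0$ for every admissible control; hence in Theorem \ref{teo:verst}(i) one may take $\hat u=u$ for each $u\in\mathcal{U}$, obtaining $v(x)\ge V(x)$, and condition \eqref{tracontbis} of part (ii) holds trivially for $y^*$.

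Next I would check that $u^*\equiv i^*(\cdot)$ is admissible and achieves the Hamiltonian maximum. Admissibility means $i^*\in X^+$ — which follows from $i^*(\theta)=\frac{1}{a(\theta)-1}(\eta(\theta)\alpha(\theta))^{-1/\gamma(\theta)}$ together with the boundedness of $a,\eta,\alpha$ and the lower bound on $a-1$, so that $i^*\in L^2(S^1)$ — and $\int_0^\infty e^{-\rho t}|\Xi i^*|\,\mathrm dt=|\Xi i^*|/\rho<\infty$, which is immediate. Because $Dv(y^*(s))\equiv-\alpha$, the map in the Hamiltonian, $u\mapsto\Psi(u)-\langle\Xi u,\alpha\rangle$, decouples over $\theta$ into the strictly concave functions $u(\theta)\mapsto\frac{((a(\theta)-1)u(\theta))^{1-\gamma(\theta)}}{1-\gamma(\theta)}-\eta(\theta)\alpha(\theta)u(\theta)$, whose unique maximizer on $[0,\infty)$ is given by the first-order condition, i.e. exactly $i^*(\theta)$ — this is the computation already performed in the proof of Proposition \ref{prop:HJB-solution-pollution}, and it shows \eqref{feedverst} holds with $u^*\equiv i^*$. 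Moreover the closed-loop equation \eqref{eq:CLEst} here is the linear non-homogeneous equation with constant forcing $\Xi i^*$, which has the unique mild solution $p^*$ described by \eqref{SE-transbis}; positivity of $p^*$ is preserved since $e^{t\mathcal{A}}$ and $i^*$ are nonnegative.

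Finally I would invoke Theorem \ref{teo:verst}(ii): the domination/transversality of part (i), the maximality \eqref{feedverst}, and the limit \eqref{tracontbis} have all been established, so $v(x)=V(x)$ and $u^*\equiv i^*$ is optimal at $x$. Rewriting in the original notation with $x=p_0$ yields the stated constant optimal input $i^*$, the optimal pollution path solving \eqref{SE-transbis}, and the optimal welfare $V(p_0)=v(p_0)=-\langle\alpha,p_0\rangle+q$. The only genuinely delicate point is the transversality/growth estimate, but the contractivity of $e^{t\mathcal{A}}$ (from $\delta\ge 0$ and the divergence-form diffusion) combined with the integrability built into $\mathcal{U}$ makes it routine; a secondary technical care is ensuring $i^*\in L^2(S^1)$ and that the pointwise supremum defining $q$ in \eqref{eq:q-def-pollution} is finite, which is exactly where the quantitative assumptions on $a,\gamma,\eta,w$ are used.
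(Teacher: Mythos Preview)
Your proof is correct and follows essentially the same route as the paper: apply the verification Theorem \ref{teo:verst} to the explicit affine solution $v$ of Proposition \ref{prop:HJB-solution-pollution}, checking admissibility of the constant control $i^*$, that it attains the Hamiltonian maximum, and the transversality conditions. If anything you are more explicit than the paper on the transversality argument (the paper simply invokes $\rho>0$) and on why $i^*\in L^2(S^1)$; your positivity-of-$p^*$ justification via positivity of the semigroup is equivalent to the paper's appeal to the parabolic maximum principle.
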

\begin{proof}
The candidate optimal control $i^*$ is derived directly from the maximization of the Hamiltonian in the proof of Proposition \ref{prop:HJB-solution-pollution}. It is admissible : (i) it is positive (ii) the integral in de definition of $J$ is bounded since $i^*$ is time-independent (iii)  By the maximum principle for parabolic equations $p$ remains positive since it is the solution of a parabolic equation with a positive source term ($\eta(\theta) i^*(\theta) \geq 0$ for all $\theta$) with non-negative intiial condition. 

The result then follows by applying Theorem \ref{teo:verst}. The transversality condition required by the theorem is satisfied due to the discounting factor $\rho > 0$.
\end{proof}

\subsubsection{Variants and Literature}
We followed here the structure of the model presented by \cite{boucekkine2022managing}, which generalizes the approaches of \cite{boucekkine2021firm, boucekkine2019geographic}, emphasizing the role of location-dependent parameters in shaping optimal strategies. 
The model draws on a classical idea in theoretical ecology, where pollutant diffusion is captured via parabolic equations that describe the spatial and temporal spread of contaminants 
\cite{Jacob1999,Thibodeaux1996}.

Previous contributions in the same spirit include 
\cite{anita2015dynamics}, which examined a spatially structured economic growth model that incorporates pollution diffusion and optimal taxation, focusing on the long-term behavior of the system, and the series of papers \cite{de2019spatial, de2019spatial-2, de2021equilibrium, de2022investment}, where the problem is addressed in the (more complex) context of a differential Nash game instead of the case of social optimum (and thus optimal control).

In a certain sense, pollution can be thought of as the "opposite" of a natural resource (the natural resource being clean air or water). In this perspective, these types of models can be interpreted as being closely related to optimal exploitation models of natural resources with a spatial dynamic, which makes them, in this specific sense, akin to optimal fishery management models: in the theoretical economics literature, optimal fishing models often contain an intertemporal optimization problem with a parabolic equation describing the spatio-temporal dynamics of the resource (see, e.g., \citealp{smith2009economics, brock2014optimal-ARRE}).

\subsection{Vintage capital: delay differential equation approach}
\label{sub:vintagecapitaldelay}

In this section, we will examine an optimal growth problem in the context of a vintage capital model. The model will feature a state equation with delay, which gives it a structure distinct from the models described earlier. \emph{Vintage capital models}, first introduced by \cite{johansen1959substitution} and \cite{Solow1960}, are models in which the stock of capital, at any point in time, is described as a heterogeneous set of different generations of machines. One of the intuitions behind this approach is the idea that technical progress is \emph{embodied} in capital goods: a new technology influences the production process only if new machines use that technology. This leads to distinguish generations of machines that are replaced over time, emphasizing the importance of stratification by age of capital\footnote{We refer the interested reader to \cite{BoucekkineDeLaCroixLicandro2011} for the (fascinating) history of the debate on the \emph{embodiment hypothesis} and the subsequent developments in the growth literature within the context of vintage capital models.}

\subsubsection{The economic model}

In this subsection, we describe a model presented by \cite{Boucekkineetal05}, characterized by a \emph{one-hoss shay} depreciation hypothesis, where capital loses its value abruptly: it is assumed that, at any time $t$, only investments made during the interval $(t-T, t)$ contribute to production, while those older than the exogenous and constant scrapping time $T$ become unproductive. In other words, the equation linking the evolution of capital dynamics to the flow of investments is:
\begin{equation}
\label{eq:VCM-accumulazionek}
k(t) = \int_{t-T}^t i(s) \, {\mathrm{d}}s.
\end{equation}
The rest of the model mirrors a standard $AK$ model: production linear and given by $Ak(t)$ where the total factor productivity $A$ is constant and exogenous. A planner decides how to allocate production between investment and consumption, according to the budget relation $Ak(t) = i(t) + c(t)$, in order to maximize an aggregate social welfare functional:
\begin{equation}
\label{eq:VCM-funzionaleinc}
\int_0^{\infty} e^{-\rho t} \frac{c(t)^{1-\sigma}}{1-\sigma} \, {\mathrm{d}}t,
\end{equation}
for some positive $\sigma \neq 1$. If we rewrite (\ref{eq:VCM-accumulazionek}) more formally, including an initial investment datum $\bar{\iota}(\cdot) \in L^2(-T, 0)$ over the interval $[-T, 0)$, the problem is characterized by the following state equation:
\begin{equation}
\label{eqboucekkine} 
\left\{
\begin{array}{ll}
k'(t) = i(t) - i(t-T), &  t \geq 0, \\[3pt]
i(s) = \bar{\iota}(s), & s \in [-T, 0), \\[3pt]
k(0) = \int_{-T}^0 \bar{\iota}(s) \, {\mathrm{d}}s,
\end{array} 
\right.
\end{equation}
where $i(t)$ for $t \geq 0$ is the control variable. Substituting the budget constraint $c(t) = y(t) - i(t)$ into (\ref{eq:VCM-funzionaleinc}), we can write the functional to maximize as:
\[
J(\bar{\iota}(\cdot); i(\cdot)) = \int_0^\infty e^{-\rho t} 
\frac{(Ak(t) - i(t))^{1-\sigma}}{1-\sigma} \, {\mathrm{d}}t,
\]
over the set:
\[
\mathcal{I}_{\bar{\iota}} = \big\{ i(\cdot) \in L^2_{\mathrm{loc}}([0,+\infty); \mathbb{R}^+)  : \ 
i(t) \in [0, Ak(t)] \ \forall t \in \mathbb{R}^+ \big\}.
\]
Note that the constraint $i(t) \in [0, Ak(t)]$ in the admissible control set includes the usual non-negativity constraints $c \geq 0$ and $i \geq 0$, which are inherent to the age structure (a negative amount of any specific vintage is not feasible). This, in turn, implies $k, y \geq 0$. The choice of $\mathcal{I}_{\bar{\iota}}$ ensures that $k(\cdot) \in W_{\mathrm{loc}}^{1,2}(0,+\infty; \mathbb{R}^+)$ for every $i(\cdot) \in \mathcal{I}_{\bar{\iota}}$, and, in particular, $k(\cdot)$ is continuous (as is evident from the integral form of (\ref{eq:VCM-accumulazionek})).

\subsubsection{Rewriting the problem in a Hilbert space setting}\label{sub:aas}
We introduce the infinite-dimensional space in which we reformulate the problem. It is defined as:
\[
{X} := {\mathbb{R}} \times L^2(-T,0;\R).
\]
A generic element $x\in{X}$ is denoted as a pair $x=(x^0, x^1)$. The scalar product on ${X}$ is the one defined for the product of Hilbert spaces, i.e.,
\[
\langle ({x}^0, {x}^1), ({z}^0, {z}^1) \rangle  := {x}^0 {z}^0 + \langle {x}^1, {z}^1 \rangle _{L^2},
\]
for every $({x}^0, {x}^1), ({z}^0, {z}^1) \in {X}$.
We define the operator $\mathcal{A}$ on ${X}$, which will be the adjoint of the generator of the $C_0$ semigroup that appears in the state equation of our problem:
\begin{equation}
\label{eq:defA-VCM}
\left\{
\begin{array}{ll}
D(\mathcal{A}) := \{ (\psi^0, \psi^1) \in {X} : \psi^1 \in  W^{1,2}(-T,0; {\mathbb{R}}), \; \psi^0 = \psi^1(0) \}, \\[4pt]
\mathcal{A}\colon D(\mathcal{A})\subseteq X \rightarrow {X}, \quad
\mathcal{A}(\psi^0, \psi^1) := (0, \frac{{\mathrm{d}}}{{\mathrm{d}}s} \psi^1).
\end{array}
\right.
\end{equation}

We also define the following three operators:
\[
\left\{
\begin{array}{ll}
B\colon D(\mathcal{A}) \rightarrow {\mathbb{R}}, \\[4pt]
B(\psi^0, \psi^1) := \psi^1(0) - \psi^1(-T),
\end{array}
\right.
\]
\begin{equation}
\label{defdiF}
\left\{
\begin{array}{ll}
F\colon L^2([-T,0); \mathbb{R}) \to L^2([-T,0); \mathbb{R}), \\[4pt]
F(z)(s) := -z(-T-s),
\end{array}
\right.
\end{equation}
and
\[
\left\{
\begin{array}{ll}
R\colon L^2([-T,0); \mathbb{R}) \to {\mathbb{R}}, \\[4pt]
R(z) := \int_{-T}^0 z(s) {\mathrm{d}}s.
\end{array}
\right.
\]

It will be useful to relax the relationship between $\iota$ and the initial datum of $k$ (we will use this for defining the solution of the HJB equation on an open set). Specifically, we will not necessarily impose $k(0) = \int_{-T}^0 \bar{\iota}(s) {\mathrm{d}}s$. The restriction will be re-introduced when we will come back to the applied problem. To achieve this, given $(k_0, \bar{\iota}) \in {X}$ and $i(\cdot) \in L^2_{\mathrm{loc}}([0,+\infty); \mathbb{R}^+)$, we define:
\begin{equation}
\label{eqintegralebis}
k_{k_0,\bar{\iota},i}(t) = k_0 - \int_{(-T+t) \wedge 0}^0 \bar{\iota}(s) {\mathrm{d}}s + \int_0^t i(s) {\mathrm{d}}s.
\end{equation}

Following \cite{VinterKwong}, we introduce now the structural state of the system, which connect the system's dynamics as a delay equation with the dynamics of a suitable evolution equation in ${X}$:
\begin{Theorem}
\label{thinmp}
Assume that $\bar{\iota} \in L^2([-T,0); \mathbb{R}^+)$, $i \in L^2_{\mathrm{loc}}([0,+\infty); \mathbb{R}^+)$, and $k_0 \in {\mathbb{R}}$, with $y = (k_0, F(\bar{\iota}))$. Then, for every $T > 0$, the \emph{structural state} 
\[
y_{y,i}(t) = (y_{y,i}^0(t), y_{y,i}^1(t)) = (k_{k_0,\bar{\iota},i}(t), F(i(t+\cdot)|_{[-r,0]}) 
\]
is the unique solution in
\begin{equation}
\label{defdiPi}
\Pi := \bigg\{ f \in C(0,T; {X}) \, : \,  f' \in L^2(0,T; D(\mathcal{A})') \bigg\},
\end{equation}
to the equation:
\begin{equation}
\label{eqBDDM}
\left\{
\begin{array}{ll}
 y'(t) = \mathcal{A}^* y(t) + {B}^* i(t), \quad t > 0,\\[4pt]
y(0) = x = (k_0, F(\bar{\iota})),
\end{array}
\right.
\end{equation}
where $\mathcal{A}^*$ and $B^*$ are the adjoints  of the operators $\mathcal{A}\colon D(\mathcal{A}) \rightarrow {X}$ and $B\colon D(\mathcal{A}) \rightarrow {\mathbb{R}}$.
\end{Theorem}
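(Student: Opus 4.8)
The plan is to read \eqref{eqBDDM} as an evolution equation in the extrapolation space $D(\mathcal{A})'$, to verify directly that the candidate structural state lies in $\Pi$ and satisfies the corresponding weak formulation, and then to conclude by uniqueness of mild solutions. Since $D(\mathcal{A})$ carries the graph norm, $\mathcal{A}\in\mathcal{L}(D(\mathcal{A}),X)$ and $B\in\mathcal{L}(D(\mathcal{A}),\mathbb{R})$; identifying $X$ with its dual, the adjoints satisfy $\mathcal{A}^{*}\in\mathcal{L}(X,D(\mathcal{A})')$ and $B^{*}\in\mathcal{L}(\mathbb{R},D(\mathcal{A})')$, so \eqref{eqBDDM} genuinely lives in $D(\mathcal{A})'$. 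A function $y\in\Pi$ (see \eqref{defdiPi}) solves it if and only if $y(0)=x$ and, for every test element $\psi=(\psi^1(0),\psi^1)\in D(\mathcal{A})$,
\begin{equation}\label{eq:weak-VCM}
\frac{d}{dt}\big\langle y(t),\psi\big\rangle_{X}=\int_{-T}^{0}y^1(t)(s)\,(\psi^1)'(s)\,\ud s+i(t)\big(\psi^1(0)-\psi^1(-T)\big)\qquad\text{in }\mathcal{D}'(0,T),
\end{equation}
because $\langle\mathcal{A}^{*}y(t),\psi\rangle_{D(\mathcal{A})',D(\mathcal{A})}=\langle y(t),\mathcal{A}\psi\rangle_{X}=\langle y^1(t),(\psi^1)'\rangle_{L^2}$ and $\langle B^{*}i(t),\psi\rangle=i(t)\,B\psi$.

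First I would settle the regularity, i.e.\ that $y_{y,i}\in\Pi$. Continuity of $t\mapsto k_{k_0,\bar{\iota},i}(t)$ into $\mathbb{R}$ is immediate from the integral representation \eqref{eqintegralebis}; continuity of $t\mapsto F\big(i(t+\cdot)|_{[-T,0]}\big)$ into $L^{2}(-T,0)$ follows because $F$ (see \eqref{defdiF}) is, up to a sign, an $L^{2}$-isometry and translation is continuous in $L^{2}_{\loc}$, once $i$ is extended by $\bar{\iota}$ on $[-T,0)$; hence $y_{y,i}\in C(0,T;X)$. The remaining requirement $y'_{y,i}\in L^{2}(0,T;D(\mathcal{A})')$ then follows a posteriori from \eqref{eq:weak-VCM}, since $y_{y,i}\in L^{2}(0,T;X)$, $\mathcal{A}^{*}\in\mathcal{L}(X,D(\mathcal{A})')$ and $i\in L^{2}(0,T)$.

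The core step is the verification of \eqref{eq:weak-VCM} for $y=y_{y,i}$. Writing $y^{1}_{y,i}(t)(s)=(F(i(t+\cdot)))(s)=-\,i(t-T-s)$ and performing the change of variable $\tau=t-T-s$, one obtains
\begin{equation}\label{eq:pairing-VCM}
\big\langle y_{y,i}(t),\psi\big\rangle_{X}=k_{k_0,\bar{\iota},i}(t)\,\psi^1(0)-\int_{t-T}^{t}i(\tau)\,\psi^1(t-T-\tau)\,\ud\tau .
\end{equation}
Differentiating \eqref{eq:pairing-VCM} in $t$ (rigorously, in integrated form, using only $i\in L^{2}_{\loc}$ and $\psi^1\in W^{1,2}$): the first term contributes $k'_{k_0,\bar{\iota},i}(t)\,\psi^1(0)$, and the fundamental theorem of calculus applied to \eqref{eqintegralebis} gives $k'_{k_0,\bar{\iota},i}(t)=i(t)-i(t-T)$ (with the convention $i|_{[-T,0)}=\bar{\iota}$); the delay integral contributes, by Leibniz' rule, the two boundary terms $-i(t)\psi^1(-T)+i(t-T)\psi^1(0)$ coming from the endpoints $\tau=t$ and $\tau=t-T$, together with $-\int_{t-T}^{t}i(\tau)(\psi^1)'(t-T-\tau)\,\ud\tau$. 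Undoing the change of variable in this last integral turns it into $\int_{-T}^{0}y^1_{y,i}(t)(s)(\psi^1)'(s)\,\ud s$, the two $i(t-T)\psi^1(0)$ terms cancel, and what remains is exactly the right-hand side of \eqref{eq:weak-VCM}. The initial condition is read off directly: $y_{y,i}(0)=\big(k_{k_0,\bar{\iota},i}(0),\,F(i(0+\cdot)|_{[-T,0]})\big)=(k_0,F(\bar{\iota}))=x$. Hence $y_{y,i}$ is a solution of \eqref{eqBDDM} in $\Pi$.

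Finally, for uniqueness I would invoke the extrapolation-space (Sobolev-tower) construction recalled in Section \ref{sec:formulation}: $\mathcal{A}^{*}$ extends to the generator of a $C_{0}$-semigroup on $D(\mathcal{A})'$, and since $s\mapsto B^{*}i(s)$ belongs to $L^{1}_{\loc}([0,\infty);D(\mathcal{A})')$, the Cauchy problem \eqref{eqBDDM} admits a unique mild solution, given by the variation-of-constants formula; a standard mollification/semigroup argument (cf.\ \cite{VinterKwong}) shows that any $y\in\Pi$ solving \eqref{eqBDDM} must coincide with it. Combined with the previous step, this identifies $y_{y,i}$ as \emph{the} solution in $\Pi$. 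I expect the main obstacle to be bookkeeping rather than conceptual: one has to be scrupulous about the space $D(\mathcal{A})'$ in which the equation is posed and, above all, about the boundary terms generated by the change of variables and by Leibniz' rule — it is precisely the point evaluations $\psi^1(0)$ and $\psi^1(-T)$ that reproduce the operator $B$ and the coupling $\psi^{0}=\psi^1(0)$ built into $D(\mathcal{A})$ — after which the argument reduces to the fundamental theorem of calculus and continuity of translation in $L^{2}$.
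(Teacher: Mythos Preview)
Your proposal is correct and essentially self-contained, whereas the paper does not prove this statement at all: it simply writes ``See Theorem 5.1, p.~258 of \cite{BDDM07} for a proof.'' What you have sketched --- interpreting \eqref{eqBDDM} weakly against test elements $\psi\in D(\mathcal{A})$, writing out the pairing \eqref{eq:pairing-VCM} via the change of variables $\tau=t-T-s$, differentiating and collecting boundary terms, then invoking the extrapolation-space semigroup for uniqueness --- is precisely the direct verification that underlies the cited result in \cite{BDDM07} (and, before it, \cite{VinterKwong}). Your computation checks out: the two $i(t-T)\psi^{1}(0)$ contributions do cancel, and the residual integral becomes $\int_{-T}^{0}y^{1}(t)(s)(\psi^{1})'(s)\,\ud s$ after undoing the substitution. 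The only remark is cosmetic: in your sign bookkeeping it is worth stating once that the convention $i|_{[-T,0)}=\bar{\iota}$ is in force (you do say this, but only parenthetically), since without it the formula $k'_{k_{0},\bar{\iota},i}(t)=i(t)-i(t-T)$ for $t<T$ is ambiguous.
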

See  Theorem 5.1, p. 258 of \cite{BDDM07} for a proof.

\subsubsection{The infinite-dimensional HJB equation and its explicit solution}
\label{sub:VCM_HJB}

To find the solution to the optimal control problem, we formulate the associated Hamilton-Jacobi-Bellman (HJB) equation in the infinite-dimensional space ${X}$. The analysis requires two key hypotheses that parallel those found in standard growth models. The first ensures that sustained growth is possible, while the second guarantees the finiteness of the value function.

To this end, let $\xi$ be the unique strictly positive root of the characteristic equation 
\[
z = A(1 - e^{-zT}). 
\]
Such a root exists if and only if the following condition holds.
\begin{Assumption}
\label{hyp:VCM_growth}
$A T > 1$.
\end{Assumption}

The root $\xi$ represents the maximum possible asymptotic growth rate of the capital stock when all output is reinvested. The second assumption ensures that the planner's discount rate $\rho$ is sufficiently high to prevent the utility integral from diverging.

\begin{Assumption}
\label{hyp:VCM_finiteness}
$\rho > \xi(1-\sigma)$.
\end{Assumption}
This is the counterpart of the standard finite-utility assumption in a one sector $AK$ model

The HJB equation for the value function $V(y)$ associated with the structural state $y = (k, F(\bar{\iota})) \in {X}$ is:
\begin{equation}
\label{eq:VCM_HJB}
\rho v(x) = \sup_{i \in [0, A x^0]} \left\{ \frac{(A x^0 - i)^{1-\sigma}}{1-\sigma} + \langle  D  v(x), \mathcal{A}^* x + B^* i \rangle_{{X}} \right\}.
\end{equation}
We try now to look for an explicit solution of this equation.

First, we define the functional $\Gamma_0: {X} \to \mathbb{R}$, which acts as a scalar ``equivalent capital'' stock:
\begin{equation}
\label{eq:VCM_Gamma0}
\Gamma_0(x) := x^0 + \int_{-T}^0 e^{\xi s} x^1(s) \,{\mathrm{d}}s,
\end{equation}
where $x = (x^0, x^1)$ is a generic element of ${X}$. The solution to the HJB equation can be defined on the open set 
\[
\mathcal{O} := \left\{ x = (x^0, x^1) \in {X} \;:\; \  \Gamma_0(x) > 0 \quad \text{and} \quad A x^0 > \nu^{-1/\sigma} \left(\frac{A}{\xi}\right)^{1/\sigma} \Gamma_0(x) \right\}.
\]

\begin{Proposition}
\label{prop:VCM_HJB_solution}
Suppose that Assumptions \ref{hyp:VCM_growth} and \ref{hyp:VCM_finiteness} are verified. Define  $v: \mathcal{O} \to \mathbb{R}$ as the function
\begin{equation}
\label{eq:VCM_ValueFunction}
v(x) = \nu \frac{[\Gamma_0(x)]^{1-\sigma}}{1-\sigma},
\end{equation}
where 
\[
\nu = \left(\frac{\rho-\xi(1-\sigma)}{\sigma}\right)^\sigma \left(\frac{A}{\xi}\right)^{1-\sigma}.
\]
Then $v$ 
is a classical solution to the HJB equation \eqref{eq:VCM_HJB} on the open subset $\mathcal{O}$.
\end{Proposition}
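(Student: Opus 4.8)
The plan is to check directly that $v$ meets all the requirements of Definition \ref{def:classicalsolution} on $\mathcal{O}$, bearing in mind that in this example the operator playing the role of ``$\mathcal{A}$'' in the general theory of Section \ref{sec:DP} is $\mathcal{A}^*$, so that its adjoint (the general ``$\mathcal{A}^*$'') is $\mathcal{A}^{**}=\mathcal{A}$, while the unbounded control operator is $B^*$; the fact that the resulting extra Hamiltonian term is well posed is the content of Remark \ref{rem:HJBunbounded}.

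First I would rewrite $\Gamma_0$ as a bounded linear functional. Setting $\eta:=(1,\,e^{\xi\,\cdot})\in X$ one has $\Gamma_0(x)=\langle x,\eta\rangle$ for every $x\in X$, and $\eta\in D(\mathcal{A})$ since $s\mapsto e^{\xi s}$ belongs to $W^{1,2}(-T,0;\mathbb{R})$ and $\eta^1(0)=1=\eta^0$. Hence $v(x)=\nu\,\langle x,\eta\rangle^{1-\sigma}/(1-\sigma)$ is Fr\'echet differentiable on the open set $\mathcal{O}$ (on which $\Gamma_0(x)>0$), with gradient $Dv(x)=\nu\,\Gamma_0(x)^{-\sigma}\,\eta$; as the direction $\eta$ is a fixed element of $D(\mathcal{A})$, the map $x\mapsto Dv(x)$ is continuous from $\mathcal{O}$ into $D(\mathcal{A})$, that is, $v\in\mathcal{S}(\mathcal{O})$ in the required sense.

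Next I would evaluate the two pairings in \eqref{eq:VCM_HJB}. Since $Dv(x)\in D(\mathcal{A})=D(\mathcal{A}^{**})$, the term $\langle Dv(x),\mathcal{A}^* x\rangle$ is read (exactly as in Definition \ref{def:classicalsolution}) as $\langle x,\mathcal{A}\,Dv(x)\rangle$, and using $\mathcal{A}\eta=(0,\xi e^{\xi\cdot})$ together with the definition of $\Gamma_0$ this equals $\nu\xi\,\Gamma_0(x)^{-\sigma}\bigl(\Gamma_0(x)-x^0\bigr)$. Likewise $\langle Dv(x),B^* i\rangle= i\, B\!\left(Dv(x)\right)= i\,\nu\,\Gamma_0(x)^{-\sigma}\,B\eta$, with $B\eta=\eta^1(0)-\eta^1(-T)=1-e^{-\xi T}$; and here the defining property of $\xi$ intervenes decisively, since the characteristic equation $z=A(1-e^{-zT})$ gives $1-e^{-\xi T}=\xi/A$, hence $B\eta=\xi/A$. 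I would then pass to the consumption variable $c:=Ax^0-i\in[0,Ax^0]$: after substitution into the bracket of \eqref{eq:VCM_HJB} all the terms in $x^0$ cancel, and the maximization reduces to
\[
\sup_{c\in[0,Ax^0]}\Bigl\{\tfrac{c^{1-\sigma}}{1-\sigma}-\lambda\, c\Bigr\}+\nu\xi\,\Gamma_0(x)^{1-\sigma},\qquad \lambda:=\tfrac{\nu\xi}{A}\,\Gamma_0(x)^{-\sigma}>0 .
\]
The map $c\mapsto \tfrac{c^{1-\sigma}}{1-\sigma}-\lambda c$ is strictly concave on $(0,\infty)$ with unique maximizer $c^*=\lambda^{-1/\sigma}=\nu^{-1/\sigma}(A/\xi)^{1/\sigma}\Gamma_0(x)$, and the two inequalities defining $\mathcal{O}$ say precisely that $0<c^*<Ax^0$; hence the supremum is interior and equals $\tfrac{\sigma}{1-\sigma}(c^*)^{1-\sigma}$. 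Substituting back, both sides of \eqref{eq:VCM_HJB} become scalar multiples of $\Gamma_0(x)^{1-\sigma}$, and equating the multipliers (using once more $B\eta=\xi/A$) produces a single scalar identity which holds exactly for the constant $\nu$ of the statement; Assumption \ref{hyp:VCM_finiteness} ($\rho>\xi(1-\sigma)$) guarantees that this $\nu$ is a well-defined, strictly positive real number, so all the manipulations are legitimate. This shows $v$ solves \eqref{eq:VCM_HJB} on $\mathcal{O}$ in the sense of Definition \ref{def:classicalsolution}.

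The calculation itself is routine; I expect the two delicate points to be the following. The first is the $\mathcal{A}\leftrightarrow\mathcal{A}^*$ bookkeeping that makes the unbounded term $\langle Dv(x),B^* i\rangle$ meaningful — this causes no trouble because $Dv(x)$ never leaves $D(\mathcal{A})$ for $x\in\mathcal{O}$, so one is squarely within the framework of Remark \ref{rem:HJBunbounded}. The second, and the genuine crux, is checking that the unconstrained optimal consumption $c^*$ actually lies inside the feasible interval $[0,Ax^0]$: this is not automatic, and it is exactly why the candidate is defined only on the open set $\mathcal{O}$ and not on the whole cone — outside $\mathcal{O}$ the relevant maximizer would sit on the boundary $\{i=0\}$ and a value function of this explicit form would no longer solve the equation.
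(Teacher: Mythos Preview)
Your proposal is correct and follows essentially the same route as the paper: both verify directly that $Dv(x)=\nu\Gamma_0(x)^{-\sigma}\eta$ with $\eta=(1,e^{\xi\cdot})\in D(\mathcal{A})$, exploit the characteristic equation to get $B\eta=\xi/A$, check that the definition of $\mathcal{O}$ forces the maximizer to be interior, and then reduce the HJB identity to a scalar equation determining $\nu$. Your change of variable to $c=Ax^0-i$ is a minor cosmetic difference from the paper's direct computation in $i$, but the structure and all key steps coincide.
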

\begin{proof}
We directly check that, for some positive $\nu$ that we will characterize, we can find a solution of the form \eqref{eq:VCM_ValueFunction}.

If $v$ is in the form \eqref{eq:VCM_ValueFunction} then it is continuously Fr\'echet differentiable on the open set $\mathcal{O}$ and its differential is given by
\begin{equation}
\label{eq:VCM_gradient}
 D  v(x) = \nu [\Gamma_0(x)]^{-\sigma}  D  \Gamma_0(x),
\end{equation}
where 
\[
D  \Gamma_0(x) = (1, \{s \mapsto e^{\xi s}\}) \in {X}.
\]
One can easily prove that $ D  \Gamma_0(x) \in D(\mathcal{A})$ (defined in \eqref{eq:defA-VCM}) and then the gradient $ D  v(x)$ also belongs to $D(\mathcal{A})$ for all $x \in \mathcal{O}$ and $D  v$ is continuous from $\mathcal{O}$ to $D(A)$ (endowed with the graph norm) satisfying the regularity requirements for a classical solution.

Since the function id concave, the maximizer of the Hamiltonian, which gives the candidate optimal control $i^*(x)$, is found by taking the first-order condition with respect to $i$ in the expression inside the supremum of \eqref{eq:VCM_HJB}:
\[
0= \frac{\partial}{\partial i} \left[ \frac{(A x^0 - i)^{1-\sigma}}{1-\sigma} + i B( D  v(x)) \right] = -(A x^0 - i)^{-\sigma} + B( D  v(x)).
\]
This gives the condition $(A x^0 - i)^{-\sigma} = B( D  v(x))$.
Since $B$ is the evaluation map $B(\psi) = \psi^1(0) - \psi^1(-T)$, its application to the gradient yields:
\[
B( D  v(x)) = \nu [\Gamma_0(x)]^{-\sigma} B( D  \Gamma_0(x)) = \nu [\Gamma_0(x)]^{-\sigma} (e^{\xi \cdot 0} - e^{\xi (-T)}) = \nu [\Gamma_0(x)]^{-\sigma} (1 - e^{-\xi T}).
\]
Using the characteristic equation for the root $\xi$, we have $1 - e^{-\xi T} = \xi/A$. Thus, we find
\[
B( D  v(x)) = \nu [\Gamma_0(x)]^{-\sigma} \frac{\xi}{A}.
\]
From the first-order condition, the optimal investment $i^*(x)$ is given by
\[
i^*(x) = A x^0 - [B( D  v(x))]^{-1/\sigma},
\]
which explicitly becomes:
\begin{equation}
\label{eq:VCM_i_star}
i^*(x) = A x^0 - \left( \nu [\Gamma_0(x)]^{-\sigma} \frac{\xi}{A} \right)^{-1/\sigma} = A x^0 - \nu^{-1/\sigma} \left(\frac{A}{\xi}\right)^{1/\sigma} \Gamma_0(x)
\end{equation}
(remark that $i^*(x)$ is in $(0, Ax^0)$ thanks to definition of $\mathcal{O}$),
\[
(A x^0 - i^*(x))^{1-\sigma} = B( D  v(x))^{-(1-\sigma)/\sigma} =\left( \nu [\Gamma_0(x)]^{-\sigma} \frac{\xi}{A} \right)^{(1-\sigma)/\sigma}
\]
and
\begin{align*}
i^*(x) B(\nabla v(x)) &= \left( A x^0 - \nu^{-1/\sigma} \left(\frac{A}{\xi}\right)^{1/\sigma} \Gamma_0(x) \right) \left( \nu [\Gamma_0(x)]^{-\sigma} \frac{\xi}{A} \right) \\&= \xi \nu x^0 [\Gamma_0(x)]^{\sigma} - \left( \nu [\Gamma_0(x)]^{\sigma} \frac{\xi}{A} \right)^{-(1-\sigma)/\sigma}.
\end{align*}
Let us compute the term $\langle  D  v(x), \mathcal{A}^*x \rangle_{{X}} = \langle \mathcal{A}( D  v(x)), x \rangle_{{X}}$. From the definition of operator $\mathcal{A}$, we have
\[
\mathcal{A}( D  v(x)) = \nu [\Gamma_0(x)]^{-\sigma} \mathcal{A}( D  \Gamma_0(x)) = \nu [\Gamma_0(x)]^{-\sigma} (0, \{s \mapsto \xi e^{\xi s}\}).
\]
Therefore,
\[
\langle \mathcal{A}( D  v(x)), x \rangle_{{X}} = \nu [\Gamma_0(x)]^{-\sigma} \int_{-T}^0 \xi e^{\xi s} x^1(s) \,{\mathrm{d}}s = \xi \nu [\Gamma_0(x)]^{-\sigma} (\Gamma_0(x) - x^0).
\]

Plugging the expressions for each term into the HJB equation 
\[
\rho v(x) = \langle \mathcal{A} D  v(x),  x \rangle + \sup_{i \in [0, A x^0]} \left\{ \frac{(A k - i)^{1-\sigma}}{1-\sigma} + \langle  D  v(y), \mathcal{B}^* i \rangle \right\}.
\]
we have that $v$ is a solution if and only if
\begin{align*}
\rho \nu \frac{[\Gamma_0(x)]^{1-\sigma}}{1-\sigma} = &\; \xi \nu [\Gamma_0(x)]^{-\sigma} (\Gamma_0(x) - x^0) \\
&+ \frac{\sigma}{1-\sigma}\left(\nu [\Gamma_0(x)]^{-\sigma} \frac{\xi}{A}\right)^{(\sigma-1)/\sigma} + x^0 \nu [\Gamma_0(x)]^{-\sigma} \xi.
\end{align*}
Now, we multiply the entire equation by $\frac{1-\sigma}{\nu [\Gamma_0(x)]^{-\sigma}}$ to simplify:
\begin{align*}
\rho [\Gamma_0(x)] = &\; \xi (1-\sigma) (\Gamma_0(x) - x^0) \\
&+ \frac{\sigma}{\nu} \left(\nu \frac{\xi}{A}\right)^{(\sigma-1)/\sigma} \Gamma_0(x) + (1-\sigma) \xi x^0.
\end{align*}
Let us expand and rearrange the terms:
\[
\rho \Gamma_0(x) = \xi (1-\sigma) \Gamma_0(x) - \xi (1-\sigma) x^0 + \frac{\sigma}{\nu} \left(\nu \frac{\xi}{A}\right)^{(\sigma-1)/\sigma} \Gamma_0(x) + (1-\sigma) \xi x^0.
\]
The terms involving $x^0$ cancel each other out so we are left with an equation where we can factor out $\Gamma_0(x)$:
\[
\rho \Gamma_0(x) = \left[ \xi(1-\sigma) + \sigma \nu^{-1/\sigma} \left(\frac{\xi}{A}\right)^{(\sigma-1)/\sigma} \right] \Gamma_0(x).
\]
which holds for any $x \in \mathcal{O}$ if and only if
\[
\nu = \left(\frac{\rho-\xi(1-\sigma)}{\sigma}\right)^\sigma \left(\frac{A}{\xi}\right)^{1-\sigma}.
\]
This confirms that the function $v(x)$ with this specific constant $\nu$ is indeed a solution to the HJB equation.
\end{proof}

\begin{Remark}
The (candidate, so far) value function is homogeneous of degree $1-\sigma$, not in the capital stock $k$ itself, but in the ``equivalent capital'' $\Gamma_0(x)$. This quantity discounts the past investments (contained in $x^1$) using the maximal growth rate $\xi$, effectively translating the entire vintage capital structure into a single scalar value.
\end{Remark}

\subsubsection{The solution of the model}
\label{sub:VCM_solution}

Before presenting the main results on the solution of the optimal control problem, we make some remarks on the state constraints. In our Hilbert space formulation, we have not explicitly included constraints such as the positivity of the capital stock, $k(t) \ge 0$, which corresponds to $x^0(t) \ge 0$. Moreover, the non-negativity of the investment control, $i(t) \ge 0$, is a natural requirement in a vintage capital model, where each past investment corresponds to a distinct generation of machines.

Let's consider an initial state $x=(k_0, F(\bar{\iota}))$ such that $k_0 > 0$ and the past investments are non-negative, i.e., $\bar{\iota}(s) \ge 0$ for $s \in [-T, 0)$. This implies that the initial structural state $x$ has a positive first component $x^0 > 0$ and a second component $x^1$ which is non-positive.
The candidate optimal feedback control is given by $\phi(x) = i^*(x)$ as defined in \eqref{eq:VCM_i_star}. By the very definition of the domain $\mathcal{O}$ in which we work, the feedback $\phi(x)$ is strictly positive for any $x \in \mathcal{O}$.
If a trajectory $y(t)$ starting from $x \in \mathcal{O}$ remains in $\mathcal{O}$ for all future times, then the investment control $i(t) = \phi(y(t))$ will always be positive. Consequently, the capital stock $k(t) = y^0(t)$, being an integral of past positive investments over the time window of length $T$, will also remain positive.
Therefore, the crucial step is to prove that the trajectory remains within the domain $\mathcal{O}$.

To make the argument cleaner, we state a first verification theorem, which directly follows from the general theory, where we suppose the trajectory to remain in $\mathcal{O}$. Then we will give a sufficient condition (on the parameters) to ensure that trajectories starting from economically meaningful initial data (i.e., with $k_0 > 0$ and $\bar{\iota}(s) \ge 0$) remain in $\mathcal{O}$. 

\begin{Theorem}[Verification Theorem]
\label{th:VCM_verification}
Suppose that Assumptions \ref{hyp:VCM_growth} and \ref{hyp:VCM_finiteness} hold. Let $\phi: \mathcal{O} \to \mathbb{R}$ be the feedback map defined by \eqref{eq:VCM_i_star}. Let $x \in \mathcal{O}$ be an initial state with $x^0>0$ and $x^1 \le 0$. If the unique solution $y(\cdot)$ to the closed-loop system
\[
 y'(t) = \mathcal{A}^* y(t) + \mathcal{B}^* \phi(y(t)), \quad y(0) = x,
\]
remains in $\mathcal{O}$ for all $t \ge 0$, then the control $u^*(t) = \phi(x(t))$ is optimal for the infinite-dimensional problem, and the value function is given by $V(x) = v(x)$, where $v$ is defined in \eqref{eq:VCM_ValueFunction}.
\end{Theorem}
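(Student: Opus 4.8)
The plan is to derive Theorem~\ref{th:VCM_verification} as an application of the stationary Verification Theorem~\ref{teo:verst}, in the form valid for unbounded control operators recorded in Remark~\ref{rem:VerificUnboundedSStaz}. Indeed \eqref{eqBDDM} is exactly of the type treated in Subsection~\ref{SSSE:Bunbounded}, with the generator taken to be $\mathcal{A}^*$ and the unbounded control operator $B^*\colon\mathbb{R}\to D(\mathcal{A})'$ (the jump functional $B$ is bounded on $D(\mathcal{A})$ but not on $X$), and Theorem~\ref{thinmp} furnishes the well-posedness of the structural state in the sense needed there; so what remains is to check the hypotheses of Theorem~\ref{teo:verst}: (a) $v\in\mathcal{S}(\mathcal{O})$ is a classical solution of the HJB equation \eqref{eq:VCM_HJB}; (b) the feedback map attains the supremum defining the Hamiltonian at $p=Dv$; (c) the closed-loop control $u^*(\cdot)$ belongs to the admissible set $\mathcal{U}(x)=\mathcal{I}_{\bar\iota}$; (d) the transversality conditions \eqref{tracont}--\eqref{tracontbis} hold. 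Point (a) is precisely Proposition~\ref{prop:VCM_HJB_solution}. Point (b) is immediate: $\phi(x)=i^*(x)$ in \eqref{eq:VCM_i_star} was obtained from the first-order condition for $i\mapsto\frac{(Ax^0-i)^{1-\sigma}}{1-\sigma}+iB(Dv(x))$, which is strictly concave on $[0,Ax^0]$, so $\phi(x)$ is its unique maximiser and \eqref{feedverst} holds along every closed-loop trajectory.

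For admissibility (c) I would argue as follows. By the standing assumption $y(\cdot)$ stays in $\mathcal{O}$, and the second defining inequality of $\mathcal{O}$ gives $0<i^*(y(t))<Ay^0(t)=Ak(t)$ for all $t\ge0$, so the control values lie in $[0,Ak(t)]$; moreover $u^*(\cdot)$ is locally $L^2$ because $y(\cdot)\in C([0,\infty);X)$ (Theorem~\ref{thinmp}) and $\phi$ is continuous on $\mathcal{O}$. Positivity of $k(\cdot)=y^0(\cdot)$ then follows from the integral representation \eqref{eqintegralebis}--\eqref{eq:VCM-accumulazionek}: for $t\ge T$, $k(t)=\int_{t-T}^t u^*(s)\,\mathrm{d}s>0$, while for $t\in[0,T)$ the part coming from $\bar\iota\ge0$ is nonnegative and the contribution of the recent strictly positive investments together with $x^0>0$ keeps $k(t)>0$. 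For finiteness of $\mathcal{J}(x;u^*)$ one uses that along the closed loop the optimal consumption is $c^*(t)=Ak(t)-i^*(y(t))=\nu^{-1/\sigma}(A/\xi)^{1/\sigma}\Gamma_0(y(t))$ and, since a direct computation (using the characteristic relation $1-e^{-\xi T}=\xi/A$) gives $\frac{d}{dt}\Gamma_0(y(t))=\xi\Gamma_0(y(t))-\frac{\xi}{A}(Ak(t)-i(t))$ along any admissible trajectory, the closed loop reduces to a linear scalar ODE and $\Gamma_0(y(t))=\Gamma_0(x)\,e^{g^*t}$ for a constant $g^*$; substituting into the HJB identity one checks $\rho-g^*(1-\sigma)=\tfrac{\rho-\xi(1-\sigma)}{\sigma}>0$ by Assumption~\ref{hyp:VCM_finiteness}, so the discounted running payoff is $O\!\big(e^{-\frac{\rho-\xi(1-\sigma)}{\sigma}t}\big)$, hence integrable.

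The transversality conditions (d) are where the real work lies. Along the closed loop the same computation gives $e^{-\rho r}v(y(r))=\frac{\nu}{1-\sigma}\Gamma_0(x)^{1-\sigma}e^{-(\rho-g^*(1-\sigma))r}\to0$, which supplies both \eqref{tracontbis} and the limit needed in part~(ii) along $y^*$. For part~(i), i.e. for \eqref{tracont}, one cannot insert an arbitrary competing $u(\cdot)\in\mathcal{U}(x)$ directly, since $v$ is only defined on $\mathcal{O}$; instead I would use the a priori growth bound $\Gamma_0(y^{x,u}(t))\le\Gamma_0(x)\,e^{\xi t}$, which follows from the ODE above together with $Ak(t)-i(t)\ge0$. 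When $\sigma\in(0,1)$, this bound and $v\ge0$ on $\mathcal{O}$ make \eqref{tracont} automatic for trajectories remaining in $\mathcal{O}$, and the remaining cases are handled via the dominating-control clause of Theorem~\ref{teo:verst}(i); when $\sigma>1$, $v<0$ on $\mathcal{O}$, and one invokes that same clause, replacing $u$ by a control whose trajectory stays in $\mathcal{O}$ and for which the bound forces $e^{-\rho r}v(\hat y(r))\to0$. With (a)--(d) in hand, Theorem~\ref{teo:verst} yields $v(x)\ge V(x)$ and, since $u^*$ realises $\mathcal{J}(x;u^*)=v(x)$, also $V(x)=v(x)$, with $u^*$ optimal.

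I expect the genuine obstacle to be step~(d) in the case $\sigma>1$: making the comparison/truncation argument precise so that Theorem~\ref{teo:verst}(i) applies while respecting that $v$ lives only on $\mathcal{O}$, and excluding trajectories along which $e^{-\rho r}v(y^{x,u}(r))$ fails to vanish. The admissibility bookkeeping of step~(c) --- in particular propagating positivity of $k(\cdot)$ from the hypothesis that $y(\cdot)$ remains in $\mathcal{O}$, and verifying the $\Gamma_0$-ODE --- is routine but should be carried out carefully.
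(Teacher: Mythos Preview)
Your approach is exactly the paper's: invoke the stationary Verification Theorem~\ref{teo:verst} (in its unbounded-operator form, Remark~\ref{rem:VerificUnboundedSStaz}), with the transversality conditions following from Assumption~\ref{hyp:VCM_finiteness}. The paper's own proof is a two-line reference to Theorem~\ref{teo:verst}, whereas you have correctly supplied the details --- in particular the $\Gamma_0$-ODE along trajectories, the explicit growth rate $g^*$ of $\Gamma_0(y^*(t))$, and the identity $\rho-g^*(1-\sigma)=\tfrac{\rho-\xi(1-\sigma)}{\sigma}$ --- that make the transversality check concrete; your flag on the $\sigma>1$ case for part~(i) is a point the paper simply does not address.
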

\begin{proof}
The result follows directly from applying Theorem \ref{teo:verst}. Remark that the demanded transversality is satisfied due to Assumption \ref{hyp:VCM_finiteness}.
\end{proof}

The next result provides a sufficient condition on the model's parameters for ensuring that the trajectory driven by the (candidate optimal) feedback remains in $\mathcal{O}$, thus providing a complete solution to the problem. The condition is expressed in the following assumption.

\begin{Assumption}
\label{hyp:VCM_interior}
$\displaystyle \frac{\rho - \xi(1-\sigma)}{\sigma} < A$.
\end{Assumption}

\begin{Theorem}
\label{th:VCM_main_solution}
Suppose that Assumption \ref{hyp:VCM_growth}, \ref{hyp:VCM_finiteness}, and \ref{hyp:VCM_interior} hold.
Let $x = (k_0, F(\bar{\iota}))$ be an initial state such that $k_0 > 0$ and $\bar{\iota}(s) \ge 0$ for a.e. $s \in [-T,0)$.
Then the closed-loop system driven by the feedback map $\phi(x)$ from \eqref{eq:VCM_i_star} has a unique solution $y(\cdot)$ starting from $x$, and this solution remains in $\mathcal{O}$ for all $t \ge 0$, moreover $y^0$ and $i$ remains positive.
Consequently, the control $i^*(t) = \phi(y(t))$ is optimal for the original economic/DDE problem (\ref{eqboucekkine})-(\ref{eq:VCM-funzionaleinc}) with positivity contraints for capital, investment (and consumption), the related value function is $V(x)=v(x)$.
\end{Theorem}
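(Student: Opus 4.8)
The plan is to verify the two hypotheses of the verification Theorem~\ref{th:VCM_verification} — existence of a global mild solution to the closed-loop system and its permanence in $\mathcal{O}$ — and then read off optimality and $V=v$ from that theorem, translating back to \eqref{eqboucekkine}--\eqref{eq:VCM-funzionaleinc} at the end. Throughout I write $\phi(x)=i^{*}(x)=Ax^{0}-m\,\Gamma_{0}(x)$ with $m:=\nu^{-1/\sigma}(A/\xi)^{1/\sigma}$; combining the characteristic equation $\xi=A(1-e^{-\xi T})$ with the identity $\sigma\nu^{-1/\sigma}(\xi/A)^{(\sigma-1)/\sigma}=\rho-\xi(1-\sigma)$ derived in the proof of Proposition~\ref{prop:VCM_HJB_solution} gives the relation
\[
m\,(1-e^{-\xi T})=\frac{\rho-\xi(1-\sigma)}{\sigma},
\]
which is what will tie Assumption~\ref{hyp:VCM_interior} to a sign condition below. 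Since $x\mapsto(x^{0},\Gamma_{0}(x))$ is a bounded linear map, $\phi$ is a bounded linear functional on all of $X$; hence the closed-loop equation $y'(t)=\mathcal{A}^{*}y(t)+B^{*}\phi(y(t))$, $y(0)=x$, is a bounded linear perturbation of \eqref{eqBDDM} and, by Theorem~\ref{thinmp} together with a contraction/continuation argument and linearity to exclude blow-up, admits a unique global solution $y(\cdot)$. Crucially, $i(t):=\phi(y(t))$, $c(t):=Ay^{0}(t)-i(t)$ and $\Gamma_{0}(y(t))$ are then well defined and continuous for all $t\ge 0$ whether or not $y(t)\in\mathcal{O}$, which is what will let me run a continuation argument without circularity.

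First I would extract the two ``cheap'' facts. Applying the chain rule of Proposition~\ref{prop:chainunbounded} with the test function $\Gamma_{0}$ — using $D\Gamma_{0}=(1,\{s\mapsto e^{\xi s}\})\in D(\mathcal{A})$, $\mathcal{A}D\Gamma_{0}=(0,\{s\mapsto\xi e^{\xi s}\})$ and $B(D\Gamma_{0})=1-e^{-\xi T}=\xi/A$ — and inserting $\phi(y(t))=Ay^{0}(t)-m\Gamma_{0}(y(t))$, the $y^{0}$-terms cancel and one obtains $\frac{d}{dt}\Gamma_{0}(y(t))=g\,\Gamma_{0}(y(t))$ with $g:=\xi(1-m/A)=\frac{\xi-\rho}{\sigma}$, hence $\Gamma_{0}(y(t))=\Gamma_{0}(x)e^{gt}$ for all $t$. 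Next, for an admissible datum $x=(k_{0},F(\bar\iota))$ with $k_{0}=\int_{-T}^{0}\bar\iota$ and $\bar\iota\ge 0$, a change of variable gives $\Gamma_{0}(x)=\int_{-T}^{0}\bigl(1-e^{-\xi(T+s)}\bigr)\bar\iota(s)\,ds$, which is strictly positive because $1-e^{-\xi(T+s)}>0$ on $(-T,0]$ and $\bar\iota\not\equiv 0$ (as $k_{0}>0$). Therefore $\Gamma_{0}(y(t))=\Gamma_{0}(x)e^{gt}>0$ for every $t\ge 0$: the first defining inequality of $\mathcal{O}$ holds along the whole trajectory, and it remains only to prove the second one, i.e. $i(t)>0$.

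The heart of the argument is this positivity, and it is where Assumption~\ref{hyp:VCM_interior} enters. From $y^{1}(t)(s)=-i(t-T-s)$ (Theorem~\ref{thinmp}, with $i$ extended by $\bar\iota$ on $[-T,0)$) and $k(t)=y^{0}(t)=\int_{t-T}^{t}i(r)\,dr$, a change of variable yields $\Gamma_{0}(y(t))=\int_{0}^{T}(1-e^{-\xi\rho})\,i(t-T+\rho)\,d\rho$, and hence the renewal identity
\[
i(t)=Ak(t)-m\,\Gamma_{0}(y(t))=\int_{0}^{T}\kappa(\rho)\,i(t-T+\rho)\,d\rho,\qquad \kappa(\rho):=A-m(1-e^{-\xi\rho}),
\]
valid for all $t\ge 0$. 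As $\kappa$ is decreasing on $[0,T]$, $\kappa(\rho)\ge\kappa(T)=A-m(1-e^{-\xi T})=A-\frac{\rho-\xi(1-\sigma)}{\sigma}>0$ exactly by Assumption~\ref{hyp:VCM_interior}, so $\kappa>0$ on $[0,T]$. Evaluating at $t=0$ gives $i(0)=i^{*}(x)=\int_{-T}^{0}\kappa(s+T)\,\bar\iota(s)\,ds>0$, so $x\in\mathcal{O}$; and $i$ is continuous at $0$, so $\tau^{*}:=\sup\{\tau\ge 0:\ i>0 \text{ on } [0,\tau]\}>0$. If $\tau^{*}<\infty$, then $i(\tau^{*})=0$ by continuity, yet on $[\tau^{*}-T,\tau^{*}]$ the function $i$ is nonnegative a.e. (it equals $\bar\iota\ge 0$ on the part below $0$ and is positive on the part inside $[0,\tau^{*})$) and strictly positive on a set of positive measure; since $\kappa>0$, the renewal identity forces $i(\tau^{*})>0$ — a contradiction. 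Hence $i(t)>0$ for all $t\ge 0$, so $k(t)=\int_{t-T}^{t}i>0$ and $Ak(t)>i(t)$ (because $m\Gamma_{0}(y(t))>0$); that is, $y(t)\in\mathcal{O}$ for all $t\ge 0$, with $y^{0}$ and $i$ strictly positive.

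With the trajectory confined to $\mathcal{O}$, Theorem~\ref{th:VCM_verification} gives at once that $u^{*}(t)=\phi(y(t))$ is optimal for the infinite-dimensional problem and that $V(x)=v(x)$ (the transversality condition there being covered by Assumption~\ref{hyp:VCM_finiteness}, since $e^{-\rho t}v(y(t))=\nu\frac{\Gamma_{0}(x)^{1-\sigma}}{1-\sigma}e^{(g(1-\sigma)-\rho)t}$ and $g(1-\sigma)<\rho\Leftrightarrow\xi(1-\sigma)<\rho$). Returning to the original formulation, $i^{*}(t)=\phi(y(t))\in(0,Ak(t))$ by the previous paragraph, so $c(t)=Ak(t)-i^{*}(t)=m\Gamma_{0}(y(t))>0$ and $k(t)>0$; thus $i^{*}(\cdot)\in\mathcal{I}_{\bar\iota}$, it is optimal for \eqref{eqboucekkine}--\eqref{eq:VCM-funzionaleinc} under the positivity constraints on capital, investment and consumption, and the corresponding value is $v(x)$. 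I expect the genuinely delicate step to be the third paragraph: showing $i(t)>0$ for all $t$, i.e. that the unconstrained HJB feedback is feasible for the state/control constraints — the key being the recasting of the feedback dynamics as a renewal equation with kernel $\kappa$, whose positivity is precisely what Assumption~\ref{hyp:VCM_interior} buys.
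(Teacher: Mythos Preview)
Your proof is correct and follows essentially the same route as the paper: both reduce the question to the renewal identity $i(t)=\int_{-T}^{0}\kappa(T+s)\,i(t+s)\,ds$ with kernel $\kappa(\cdot)=A-m(1-e^{-\xi\cdot})$, observe that Assumption~\ref{hyp:VCM_interior} is exactly $\kappa(T)>0$ (hence $\kappa>0$ on $[0,T]$), and run the first-hitting-time argument to propagate strict positivity of $i$. You add welcome detail the paper leaves implicit --- global well-posedness of the linear closed loop, the explicit evolution $\Gamma_{0}(y(t))=\Gamma_{0}(x)e^{gt}$, initial membership $x\in\mathcal{O}$, and an explicit transversality check --- but the core mechanism is identical. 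One cosmetic remark: you reuse $\rho$ both for the discount rate and as the integration variable in $\kappa(\rho)$; renaming the latter would avoid a clash, and your renewal identity for $t\in[0,T)$ tacitly uses $k_{0}=\int_{-T}^{0}\bar\iota$, which is indeed the constraint built into \eqref{eqboucekkine}.
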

\begin{proof}
We need to show that if we start from $x \in \mathcal{O}$, the trajectory $y(t)$ remains in $\mathcal{O}$ (our discussion before Theorem \ref{th:VCM_verification} already shows that, in this case, if one starts from positive $k=x^0$ and $\bar\iota$ than the system will maintain positive capital and investment over time). This means we need to prove that $i(t) = \phi(y(t)) > 0$ for all $t \ge 0$. 

Along the candidate optimal trajectory, from \eqref{eq:VCM_i_star} and using the expressions for the state components from Theorem \ref{thinmp} and equation \eqref{eqintegralebis}, we have:
\begin{multline}
i(t) = A y^0(t) - \left( \nu [\Gamma_0(y(t))]^{-\sigma} \frac{\xi}{A} \right)^{-1/\sigma} = A y^0(t) - \nu^{-1/\sigma} \left(\frac{A}{\xi}\right)^{1/\sigma} \Gamma_0(y(t))\\
= \left(A - \frac{\rho-\xi(1-\sigma)}{\sigma\xi/A} \right) \left( \int_{t-T}^t i(s) \ud s \right) - \frac{\rho-\xi(1-\sigma)}{\sigma\xi/A} \int_{-T}^0 e^{\xi s} F(i(t+\cdot))(s) \ud s \\
=\int_{-T}^{0} \left[ A - \left( \frac{\rho-\xi(1-\sigma)}{\sigma} \frac{A}{\xi} \right) \left( 1 - e^{\xi(-T -s)} \right) \right] i(t+s) \ud s.
\end{multline}
Thanks to Assumption \ref{hyp:VCM_interior}, the weight multiplying $i(s)$ inside the integral is strictly positive for every $s$. This implies that if the investment path $i(\cdot)$ has been positive in the past (or non-negative and not identically zero), then the resulting $i(t)$ must be strictly positive. It is therefore impossible for the investment to reach zero (or become negative) for the first time.

Therefore, the trajectory remains in $\mathcal{O}$, and by Theorem \ref{th:VCM_verification}, the proof is complete.
\end{proof}

\subsubsection{Variants and Literature}\label{vandl:delay}
The study of vintage capital models with delay dynamics in an intertemporal optimization context began with the optimal replacement model proposed by \cite{Malcomson1975}, while one of the first contributions in the realm of optimal growth theory comes from \cite{Benhabib1991}.

In the model considered here, the scrapping time $T$ is exogenous and constant, whereas in several other models (such as those already cited, or also \cite{Boucekkine1997} and \cite{boucekkine2011revisiting}), it is endogenous (for example, due to the cost of maintenance, which increases with the age of the machines). In the model by \cite{boucekkine2011revisiting}, where the lifetime $T$ is endogenous but constant at the optimum, the authors solve the model using an infinite-dimensional formulation. For additional references, see also \cite{Boucekkine2008, Fabbri2008, Fabbri2008a, Boucekkine2011} and the works cited therein.

In a series of other models using a similar formalization (optimal control problem with e DDE state equation), attention has been focused on human capital rather than physical capital accumulation. Examples include the works of \cite{deLaCroix1999, Boucekkine2002, Boucekkine2011, Boucekkine2014}. The last two articles provide explicit feedback solutions to the optimization problem, obtained through the formulation of the problem in Hilbert spaces.

Another strand of literature where delays naturally appear assumes that agents modify their preferences according to \emph{internal habits}, i.e., based on their past consumption habits. One of the classical models in this context is \cite{Constantinides1990}, originally formulated in discrete time and later extended to a continuous-time setting with delay differential equations by several authors, including \cite{Detemple1991} and, more recently, \cite{AugeraudVeron2015}. Models with habits have also been studied through a reformulation in Hilbert spaces via dynamic programming by \cite{Augeraud-Veron2017}.

Finally, we mention the articles by \cite{Fabbri2017} and \cite{Boucekkine2018}, where some international macroeconomic problems are modeled as optimal control problems with neutral-type state equations (i.e., including delay terms in both the state and its derivative). In both cases, the problems are studied using a Hilbert space formulation introduced by \cite{Burns1983}.

\subsection{Vintage capital with transport state equation}
\label{sec:vcteexpl}
We here analyse a model for  {vintage capital} where the heterogeneity in the vintage, or equivalently the age,  of capital goods is represented by a separate variable $s$ evolving jointly with the time variable $t$. Here, capital goods $k$ depend on the two variables $t$ and $s$, and their evolution becomes a PDE of  transport type.
\subsubsection{The economic problem}  Differently from the model analysed in \ref{sub:vintagecapitaldelay},  The instance of the model that we describe here is that of \cite{BarucciGozzi2001}, where 
the capital-accumulation process is described by the system
\begin{equation}\label{eq:esvintage}
\begin{cases}
\displaystyle{\frac{\partial z(t, s)}{\partial t}+\frac{\partial z(t, s)}{\partial s}=-\mu z(t, s)+u_1(t, s)}, & t \in(0,\infty), s \in(0, \bar{s}], \\\\ z(t, 0)=u_0(t), & t \in(0,\infty), \\\\ z(0, s)=z_0(s), & s \in[0, \bar{s}],\end{cases}    
\end{equation}
where $z(t, s)$ is the amount of capital goods of vintage $s$ accumulated at time $t$, $u_1(t,s)$ represents the gross investment rate at time $t$ in capital goods of vintage $s$, $u_0(t)$ represents gross investment in new capital goods at time $t$, it also defines the boundary condition for the evolution of the stock of capital $z$. Moreover $\mu>0$ is the depreciation factor,  $\bar{s} \in(0,+\infty]$ is the maximum vintage/age considered (older capital goods are considered nonproductive), and  $s^{\prime}>s$ means that capital goods of vintage $s^{\prime}$ are older than capital goods of vintage $s$.\footnote{Note that the partial differential equation \eqref{eq:esvintage} generalizes the classical dynamic system describing the firm capital accumulation, $z'(t)=$ $u(t)-\mu z(t)$.}
Time and vintage share the same unit measure, meaning that, net of the decrease $\mu z(t,s)$ and of the investment $u_1(t,s)$ (i.e. for $\mu=0$ and $u_1\equiv 0$), the stock of capital at time $t$ of vintage $s$ becomes after the time period $\Delta t$ the stock of capital of vintage $s+\Delta t$, i.e., $z(t, s)=z(t+\Delta t, s+\Delta t)$.

The entrepreneur maximizes over $u(\cdot)=(u_0(\cdot),u_1(\cdot))$   the discounted intertemporal profits (revenues minus costs) 
\begin{align}\label{fp}
&
    \mathcal{J}\left(z_0 ; u(\cdot)\right)\\&=\int_0^{\infty} e^{-\rho t}\bigg[\int_0^{\bar{s}}\left[\alpha(t, s) z(t, s)-q_1(t, s) u_1(t, s)-\beta_1(t, s) u_1(t, s)^2\right] \mathrm{d} s-q_0(t) u_0(t)-\beta_0(t) u_0(t)^2\bigg] \mathrm{d} t,\nonumber
    \end{align}
 where  $\alpha (t,s)k(t,s)$ represent the revenues  at time $t$ of capital goods of vintage $s$, meaning we assume  constant returns to scale,  while costs are quadratic, with the $q_i$'s representing unit costs, the $\beta_i$'s adjustment costs.
 
We can make further assumptions about the payoff coefficients to give them economic meaning:
\begin{enumerate}[(i)]
    \item  $\alpha(t, s)\ge 0$, with $\alpha(t, s)=0$,  $\forall t \geq 0$ if $s \geq \bar{s}$;  moreover,  $\frac{\partial\alpha}{\partial s} (t, s) \leq 0, \forall s\in[0,\bar s]$, for every fixed $t$ (technology productivity is decreasing in $s$, so that young capital goods are more productive than old ones);
    \item similarly, $\beta_1(t, s), q_1(t, s)>0$, $\beta_0(t), q_0(t)\ge 0$; and $\frac{\partial\beta_1}{\partial s} (t, s), \frac{\partial q_1}{\partial s} (t, s) \leq 0, \forall s\in[0,\bar s]$, for every fixed $t$ (old capital goods are less expensive and easier to
install than young ones);  moreover  $$q_0(t) \geq \lim\limits_{s \rightarrow 0} q_1(t, s)=:q_1\left(t, 0^{+}\right), \ \ \ \beta_0(t) \geq \lim\limits _{s \rightarrow 0} \beta_1(t, s)=:\beta_1\left(t, 0^{+}\right).$$    
\end{enumerate}
  
It is crucial to observe that the linearity of the payoff with respect to the state variable is essential for deriving an explicit solution to the problem. The case where the revenue is a more general concave function of the state variable produces no explicit formula, and will be later analysed in Subsection \ref{vcteimpl}.

\subsubsection{Rewriting the problem in a Hilbert space setting}
We rephrase the equation \eqref{eq:esvintage} in the Hilbert space $X=L^2(0,\bar s;\R)$ through the following linear closed operator $\A$:
\begin{equation*}
    D(\A)=\left\{f \in H^1(0,\bar s): f(0)=0\right\}, \quad \A f=-f^{\prime}-\mu f,
\end{equation*}
where $H^1(0,\bar s)$ is the usual Sobolev space of order $1$.  The operator $\A$ generates a $C_0$-semigroup on the Hilbert space $X$ (see e.g. \cite{engel-nagel}, Sec. 1.4), which is a modification of the well-known translation semigroup analysed in Example \ref{ex:trsem}. In detail
$$
[e^{\A t} f](s)=e^{-\mu t} f(s-t) \chi_{[t, \bar{s}]}(s).
$$
The adjoint semigroup $\{e^{t\A^*}\}_{t\ge0}$ and its generator $\A^*$, the adjoint of $\A$, will also play a role in the sequel.
When defined in $X$, they read as
$$[e^{t\A^*} f](s)=e^{-\mu t} f(s+t) \chi_{[0, \bar{s}-t]}(s)$$ with
$$
D\left(\A^*\right)=\left\{f \in H^1(0,\bar s): f(\bar{s})=0\right\}, \quad\A^* f=f^{\prime}(s)-\mu f,
$$
when $\bar{s}<+\infty$; when $\bar{s}=+\infty$ we have that $\lim _{s \rightarrow+\infty} f(s)=0$ for $f \in H^1(0,+\infty)$, so that
$D\left(\A^*\right)=H^1(0,\infty)$.


Formally setting $y(t):=z(t,\cdot)$, 
the state equation \eqref{eq:esvintage} can be formally rewritten as an abstract equation as
\begin{equation}\label{eq:clsolvint}
    \begin{cases}
        y'(t)=\A y(t)+\mathcal{B} u(t),&t>0,\\
        y(0)=x=z_0(\cdot),
    \end{cases}
\end{equation}
where, with abuse of notation, $u_1(t):=u_1(t,\cdot)$ and, letting $U=\mathbb R\times X$ be the control space endowed with norm $|u|=| u_0|_{\mathbb R}+|u_1|_X$, the control operator is
$$\mathcal{B}:U\to D(\A^*)', \quad \mathcal{B}u=\mathcal{B}(u_0,u_1)= u_0\delta_0+u_1.$$
Note that, it is 
hence unbounded -- a common trait in boundary value problems, where the control is acting on the boundary.
We are therefore within the framework of unbounded control described in Subsection \ref{SSSE:Bunbounded}. In particular $\mathcal{B}$ satisfies \ref{ass:Bunbounded}, so the claim of Proposition \ref{prop:mildunbounded} holds in this case and therefore the 
 mild solution
\begin{equation}\label{mildvintage}
y(t)=e^{t\A  }x+\int_0^t e^{(t-\tau)\A} \mathcal{B}u(\tau) \mathrm{d} \tau
\end{equation}
lies in $X$.
The profit functional can also be rewritten in abstract terms 
with abuse of notation as 
\begin{equation}\label{eq:vintagefp2}
\begin{aligned}
&    \mathcal{J}\left(x ; u(\cdot)\right)\\&=\int_0^{\infty}  e^{-\rho t}\bigg[\langle\alpha(t), k(t)\rangle-\langle q_1(t), u_1(t)\rangle-\langle \beta_1(t) u_1(t), u_1(t)\rangle -q_0(t) u_0(t)-\beta_0(t) u_0^2(t)\bigg] \mathrm{d} t.
    \end{aligned}
\end{equation}
The functional $\mathcal{J}\left(k_0, u\right)$ has to be minimized with respect to $u$ ranging over a suitable set of admissible controls. For instance, we may choose the set
\begin{align}\label{U(t,x)rho}
 L_\rho^2(0,\infty; U)\equiv\left \{u:[0,\infty)\to U: \int_0^\infty e^{-\rho \tau}|u(\tau)|^2<\infty\right \}.
\end{align}
We do not impose constraints on the controls and on the trajectory, but provide conditions under which the positivity of the trajectory is obtained \emph{a posteriori} for the solution of the unconstrained problem.
For the sake of simplicity, in what follows, we will assume that the coefficients $\alpha, q_1, \beta_1, q_0, \beta_0$ are independent of $t$, leading to the stationary problem, although in \cite{BarucciGozzi2001} the authors discuss a case with a particular dependence on the time variable $t$ of such coefficients.

\subsubsection{The infinite-dimensional HJB equation and its explicit solution} We discuss in detail the case in which the data satisfy the following assumptions.
\begin{Assumption} \label{hypovint1} 
\begin{enumerate}[(i)]
\item[]
\item$\alpha, \beta_1, q_1\in H^1(0,\bar s)$. 
\item For every $s \geq 0$, we have $\alpha(s) \geq 0, q(s) \geq 0, \beta(s) \geq \epsilon$ for a given $\epsilon>0$
\item $\alpha^{\prime}(s) \leq 0, q_1^{\prime}(s) \leq 0$, $\beta_1^{\prime}(s) \leq 0$.
\item $\alpha(\bar{s})=0$, and
$$
q_0 \geq \lim _{s \rightarrow 0^+} q_1(s)=:q\left(0^{+}\right), \quad \beta_0 \geq \lim _{s \rightarrow 0^+} \beta_1(s)=:\beta_1\left(0^{+}\right).
$$
\end{enumerate}
\end{Assumption}
\begin{Assumption}  \label{hypovintmu} When $\bar s=+\infty$, we assume $\rho>-\mu$.\end{Assumption}
\noindent 
In line with the notation of Section \ref{sec:DP}, for $p\in D(\A^*)$, $x\in X$, we 
define\footnote{Operations with functions must be intended pointwise.}
\begin{align}\mathcal H(x,p)&:=\langle\alpha, x\rangle+\sup_{u_0\in \mathbb R} \Big\{(\delta_0p-q_0)  u_0-\beta_0 u_0^2\Big\}  
 +\sup_{u_1\in X} \Big\{\langle (p-q), u_1\rangle
-\left\langle \beta_1 u_1, u_1\right\rangle\Big\}  \label{eq:hamvitage} \\
& = \langle\alpha, x\rangle+\frac{(p(0)-q_0)^2}{4\beta_0}+\Big\langle {\frac 1{4\beta_1}}(p-q),p-q\Big\rangle\nonumber
\end{align} 
Note that the above function is well defined as $p$ is continuous and $(\delta_0p)=p(0)$. Moreover, the suprema in the above formula are attained at
\begin{equation}\label{eq:maxconvintage}
  u_0^*=\frac{p(0)-q_0}{2\beta_0}, \quad u_1^*=u_1^*(s)=\frac{p(s)-q(s)}{2\beta_1(s)}.  
\end{equation}
Consequently, the
 HJB equation associated to the problem is 
 \begin{equation} \label{eq:HJBvintage}
  \rho v(k)= \langle\alpha+\A^*Dv(k), x\rangle+\frac{([Dv(x)](0)-q_0)^2}{4\beta_0}+\Big\langle \frac 1{4\beta_1}Dv(x)-q_1,Dv(x)-q_1\Big\rangle.   
 \end{equation}

\begin{Proposition}
\label{prop:vintageHJBsol}
Let Assumptions \ref{hypovint1} and \ref{hypovintmu} hold. Define  the function
\begin{equation}
\label{eq:vintageVF}
v(x) = \langle\bar{\alpha}, x\rangle+\frac{(\bar\alpha(0)-q_0)^2}{4\rho\beta_0}+\int_0^{\bar s}\frac{(\bar\alpha(s)-q_1(s))^2}{4\rho\beta(s)}\ud s
\end{equation}
where 
$$
\bar{\alpha}(s)=[ R(\rho, \A^*) \alpha](s)=\int_s^{\bar{s}} e^{-(\rho+\mu)(\sigma-s)} \alpha(\sigma) d \sigma.
$$
Then $Dv(x)\in D(\A^*)$ and  $v$ 
is a classical  solution to the \emph{HJB} equation \eqref{eq:HJBvintage} on $X$.
\end{Proposition}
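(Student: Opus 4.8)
The plan is to check directly that the affine‑plus‑constant function $v$ in \eqref{eq:vintageVF} belongs to $\mathcal{S}(X)$ and solves the stationary HJB equation \eqref{eq:HJBvintage} in the sense of Definition \ref{def:classicalsolution}, taking $\mathcal{O}=X$ (there being no state constraint). The payoff of the affine ansatz is that $v$ has a \emph{constant} Fréchet gradient: writing $v(x)=\langle\bar\alpha,x\rangle+c$ with $c$ the scalar in \eqref{eq:vintageVF}, one has $v\in C^\infty(X)$ with $Dv(x)\equiv\bar\alpha$, so the requirement $v\in\mathcal{S}(X)$ collapses to the single point $\bar\alpha\in D(\A^*)$, after which $Dv\equiv\bar\alpha$ is trivially continuous from $X$ into $D(\A^*)$. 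I would obtain this by recognising $\bar\alpha=R(\rho,\A^*)\alpha$, since the range of the resolvent is exactly $D(\A^*)$; as $\bar\alpha\in D(\A^*)\subset H^1$ it has a continuous representative, so $[Dv(x)](0)=\bar\alpha(0)$ is meaningful and the unbounded‑control Hamiltonian \eqref{eq:hamvitage} is well defined at $p=\bar\alpha$, consistently with Remark \ref{rem:HJBunbounded} (here $\mathcal{B}^*\bar\alpha=(\bar\alpha(0),\bar\alpha)$).

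The one delicate point is to justify that $\rho$ lies in the resolvent set of $\A^*$ and to pin down the closed form of $\bar\alpha$. Using the explicit adjoint semigroup $[e^{t\A^*}f](s)=e^{-\mu t}f(s+t)\chi_{[0,\bar s-t]}(s)$, I would note that for $\bar s<\infty$ this semigroup is eventually zero, so every $\rho$ lies in the resolvent set, whereas for $\bar s=\infty$ it has growth bound $-\mu$, so the condition $\rho>-\mu$ from Assumption \ref{hypovintmu} is exactly what puts $\rho$ in the resolvent set. In both cases $R(\rho,\A^*)\alpha=\int_0^\infty e^{-\rho t}e^{t\A^*}\alpha\,\ud t$, and evaluating this integral yields precisely the stated formula $\bar\alpha(s)=\int_s^{\bar s}e^{-(\rho+\mu)(\sigma-s)}\alpha(\sigma)\,\ud\sigma$ together with the resolvent identity $(\rho-\A^*)\bar\alpha=\alpha$, i.e.\ $\A^*\bar\alpha=\rho\bar\alpha-\alpha$. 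At the same time I would verify finiteness of the scalar constant in \eqref{eq:vintageVF} and of the Hamiltonian value: $\bar\alpha(0)$ is finite by the above; since $\beta_1\ge\epsilon>0$ and $\bar\alpha-q_1\in L^2(0,\bar s)$ (because $\bar\alpha\in D(\A^*)\subset X$ and $q_1\in H^1\subset X$), the integral $\int_0^{\bar s}(\bar\alpha-q_1)^2/\beta_1$ converges, which also shows the supremum over $u_1\in X$ in \eqref{eq:hamvitage} is finite and attained at \eqref{eq:maxconvintage}; the $u_0$‑supremum is trivially finite since $\beta_0\ge\epsilon>0$.

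Once this is in place, the verification is a one‑line algebraic substitution. Evaluating the Hamiltonian at $p=Dv(x)=\bar\alpha$ gives $\mathcal H(x,\bar\alpha)=\langle\alpha,x\rangle+(\bar\alpha(0)-q_0)^2/(4\beta_0)+\langle\tfrac1{4\beta_1}(\bar\alpha-q_1),\bar\alpha-q_1\rangle$, and adding the term $\langle x,\A^*Dv(x)\rangle=\langle x,\rho\bar\alpha-\alpha\rangle$ cancels the two $\pm\langle\alpha,x\rangle$ contributions, leaving $\rho\langle\bar\alpha,x\rangle+(\bar\alpha(0)-q_0)^2/(4\beta_0)+\int_0^{\bar s}(\bar\alpha(s)-q_1(s))^2/(4\beta_1(s))\,\ud s$, which is exactly $\rho v(x)$ by the definition \eqref{eq:vintageVF}. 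Hence $v$ solves \eqref{eq:HJBvintage} on all of $X$; the optimizers \eqref{eq:maxconvintage} evaluated at $p=\bar\alpha$ simultaneously furnish the candidate optimal feedback, but that is only needed for the subsequent verification step, not here.

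The main obstacle is precisely the middle step: confirming $\rho$ is in the resolvent set of $\A^*$ — in particular in the case $\bar s=\infty$, where Assumption \ref{hypovintmu} is indispensable — and carrying along the accompanying $L^2$‑integrability of the constant term and of the Hamiltonian value, so that the affine ansatz is a genuine, not merely formal, classical solution. Everything else reduces to the resolvent identity $(\rho-\A^*)\bar\alpha=\alpha$ and a routine cancellation.
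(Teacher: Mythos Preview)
Your proof is correct and follows essentially the same approach as the paper's: identify $Dv(x)\equiv\bar\alpha=R(\rho,\A^*)\alpha\in D(\A^*)$, use the resolvent identity $(\rho-\A^*)\bar\alpha=\alpha$ to rewrite $\langle x,\A^*\bar\alpha\rangle=\rho\langle\bar\alpha,x\rangle-\langle\alpha,x\rangle$, and observe that this makes the HJB collapse to the algebraic identity $\rho v(x)=\rho v(x)$. Your version is in fact more careful than the paper's on two points the paper glosses over---the case distinction $\bar s<\infty$ versus $\bar s=\infty$ for membership of $\rho$ in $\rho(\A^*)$ (where Assumption~\ref{hypovintmu} is genuinely needed), and the $L^2$-finiteness of the constant term via $\beta_1\ge\epsilon$.
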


\begin{proof}
    By the Riesz representation theorem, it is straightforward to see that $Dv(x)=\bar \alpha$, which is  a function in $D(\A^*)$ by direct proof. Then, comparing \eqref{eq:vintageVF} and \eqref{eq:HJBvintage}, one sees that  $v$ solves \eqref{eq:HJBvintage} if and only if 
    $$\rho\langle \bar\alpha, x\rangle=\langle\alpha+\A^*\bar\alpha, x\rangle\iff \langle (\rho-\A^*)\bar\alpha -\alpha, x\rangle=0.$$
    Note that the latter is equivalent to establishing that $\bar \alpha=R(\rho,\A^*)\alpha$ or also that $\bar\alpha$  is
 the unique solution to the backward Cauchy problem
    $$\begin{cases}
        \rho f(s)-f'(s)+\mu f(s)=\alpha(s), &s\in[0,\bar s)\\
        f(\bar s)=0
    \end{cases}.$$
    which is, by the variation of constant formula
    $\bar \alpha$, 
    concluding the proof.
\end{proof}

\subsubsection{The solution of the model}  We want to show that $V$ coincides with the explicit solution $v$ of the HJB equation  of Proposition \ref{prop:vintageHJBsol}, so that the candidate optimal (feedback) controls are obtained by replacing $p$ in  \eqref{eq:maxconvintage} with the gradient $\bar\alpha$ of $v$. 
Note that such controls are actually independent of time, of  the trajectory $y(t)$, and of the initial datum,  so that the closed loop equation, obtained by replacing  $u_0$ and $u_1$ with $u_0^*$ and $u_1^*$ in \eqref{eq:esvintage}, has a unique solution in $X$.

 Our theoretical results of Subsection \ref{sub:infstat} apply leading to the following.

\begin{Theorem}
Let Assumptions \ref{hypovint1} and \ref{hypovintmu} hold. The following statements hold:
\begin{enumerate}[(i)]
    \item The value function $V$ coincides with $v$  defined  in Proposition \ref{prop:vintageHJBsol}.
    \item The unique optimal control is 
\begin{equation}\label{eq:optcontrvintage}
    u_0^*=\frac{\bar\alpha(0)-q_0}{2\beta_0}, \quad u_1^*(s)=\frac{\bar\alpha(s)-q(s)}{2\beta(s)},
\end{equation} 
and the associated  optimal trajectory of the original problem is\footnote{Note that when $z_0$ is nonnegative, a sufficient condition for  the optimal trajectory to be nonnegative is that the optimal controls are nonnegative, that is
    $$q_0\le \bar\alpha(0),\quad q(s)\le \bar\alpha(s), \ \ \textrm{for a.a. } s\in [0,\bar s].$$}\begin{equation}\label{eq:optimaltrjvintage}
    z^*(t,s)=\begin{cases}
        \displaystyle{e^{-\mu t} z_0(s-t)+\int_0^te^{-\mu \sigma}\frac{\bar\alpha(s-\sigma)-q_1(s-\sigma)}{2\beta_1(s-\sigma)}d\sigma},& s\ge t,\\\\
\displaystyle
{       e^{-\mu s}\frac{\bar\alpha(0)-q_0}{2\beta_0}\int_0^se^{-\mu \sigma}\frac{\bar\alpha(s-\sigma)-q_1(s-\sigma)}{2\beta_1(s-\sigma)}d\sigma},&s<t.
    \end{cases}
\end{equation}
\end{enumerate}
\end{Theorem}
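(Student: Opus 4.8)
The plan is to obtain the statement from the infinite‑horizon stationary Verification Theorem \ref{teo:verst}, in the unbounded‑control version validated by Remark \ref{rem:VerificUnboundedSStaz}, applied to the explicit classical solution $v$ of Proposition \ref{prop:vintageHJBsol}. The setting fits the hypotheses: there are no state constraints, so $\mathcal D=\mathcal O=X$; by Proposition \ref{prop:vintageHJBsol}, $v\in\mathcal S(X)$ solves the stationary \emph{HJB} equation \eqref{eq:HJBvintage} on all of $X$ with $Dv(x)\equiv\bar\alpha\in D(\A^*)$; and $\mathcal B$ satisfies Assumption \ref{ass:Bunbounded}, so $\mathcal B^*\bar\alpha$ is well defined, the pairing $\langle u,\mathcal B^*Dv(x)\rangle$ makes sense, the mild trajectory \eqref{mildvintage} stays in $X$ (Proposition \ref{prop:mildunbounded}), and the stationary chain rule \eqref{chainstunbounded} applies. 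It then remains to check: (a) finiteness of $\mathcal J(x;\cdot)$ on $\mathcal U(x)=L^2_\rho(0,\infty;U)$, so that Assumption \ref{ass:wellpose} holds; (b) the transversality conditions \eqref{tracont}--\eqref{tracontbis}; (c) that \eqref{eq:optcontrvintage} defines an admissible control realizing the maximum in the Hamiltonian; (d) uniqueness.

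For (a)--(b), the crux is a growth bound on the trajectory. Since $f\equiv 0$, the mild solution is $y(r)=e^{r\A}x+(\mathcal S\mathcal B u)(r)$, with $(\mathcal S\mathcal B u)(r):=\int_0^r e^{(r-\tau)\A}\mathcal B u(\tau)\,\ud\tau$. From the explicit form $[e^{t\A}f](s)=e^{-\mu t}f(s-t)\chi_{[t,\bar s]}(s)$ one reads $\|e^{t\A}\|_{\mathcal L(X)}\le e^{-\mu t}$ (and $e^{t\A}=0$ for $t\ge\bar s$ when $\bar s<\infty$), so $e^{-\rho r}|e^{r\A}x|\le e^{-(\rho+\mu)r}|x|\to 0$, using $\rho>0$ and, when $\bar s=+\infty$, Assumption \ref{hypovintmu}. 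For the convolution term, Assumption \ref{ass:Bunbounded}(ii), i.e.\ \eqref{stimaB}, together with Cauchy--Schwarz $|u|_{L^1([0,r];U)}\le\big((e^{\rho r}-1)/\rho\big)^{1/2}|u|_{L^2_\rho}$, gives $|(\mathcal S\mathcal B u)(r)|_X\le C\,r\,e^{\rho r/2}|u|_{L^2_\rho}$, whence $e^{-\rho r}|(\mathcal S\mathcal B u)(r)|_X\le C r e^{-\rho r/2}|u|_{L^2_\rho}\to 0$. Since $v$ is affine, $v(x)=\langle\bar\alpha,x\rangle+c_0$ by \eqref{eq:vintageVF}, it follows that $\lim_{r\to\infty}e^{-\rho r}v(y(r))=0$ for every $u\in\mathcal U(x)$; in particular \eqref{tracont} holds with $\hat u=u$ and \eqref{tracontbis} holds. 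The same estimates, with further applications of Cauchy--Schwarz to the linear and quadratic cost terms, show $\mathcal J(x;u)$ is finite; hence Assumption \ref{ass:wellpose} is met and Theorem \ref{teo:verst}(i) yields $v(x)\ge V(x)$.

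For (c)--(d), note that $u^*=(u_0^*,u_1^*)$ of \eqref{eq:optcontrvintage} is exactly the maximizer \eqref{eq:maxconvintage} of the map $u\mapsto\mathcal H_{cv}(x,Dv(x);u)$ at $p=Dv(x)=\bar\alpha$; since $\beta_0>0$ and $\beta_1\ge\e>0$ this map is strictly concave, so the maximizer is unique and \eqref{feedverst} holds along any trajectory. As $u^*$ is constant in time, it lies in $L^2_\rho(0,\infty;U)$ and is admissible, and by the previous paragraph \eqref{tracontbis} holds along $y^*:=y^{x,u^*(\cdot)}$; thus Theorem \ref{teo:verst}(ii) gives $v(x)=V(x)$ --- which is (i) --- and the optimality of $u^*$. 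For uniqueness, apply the fundamental identity \eqref{eq:fundstaznew} to an arbitrary optimal $u$ and let $r\to\infty$: using the transversality already established one gets $\int_0^\infty e^{-\rho s}\big(\mathcal H(y(s),\bar\alpha)-\mathcal H_{cv}(y(s),\bar\alpha;u(s))\big)\,\ud s=0$ with a nonnegative integrand, so $u(s)$ maximizes the Hamiltonian for a.e.\ $s$, whence $u=u^*$ a.e.\ by strict concavity. Finally, substituting the constant control $u^*$ into the transport equation \eqref{eq:esvintage} and integrating along characteristics (equivalently, evaluating \eqref{mildvintage} with the explicit semigroup and the boundary‑injection term $u_0^*\delta_0$) produces \eqref{eq:optimaltrjvintage}.

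The main obstacle is points (a)--(b): controlling the growth of $y(r)$ for every admissible control despite the unboundedness of $\mathcal B$. The resolution is not to estimate $\mathcal B u(\tau)$ pointwise (it lives only in $D(\A^*)'$) but to use the global bound \eqref{stimaB} on the convolution operator $\mathcal S\mathcal B$, which already absorbs the semigroup behaviour on bounded intervals, together with the weighted‑$L^2$ integrability of admissible controls; and it is essential that $v$ is merely affine, hence of at most linear growth in the state, which is what turns the crude bound into a term decaying like $r\,e^{-\rho r/2}$. Once this estimate is in place, everything else is a direct application of the verification machinery of Section \ref{sec:DP}.
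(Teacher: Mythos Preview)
Your proof is correct and matches the paper's stated approach: the text preceding the theorem simply says ``Our theoretical results of Subsection~\ref{sub:infstat} apply leading to the following'' without spelling out details, and you have supplied precisely those details --- checking $v\in\mathcal S(X)$, verifying the transversality conditions \eqref{tracont}--\eqref{tracontbis} via the growth estimate on $y(r)$ (using \eqref{stimaB} and the weighted-$L^2$ admissibility), and invoking Theorem~\ref{teo:verst} together with Remark~\ref{rem:VerificUnboundedSStaz}. Your uniqueness argument via the fundamental identity \eqref{eq:fundstaznew} and strict concavity is also the standard and correct one.

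It is worth noting that the paper's source contains a commented-out alternative proof that takes a genuinely different, more elementary route: it exploits the affine-in-state, quadratic-in-control structure to rewrite the functional \emph{directly} as
\[
\mathcal J(x;u)=\langle\bar\alpha,x\rangle+\int_0^\infty e^{-\rho t}F(u_0(t),u_1(t))\,\ud t,
\]
where $F$ is a quadratic function of the control alone (this is obtained by moving the semigroup onto $\alpha$ via the adjoint and recognizing $\int_0^\infty e^{-\rho t}e^{t\A^*}\alpha\,\ud t=R(\rho,\A^*)\alpha=\bar\alpha$). The optimization then decouples into a pointwise-in-time maximization of $F$, yielding \eqref{eq:optcontrvintage} and $V(x)=v(x)$ with no appeal to the abstract verification machinery, transversality, or the chain rule. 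That approach is shorter and makes the linear-quadratic nature transparent, but it is entirely specific to this structure; your verification-based proof, by contrast, is the template that generalizes to the other examples in Section~\ref{sec:examples}.
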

\subsubsection{Variants and Literature} 
A variant of the model with a boundary condition of Neumann type is addressed in \cite{Barucci1998}. Extensions of the model for a objective functional  concave  in the state and control variables are addressed in a series of papers by Faggian with finite horizon, and by Faggian and Gozzi with infinite horizon. For finite horizon, in  \cite{faggian2008application} it is shown the value function is the unique strong solution  (namely, the limit of classical solutions of approximating equations) of the HJB equation associated to the problem, althouygh no explicit formula is available. The optimal control is characterized by means of the gradient of the value function. Concurrently,  \cite{faggian2005regular} establishes existence and uniqueness of {strong} solutions of a class of HJ equation to which the value function of the control problem belongs. The same optimal control problem, but with the addition of state constraints, is then addressed  in \cite{faggian2008hamilton},  together with existence and uniqueness of the a weak solution (defined as, the limit of strong solutions in the above sense) of the associated HJB equation. 
For the infinite horizon case one can see, e.g., \cite{faggian2010optimal} and \cite{faggian2021optimal}.

Models with the same evolutionary and payoff structure are used in various applications.  In \cite{Faggian2013} the authors analyze a model for optimal advertising where    capitals are interpreted as the so called ``goodwill" (the intangible value of a product  that comes from its established reputation) of products of different vintages, and controls represent the intensity of advertising. In \cite{fabbri2015mitra} capitals represent a forest of trees   distributed in time among different ages, while the pay-off represents utility from harvesting, and is a concave functional solely of the control.


\subsection{Time-to-build: the linear pointwise delay case}
\label{sub:time-to-build}

\subsubsection{The economic problem}

In this subsection  work we present a class of models with a \emph{time-to-build} structure, i.e. a model which explicitly takes into account the time that elapses between the investment decision and the moment when the new generation of capital goods becomes productive. 

Time-to-build models are not directly a sub-family of vintage capital models but it is clear that the two have an important feature in common (see also \citealp{Benhabib1991}): in both cases, past investments have different effects on capital and output depending on when they were made. For this reason, an irregular pattern in the history of past investment implies shocks and an irregularity in the present capital stock both in vintage capital models and in time-to-build models.

In this subsection, we examine the model introduced by \cite{Kalecki35}, later studied, among others, by \cite{Asea1999}, \cite{Bambi08} and \cite{BambiFabbriGozzi10}, which presents the simplest case (albeit technically somewhat delicate) where investment becomes suddenly and fully productive after a certain (exogenous and constant) time $d$ (the \emph{gestation period}). For the reformulation of the problem as an optimal control problem in Hilbert spaces, we will follow \cite{BambiFabbriGozzi10}.

The described time-to-build structure is integrated, following \cite{Bambi08}, into an AK endogenous growth model, assuming that production at time $t$ is given by:
\[
q(t) = A k(t-d),
\]
where $A$ is the capital productivity (we use the notation $q$ for the production instead of $y$ to avoid confusion with the name of the state variable in the Hilbert space formulation of the problem). So, under the standard closed-economy, no-state-intervention budget constraints, we have
\[
i(t) = q(t) - c(t) = A k(t-d) - c(t),
\]
which we assume to be positive to incorporate an irreversibility constraint: once capital is installed, it has no alternative value outside production. Including a capital depreciation rate $\delta \geq 0$, the accumulation of capital is governed by:
\begin{equation}
{k}'(t) = i(t) - \delta k(t-d) = A k(t-d) - c(t) - \delta k(t-d) = \tilde{A}k(t-d) - c(t), \qquad \forall t \geq 0, \label{eq:state-delay} 
\end{equation}
where the parameter $\tilde{A} = A - \delta > 0$ reflects the productivity level $A$ net of the capital depreciation rate $\delta$. The irreversibility constraint can therefore be written as:
\begin{equation}
\dot{k}(t) \geq -\delta k(t-d), \qquad \forall t \geq 0, \label{eq:irrnew}
\end{equation}
and is coupled with the non-negativity constraints on $c$ and $k$.

The planner maximizes:
\begin{equation}
\label{eq:functional-firstversion}
\int_{0}^{\infty }\frac{c(t)^{1-\sigma }}{1-\sigma }e^{-\rho t}\ud t,
\end{equation}
where, as usual, $\rho > 0$ represents the intertemporal discount rate, and $\sigma > 0$ (with $\sigma \neq 1$) is the inverse of the elasticity of substitution.

We assume that the initial condition for $k$ is slightly more regular than in Subsection \ref{sub:vintagecapitaldelay}. Specifically, we assume that $k_0(\cdot) \in H^1([-d,0];\mathbb{R}^+)$ (the Sobolev space of order 1). We also assume that $c(\cdot) \in L^2_{loc}([0,+\infty);\mathbb{R})$. Under these assumptions, equation (\ref{eq:state-delay}) admits a unique continuous solution, and this solution belongs to $H^1_{loc}([-d,+\infty);\mathbb{R})$, as proved in \cite{BDDM07}, page 287.

Before reformulating the problem in the Hilbert space ${X}$ introduced in Subsection \ref{sub:vintagecapitaldelay}, we perform a change of variables. The optimal control problem is rewritten in terms of output, $q(t) = A k(t-d)$, and adjusted net investment, $u(t) := (A/\tilde{A})\dot{k}(t) = q(t) - \frac{A}{\tilde{A}} c(t)$.

In this way the constraints
\[
i(t) \geq 0, \qquad c(t) \geq 0
\]
read as
\begin{equation}
\label{eq:constraintssecond-1}
u(t) \geq \left(1-\frac{A}{\tilde{A}}\right)q(t) \text{ and } u(t) \leq q(t) \iff u(t) \in
\left[\left(1-\frac{A}{\tilde{A}}\right)q(t),q(t)\right],
\end{equation}
and maximizing the functional (\ref{eq:functional-firstversion}) is equivalent to maximizing:
\begin{equation}
\label{eq:functional-secondversion}
\int_0^\infty e^{-\rho t}
\frac{\left (q(t) - u(t) \right )^{1-\sigma}}{1-\sigma} \ud t
\end{equation}
subject to the state equation:
\begin{equation}
\label{eq:secondstateeq}
\left \{
\begin{array}{l}
{q}'(t) = \tilde{A} u(t-d), \qquad t \ge 0, \\
u(s) = u_0(s) = A \dot{k}_0(-d-s), \qquad s \in [-d,0), \\
q(0) = q_0 := A k(-d),
\end{array}
\right .
\end{equation}
and the constraint (\ref{eq:constraintssecond-1}).

\subsubsection{Rewriting the problem in a Hilbert space setting}
Given any initial data $q_0 \in \mathbb{R}$, $u_0 \in L^2([-d,0);\mathbb{R})$, and any control strategy $u(\cdot) \in L^2_{loc}([0, + \infty);\mathbb{R})$, we denote by $q^{(q_0,u_0(\cdot)),u(\cdot)}(\cdot)$ the unique output trajectory associated with $u(\cdot)$. This trajectory is the unique absolutely continuous solution of (\ref{eq:secondstateeq}), as shown in \cite{BDDM07}, Theorem 4.1, page 222. 

Following the approach used in Subsection \ref{sub:vintagecapitaldelay}, and based on the results of \cite{VinterKwong}, we define the structural state of the system. This state is again an element of the Hilbert space ${X}$ introduced in Subsection \ref{sub:vintagecapitaldelay}.

\begin{Definition}
\label{structuralstate}
Given the initial data $q_0 \in \mathbb{R}$ and $u_0 \in L^2([-d,0];\mathbb{R})$, and a control strategy $u(\cdot) \in L^2_{loc}([0,+ \infty);\mathbb{R})$, we define the \emph{structural state} of the system at time $t \geq 0$ as the pair:
\[
{y}^{(q_0,u_0(\cdot)),u(\cdot)}(t) =
({{y}^{(q_0,u_0(\cdot)),u(\cdot)}}^0(t),
{{y}^{(q_0,u_0(\cdot)),u(\cdot)}}^1(t))
\]
\[
\qquad \qquad \qquad := (q^{(q_0,u_0(\cdot)),u(\cdot)}(t), \gamma(t)(\cdot)) \in {X},
\]
where $\gamma(t)(\cdot)$ is the element of $L^2([-d,0];\mathbb{R})$ defined as:
\begin{equation}
\label{eq:defdellostato}
\left \{
\begin{array}{l}
\gamma(t)(\cdot) \colon [-d,0] \to \mathbb{R}, \\[4pt]
\gamma(t)(s) := \tilde{A} u(t-d-s).
\end{array}
\right .
\end{equation}
\end{Definition}

For notational convenience we will often skip the double superscript and we will just write ${y}(t) =
({{y}}^0(t),
{{y}}^1(t))$.

Before writing the state equation as an equation in ${X}$, we introduce some additional notation. We start by defining the unbounded operator $\A^*$ on ${X}$:
\[
\left\{ \begin{array}{ll}
D(\mathcal{A}^*) := \{ (\psi^0,\psi^1) \in {X} : \psi^1 \in
  W^{1,2}([-d,0];\mathbb{R}), \psi^0 = \psi^1(0) \}, \\
\mathcal{A}^* \colon D(\mathcal{A}^*) \rightarrow {X}, \\
\mathcal{A}^*(\psi^0,\psi^1) := (0,\frac{d}{d s}\psi^1).
\end{array} \right .
\]
The operator $\mathcal{A}^*$ and its adjoint $\A$ are generators of a $C_0$ semigroup on ${X}$ \citep[see][Section 4.6, page 242]{BDDM07}. We define Dirac's delta $\delta_{-d}$ on $D(\mathcal{A}^*)$ as:\footnote{By the Sobolev embedding theorem, $W^{1,2}([-d,0];\mathbb{R}) \subseteq C([-d,0]; \mathbb{R})$, so this definition is well-posed.}
\begin{equation}\label{eq:defdeltanew}
\delta_{-d}(\psi^0,\psi^1) = \delta_{-d}\psi^1 = \psi^1(-d) \in \mathbb{R}.
\end{equation}

We are now ready to rewrite the state equation of our problem as an evolution equation in ${X}$. Using Theorem 5.1, page 258 of \cite{BDDM07}, we obtain the following result:

\begin{Theorem}
Given any initial data $q_0 \in \mathbb{R}$, $u_0 \in L^2([-d,0];\mathbb{R})$, and any control strategy $u(\cdot) \in L^2_{loc}([0, + \infty);\mathbb{R})$, the structural state ${y}^{(q_0, u_0(\cdot)),u(\cdot)}(\cdot)$, as defined in (\ref{eq:defdellostato}), is the unique solution in the space:
\[
\Pi := \bigg \{ f \in C(0,+\infty; {X}) : f' \in L^2_{loc}(0,+\infty;\,D(\A^*)')) \bigg \}.
\]
of the equation:
\begin{equation}
\label{eqBDDMbis}
\left\{ \begin{array}{ll}
 {y}'(t) = \mathcal{A} {y}(t) + {u}(t) \tilde{A} \delta_{-d},
\hspace{.05in} \hspace{.05in} t\geq 0, \\\\
{y}(0) = x = (q_0, \gamma(0)(\cdot)),
\end{array} \right.
\end{equation}
where $\gamma(0)(\cdot)$ is defined in terms of $u_0$ as in  (\ref{eq:defdellostato}).
\end{Theorem}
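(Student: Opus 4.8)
The plan is to recognise \eqref{eqBDDMbis} as an instance of the unbounded-control framework of Subsection \ref{SSSE:Bunbounded} and to identify the structural state with the (unique) mild, equivalently weak, solution it produces, in the spirit of \citealp[Theorem 5.1, p.\,258]{BDDM07}. First I would check that $\delta_{-d}\in D(\A^*)'$: since $W^{1,2}([-d,0];\R)\hookrightarrow C([-d,0];\R)$ continuously and the graph norm of $\A^*$ dominates the $W^{1,2}$-norm of $\psi^1$ (indeed $\|(\psi^0,\psi^1)\|_{\mathrm{graph}}^2=|\psi^0|^2+\|\psi^1\|_{L^2}^2+\|(\psi^1)'\|_{L^2}^2$), the functional $\psi\mapsto\psi^1(-d)$ is bounded on $D(\A^*)$. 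Thus Assumption \ref{ass:Bunbounded}-(i) holds with $\overline X=D(\A^*)'$ and $\mathcal Bu:=\tilde Au\,\delta_{-d}$, and Proposition \ref{prop:mildunbounded} provides a unique $\tilde y\in\Pi$ solving \eqref{eqBDDMbis}, given by
\[
\tilde y(t)=e^{t\A}x+\tilde A\int_0^t e^{(t-s)\A_{-1}}u(s)\,\delta_{-d}\,\ud s .
\]
Exactly as in the vintage-capital case of Subsection \ref{sub:vintagecapitaldelay}, the convolution term actually takes values in $X$ (the Dirac sitting at the inflow boundary $s=-d$ of $\A$ is transported through the history coordinate and, after time $d$, absorbed into the $\R$-coordinate — this is precisely the relation $q'(t)=\tilde Au(t-d)$), so Assumption \ref{ass:Bunbounded}-(ii) holds as well and $\tilde y\in C([0,\infty);X)$. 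It therefore suffices to prove $\tilde y(t)=y^{(q_0,u_0(\cdot)),u(\cdot)}(t)$ for every $t\ge 0$.

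I would establish this identity by testing the abstract equation against an arbitrary $\zeta=(\zeta^0,\zeta^1)\in D(\A^*)$, so that $\zeta^1\in W^{1,2}$ and $\zeta^0=\zeta^1(0)$. Using $\gamma(t)(s)=\tilde Au(t-d-s)$ and the change of variable $r=t-d-s$,
\begin{align*}
\langle y(t),\zeta\rangle_X &=q(t)\,\zeta^1(0)+\int_{-d}^0\tilde Au(t-d-s)\,\zeta^1(s)\,\ud s\\
&=q(t)\,\zeta^1(0)+\tilde A\int_{t-d}^{t}u(r)\,\zeta^1(t-d-r)\,\ud r .
\end{align*}
This map is absolutely continuous in $t$ (by absolute continuity of $q$ and continuity of translations in $L^2$) and equals $\langle x,\zeta\rangle_X$ at $t=0$, by the definition of $\gamma(0)$. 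Differentiating: the first term contributes $q'(t)\zeta^1(0)=\tilde Au(t-d)\zeta^1(0)$ by the state equation $q'(t)=\tilde Au(t-d)$; the second contributes, via the Leibniz rule, the moving-endpoint terms $\tilde Au(t)\zeta^1(-d)-\tilde Au(t-d)\zeta^1(0)$ together with the interior term $\tilde A\int_{t-d}^t u(r)\,(\zeta^1)'(t-d-r)\,\ud r$. The $\zeta^1(0)$-terms cancel — this is where $\zeta^0=\zeta^1(0)$ is used — and, undoing the change of variable, the interior term equals $\langle\gamma(t),(\zeta^1)'\rangle_{L^2}=\langle y(t),\A^*\zeta\rangle_X$. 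Hence
\[
\frac{d}{dt}\langle y(t),\zeta\rangle_X=\langle y(t),\A^*\zeta\rangle_X+\tilde Au(t)\langle\delta_{-d},\zeta\rangle ,\qquad \langle y(0),\zeta\rangle_X=\langle x,\zeta\rangle_X .
\]
By density of $D(\A^*)$ in $X$ this says that $y(\cdot)$ is the weak solution of \eqref{eqBDDMbis}; therefore $y(\cdot)=\tilde y(\cdot)\in\Pi$, and the required uniqueness in $\Pi$ is the one already furnished by Proposition \ref{prop:mildunbounded}.

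I expect the main (and essentially only) obstacle to be the differentiation step above: one must handle simultaneously the $t$-dependence appearing in the integration limits and in the integrand $\zeta^1(t-d-r)$, and verify that the two boundary contributions come out exactly as $q'(t)\zeta^1(0)-\tilde Au(t-d)\zeta^1(0)=0$ and $\tilde Au(t)\zeta^1(-d)$. This is the precise mechanism through which the history coordinate of the structural state encodes the pointwise delay, and it is where the specific domain of $\A^*$ (the coupling $\psi^0=\psi^1(0)$) and the form of $\delta_{-d}$ enter; everything else — continuity, the regularity $\tilde y'\in L^2_{\loc}(0,\infty;D(\A^*)')$, and uniqueness — is inherited from the abstract framework of Subsection \ref{SSSE:Bunbounded}. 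For a fully detailed argument one may alternatively quote \citealp[Theorem 5.1, p.\,258]{BDDM07} verbatim.
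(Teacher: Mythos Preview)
Your proposal is correct, and indeed more detailed than the paper itself: the paper does not give a proof but simply states ``Using Theorem 5.1, page 258 of \cite{BDDM07}, we obtain the following result'' and then states the theorem. Your verification --- checking $\delta_{-d}\in D(\A^*)'$, invoking the abstract unbounded-control machinery, and then explicitly testing the structural state against $\zeta\in D(\A^*)$ via the Leibniz computation --- is precisely what lies behind that citation, and your final remark that one may ``alternatively quote \citealp[Theorem 5.1, p.\,258]{BDDM07} verbatim'' is exactly what the paper does. One minor caution: your application of the Leibniz rule to $t\mapsto\int_{t-d}^t u(r)\zeta^1(t-d-r)\,\ud r$ with $u$ merely in $L^2_{\loc}$ should be understood in the weak/absolutely-continuous sense rather than pointwise (as you yourself flag as the ``main obstacle''); and the claim that Assumption \ref{ass:Bunbounded}-(ii) holds deserves a line of justification, but both are handled by the BDDM07 result you cite.
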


Note (see page 258 of \citealp{BDDM07}) that (\ref{eqBDDM}) has a unique solution for every initial datum $x \in {X}$ and control strategy $u(\cdot) \in L^2_{loc}([0, + \infty);\mathbb{R})$. We denote this solution by $y^{p,u(\cdot)}(\cdot)$. 

Once we have rewritten the state equation using the Hilbert space formalization, we can consequently reformulate the other elements of the optimization problem in a consistent manner.

The constraint (\ref{eq:constraintssecond-1}) can be rewritten as:
\[
u(t) \in \left [ \left ( 1-\frac{A}{\tilde{A}} \right) y^0(t) , y^0(t) \right ], \qquad t\geq 0,
\]
so the set of admissible control strategies for a given initial datum $p \in {X}$ is given by:
\begin{equation}
\label{eq:defA0}
\mathcal{U}(x) := \Big \{ u \in L^2_{loc}([0,+\infty);\mathbb{R}^+) : u(t) \in \left [ \left ( 1-\frac{A}{\tilde{A}} \right) y^0(t), y^0(t) \right ] \text{ for all } t\geq 0 \Big \}.
\end{equation}
If $y^0(t)<0$, then the interval $\left [ \left ( 1-\frac{A}{\tilde{A}} \right) y^0(t), y^0_{p,u(\cdot)}(t) \right ]$ is empty. Thus, admissibility requires $y^0(t) \geq 0$ for all $t\geq 0$.

The functional to be maximized becomes:
\begin{equation}
\label{eqfunzionalehilbert}
J (x,u(\cdot)) := \int_0^\infty e^{-\rho s} \frac{(x^0(t) - u(t))^{1-\sigma}}{1-\sigma} \ud s.
\end{equation}

\subsubsection{The infinite-dimensional HJB equation and its explicit solution}
\label{subsubsec:HJB-timetobuild}

Following the Hilbert space formulation, we can now write the associated Hamilton-Jacobi-Bellman (HJB) equation which, in line with (\ref{eq:HJBbis}) can be written, in terms of the structural state as

\begin{equation}
\label{eq:HJB-timetobuild}
\rho v(x) = \langle x,  \mathcal{A} Dv(x) \rangle_{{X}} +  \sup_{u \in \left [ \left ( 1-\frac{A}{\tilde{A}} \right) x^0, x^0 \right ]} \left [ u \tilde{A} \delta_{-d}(D(v(x)) + \frac{(x^0 - u)^{1-\sigma}}{1-\sigma} \right ]
\end{equation}

Similarly to what we did in Subsection \ref{sub:vintagecapitaldelay} for the $AK$ vintage model, we denote here $\xi$ the unique positive real root of the characteristic equation associated with the delay dynamics:
\begin{equation}
\label{eq:characteristic-eq-delay}
z = \tilde{A} e^{-zd}.
\end{equation}
Also in this case $\xi$ can be interpreted as the maximal attainable growth rate (or the constant real interest rate of the economy). So the standard assumption for endogenous growth and for the finiteness of the value function translates here in the following:
\begin{Assumption}
\label{hyp:ttb_finiteness}
$\rho > \xi(1-\sigma)$.
\end{Assumption}
It ensures that the planner's discount rate $\rho$ is sufficiently high compared to the economy's maximal growth rate $\xi$ to prevent the utility integral \eqref{eqfunzionalehilbert} from diverging.

To find an explicit solution of the HJB equation, we try to find a function homogeneous in a linear functional of the state. Since $\xi$ can be interpreted as the interest rate 
it is natural to weight the past actions using the exponential of $\xi$. All in all (and similarly to what we had in the vintage capital model that we presented in the previous subsection) we look for a solution that is in the form of of power of some ``equivalent capital''. So we define the functional $\Gamma: {X} \to \mathbb{R}$ as
\begin{equation}
\label{eq:defGamma_ttb}
\Gamma(x) := x^0 + \int_{-d}^0 e^{\xi s} x^1(s) \,{\mathrm{d}}s,
\end{equation}
where $x=(x^0,x^1) \in {X}$. Here, $x^0(t) = q(t)$ is the current output produced by the current installed capital, while the integral term represents the discounted present value of the output that will be generated by capital goods that are already in the pipeline but not yet productive. 

The solution to the HJB equation can be defined on a specific subset of ${X}$. Let us define the open set $\mathcal{Q} := \{ x \in {X} : \Gamma(x) > 0 \}$ and, setting
\[
\alpha = \frac{\rho - \xi(1-\sigma)}{\sigma \xi},
\]
the open set
\[
\mathcal{O} := \left\{ x = (x^0,x^1) \in \mathcal{H} :\ \ \Gamma(x) < x^0 \left(\frac{A}{\alpha \tilde{A}}\right) \right\}.
\]
The set $\mathcal{O}$ represents the region of the state space where the optimal control is an interior solution to the constrained problem, as we will see in the proof.
\begin{Proposition}
\label{prop:HJB-solution-ttb}
Under Assumption \ref{hyp:ttb_finiteness}, the function $v: \mathcal{Q} \to \mathbb{R}$ given by
\begin{equation}
\label{eq:value-function-ttb}
v(x) := \nu \frac{[\Gamma(x)]^{1-\sigma}}{1-\sigma}
\end{equation}
with
\[
\nu = \alpha^{-\sigma} \frac{1}{\xi}
\]
is differentiable in all $x \in \mathcal{Q}$ and is a solution of the HJB equation \eqref{eq:HJB-timetobuild} on the set $\mathcal{O}$ in the sense of (\ref{eq:HJBbis}).
\end{Proposition}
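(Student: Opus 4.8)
The statement is a direct verification in the spirit of Proposition~\ref{prop:VCM_HJB_solution}: the plan is to compute $Dv$, insert it into \eqref{eq:HJB-timetobuild}, and check that the resulting identity reduces to the stated value of $\nu$ together with the definition of $\alpha$.

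First I would settle differentiability and regularity. Since $\Gamma$ is a bounded linear functional on ${X}$, it is represented by the fixed vector $D\Gamma(x)\equiv(1,\{s\mapsto e^{\xi s}\})\in{X}$, independent of $x$; because $\Gamma(x)>0$ on $\mathcal{Q}$, the function $v=\nu[\Gamma(\cdot)]^{1-\sigma}/(1-\sigma)$ is Fréchet differentiable there with $Dv(x)=\nu[\Gamma(x)]^{-\sigma}D\Gamma(x)$. One then checks that $D\Gamma(x)$ lies in the domain of the relevant operator: its second component $s\mapsto e^{\xi s}$ belongs to $W^{1,2}([-d,0];\R)$ and the compatibility condition $\psi^0=\psi^1(0)$ holds since $1=e^{\xi\cdot 0}$; moreover $x\mapsto Dv(x)$ is continuous with values in that domain under the graph norm, giving the regularity demanded of a classical solution on $\mathcal{Q}$.

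Next I would evaluate each term of \eqref{eq:HJB-timetobuild} at $x\in\mathcal{O}$, factoring out $[\Gamma(x)]^{-\sigma}$ throughout. Applying the operator to $D\Gamma(x)$ produces $(0,\{s\mapsto\xi e^{\xi s}\})$, so the drift term equals $\xi\nu[\Gamma(x)]^{-\sigma}(\Gamma(x)-x^0)$; for the boundary term, $\delta_{-d}(D\Gamma(x))=e^{-\xi d}=\xi/\tilde{A}$ by the characteristic equation \eqref{eq:characteristic-eq-delay}, hence $\tilde{A}\,\delta_{-d}(Dv(x))=\xi\nu[\Gamma(x)]^{-\sigma}$. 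The map $u\mapsto u\,\tilde{A}\,\delta_{-d}(Dv(x))+(x^0-u)^{1-\sigma}/(1-\sigma)$ is strictly concave, with unconstrained maximizer solving $(x^0-u)^{-\sigma}=\tilde{A}\,\delta_{-d}(Dv(x))$, i.e.\ $u^*(x)=x^0-(\xi\nu)^{-1/\sigma}\Gamma(x)=x^0-\alpha\Gamma(x)$ after using $\xi\nu=\alpha^{-\sigma}$. A key sub-step is to verify that $u^*(x)$ is interior to the admissibility interval $[(1-A/\tilde{A})x^0,\,x^0]$ exactly when $x\in\mathcal{O}$: the bound $u^*(x)<x^0$ amounts to $\alpha\Gamma(x)>0$ (so one needs $\alpha>0$, which is Assumption~\ref{hyp:ttb_finiteness}, and $\Gamma(x)>0$), while $u^*(x)>(1-A/\tilde{A})x^0$ is equivalent to $\Gamma(x)<(A/(\alpha\tilde{A}))x^0$, the defining inequality of $\mathcal{O}$; hence on $\mathcal{O}$ the constrained supremum is attained at $u^*(x)$ and the constraints \eqref{eq:constraintssecond-1} are slack.

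Finally, substituting $u^*(x)$ with $(x^0-u^*(x))^{1-\sigma}=\alpha^{1-\sigma}[\Gamma(x)]^{1-\sigma}$, collecting the three contributions and cancelling the common factor $[\Gamma(x)]^{-\sigma}$, the terms linear in $x^0$ cancel and one is left with the scalar identity $\rho\nu/(1-\sigma)=\xi\nu(1-\alpha)+\alpha^{1-\sigma}/(1-\sigma)$; inserting $\nu=\alpha^{-\sigma}/\xi$ and clearing denominators reduces it to $\rho/\xi=1-\sigma+\sigma\alpha$, i.e.\ precisely $\alpha=(\rho-\xi(1-\sigma))/(\sigma\xi)$, which holds by definition, so $v$ solves \eqref{eq:HJB-timetobuild} on $\mathcal{O}$. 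The computation is elementary; the only genuinely delicate points are the bookkeeping of which operator (the generator versus its adjoint in the Hilbert-space reformulation) and domain enter, and --- more substantively --- correctly identifying $\mathcal{O}$ as the region where the irreversibility and non-negativity constraints do not bind, since outside $\mathcal{O}$ the Hamiltonian's maximizer lies on the boundary of the control interval and the power ansatz \eqref{eq:value-function-ttb} ceases to satisfy the equation.
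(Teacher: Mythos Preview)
Your proposal is correct and follows essentially the same approach as the paper's own proof: compute $Dv(x)=\nu[\Gamma(x)]^{-\sigma}(1,e^{\xi\cdot})$, check it lies in $D(\mathcal{A}^*)$, use the characteristic equation to simplify $\tilde{A}\,\delta_{-d}(Dv(x))=\xi\nu[\Gamma(x)]^{-\sigma}$, identify the interior maximizer $u^*(x)=x^0-\alpha\Gamma(x)$ (admissible precisely on $\mathcal{O}$), and substitute back to reduce the HJB to the scalar identity encoding the definition of $\alpha$. Your treatment of the admissibility interval and the final algebraic reduction is in fact slightly more explicit than the paper's.
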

\begin{proof}
The proof is constructive: we verify that \eqref{eq:value-function-ttb} solves the HJB equation \eqref{eq:HJB-timetobuild}.

First, we compute the Fréchet derivative of $v(x)$. Since $\Gamma$ is a linear functional, $v$ is differentiable for all $x \in \mathcal{Q}$, and its derivative is:
\[
Dv(x) = \nu [\Gamma(x)]^{-\sigma} D\Gamma(x),
\]
where $D\Gamma(x) = \psi := (1, \theta(\cdot))$ with $\theta(s) = e^{\xi s}$ which can be easily seen to belong to $D(\A^*)$.

Next, we find the candidate feedback optimal control by maximizing 
\[
u \mapsto u \tilde{A} \delta_{-d}(Dv(x)) + \frac{(x^0 - u)^{1-\sigma}}{1-\sigma}
\]
over the set $u \in \left [ \left ( 1-\frac{A}{\tilde{A}} \right) x^0, x^0 \right ]$. Since this function is concave the first-order condition is necessary and sufficient it identify a point which respects the constraints. We get
\[
\tilde{A} \delta_{-d}(Dv(x)) - (x^0 - u^*)^{-\sigma} = 0 \implies u^*(x) = x^0 - (\tilde{A} \delta_{-d}(Dv(x)))^{-1/\sigma}.
\]
We compute the term $\delta_{-d}(Dv(x))$:
\[
\delta_{-d}(Dv(x)) = \nu [\Gamma(x)]^{-\sigma} \delta_{-d}(\psi) = \nu [\Gamma(x)]^{-\sigma} e^{-\xi d}.
\]
Using the characteristic equation \eqref{eq:characteristic-eq-delay}, $\tilde{A}e^{-\xi d} = \xi$, so,
\[
\tilde{A} \delta_{-d}(Dv(x)) = \xi \nu [\Gamma(x)]^{-\sigma}.
\]
Substituting this into the expression for $u^*(x)$ gives the candidate feedback policy:
\[
u^*(x) = x^0 - (\xi \nu [\Gamma(x)]^{-\sigma})^{-1/\sigma} = x^0 - (\xi \nu)^{-1/\sigma} \Gamma(x).
\]
Using the definitions of $\nu$ and $\alpha$, we find that $(\xi\nu)^{-1/\sigma} = \alpha$, so
\[
u^*(x) = x^0 - \alpha \Gamma(x).
\]
This maximizer is admissible (i.e., it satisfies the constraints in \eqref{eq:constraintssecond-1} so the first condition identify an internal maximum in $\left [ \left ( 1-\frac{A}{\tilde{A}} \right) x^0, x^0 \right ]$) precisely when $x \in \mathcal{O}$.

Finally, we substitute the solution $v(x)$ and the candidate optimal control $u^*(x)$ back into  \eqref{eq:HJB-timetobuild}. The equation holds if:
\[
\rho v(x) = \langle x, \mathcal{A}^* Dv(x) \rangle_{{X}} + u^*(x) \tilde{A} \delta_{-d}(Dv(x)) + \frac{(x^0 - u^*(x))^{1-\sigma}}{1-\sigma}.
\]

Let's evaluate the right side of this equation: since $Dv(x) = \nu [\Gamma(x)]^{-\sigma} \psi$, $\mathcal{A}^*\psi = (0, \xi e^{\xi s})$ and $u^*(x) = x^0 - \alpha \Gamma(x)$, we have
\begin{multline}
\langle \mathcal{A}^* Dv(x), x \rangle_{{X}} + u^*(x) \tilde{A} \delta_{-d}(Dv(x)) + \frac{(x^0 - u^*(x))^{1-\sigma}}{1-\sigma} \\
= \left\langle \nu [\Gamma(x)]^{-\sigma} (0, \xi e^{\xi s}), (x^0, x^1) \right\rangle_{{X}} +
(x^0 - \alpha \Gamma(x))(\xi \nu [\Gamma(x)]^{-\sigma}) + \frac{(\alpha \Gamma(x))^{1-\sigma}}{1-\sigma}\\
= \nu [\Gamma(x)]^{-\sigma} \left( \xi \int_{-d}^0 e^{\xi s} x^1(s) \,{\mathrm{d}}s \right) +
x^0 \xi \nu [\Gamma(x)]^{-\sigma} - \alpha \xi \nu [\Gamma(x)]^{1-\sigma} + \frac{\alpha^{1-\sigma}}{1-\sigma} [\Gamma(x)]^{1-\sigma}.
\end{multline}
Using the definition of $\Gamma(x)$ from \eqref{eq:defGamma_ttb}, we can write $\int_{-d}^0 e^{\xi s} x^1(s) \,{\mathrm{d}}s = \Gamma(x) - x^0$. Thus the previous expression becomes
\begin{multline}
\nu \xi [\Gamma(x)]^{-\sigma} \left( \Gamma(x) - x^0 \right) + x^0 \xi \nu [\Gamma(x)]^{-\sigma} - \alpha \xi \nu [\Gamma(x)]^{1-\sigma} + \frac{\alpha^{1-\sigma}}{1-\sigma} [\Gamma(x)]^{1-\sigma}\\
=[\Gamma(x)]^{1-\sigma} \left( \xi\nu(1-\alpha) + \frac{\alpha^{1-\sigma}}{1-\sigma} \right).
\end{multline}
Using the definitions of $\alpha = \frac{\rho-\xi(1-\sigma)}{\sigma\xi}$ and with some algebra one can prove that $\left( \xi\nu(1-\alpha) + \frac{\alpha^{1-\sigma}}{1-\sigma} \right)$ is indeed equal to $\rho \left( \frac{\nu}{1-\sigma} \right) = \rho v(x)$ so the HJB equation is verified.
\end{proof}

\subsubsection{The solution of the model}
\label{subsubsec:solution-timetobuild}
The explicit solution to the HJB equation allows us to identify the candidate for the optimal policy in feedback form. The candidate optimal feedback map, provided by the maximizer of the Hamiltonian, is the linear map
\begin{equation}
\label{eq:Feedback-delay}
 \phi(x) := x^0 - \alpha \Gamma(x).
\end{equation}
We have the following result.
\begin{Proposition}\label{prop:ggg}
Let Assumption \ref{hyp:ttb_finiteness} hold. Let $\phi: \mathcal{O} \to \mathbb{R}$ be the (linear) feedback map defined by \eqref{eq:Feedback-delay} and  let $x \in \mathcal{O}$ be an initial state with $x^0>0$ and $x^1 > 0$. If the unique solution $y(\cdot)$ to the closed-loop system
\begin{equation}
\label{eqBDDM-ffedbackopt}
\left\{ \begin{array}{ll}
 {y}'(t) = \mathcal{A} {y}(t) + {\phi(y(t))} \tilde{A} \delta_{-d},
\hspace{.05in} \hspace{.05in} t\geq 0, \\\\
{y}(0) = x = (q_0, \gamma(0)(\cdot)),
\end{array} \right.
\end{equation}
remains in $\mathcal{O}$ for all $t \ge 0$, then the feedback control $u^*(t) = \phi(y(t))$ is optimal and the value function is given by the function $v$  provided in  Proposition \ref{prop:HJB-solution-ttb}.
\end{Proposition}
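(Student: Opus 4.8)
The plan is to apply the infinite-horizon verification theorem, Theorem \ref{teo:verst}, whose abstract hypotheses have essentially all been arranged in the preceding propositions. The state equation \eqref{eqBDDMbis} is of the unbounded-control type discussed in Subsection \ref{SSSE:Bunbounded} (here $\mathcal{B}u = u\tilde A\delta_{-d}$, with $\delta_{-d}\in\mathcal L(D(\mathcal A^*),\mathbb R)$ by the Sobolev embedding), so by Remark \ref{rem:VerificUnboundedSStaz} the verification machinery applies verbatim. First I would record that, by Proposition \ref{prop:HJB-solution-ttb}, $v$ is a classical solution of the HJB equation \eqref{eq:HJB-timetobuild} on $\mathcal{O}$, with $Dv(x)=\nu[\Gamma(x)]^{-\sigma}\psi$ and $\psi\in D(\mathcal A^*)$, so that $\mathcal A^*Dv$ and $\delta_{-d}(Dv)$ are well defined and continuous on $\mathcal O$; and that, by construction in that proof, the feedback $\phi(x)=x^0-\alpha\Gamma(x)$ realises the supremum in the Hamiltonian pointwise, i.e. \eqref{feedverst} holds along \emph{any} trajectory that stays in $\mathcal O$ — in particular along $y(\cdot)$ and its associated control $u^*(t)=\phi(y(t))$.

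Next I would check admissibility of $u^*(\cdot)$. Since $x\in\mathcal O$ and the closed-loop trajectory $y(\cdot)$ is assumed to remain in $\mathcal O$, we have $\Gamma(y(t))>0$ for all $t$, and the defining inequality of $\mathcal O$ guarantees $u^*(t)=\phi(y(t))\in\big[(1-\tfrac{A}{\tilde A})y^0(t),\,y^0(t)\big]$, so $u^*(\cdot)\in\mathcal U(x)$ as defined in \eqref{eq:defA0} (note $y^0(t)\ge 0$ is forced by membership in $\mathcal O$ together with $\Gamma(y(t))>0$); one also needs $u^*(\cdot)\in L^2_{loc}$, which follows since $y\in C([0,\infty);X)$ and $\phi$ is a bounded linear functional on $X$. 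The finiteness of $J(x;u^*(\cdot))$ and of the value function (Assumption \ref{ass:wellpose}(ii)-(iii)) is standard for the $AK$-type utility under Assumption \ref{hyp:ttb_finiteness}: along any admissible trajectory $q(t)=y^0(t)$ grows at most like $e^{\xi t}$, hence the integrand is dominated by $Ce^{(\xi(1-\sigma))t}$, integrable against $e^{-\rho t}$ precisely when $\rho>\xi(1-\sigma)$.

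The genuinely substantive point is the transversality condition. For part (i) of Theorem \ref{teo:verst} one needs, for every $u(\cdot)\in\mathcal U(x)$, a weakly better admissible control $\hat u(\cdot)$ with $\limsup_{r\to\infty}e^{-\rho r}v(\hat y(r))\ge 0$; since $v\ge 0$ when $\sigma\in(0,1)$ and $v\le 0$ when $\sigma>1$, in the first case the condition is automatic, while in the second case one uses that along an optimising sequence the trajectories cannot escape too fast — concretely, $|v(\hat y(r))|\le C[\Gamma(\hat y(r))]^{1-\sigma}$ and $\Gamma(\hat y(r))$ grows at most like $e^{\xi r}$ (this is where the maximal-growth-rate interpretation of $\xi$, built into the characteristic equation \eqref{eq:characteristic-eq-delay}, is used), so $e^{-\rho r}|v(\hat y(r))|\le Ce^{-(\rho-\xi(1-\sigma))r}\to 0$ by Assumption \ref{hyp:ttb_finiteness}. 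For part (ii) one needs $\liminf_{r\to\infty}e^{-\rho r}v(y^*(r))\le 0$ along the candidate optimal trajectory, which in the case $\sigma\in(0,1)$ follows from the same decay estimate $e^{-\rho r}v(y(r))\to 0$, and in the case $\sigma>1$ is trivial since $v\le 0$. I expect the bookkeeping of these growth estimates for $\Gamma(y(t))$ along admissible trajectories — showing $\Gamma$ cannot grow faster than $e^{\xi t}$ using \eqref{eqBDDMbis} and Gronwall — to be the main (though not deep) obstacle; granting it, Theorem \ref{teo:verst}(ii) gives $v(x)=V(x)$ and optimality of $u^*(\cdot)$, which is the claim.
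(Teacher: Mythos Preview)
Your proposal is correct and follows exactly the same approach as the paper's own (very brief) sketch: apply the infinite-horizon verification Theorem \ref{teo:verst} via Remark \ref{rem:VerificUnboundedSStaz}, with the transversality conditions established through Assumption \ref{hyp:ttb_finiteness}. Your write-up is considerably more detailed than the paper's two-line sketch, spelling out admissibility of $u^*(\cdot)$, the role of $\mathcal{O}$, and the case split on $\sigma$ for transversality, but the strategy is identical.
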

\begin{proof}[Sketch of the proof]
The result follows from applying Theorem \ref{teo:verst}. The demanded transversality can be proved by using Assumption \ref{hyp:ttb_finiteness}.
\end{proof}

If one comes back to the original DDE formulation of the problem, one can rewrite the results in terms of the original variables. For instance, the optimal control in open loop form is the solution of the following DDE:
\begin{equation*}
 \label{eq:per-u-ottimo}
 \left\{ \begin{array}{l}
\displaystyle{\frac{\ud}{\ud t}{ u}^*(t) = \tilde A { u}^*(t-d) \left ( 1- \alpha \right )  -\alpha \left ( \xi \tilde A e^{\xi t}\int_{-d-t}^{-t}
e^{\xi s} { u^*}(-d-s) \ud s + \tilde A ( -{ u^*}(t-d)+
e^{-d \xi} { u^*}(t) ) \right ),}\\\\
 {u}^*(s)= u_0(s) \qquad \text{for } s\in [-d,0),\\\\
 u^* (0) = \left ( 1- \alpha \right ) q_0 - \alpha
  \int_{-d}^0 e^{\xi s} u_0(-d-s)(s) \ud s.
 \end{array} \right.
 \end{equation*}
Similar results can be stated for the trajectory  $t\mapsto q(t)$ .

\begin{Remark}\label{rem:adm}
    One of the main issue for the application of Proposition \ref{prop:ggg} is the requirement that the solution to the closed loop equation remains in $\mathcal{O}$, that is, the admissibility of the candidate optimal feedback map. This may be really difficult: some sufficient conditions are provided at a theoretical level in the paper \cite{bambi2017generically}, dealing with a similar problem, but with distributed delay.\hfill$\square$
\end{Remark}

\subsubsection{Variants and Literature}
The economic importance of time-to-build was already clear, for instance, to \cite{Jevons71} (see, in particular, the section "Capital is concerned with Time" in Chapter 7 of \emph{Theory of Capital}). However, the first study linking this concept to aggregate economic fluctuations is attributed to \cite{Kalecki35}. Since then, numerous economists have explored this theme. Among them, the fundamental work of \cite{kydland1982time} demonstrated how, in a dynamic model with time-to-build, exogenous technological fluctuations can be amplified and become a key driver of economic cycles. The time-to-build phenomenon is well-documented in the empirical literature. For example, the dataset of \cite{Koeva00}, derived from a sample of firms in the Compustat database, indicates that the average time required to install a facility is about two years, with no significant dependence on the business cycle.

In this subsection, we focused on the formalization introduced by \cite{Kalecki35} and studied, for instance, by \cite{Asea1999, Bambi08, BambiFabbriGozzi10}, where investment becomes productive after a fixed, exogenous lag modeled as a pointwise delay in the state equation. In other time-to-build models addressing different aspects (such as firm and household investments or the presence of habits), other authors, including \cite{zhou2000time, gomme2001home, edge2007time, jeon2023time}, often use a smoother dependence on the entire history of past investments, frequently in a discrete-time framework.  On the one hand, this approach can be seen as more general; on the other, it is technically simpler. This is because, instead of involving an unbounded operator in the state equation (as is the case here, where the operator $\delta_{-r}$ appears in the evolution equation in ${X}$, with the relevant norm being that of $L^2(-d,0)$), it features a bounded operator (for instance the $L^2$ inner product with a weight function for the ``history'' of investment). In the next subsection, we explore another possible approach to obtaining a different infinite-dimensional formalization of the problem in the distributed delay case.

\section{Examples without explicit solution arising in economic theory}\label{sec:exnonexpl}
\subsection{Time-to-build: the nonlinear distributed delay case}\label{sub:ttbnonlinear}
In this subsection, we continue the study of time-to-build models, but instead of focusing, as in the previous case, on the scenario where machines become fully productive after a fixed time $d$ from purchase, we will consider a case where the process occurs more gradually. The problem described here is inspired by the economic problem proposed in \cite{Asea1999}, which is a nonlinear version of the one addressed in Subsection \ref{sub:time-to-build}, by replacing the term $Ak(t-d)$ by $P(k(t-d))$, with $P$ increasing and concave, and the power utility function with a more general one.  What we illustrate here is a smoothed version of this problem, extensively studied by means of viscosity solutions in the two papers \cite{federico2010hjb, federico2011hjb},  simplifying it in some aspects\footnote{We do not consider here the utility on the current state, present in the aforementioned papers.}.


\subsubsection{The optimal control problem with delay}
From a mathematical point of view, the aim is to study the optimal control of the $1$-dimensional  ordinary differential equation with delay
\begin{equation}
\label{statedelay}
\begin{cases}
\displaystyle{k'(t)=P\Big(k(t), \int_{-d}^0 a(\xi)k(t+\xi) d \xi\Big)-u(t),  }   \\  k(0)=x_0;\quad k(s)=x_1(s),\,\,s\,\in\,[-d,0).
  \end{cases}
\end{equation}
where \begin{enumerate}[(i)]
\item 
$d\in (0,\infty)$ is the maximal time delay of the  controlled ODE;
\item  $x= (x_0,x_1(\cdot))$ is such that $x_0>0$ and $x_1\in L^2([-d,0];\R)$;
\item  
$P: \R^+\times \R\to \R$;
\item  $u(\cdot)$ is the control variable taking values in some interval $\mathcal{C}\subseteq \R^+$. 
\end{enumerate}
The goal is to maximize a functional of the form\footnote{Notice that $\mathcal{J}$ does not actually depend on $\alpha$ here.}
\begin{equation}\label{functttb}
\mathcal{J}(c(\cdot))=\int_0^\infty e^{-\rho t} h(u(t))\ud t,
\end{equation} over the set of admissible controls
$$
\mathcal{U}(x)=\Big\{u(\cdot)\in L^1_{loc}(\R^+;\mathcal{C}): \ k^{x,u(\cdot)}(\cdot)>0\Big\}.
$$
The meaning of the above objects is the following.
\begin{enumerate}[(i)]
\item  $k$ is the state variable representing capital,  required to stay positive (state constraint); 
\item $u$ is the control variable representing the consumption rate;
\item 
$P$ is a production function, which takes the current and past values and capital as inputs and returns the production as output;
\item  $\rho>0$ is a discount factor and $w: \mathcal{C}\to\R$ is a  utility function.
\end{enumerate}

\subsubsection{Rewriting the problem in a Hilbert space setting absorbing the delay}
The optimal control problem with delay  described above can be reformulated as an optimal control problem without delay by lifting it to infinite dimension. The idea is to look at the whole path $k(t+\cdot)|_{[-d,0]}$ as a state variable in order to absorb the delay. More precisely, to give a formulation in a Hilbert space, it is useful to distinguish the present of the trajectory of  $k$ from its past by considering the dynamics of the couple $\left(k(t),k(t+\cdot)|_{[-d,0]}\right)$ as an element of the Hilbert space $X=\R\times L^2(-d,0;\R)$ endowed with the inner product 
\[
\langle x,z\rangle=  {x}_0 {z}_0 + \langle {x}_1, {z}_1 \rangle _{L^2([-d,0];\R)}, \ \ \ \ \ \ x=({x}_0, {x}_1)\in X, \ z=({z}_0, {z}_1) \in {X}.
\]
To this purpose, set
$$
y(t)=(y_0(t),y_1(t))=\left(k(t), k(t+\cdot)|_{[-d,0]} \right)\in X.
$$ 
The state equation (without delay) formally associated to the state variable $y(\cdot)$  in the space $X$ is 
\begin{equation}\label{ref:eqinf}
y'(t)=\mathcal{A}y(t)+F(y(t))-u(t)\hat n, \ \ \ y(0)=x,
\end{equation}
where $\hat n=(1,0)\in X$,
$$
\mathcal{A}: D(\mathcal{A})= \Big\{x\in \R\times W^{1,2}([-d,0];\R): \ x_1(0)=x_0\Big\}\subseteq X \to X, \ \ \ \ \ \ \mathcal{A} x= (x_{0}, x_1'(\cdot)),$$
$$
F:X\to X, \ \ \ \ \ F(x)= (F_{0}(x),F_{1}(x))=\left(P\Big(x_0,\, \int_{-d}^0 a(\xi)x_1(\xi)d\xi\Big)-x_{0},\,\, 0\right).
$$
The unbounded operator $\mathcal{A}$ is closed, densely defined, and generates a strongly continuous semigroup of linear operators of $\mathcal{L}(X)$. Moreover, it is invertible (with bounded inverse), which allows to define the weaker norm $|x|_{-1}=|A^{-1}x|$ in the space $X$. 
The objective functional remains \eqref{functttb} and the set of admissible controls is just rewritten as
 $$
\mathcal{U}(x)=\Big\{u(\cdot)\in L^1_{loc}(\R^+;\mathcal{C}): \ y_0^{x,u(\cdot)}(\cdot)>0\Big\}.
$$
We actually have an equivalence between solutions to \eqref{statedelay} and mild solutions to \eqref{ref:eqinf} (see \citealp[Proposition 3.3]{federico2010hjb}), which enables us to establish a rigorous correspondence of the original finite dimensional control problem with delay and the lifted control problem without delay in the Hilbert space $X$, posing ourselves in our theoretical  framework  of Subsection \ref{sec:infer} with 
$$f_{0}(x,u)=F(x)-u \hat n, \ \ \ g_{0}(x,u)=h(u)$$ and therefore posing the basis to employ the dynamic programming techniques in infinite dimension to approach the problem.
\subsubsection{Preliminary properties of the value function}
Some preliminary properties of the value function
$$V(x)=\sup_{u(\cdot)\in\mathcal{U}(x)} \mathcal{J}(u(\cdot)), \ \ \ x\in X,$$
 are established a priori in \cite{federico2010hjb}, Secs.\,2--3, dealing both with the original problem with delay in dimension one  and with the lifted problem in $X$ without delay. Specifically, under reasonable assumptions on the problem data\footnote{Basically, it is assumed that: (i)  $P$ is nondecreasing in its second variables, Lipschitz continuous, and jointly concave; (ii)  $a \in W^{1,2}([-d,0];\R^{+})$ with $a(-d)=0$; (iii)  $h$ satisfies standard assumptions for utility functions. For more details, we refer to the paper.}, it is shown that $V$ is:
\begin{enumerate}[(i)]
\item Nondecreasing with respect to the partial order in $X$;  
\item Concave and finite on a convex subset $\mathcal{O}$ of the positive cone of $X$, with $\mathcal{O}$ open with respect to $|\cdot|_{-1}$; the finiteness is consequence of the important estimate 
\begin{equation}\label{stimanorma}
|x_0|\leq C|x|_{-1}, \ \ \ \forall x=(x_0,x_1(\cdot)).   
\end{equation}
for some $C>0$.
\end{enumerate}
The latter result (ii) implies that $V$ is locally Lipschitz continuous with respect to $|\cdot|_{-1}$ within the domain  $\mathcal{O}$, leading to an important regularity result for the superdifferential $D^{+}V$. Specifically, $D^{+}V(x) \subset D(\mathcal{A}^{*})$ and this result enables a proper treatment of the term $\langle \mathcal{A}x,Dv(x)\rangle $ in the HJB equation by eliminating the unboundedness of $\mathcal{A}$ passing to the adjoint through formal equality $\langle \mathcal{A}x,Dv(x)\rangle =\langle x,\mathcal{A}^*Dv(x)\rangle$.    
\subsubsection{The \emph{HJB} equation in the Hilbert space: viscosity property and partial regularity of the value function}
The stationary HJB equation associated to the optimal control problem in the space $X$ is  \eqref{eq:HJBst} with 
\begin{equation}\label{HJBgoldys}
\mathcal{H}(x,p)=F_{0}(x)p_{0}+ \mathcal{H}^{o}(p_{0}) , \ \ \ x\in\mathcal{O}, \ \ p=(p_{0},p_{1})\in X.
\end{equation}
where 
$$ \mathcal{H}^{o}(p_{0})= \sup_{u\in\mathcal{C}} \big\{h(u)-up_{0}\big\}, \ \ \ p_{0}\in\R.$$
What is important to notice is that only the first (one dimensional) component $p_{0}$ of $p\in H$ is involved in this term, in particular in the nonlinear part. Since $p$ is the formal entry of $Dv(x)$ in the HJB equation, this means that only the one-dimensional directional derivative $v_{x_{0}}(x)=\frac{\partial v}{\partial x_{0}} (x_0,x_{1})$, corresponding to the formal writing $\langle Dv(x),\hat n\rangle$, is needed to give sense to the nonlinear term in HJB equation, which can be more explicitly written as \begin{equation}\label{HJBgoldysexpl} \rho v(x)=\langle \mathcal{A}x,Dv(x)\rangle + F_{0}(x)v_{x_{0}}(x)+ \mathcal{H}^{o}(v_{x_{0}}(x)). \end{equation} Particularly important is that the nonlinear term $\mathcal{H}^{o}$ only depends on $v_{x_{0}}(x)$, since this means that the candidate optimal feedback map associated to a solution to the HJB equation $v$ can be formally defined by only using $v_{x_0}$: assuming existence and uniqueness\footnote{Conditions for that can be given: we refer to  \cite{federico2011hjb} and refrain from providing them here, for the sake of brevity. To fix the ideas, here we may just assume that $\mathcal{C}$ is compact and $h$ is strictly concave.} of the $\argmax$, the candidate optimal feedback map is \begin{equation}\label{feedgoldys} \phi(x)=\mbox{argmax}_{u\in\mathcal{C}}\left\{h(u)- u v_{x_{0}}(x)\right\}. \end{equation} In \cite[Sec.\,4]{federico2010hjb}, the following main results, collected here in a single statement, are proved: \begin{Theorem}\label{th:goldys} $V$ is a viscosity solutions to \eqref{HJBgoldysexpl} over $\mathcal{O}$ and it is continuously differentiable, in classical sense, along the direction $\hat n$ therein; that is, $V_{x_{0}}(x)$ exists in classical sense for all $x\in\mathcal{O}$ and the map $x\mapsto V_{x_{0}}(x)$ is continuous. \end{Theorem}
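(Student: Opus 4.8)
The plan is to prove the two assertions of Theorem \ref{th:goldys} separately but with a common thread: the viscosity property follows from the Dynamic Programming Principle applied to the lifted problem, and the partial $C^1$-regularity follows by combining the viscosity property with the concavity of $V$ along the direction $\hat n$.

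\textbf{Step 1: Viscosity solution property.} First I would invoke the Dynamic Programming Principle (Proposition \ref{DPP}, in its stationary form) for the lifted control problem in $X$, which holds thanks to properties (P1)--(P2) of the admissible sets $\mathcal{U}(x)$. Then, given a test function $\varphi\in\mathcal{S}(\mathcal{O})$ touching $V$ from below (resp.\ above) at $x$, one argues in the standard way: take a nearly-optimal (resp.\ the constant, suitably perturbed) control, apply the chain rule of Proposition \ref{prop:chain}(ii) — or rather Proposition \ref{prop:chainunbounded}(ii) since the term $-u\hat n$ here is actually bounded, so the ordinary chain rule suffices — divide by the time increment and let it go to zero. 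The delicate point, exactly as flagged in Remark \ref{rem:assV1}, is the subsolution inequality: one needs a lower bound on the exit time from a small ball and finiteness/continuity of the Hamiltonian. Here, crucially, the estimate \eqref{stimanorma}, $|x_0|\le C|x|_{-1}$, together with the local Lipschitz continuity of $V$ with respect to $|\cdot|_{-1}$ on $\mathcal{O}$, controls the relevant quantities; the Hamiltonian $\mathcal{H}^o$ is finite and continuous under the assumption that $\mathcal{C}$ is compact and $h$ is (strictly) concave. This gives both inequalities at every $x\in\mathcal{O}$.

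\textbf{Step 2: Classical differentiability along $\hat n$.} The key structural observation is that the nonlinear part of the Hamiltonian depends only on $p_0=\langle p,\hat n\rangle$. I would exploit the a priori concavity of $V$ on the convex set $\mathcal{O}$: the one-variable function $t\mapsto V(x+t\hat n)$ is concave, hence admits left and right derivatives $V_{x_0}^-(x)\ge V_{x_0}^+(x)$ everywhere, and is differentiable in $x_0$ off a set of the $x_0$-variable of measure zero. The superdifferential $D^+V(x)$ is nonempty (by concavity) and, as recalled before the theorem, $D^+V(x)\subset D(\mathcal{A}^*)$. For $p\in D^+V(x)$ one has $p_0\in[V_{x_0}^+(x),V_{x_0}^-(x)]$. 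Plugging any two such $p$, $p'$ into the viscosity supersolution inequality — which for elements of the superdifferential gives the inequality with ``$\le$'' — and subtracting, and using that the linear term $\langle x,\mathcal{A}^*p\rangle$ and the term $F_0(x)p_0$ are affine in $p_0$ while $\mathcal{H}^o$ is \emph{strictly} concave in $p_0$ (because $h$ is strictly concave, so its Legendre transform $\mathcal{H}^o$ is strictly convex — wait, one must be careful about convexity vs.\ concavity here), I would force $p_0=p_0'$. Concretely: the supersolution property says $\rho\varphi(x)\ge\langle x,\mathcal{A}^*D\varphi(x)\rangle+F_0(x)D\varphi_0(x)+\mathcal{H}^o(D\varphi_0(x))$ for test functions from below; combined with the subsolution property one deduces that $p_0\mapsto \rho V(x)-\langle x,\mathcal{A}^*p\rangle-F_0(x)p_0-\mathcal{H}^o(p_0)$ cannot change sign across the interval $[V_{x_0}^+(x),V_{x_0}^-(x)]$ unless that interval is a single point; strict concavity of $\mathcal{H}^o$ then does the job, yielding $V_{x_0}^+(x)=V_{x_0}^-(x)$, i.e.\ differentiability in the $\hat n$ direction at every $x\in\mathcal{O}$.

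\textbf{Step 3: Continuity of $x\mapsto V_{x_0}(x)$.} Once differentiability along $\hat n$ is established pointwise, continuity is a soft consequence of concavity: for concave functions, pointwise convergence of arguments forces the partial derivatives (which exist everywhere here) to converge, because the difference quotients are monotone in the step size and converge uniformly on compact subsets of $\mathcal{O}$; an upper-semicontinuity argument for $D^+V$ plus the just-proved single-valuedness of the $\hat n$-component closes it. The \textbf{main obstacle} I anticipate is Step 2 — getting the strict-concavity argument to bite requires carefully matching the sign conventions (the problem is a maximization, so $\mathcal{H}^o$ is a sup, and whether it ends up convex or concave in $p_0$ must be checked), and one must make sure the viscosity inequalities are being applied in the correct direction (sub- vs.\ supersolution) when testing against elements of $D^+V$; a clean way is to regularize/approximate $V$ by sup-convolutions along $\hat n$ only, but the concavity shortcut above should work directly. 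The actual references \cite{federico2010hjb,federico2011hjb} carry this out in detail, and I would cite them for the routine verifications.
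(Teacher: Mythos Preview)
The paper itself does not prove this theorem: it is a survey and simply refers to \cite[Sec.\,4]{federico2010hjb} for the proof. So there is no ``paper's own proof'' to compare against beyond that citation. Your outline is in the spirit of the cited work --- DPP gives the viscosity property, and then concavity of $V$ combined with the viscosity property and the structure of the Hamiltonian yields the partial $C^1$ regularity along~$\hat n$.

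That said, your Step~2 has a genuine gap that you partially flag but do not resolve. First, the sign: $\mathcal{H}^o(p_0)=\sup_{u\in\mathcal{C}}\{h(u)-up_0\}$ is a supremum of affine functions of $p_0$, hence \emph{convex}; under strict concavity of $h$ (and the argmax being interior) it is \emph{strictly convex}, not strictly concave. Second, and more importantly, the mechanism you sketch --- ``subtract two inequalities'' --- does not close. For a concave $V$, at a putative point of nondifferentiability the subdifferential $D^-V(x)$ is empty, so you only have the \emph{subsolution} inequality $\rho V(x)\le \langle x,\mathcal{A}^*p\rangle+F_0(x)p_0+\mathcal{H}^o(p_0)$ for every $p\in D^+V(x)$; there is nothing on the other side to subtract. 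The correct argument proceeds via the set $D^*V(x)$ of \emph{reachable gradients} (limits of $DV(x_n)$ along differentiability points $x_n\to x$): at each such $x_n$ both viscosity inequalities hold, hence the HJB is an \emph{equality}; passing to the limit (here the inclusion $D^+V\subset D(\mathcal{A}^*)$ with a uniform bound, coming from the $|\cdot|_{-1}$-Lipschitz property, is essential) gives equality for every $p\in D^*V(x)$. Now take $p,q\in D^*V(x)$ with $p_0\neq q_0$; for $\lambda\in(0,1)$ the convex combination lies in $D^+V(x)$, and strict convexity of $\mathcal{H}^o$ yields
\[
\rho V(x)\;\le\;\langle x,\mathcal{A}^*(\lambda p+(1-\lambda)q)\rangle+F_0(x)(\lambda p_0+(1-\lambda)q_0)+\mathcal{H}^o(\lambda p_0+(1-\lambda)q_0)\;<\;\rho V(x),
\]
a contradiction. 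Hence the $\hat n$-component of $D^*V(x)$, and therefore of $D^+V(x)=\mathrm{co}\,D^*V(x)$, is a singleton, which is exactly $V_{x_0}(x)$. Your Step~3 then goes through. You were right to identify Step~2 as the crux; what is missing is the equality-at-reachable-gradients step and the correct (strict convexity) direction for $\mathcal{H}^o$.
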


\subsubsection{Verification theorem and optimal feedback controls in the context of viscosity solutions with partial regularity} 
  The second claim of Theorem \ref{th:goldys} is the notable one: it is a partial regularity result for viscosity solutions which allows to define the feedback map \eqref{feedgoldys} associated to the solution $V$ in classical sense. Indeed, the map
  \begin{equation}\label{feedgoldysV}
  \phi: \mathcal{O} \to \mathcal{C}, \ \ \ x\mapsto \argmax_{u\in\mathcal{C}}\big\{h(u)- u V_{x_{0}}(x)\big\}
  \end{equation}
  is well defined as a single valued map and continuous. The point is now to establish if it produces an optimal feedback control. In  \cite[Sec.\,3]{federico2011hjb}, the following main result is proved.

\begin{Theorem}\label{teo:vergoldys}
Let $x\in\mathcal{O}$ and let $u^*(\cdot)\in \mathcal{U}(x)$ be such that the following feedback relationship holds true: 
\begin{equation}\label{feedbackrelgoldys}
u^{*}(s)=\phi(y^{x,u^{*}(\cdot)}(s)), \ \ \ \ \mbox{for a.e.} \ s\geq 0.
\end{equation}
Then, $u^{*}(\cdot)$ is optimal starting at $x$.
\end{Theorem}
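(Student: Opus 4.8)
The plan is to adapt the verification argument of Theorem \ref{teo:verst}(ii) to the present nonsmooth setting, exploiting the partial regularity of $V$ along the direction $\hat n$ established in Theorem \ref{th:goldys}. The key obstacle is that $V$ is only a viscosity solution and is merely locally Lipschitz (with respect to $|\cdot|_{-1}$), so the chain rule \eqref{chainst} is not available for $V$ itself; we must circumvent this. The structure of the Hamiltonian \eqref{HJBgoldys}--\eqref{HJBgoldysexpl} saves us: the nonlinear part $\mathcal{H}^o$ depends only on $v_{x_0}$, and this is precisely the derivative that exists classically by Theorem \ref{th:goldys}.

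First I would set up the fundamental identity. Fix $x\in\mathcal{O}$ and let $u^*(\cdot)\in\mathcal{U}(x)$ satisfy the feedback relation \eqref{feedbackrelgoldys}; write $y^*(\cdot):=y^{x,u^*(\cdot)}(\cdot)$, which stays in $\mathcal{O}$ by admissibility. The standard route (as in \cite{federico2011hjb}) is to show that along the optimal-candidate trajectory one has, for every $r\ge 0$,
\begin{equation}\label{eq:verfund-goldys}
V(x)= e^{-\rho r}V(y^*(r))+\int_0^r e^{-\rho s}h(u^*(s))\ud s.
\end{equation}
To obtain \eqref{eq:verfund-goldys} I would argue that the map $s\mapsto e^{-\rho s}V(y^*(s))$ is absolutely continuous and compute its derivative a.e.\ using: (i) the fact that $V$ is differentiable along $\hat n$ with continuous derivative $V_{x_0}$; (ii) the viscosity sub/supersolution inequalities, which, combined with the Lipschitz regularity giving $D^+V(y)\subset D(\mathcal{A}^*)$, pin down the value of $\langle \mathcal{A}y^*(s), \cdot\rangle$-type terms; and (iii) the feedback relation \eqref{feedbackrelgoldys}, which forces $h(u^*(s))-u^*(s)V_{x_0}(y^*(s)) = \mathcal{H}^o(V_{x_0}(y^*(s)))$, i.e.\ the candidate control realizes the supremum in the Hamiltonian pointwise. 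Putting these together the HJB equality holds along $y^*$, and integrating yields \eqref{eq:verfund-goldys}. The delicate technical point here — and the main obstacle — is justifying the differentiation of $s\mapsto V(y^*(s))$ and identifying the $\mathcal{A}$-term rigorously despite $V$ not being in $\mathcal{S}(\mathcal{O})$; this is where one invokes the a priori estimates \eqref{stimanorma}, \eqref{stimnorma} wait, \eqref{stimanorma}, the inclusion $D^+V\subset D(\mathcal{A}^*)$, and a careful mollification/approximation argument, exactly as carried out in \cite[Sec.\,3]{federico2011hjb}.

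Second, I would pass to the limit $r\to\infty$ in \eqref{eq:verfund-goldys}. The integral term converges to $\mathcal{J}(u^*(\cdot))$ by definition of the objective functional \eqref{functttb} and the well-posedness (Assumption \ref{ass:wellpose}), so it remains to show $\limsup_{r\to\infty} e^{-\rho r}V(y^*(r))\le 0$ (the matching $\liminf\ge 0$ already follows, e.g., from $V\ge$ the value of the trivial tail control, or more directly from $V$ being finite and the discount). For the $\limsup$ bound one uses the growth control on $V$ coming from concavity plus the estimate $|y^*_0(r)|\le C|y^*(r)|_{-1}$ together with \eqref{stimanorma} and a standard a priori bound on the growth of $|y^*(r)|_{-1}$ under admissible controls, which the discount factor $\rho$ dominates — this is the analogue of the transversality condition \eqref{tracontbis}. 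Once both one-sided limits are controlled, \eqref{eq:verfund-goldys} gives $V(x)=\mathcal{J}(u^*(\cdot))$; since $V(x)\ge \mathcal{J}(u(\cdot))$ for every $u(\cdot)\in\mathcal{U}(x)$ by definition of the value function, $u^*(\cdot)$ is optimal starting at $x$, which is the claim. I expect the verification identity \eqref{eq:verfund-goldys} — not the limit passage — to be the heart of the matter, since it is there that the interplay between viscosity solutions, the partial classical regularity, and the unboundedness of $\mathcal{A}$ must all be handled simultaneously.
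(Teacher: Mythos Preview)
Your proposal is correct and follows essentially the same approach as the paper: both identify the argument as an adaptation of Theorem \ref{teo:verst}(ii) in which the classical chain rule is unavailable and must be replaced by an argument exploiting the partial $C^1$ regularity of $V$ along $\hat n$ together with the viscosity property and the inclusion $D^+V\subset D(\mathcal{A}^*)$, with the technical core deferred to \cite[Sec.\,3]{federico2011hjb}. The paper itself does not give a self-contained proof here but only the same outline you sketch, stressing that the delicate point is precisely the one you flag, namely justifying the differentiation of $s\mapsto e^{-\rho s}V(y^*(s))$ without $DV\in C(\mathcal{O};D(\mathcal{A}^*))$.
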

Theorem \ref{teo:vergoldys} is basically  half verification theorem,  as it basically corresponds to  the statement (ii) of Theorem \ref{teo:verst}; indeed, \eqref{feedbackrelgoldys} is nothing but  \eqref{feedverst} in the present example. However, its proof is much delicate, as it has to rely only on the existence of  $V_{x_{0}}$, whereas the proof of the classical verification theorem passes through the use of the (stationary version) of the infinite dimensional chain rule of Proposition \ref{prop:chain} and the latter requires much more regularity, i.e., that $DV\in C(\mathcal{O};D(\mathcal{A}^{*}))$. 

Moreover, in \cite[Sec.\,4]{federico2011hjb} a careful study of the closed loop equation associated with the feedback map $\phi$, i.e. 
  \begin{equation}\label{CLEgoldys}
y'(t)=\mathcal{A}y(t)+F(y(t))-\phi(x)\hat n, \ \ \ y(0)=x.
  \end{equation}
is performed to show the existence of a control satisfying \eqref{feedbackrelgoldys}. The argument requires an approximation through the introduction of a current additive utility on the state, $h_{0}(y_{0}(t))$, in the functional: it is eventually shown that, under some additional assumptions, there exists a unique optimal control and it is characterized by the feedback relation \eqref{feedbackrelgoldys}. Finally, in \cite{federico2011hjb}, Sec.\,5, an approximation argument is used to link the problem studied to the original economic problem with a pointwise delay $P(k(t-d))$ proposed by \cite{Asea1999}. 

\subsection{Optimal advertising}
The model we present here is an optimal control problem in which the state equation is linear and contains distributed delays in the control variable. Since the objective functional does not admit an explicit solution of the related HJB equation, the viscosity approach is employed. The problem is investigated in \cite{federico2014dynamic}; we here briefly illustrate how the infinite dimensional methodology is employed to succedefully address the problem.  

The motivating application is \emph{optimal advertising}. There is a substantial body of mathematical economics literature on this subject dating back to the 1960s. Key references include the very first contribution employing optimal control theory to describe this kind of application by \cite{nerlove1962optimal}, followed by papers that highlighted the need for delayed effects in the dynamics, e.g. \cite{pauwels1977optimal}. The survey by \cite{feichtinger1994dynamic} provides an excellent overview of the preceding literature on this rich mathematical economics topic. We also mention the stochastic extensions of these models introduced in the works \cite{gozzi200513, gozzi2009controlled}. 

\subsubsection{The economic problem}
The aim is to study the optimal control of the $1$-dimensional linear controlled ODE
\begin{equation}
\label{state}
\begin{cases}
\displaystyle{G'(t)=\,a_0G(t)\,+\,b_0u(t)\,+\,\int_{-r}^0b_1(\xi)u(t+\xi)d\xi, }       \\  G(0)=\alpha_0;\quad u(s)=\,\alpha_1(s),\,\,s\,\in\,[-d,0),
  \end{cases}
  \end{equation}
where  $u(\cdot)$ is the control variable taking values in some interval $\mathcal{C}\subseteq\R^+$; 
$d\in(0,\infty)$ is the delay lenght; $a_0, b_0\in\mathbb{R}^+$,  $\alpha=(\alpha_0,\alpha_1(\cdot))\in \R\times L^2([-d,0];\R^+)$, and $b_1\in L^2([-d,0];\R^+)$. It has be noticed that the delay appears here in the control variable, whose past values  still continue to have effects on the dynamics of the state variable $G(\cdot)$. Clearly, to be \eqref{state} well posed, the past values of the control variable need to be specified, explaining the presence of the initial condition $u(s)=\,\alpha_1(s)$ for $s\,\in\,[-r,0)$ in \eqref{state}. 
The goal is to maximize a functional of the form
\begin{equation}
\label{funcintroduad}
\mathcal{J}(\alpha; u(\cdot))=
\int_0^{\infty} e^{-\rho t}\ell(G(t),u(t))\ud t,
\end{equation}
where $\rho>0$ is a discount factor and $f:\R\times \mathcal{C}\to \R$ is a given measurable function,
over a set of admissible controls, subset of 
$L^1_\loc(\R^+;\mathcal{C})$.

 The interpretation of the mathematical object is the following. 
 The state variable $G(\cdot)$ represents the so-called \emph{goodwill} of a firm and the control variable $u$ represents the effort in advertising in order to increase the goodwill itself. The more is the effort in advertising, the more is the increase of the goodwill; that is, $b_0$ and $b_1(\cdot)$ are nonnegative (and nonidentically vanishing). The term $b_0 u(t)$ measures the immediate impact of the advertising effort, whereas the delay integral term $\int_{-d}^0b_1(\xi)u(t+\xi)d\xi$  accounts for the (empirically evident) fact that there is some memory of the past advertising on the current increase of the goodwill. 
 As for the objective functional, in this application  the function $\ell$ typically assumes a separable form of revenue/cost type in the state and control variables, i.e.
$\ell(G,u)=\ell_0(G)-\ell_1(u)$,
where $\ell_0$ is a revenue function and $\ell_1$ is a cost function satisfying the usual economic assumptions (nondecreasing and concave, nondecreasing and convex, respectively).  The simplest case is when $\ell_0$ is linear and $\ell_1$ is quadratic, leading to a linear-quadratic problem, solvable explicitly; however, the case when $\ell_0$ is nonlinear is  meaningful, since in reality this function is observed to have, in general, genuinely decreasing return to scale. 

\subsubsection{Rewriting the problem in a Hilbert space setting}
The lifting to infinite dimension of the above problem can be done similarly as for the problem illustrated in Subsection \ref{sub:vintagecapitaldelay} through the construction of the structural state (\citealp{VinterKwong}) with a representation through the construction of the structural state $$ y(t)=\left(G(t), \int_{-d}^\cdot b_1(\tau) u(\tau-\cdot)\ud\tau\right). $$ One would be tempted to do that lifting in the product Hilbert space $X=\mathbb{R}\times L^2([-d,0];\mathbb{R})$, as in Subsection \ref{sub:vintagecapitaldelay}. The state equation would be formally rewritten as $$ y'(t)=\mathcal{A}y(t)+ b u(t), \ \ y(0)=x_\alpha, $$ where $b=(b_0,b_1(\cdot))\in X$, $$x_\alpha= \left(\alpha_0,\int_{-d}^\cdot b_1(\tau) \alpha_1(\tau-\cdot)\ud\tau\right) \in X,$$ and $$ \mathcal{A}: D(\mathcal{A})\subseteq X \to X, \ \ \mathcal{A} x= (a_0 x_0+x_1(0), -x_1'(\cdot)), $$ with
$$D(\mathcal{A})= \Big\{x=(x_0,x_1(\cdot))\in \mathbb{R}\times W^{1,2}([-d,0];\mathbb{R}): \ x_1(-d)=0\Big\}.$$ The objective functional would be then reformulated, with abuse of notation, as $$ \mathcal{J}(x_\alpha; u(\cdot))=\int_0^\infty e^{-\rho t} g(y(t), u(t))\ud t, $$ where $g(x,u)=\ell(x_0,u))$, $(x,u)\in X\times \mathcal{C}$. Then, one could try to follow the approach employed in Subsection \ref{sub:ttbnonlinear} for the problem studied there. Unfortunately, an issue arises: in this case the estimate \eqref{stimanorma} does not hold and the argument breaks down. For this reason, in order to restore the approach, in \cite{federico2014dynamic} the problem is embedded in $X= \mathbb{R}\times W^{1,2}_0([-d,0];\mathbb{R})$, where $W^{1,2}_0([-d,0];\R)= \{f\in W^{1,2}([-d,0];\mathbb{R}): \ f(-d)=0\Big\}$. Basically, the representation is done one level of regularity more with respect to the  standard one performed in    $\mathbb{R}\times L^2([-d,0];\mathbb{R})$: the latter is replaced by $\mathbb{R}\times W^{1,2}_0([-d,0];\mathbb{R})= D(\mathcal{A})$ and $D(\mathcal{A})$ is replaced by $D(\mathcal{A}^2)$.
In this way, the estimate \eqref{stimanorma} is restored (see the crucial Remark 5.4 in \citealp{federico2014dynamic}) and the argument may proceed as in Subsection \ref{sub:ttbnonlinear} getting similar results.

\subsection{Vintage capital with transport state equation and a general concave pay-off} \label{vcteimpl} Let us now consider a generalization of the optimal investment problem with vintage capital studied in Subsection \ref{sec:vcteexpl}, in which we have the same state equation, but the objective functional is more generally concave in the state and control variables.
Differently from Subsection \ref{sec:vcteexpl}, we consider here the finite horizon problem

\subsubsection{The economic problem} We consider the following finite horizon problem ($T<+\infty$ is the horizon), having the same state equation \eqref{eq:esvintage} as that of the problem in Subsection \ref{sec:vcteexpl}
\begin{equation}
\label{eq:SEvintagenew}
\begin{cases}\frac{\partial z(\tau, s)}{\partial \tau}+\frac{\partial z(\tau, s)}{\partial s}=-\mu z(\tau, s)+u_1(\tau, s) & \tau \in(t,T), s \in(0, \bar{s}], \\ z(\tau, 0)=u_0(\tau) & \tau \in(t,T), \\ z(t, s)=z_0(s) & s \in[0, \bar{s}],\end{cases}  
\end{equation}
with the same meaning and assumptions there detailed, but a different objective functional 
\[
\mathcal{J}(t, x; u(\cdot)) = \int_t^T e^{-\rho \tau} \big[ R(Q(\tau)) - c(\tau, u(\tau)) \big] \, d\tau 
    + e^{-\rho T} R_0(Q(T))
\]
where $u(\cdot)=(u_0(\cdot),u_1(\cdot))$ and
\[
Q(\tau) = \int_0^{\bar{s}} \alpha(\tau, s) z(\tau, s) \, \ud s
\]
is the output rate (linear in $z(\tau,\cdot)$), i.e., the quantity of product of all different ages available at time $\tau$; 
$R$ and $R_0$ are \emph{concave} revenues from the output, rather than linear as in Section \ref{sec:vcteexpl},  and a general \emph{convex} investment costs 
\[
c(\tau, u(\tau)) = \int_0^{\bar{s}} c_1(\tau, s, u_1(\tau, s)) \, \ud s 
    + c_0(\tau, u_0(\tau)),
\]
with $c_1$ indicating the investment cost rate for technologies of age $s$, while $c_0$ is the investment cost in new technologies, including adjustment–innovation. 

\subsubsection{Rewriting the problem in a Hilbert space setting} 
The problem can be formulated, rather than in $L^2(0,\bar s)$, in the larger space $D(\A^*)'$,  by extending the translation semigroup $e^{t\A}$ of Subsection \ref{sec:vcteexpl}  to the semigroup $e^{t\A_{-1}}$ on $D(\A^*)'$, as defined  in Subsection \ref{SSSE:Bunbounded}. Thus, as in Subsection \ref{sec:vcteexpl}, setting $y(t):=z(t,\cdot)$ the state equation
writes
\begin{equation}\label{eq:clsolvint2}
    \begin{cases}
        y'(\tau)=\A_{-1} y(\tau)+\mathcal{B} u(\tau),&t>0,\\
        y(t)=x=z_0(\cdot),
    \end{cases}
\end{equation}
where $\mathcal{B}$ is as in Subsection \ref{sec:vcteexpl}.
Its mild form in $D(\A^*)'$ reads as 
$$
y(\tau )= e^{\mathcal{A}_{-1}(\tau-t)} x+ \int_t^\tau e^{(s-t)\mathcal{A}_{-1}} \mathcal{B}u(s)\ud s.
$$
If $\langle\cdot,\cdot\rangle$ denotes the duality pairing between $D(\A^*)$ and $D(\A^*)'$,  and    the coefficient $\alpha(\tau)$ is regular, i.e. in $D(\A^*)$, we define the concave function $g$  of $x\in D(\A^*)'$ as
$$
g(\tau, x)=R(\langle \alpha(\tau, \cdot), x\rangle), \quad g_0(k)=R_0(\langle \alpha(T), x\rangle)
$$  so that the objective functional is 
\[
J(t, x; u) = \int_t^T e^{-\rho \tau} \big[ g(\tau, y(\tau)) - c(\tau, u(\tau)) \big] \, d\tau 
    + e^{-\rho T} g_0( y(T))
\]
It is important to note that this approach applies in the enlarged state space by virtue of the stronger  regularity of the data $g$ and $g_0$.

The set of admissible controls is $\mathcal U(t,x)=L^p([t,T]; U)$
The value function is then defined, as usual as 
$$
V(t,k_0)=\sup_{u\in \mathcal U(t,x)}J(t, k_0; u)
$$

\subsubsection{The infinite-dimensional HJB equation: strong solutions} 
The HJB equation associated to the problem is 
$$-v_t(t,k)=\langle k, \A^*Dv(t,k)\rangle+g(k)+c(t)^*(\mathcal B^* Dv(k))$$
coupled with the final condition
$$v(T,k)=g_0(k),$$
and where, for all $t\ge 0$, $c(t)^*$ is  the convex conjugate of $c(t, \cdot)$, namely
$$[c(t)]^*(u)=\sup_{v\in U}\{(\mathcal B^* v,u)_U-c(t,u)\}, \quad u\in U$$
while $(\cdot,\cdot)_U$ represents the inner product in the Hilbert space $U$. 

We  need the following definitions.
If $X,Y$ are Banach spaces, then:

$\displaystyle Lip(X;Y)\,:\,
\left \{f:X\to Y \,:\, [f]_L:=\sup_{x,y\in X,\; x\neq y}
\frac{\vert f(x)-f(y)\vert_{Y}}
{\vert x-y\vert_X} <+\infty\right\}$;

 $\displaystyle{C_{Lip}^1}(X):=\{f\in C^1(X, \mathbb R)~:~ [
f^\prime]_L<+\infty\}$;

 $C_p(X,Y):=\left\{f:X\to Y\ ~:~\vert
f\vert_{C_p}:=\displaystyle\sup_{x\in X} {\vert f(x)\vert_Y\over 1+\vert
x\vert_X^p}<+\infty\right\}; \quad C_p(X):=C_p(X,\mathbb R)$.

Note that $C_p(X)$ is a Banach space, with respect to the associated norms,  and that $Lip(X;Y)$ and
${C_{Lip}^1}(X)$ are not.
\begin{Assumption} \label{ass:vcconc}
\begin{itemize}
\item[]
    \item[$(i)$] $g \in C([0, T], C_2(V')), g_x \in C([0, T], C_1(V', V)),$\\
    and for all $t \in [0, T], g(t, \cdot) \in C^1_{Lip}(V'), g(t, \cdot)$ convex, 
$t \mapsto [g_x(t, \cdot)]_L \in L^1(0, T)$;
     \item[$(ii)$] $g_0 \in C^1_{Lip}(V'), g_0$ concave
\item[$(iii)$] $c : [0, T] \times U \to \mathbb{R}, c(t, \cdot)$ is convex, l.s.c, and  
$\partial_u c(t, \cdot)$ is injective, for all $t \in [0, T].$

\item[$(iv)$] $[c(t)]^* \in C([0, T], C_2(U)), D_u [c(t)]^*\in C([0, T], C_1(U, U))$  \\
and for all $t \in [0, T], [c(t)]^* \in C^1_{Lip}(U), [c(t)]^*( 0) = 0$,\\
$\sup_{t \in [0, T]} [D_u[c(t)]^*]_L < \infty$.
\end{itemize}
\end{Assumption}
Unlike the case where $R$ is the identity function, discussed in Section \ref{sec:vcteexpl}, the HJB associated to the problem with a general concave objective functional admits no explicit classical solution. Nevertheless, one can show that the value function is the unique \emph{strong} solution of the associated HJB equation, as specified below.

\begin{Definition}
    \emph{(Strong solution of HJB)}
A function  $v\in C([0,T],C_2({\D}))$ is a {\rm strong}
solution of
\begin{equation}\label{eq:hjbvintconc}\begin{cases}-v_t(t,k)=\langle   x,\A^*Dv(t,k)\rangle +g(t,k)-[c(t)]^*(\mathcal B^* Dv(t,k))=0 , &(t,x)\in[0,T]\times {\D}\\
v(T,k)=g_0(k)&\\
\end{cases}
    \end{equation}

if there exists a family
$\{v^\e\}_{0<\e<\bar\e} \subset C([0,T],C_2({\D}))$ such
that:
\begin{itemize}
    \item[$(i)$] $v^\e(t,\cdot)\in {C_{Lip}^1}({\D})$ and
$v^\e(t,\cdot)$ is concave for all $t\in[0,T]$;
$v^\e(T,x)=g_0(x)$ for all $x\in{\D}$.

\item[$(ii)$]  there exist  constants $\Gamma_1,\Gamma_2>0$ such that
$$\sup_{t\in[0,T]}[Dv^\e(t)]_L\le \Gamma_1,~
\sup_{t\in[0,T]}\vert Dv^\e(t,0)\vert_{D(\A^*)}\le \Gamma_2,~\forall
\e\in]0,\bar\e[;$$

\item[$(iii)$]  for all $x\in D(A)$, $t\mapsto v^\e(t,x)$ is
continuously
 differentiable;

\item[$(iv)$]  $v^\e\to\phi$, as $\e\to 0+$,
 in $C([0,T],C_2({\D}))$;

\item[$(v)$] there exists  $g_\e\in C([0,T];C_2(\D))$ such that,
for all $t\in[0,T]$ and $x\in D(A)$,
$$-v^\e_t(t,k)=\langle   x,\A^*Dv^\e(t,k)\rangle+g_\e(t,k) -[c(t)]^*(\mathcal B^*Dv^\e(t,k))$$
with $g_\e(t,k)\to g(t,k)$ pointwise,  and $\int_0^T\vert
g_\e(t)-g(t)\vert_{C_2}\ud s\to 0$, as $\e\to 0+.$ 
\end{itemize}
\end{Definition}  
Then the main result is the following.

\begin{Theorem}
    Let Assumptions \ref{ass:vcconc} be satisfied. There exists a unique strong solution $v$ to \eqref{eq:hjbvintconc} with the following properties:
    \begin{itemize}
        \item [$(i)$] for all $k\in D(\A^*)'$, $v(\cdot, k)$ is Lipschitz continuous;
        \item[$(ii)$] for all fixed $t\ge0$, $v(t, \cdot)$ is a convex function in $ C^1_{Lip}(\D)$. 
    \end{itemize}
\end{Theorem}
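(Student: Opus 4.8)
\emph{Strategy.} The plan is to realise the strong solution as a limit, in $C([0,T];C_2(\D))$, of classical solutions of HJB equations with a regularised running cost — exactly along the scheme encoded in the definition of strong solution above — and to get uniqueness by reducing the equation to a fixed-point identity with a contractive structure. The starting point is the \emph{mild form} of \eqref{eq:hjbvintconc}. Writing $S(t):=e^{t\A_{-1}}$ for the extended semigroup and applying the chain rule (Proposition~\ref{prop:chainunbounded}, with the null control) along the free trajectories $s\mapsto S(s-t)k$, one checks that a function $v$ whose spatial gradient takes values in $D(\A^*)$ solves \eqref{eq:hjbvintconc} if and only if
\begin{equation}\label{eq:mildvc}
v(t,k)=g_0\big(S(T-t)k\big)+\int_t^T\Big(g\big(s,S(s-t)k\big)-[c(s)]^*\big(\mathcal B^*Dv(s,S(s-t)k)\big)\Big)\ud s
\end{equation}
for all $(t,k)\in[0,T]\times\D$. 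I would then solve \eqref{eq:mildvc} by a contraction argument in the complete metric space of $v\in C([0,T];C_2(\D))$ with $v(t,\cdot)\in C^1_{Lip}(\D)$, $\sup_t[Dv(t)]_L<\infty$ and $\sup_t|Dv(t,0)|_{D(\A^*)}<\infty$, $[\cdot]_L$ denoting the Lipschitz seminorm of $x\mapsto Dv(t,x)$ valued in $D(\A^*)$. The property that makes the associated map $v\mapsto\Phi(v)$ (the right-hand side of \eqref{eq:mildvc}) a self-map and a contraction for $T-t$ small is Assumption~\ref{ass:vcconc}$(iii)$--$(iv)$: $p\mapsto[c(t)]^*(\mathcal B^*p)$ is of class $C^1_{Lip}$ on $D(\A^*)$-balls, with $t$-uniform constants, so composing it with the map $k\mapsto Dv(s,S(s-t)k)$ — which is Lipschitz from $\D$ into $D(\A^*)$, with constant $[Dv(s)]_L\,|S(s-t)^*|_{\mathcal L(D(\A^*))}$ — preserves the required regularity, while the growth conditions in Assumption~\ref{ass:vcconc}$(i)$--$(ii)$ and $|S(t)^*|_{\mathcal L(D(\A^*))}\le Me^{\omega t}$ keep everything in $C_2$ and the gradients in $D(\A^*)$. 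The solution is then extended to all of $[0,T]$ by the usual step-by-step gluing in time.

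\emph{Approximation and uniform estimates.} To match the definition of strong solution, I would regularise only the running cost $g$: replace it by $g_\e$ — for instance $g_\e(t,\cdot):=g(t,e^{\e\A^*}\cdot)$, or a convolution along finite-dimensional projections — chosen so that each $g_\e(t,\cdot)$ is of class $C^2$ with $t$-uniform $C_1$ bounds on $Dg_\e$, preserves the concavity and the monotonicity with respect to the order of $X$, and satisfies $g_\e(t,k)\to g(t,k)$ pointwise with $\int_0^T|g_\e(t)-g(t)|_{C_2}\,\ud s\to0$ (the data $g_0$ and $c$ are left untouched, consistently with the definition). For each $\e$ the contraction argument yields a unique $v^\e$ solving \eqref{eq:mildvc} with $g_\e$ in place of $g$; the extra smoothness of $g_\e$ makes $v^\e$ a classical solution, so parts $(i)$, $(iii)$, $(v)$ of the definition hold (with $v^\e(T,\cdot)=g_0$). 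The crucial step is to show the bounds $\sup_t[Dv^\e(t)]_L\le\Gamma_1$ and $\sup_t|Dv^\e(t,0)|_{D(\A^*)}\le\Gamma_2$ of part $(ii)$ \emph{uniformly} in $\e$: estimating \eqref{eq:mildvc} and its spatial difference quotients with $|S(t)^*|_{\mathcal L(D(\A^*))}\le Me^{\omega t}$, the $t$-uniform Lipschitz bound on $D_u[c(t)]^*$, and the $\e$-uniform $C_1$ bounds on $Dg_\e$, $Dg_0$, one arrives at Gronwall-type inequalities on $[0,T]$ whose constants do not depend on $\e$. The same estimates yield, for the limit, properties $(i)$ (Lipschitz in time — estimate \eqref{eq:mildvc} directly in $t$) and $(ii)$ (the $C^1_{Lip}$ bound and the qualitative concavity/convexity pass to the limit) of the theorem.

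\emph{Passage to the limit and uniqueness.} For $\e\to0$, I would check that $\{v^\e\}$ is Cauchy in $C([0,T];C_2(\D))$: subtracting the $\e$- and $\e'$-versions of \eqref{eq:mildvc}, linearising the $[c(s)]^*$-term by means of the uniform gradient bounds, and using $g_\e\to g$ in the senses above, a Gronwall argument gives $\|v^\e-v^{\e'}\|_{C([0,T];C_2)}\to0$; the limit $v$ is then a strong solution and inherits the regularity $(i)$--$(ii)$. For uniqueness, note first that any strong solution automatically enjoys property $(ii)$, because the uniform bounds of its defining family pass to the limit. Next, any strong solution $\tilde v$, with family $\{\tilde v^\e\}$, satisfies the mild identity \eqref{eq:mildvc}: each classical $\tilde v^\e$ satisfies its own mild identity (via the chain rule, since $D\tilde v^\e$ is $D(\A^*)$-valued with the uniform bounds of the definition), and one passes to the limit using the convergences in $(iv)$--$(v)$. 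Finally, \eqref{eq:mildvc} has at most one solution in the class above by the short-time contraction of $\Phi$ glued forward in time, whence $\tilde v=v$. (Alternatively, both $v$ and any strong solution can be identified with the value function $V$ of the control problem by applying the finite-horizon verification Theorem~\ref{teo:ver} to the approximants $\tilde v^\e$ and letting $\e\to0$.)

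\emph{Main obstacle.} I expect the hard part to be the $\e$-uniform $D(\A^*)$-control of the gradients. Since $\A$ has no smoothing effect and $\mathcal B$ maps genuinely outside $X$, bounding $\A^*Dv^\e$ — equivalently $|Dv^\e(t,0)|_{D(\A^*)}$ and the $D(\A^*)$-Lipschitz constant of $Dv^\e(t,\cdot)$ — cannot be done softly; it relies on the fact that $Dv^\e$ enters the nonlinearity only through $\mathcal B^*$, whose range is under control, and on the extra regularity packed into Assumption~\ref{ass:vcconc} ($Dg\in C([0,T];C_1(V',V))$, $D_u[c(t)]^*\in C([0,T];C_1(U,U))$ with $t$-uniform Lipschitz constant, $[c(t)]^*(0)=0$). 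A secondary, bookkeeping-type difficulty is to exhibit a regularisation $g_\e$ that is at once $C^2$ with uniform $C_1$ gradient bounds, order-preserving and concavity-preserving, and convergent in precisely the topologies demanded in part $(v)$ of the definition of strong solution.
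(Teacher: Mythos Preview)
Your strategy is sound and matches the overall architecture of the reference the paper invokes, but note that the paper itself gives no self-contained argument: it simply cites \cite{faggian2005regular}, Theorem~7.3, and flags that the construction there passes through the \emph{Hamilton--Jacobi semigroup}. That device is an operator-splitting (Lie--Trotter) scheme --- one alternates the linear flow $\phi\mapsto\phi(S(\cdot)\,\cdot)+\int g$ with the flow of the purely nonlinear ODE $-\psi_t=-[c(t)]^*(\mathcal B^*D\psi)$ and proves convergence of the iterated products --- rather than the mild-form contraction you propose. Both routes exploit exactly the same structural inputs (Assumption~\ref{ass:vcconc}, in particular that $Dg$ and $Dg_0$ are $D(\A^*)$-valued so that $\mathcal B^*Dv$ is well defined), and your identification of the ``main obstacle'' --- the $\e$-uniform $D(\A^*)$-control of $Dv^\e$ in the absence of any smoothing from the translation semigroup --- is precisely where the bulk of the work in \cite{faggian2005regular} sits. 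The Hamilton--Jacobi semigroup buys a cleaner propagation of concavity and of the $C^1_{Lip}$ estimates through each factor of the product; your mild-form route is arguably more direct but requires a more delicate choice of fixed-point space.

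On that last point, one place to tighten: a contraction of $\Phi$ measured only in $C([0,T];C_2(\D))$ does \emph{not} by itself control $|Dv-Dw|$, yet it is $Dv$ (not $v$) that feeds into the nonlinearity $[c(s)]^*(\mathcal B^*\,\cdot)$. You therefore need either to run the contraction in a norm that sees the gradient --- say $\sup_t|v(t)|_{C_2}+\sup_t|Dv(t)|_{C_1(\D;D(\A^*))}$ --- or to argue in two steps: first derive a priori gradient bounds by differentiating \eqref{eq:mildvc} and applying Gronwall, then contract in $C_2$ on the resulting closed sublevel set. Either fix is standard, but as written your space is not obviously complete in a metric that makes $\Phi$ a contraction.
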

 
 The long and detailed proof is the contents of  \cite{faggian2005regular}, in particular of Theorem 7.3.   There the reader will find the definition and the properties, extended to $\D$, of the so called Hamilton-Jacobi semigroup implicated in the construction of the approximating equations. \qed
 
\begin{Theorem}
    The value function $V$ of the optimal control problem coincides with the unique strong solution of to \eqref{eq:hjbvintconc}. Moreover, the unique optimal control problem in closed-loop form is given by
    $$u^*(\tau)=D_u[c(t)^*] (\mathcal B^*DV(\tau,k^*(\tau))$$
\end{Theorem}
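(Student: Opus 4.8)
The plan is to run a standard verification argument, but adapted to the strong-solution framework rather than the classical one: the chain rule of Proposition~\ref{prop:chainunbounded} must be replaced by an approximation argument using the family $\{v^\e\}$ defining the strong solution. First I would fix $(t,x)\in[0,T)\times\D$ and an arbitrary admissible control $u(\cdot)\in\mathcal U(t,x)=L^p([t,T];U)$, with associated mild trajectory $y(\cdot)=y^{t,x,u(\cdot)}(\cdot)$ living in $D(\A^*)'$. For each approximating function $v^\e$ one has, by property $(iii)$ of the definition of strong solution, enough regularity to apply the chain rule of Proposition~\ref{prop:chainunbounded} (the unbounded-control version, since $\mathcal B$ acts into $D(\A^*)'$), obtaining
\begin{align*}
e^{-\rho t}v^\e(t,x) &= e^{-\rho T}g_0(y(T)) + \int_t^T e^{-\rho s}\Big(g_\e(s,y(s)) - c(s,u(s)) \\
&\qquad + \big[\,(\mathcal B^* Dv^\e(s,y(s)),u(s))_U - c(s,u(s)) - [c(s)]^*(\mathcal B^* Dv^\e(s,y(s)))\,\big] \\
&\qquad + [c(s)]^*(\mathcal B^* Dv^\e(s,y(s))) - [c(s)]^*(\mathcal B^* Dv^\e(s,y(s)))\Big)\ud s
\end{align*}
— more cleanly, using $(v)$ to substitute $-v^\e_t - \langle x,\A^* Dv^\e\rangle = g_\e - [c(s)]^*(\mathcal B^* Dv^\e)$, one gets the fundamental identity
\begin{align*}
v^\e(t,x) &= e^{-\rho(T-t)}g_0(y(T)) + \int_t^T e^{-\rho(s-t)}\big(g_\e(s,y(s)) - c(s,u(s))\big)\ud s \\
&\quad + \int_t^T e^{-\rho(s-t)}\Big( [c(s)]^*(\mathcal B^* Dv^\e(s,y(s))) - (\mathcal B^* Dv^\e(s,y(s)),u(s))_U + c(s,u(s)) \Big)\ud s.
\end{align*}
By the Fenchel--Young inequality the integrand in the last line is nonnegative for every $u$, and it vanishes precisely when $u(s)=D_u[c(s)]^*(\mathcal B^* Dv^\e(s,y(s)))$.

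Next I would pass to the limit $\e\to 0^+$. Property $(iv)$ gives $v^\e\to v$ in $C([0,T];C_2(\D))$, so the left side and the $g_0$ term converge; the growth bounds in $(ii)$ together with $p>1$ and the estimate~\eqref{estimate-mild1} on $|y(s)|$ control the integrands and justify passage to the limit in the two integrals (using also $\int_0^T|g_\e(s)-g(s)|_{C_2}\ud s\to 0$). This yields the limiting identity
\begin{align*}
v(t,x) &= e^{-\rho(T-t)}g_0(y(T)) + \int_t^T e^{-\rho(s-t)}\big(g(s,y(s)) - c(s,u(s))\big)\ud s \\
&\quad + \int_t^T e^{-\rho(s-t)}\Big( [c(s)]^*(\mathcal B^* Dv(s,y(s))) - (\mathcal B^* Dv(s,y(s)),u(s))_U + c(s,u(s)) \Big)\ud s,
\end{align*}
where the first integral is exactly $\mathcal J(t,x;u(\cdot))$ and the second is nonnegative; hence $v(t,x)\ge\mathcal J(t,x;u(\cdot))$, and taking the supremum over $u(\cdot)$ gives $v(t,x)\ge V(t,x)$. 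For the reverse inequality and the optimal feedback, I would define $\phi(s,k):=D_u[c(s)]^*(\mathcal B^*Dv(s,k))$ (well defined and, by Assumption~\ref{ass:vcconc}$(iv)$, Lipschitz in $k$ uniformly in $s$), show the closed-loop equation $y'(\tau)=\A_{-1}y(\tau)+\mathcal B\phi(\tau,y(\tau))$ has a unique mild solution $k^*(\cdot)$ in $C([t,T];\D)$ by a contraction argument (the Lipschitz bound on $D_u[c]^*$, the continuity of $\mathcal B^* Dv$ along $(ii)$, and Assumption~\ref{ass:Bunbounded} for $\mathcal B$ make this work in $X$ or in $\D$), and check $u^*(\cdot):=\phi(\cdot,k^*(\cdot))\in L^p([t,T];U)$. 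For this choice the last integral in the limiting identity vanishes, so $v(t,x)=\mathcal J(t,x;u^*(\cdot))\le V(t,x)$, whence $V=v$ and $u^*$ is optimal; injectivity of $\partial_u c(s,\cdot)$ in Assumption~\ref{ass:vcconc}$(iii)$ gives uniqueness of the maximizer and hence of the optimal control.

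The main obstacle I anticipate is the limit-passage step: one must guarantee that $\mathcal B^* Dv^\e(s,y(s))$ is genuinely defined (this needs $Dv^\e(s,\cdot)\in C(\D;D(\A^*))$, which is why the problem is posed in $\D$ with the stronger regularity of $g,g_0$) and that the convergence $v^\e\to v$ in $C_2$ is strong enough to transfer to convergence of the gradients $\mathcal B^* Dv^\e\to \mathcal B^* Dv$ in a sense compatible with composition with the trajectory and with the nonlinear map $[c(s)]^*$; the uniform Lipschitz and boundedness controls in $(ii)$ are exactly what is needed here, but making the argument rigorous requires care, and is precisely the content imported from \cite{faggian2005regular}. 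A secondary technical point is verifying admissibility of the feedback control, i.e. that $k^*(\cdot)$ stays in $\D$ and $u^*\in L^p$ — but since $\D=D(\A^*)'$ is the whole (extended) state space here and no state constraint is imposed, this reduces to the well-posedness of the closed-loop equation plus the linear growth of $\phi$.
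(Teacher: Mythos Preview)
Your approach is correct and is precisely the verification-via-approximation scheme that underlies the result: apply the chain rule to the classical approximants $v^\e$, obtain the fundamental identity with the nonnegative Fenchel--Young remainder, pass to the limit using properties $(ii)$, $(iv)$, $(v)$ of the strong-solution definition to get $v\ge V$, and close the loop via the feedback $\phi(s,k)=D_u[c(s)]^*(\mathcal B^*Dv(s,k))$. The paper itself gives no self-contained proof here but simply refers to \cite{faggian2008application} (Theorem~5.1 and Proposition~5.7), whose content is exactly this argument; your sketch is a faithful outline of it, and you have correctly located the two genuine technicalities (passage to the limit in the gradient term, and well-posedness/admissibility of the closed-loop equation).

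One small point to tighten: property $(iii)$ gives $t\mapsto v^\e(t,x)$ continuously differentiable only for $x\in D(A)$, and property $(v)$ asserts the approximate HJB only on $D(A)$, whereas the mild trajectory $y(\cdot)$ lives in $\D=D(\A^*)'$. So the chain rule of Proposition~\ref{prop:chainunbounded} is not directly applicable along $y(\cdot)$; one first takes initial data (and, if needed, controls) such that the trajectory stays in $D(A)$, derives the identity there, and then uses density of $D(A)$ in $\D$ together with the uniform Lipschitz bounds in $(ii)$ to extend. This is routine but should be mentioned; it is part of what is carried out in \cite{faggian2005regular,faggian2008application}.
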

The proof of this statement constitutes the contents of \cite{faggian2008application}, and in particular of Theorem 5.1, and Proposition 5.7.\qed


		\appendix\label{sec:app}

\section{Strongly Continuous Semigroups, Generators, Resolvents}
\label{sec:semigroup_generator}

In this section, we review the fundamental properties of strongly continuous semigroups of operators and their generators. The material is mainly taken from the books \cite{engel-nagel} and \cite{pazy1983semigroups}.

\begin{Definition}    \label{def:scs}
\emph{($C_0$-semigroup)} A map $T:[0,\infty) \rightarrow \mathcal{L}(X)$ is called a $C_0$ semigroup (or a strongly continuous semigroup) on $X$ if the following three conditions are satisfied:
\begin{enumerate}[(i)]

    \item $T(0) = I$, where $I$ is the identity operator on $X$;
        \item \emph{(Semigroup property)} $T(t+s) = T(t)T(s)$ for all $t, s \ge 0$.
    \item \emph{(Strong Continuity)} For every $x \in X$, the map $[0, \infty)\to X$,  $t \mapsto T(t)x$ is continuous.
\end{enumerate}
\end{Definition}
For $C_0$-semigroups we will use the notation $\{T(t)\}_{t \geq 0}$ or simply $T(t)$.

\begin{Definition}
\emph{(Generator of a $C_0$-semigroup)} Let $T(t)$ be a $C_0$-semigroup on $X$. The linear operator $\A: D(\A) \subset X \rightarrow X$ defined as
$$
\left\{\begin{array}{l}
D(\A):=\left\{x \in X: \ \displaystyle{\lim\limits_{t\to0^+}\frac{T(t) x-x}{t}} \text { exists in } X \right\} \\\\
\displaystyle{\A x:=\lim \limits_{t \rightarrow 0^{+}} \frac{T(t) x-x}{t}}.
\end{array}\right.
$$
is called the infinitesimal generator of $T(t)$.
\end{Definition}

\begin{Proposition}\label{lem:Momega}
    Let $T(t)$ be a $C_0$-semigroup on $X$. Then there exist $M \geq 1$ and $\omega \in \mathbb{R}$ such that
\begin{equation}
    \label{typeofsem}
|T(t)| \leq M e^{\omega t}, \quad \text { for } t \geq 0
\end{equation}
\end{Proposition}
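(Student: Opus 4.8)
The plan is to prove the standard exponential bound \eqref{typeofsem} for a $C_0$-semigroup in two steps: first establish local boundedness near $t=0$, and then propagate it to all of $[0,\infty)$ using the semigroup property.

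First I would show there exist $\delta>0$ and $M\ge 1$ such that $|T(t)|_{\mathcal L(X)}\le M$ for all $t\in[0,\delta]$. Suppose, for contradiction, that no such bound exists; then there is a sequence $t_n\to 0^+$ with $|T(t_n)|_{\mathcal L(X)}\to\infty$. Since each $T(t_n)$ is a bounded operator and, by strong continuity, $\sup_n |T(t_n)x|<\infty$ for every fixed $x\in X$ (the map $t\mapsto T(t)x$ being continuous on the compact set $\{t_n\}\cup\{0\}$, hence bounded), the Uniform Boundedness Principle (Banach--Steinhaus) forces $\sup_n |T(t_n)|_{\mathcal L(X)}<\infty$, a contradiction. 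Hence such $\delta$ and $M$ exist; note $M\ge |T(0)|_{\mathcal L(X)}=1$.

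Next I would upgrade this to the global estimate. Set $\omega:=\tfrac1\delta\log M\in\mathbb R$ (if $M=1$ one may just take $\omega=0$). Given any $t\ge 0$, write $t=n\delta+r$ with $n\in\mathbb N$ and $r\in[0,\delta)$. By the semigroup property, $T(t)=T(\delta)^n T(r)$, so
\begin{equation*}
|T(t)|_{\mathcal L(X)}\le |T(\delta)|_{\mathcal L(X)}^n\,|T(r)|_{\mathcal L(X)}\le M^n\cdot M=M^{n+1}.
\end{equation*}
Since $n\le t/\delta$, we get $M^{n}\le M^{t/\delta}=e^{\omega t}$, whence $|T(t)|_{\mathcal L(X)}\le M e^{\omega t}$ for all $t\ge 0$, which is exactly \eqref{typeofsem}.

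The only genuinely non-routine point is the local boundedness step, and there the essential tool is the Uniform Boundedness Principle together with the (easy) observation that strong continuity makes $t\mapsto T(t)x$ bounded on compact time intervals for each fixed $x$. Everything after that is elementary algebra with the semigroup law, so I do not anticipate further obstacles. (For the record, this is the standard argument found in \cite{pazy1983semigroups} and \cite{engel-nagel}.)
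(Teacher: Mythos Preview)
Your proof is correct and is precisely the standard argument; the paper itself does not write out a proof but simply refers to \cite{pazy1983semigroups}, Theorem 2.2, Chapter 1, which contains exactly the two-step reasoning (local boundedness via Banach--Steinhaus, then propagation by the semigroup law) that you have reproduced.
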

\noindent 
\begin{proof}
    See, for instance, \cite{pazy1983semigroups}  Theorem 2.2, Chap. 1, p. 4.
\end{proof}
The quantity $\omega_0$ defined as
$$\omega_0=\inf\{\omega : \exists M>0,  |T(t)|\le Me^{\omega t}\}$$
is called the \emph{type} of the semigroup. We have $\omega_0 \in[-\infty,+\infty)$. If $\omega_0<0$ we say that the $C_0$-semigroup $T(t)$ is of negative type and if $\omega_0>0$ we say that the $C_0$-semigroup $T(t)$ is of positive type.
\noindent \begin{Definition}
    A $C_0$-semigroup $T(t)$ on $X$ is called: 
   \begin{enumerate}[(i)]  \item 
    \emph a \emph{Contraction Semigroup}   if \eqref{typeofsem} holds with $M=1, \omega_0=0$;
\item  a \emph{Pseudo-contraction semigroup}  if \eqref{typeofsem} holds with $M=1$ for some $\omega_0 \in \mathbb{R}$;
\item a \emph{Uniformly bounded semigroup} if \eqref{typeofsem} holds with $\omega_0=0$ for some $M \geq 1$.
\end{enumerate}
\end{Definition}

\begin{Example}
    A simple and intuitive example of a strongly continuous semigroup can be found on a finite-dimensional Banach space, such as $X = \mathbb{R}^n$ equipped with any norm. In this setting, the generator of the semigroup is always a bounded linear operator, which can be represented by an $n \times n$ matrix $A$. More in detail, let $A$ be an $n \times n$ matrix. We can define a family of operators $\{T(t)\}_{t \ge 0}$ by setting $T(t)$ as the matrix exponential of $tA$:
$$T(t) = e^{tA} = \sum_{k=0}^{\infty} \frac{t^k}{k!} A^k$$
This family of operators forms a strongly continuous semigroup on $\mathbb{R}^n$, whose generator is the matrix $A$ itself.  \qed

The previous example provides a foundational understanding of the relationship between a semigroup of operators and the associated generator, before addressing  the more complex case  in infinite-dimensional spaces where generators can be unbounded. It also explains why a  semigroup of generator $\A $ is  often conventionally expressed as $T(t) = e^{t\A}$.
\end{Example}\color{black}

We also recall that the resolvent set $\rho(\A)$ of a linear operator $\A$ on a Banach space $X$ is the  set of all $\lambda \in \mathbb{C}$ for which $\lambda I - \A$ is invertible with inverse $R(\lambda, \A)\equiv (\lambda I-\A)^{-1}$ belonging to the space $\mathcal L(X)$ of linear continuous operators on $X$.

The main properties of semigroups, generators, and resolvents are recollected in the next theorem.
\begin{Theorem}\label{thm:propsemigr}
    Let $T(t)$ be a $C_0$-semigroup on the Banach space $X$ and let $\A$ be its generator. We have the following claims.
    \begin{enumerate}[(i)]
 \item   For $x \in X$, we have $\int_0^t T(s) x \ud s \in D(A)$ and $$\A\left(\int_0^t T(s) x \ud s\right)=T(t) x-x.
$$
 \item If $x\in D(\A)$, then $T(t)x\in D(\A)$.
         \item If $x\in D(\A)$, then $t\mapsto T(t)x$ is differentiable and,  for all $t\ge0$, we have  $$\frac{\ud}{\ud t}T(t)x=T(t)\A x=\A T(t)x.$$ 
        
         \item $\A$ is a closed and  densely defined linear operator.
 \item The half-plane $\{ \emph{Re}\,\lambda>\omega_0\}$ is contained in $\rho(\A)$ and if  $\lambda$ belongs to such half-plane, we have 
$$R(\lambda, \A)x = \int_0^\infty e^{-\lambda t} T(t)x \, \ud t.$$

    \end{enumerate}
\end{Theorem}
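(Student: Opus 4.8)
The plan is to prove the five claims in the order stated, using as the single workhorse the elementary identity obtained by differencing the semigroup against the integral of an orbit. First I would establish (i): fixing $x\in X$ and $t\ge 0$, for $h>0$ one writes
\[
\frac{T(h)-I}{h}\int_0^t T(s)x\,\ud s=\frac1h\int_t^{t+h}T(s)x\,\ud s-\frac1h\int_0^h T(s)x\,\ud s,
\]
after the substitution $u=s+h$ in $\int_0^tT(s+h)x\,\ud s$. Strong continuity of $s\mapsto T(s)x$ (Definition \ref{def:scs}(iii)) makes the right-hand side converge, as $h\to0^+$, to $T(t)x-x$; by the very definition of the generator this simultaneously shows $\int_0^t T(s)x\,\ud s\in D(\A)$ and gives the stated formula.

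Next I would treat (ii) and (iii) together. For $x\in D(\A)$ and $t\ge0$, boundedness and linearity of $T(t)$ give $\tfrac{T(h)-I}{h}T(t)x=T(t)\tfrac{T(h)-I}{h}x\to T(t)\A x$ as $h\to0^+$, which proves $T(t)x\in D(\A)$, that $\A T(t)x=T(t)\A x$, and that the right derivative of $s\mapsto T(s)x$ at $t$ is $T(t)\A x$. For the left derivative, for $0<h\le t$ I would write
\[
\frac{T(t)x-T(t-h)x}{h}=T(t-h)\Big(\frac{T(h)x-x}{h}-\A x\Big)+T(t-h)\A x,
\]
bound the first summand using $\sup_{0\le h\le t}|T(t-h)|<\infty$ (from Proposition \ref{lem:Momega}) times a factor tending to $0$, and treat the second by strong continuity; this yields the left derivative $T(t)\A x$ as well, so $s\mapsto T(s)x$ is $C^1$ with derivative $T(s)\A x=\A T(s)x$.

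For (iv): density follows since by (i) the vectors $x_t:=\tfrac1t\int_0^t T(s)x\,\ud s$ lie in $D(\A)$ and $x_t\to x$ as $t\to0^+$ by strong continuity. For closedness, take $x_n\in D(\A)$ with $x_n\to x$ and $\A x_n\to y$; integrating the identity of (iii) gives $T(t)x_n-x_n=\int_0^t T(s)\A x_n\,\ud s$, and passing to the limit (moving the limit inside the integral using the uniform bound $|T(s)|\le Me^{\omega t}$ on $[0,t]$) yields $T(t)x-x=\int_0^t T(s)y\,\ud s$; dividing by $t$ and letting $t\to0^+$ shows $x\in D(\A)$ and $\A x=y$. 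Finally, for (v), fix $\lambda$ with $\mathrm{Re}\,\lambda>\omega_0$, choose $\omega\in(\omega_0,\mathrm{Re}\,\lambda)$ and $M$ with $|T(t)|\le Me^{\omega t}$; then $R_\lambda x:=\int_0^\infty e^{-\lambda t}T(t)x\,\ud t$ is an absolutely convergent Bochner integral with $|R_\lambda x|\le \tfrac{M}{\mathrm{Re}\,\lambda-\omega}|x|$, so $R_\lambda\in\mathcal L(X)$. A difference-quotient computation analogous to (i), using $u=t+h$, gives $\A R_\lambda x=\lambda R_\lambda x-x$, i.e. $(\lambda I-\A)R_\lambda=I$ on $X$; conversely, for $x\in D(\A)$, integrating by parts against $\tfrac{\ud}{\ud t}T(t)x=T(t)\A x$ (the boundary term $e^{-\lambda t}T(t)x$ vanishing at infinity by the exponential bound) gives $R_\lambda\A x=\lambda R_\lambda x-x$, hence $R_\lambda(\lambda I-\A)=I$ on $D(\A)$. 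Therefore $\lambda\in\rho(\A)$ and $R(\lambda,\A)=R_\lambda$.

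All steps are short once set up correctly; the point requiring the most care is the left-hand derivative in (iii), where one genuinely needs the local uniform operator-norm bound on $|T(t-h)|$ (strong continuity alone is insufficient, since $h\mapsto\tfrac{T(h)x-x}{h}$ is only known to converge, not to be equibounded), and, relatedly, the legitimacy of exchanging limit and integral in the closedness argument and of the integration by parts in (v), both of which hinge on the same exponential estimate from Proposition \ref{lem:Momega}.
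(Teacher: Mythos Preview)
Your proof is correct and follows exactly the classical arguments; the paper itself does not give a proof but simply refers to \cite{pazy1983semigroups}, Ch.\,1 (Theorem 2.4, Corollary 2.5, and the proof of Theorem 5.3), where precisely the computations you outline are carried out. In this sense your proposal is a faithful expansion of the cited reference rather than a different route.
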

\begin{proof}
    See, for instance, \cite[ Ch.\,1]{pazy1983semigroups}: precisely, Th.\,2.4 and Corollary 2.5, for (i)--(iv), and the proof of Theorem 5.1 for (v). \end{proof}
    
The Hille-Yosida Theorem, characterizes the linear operators $\A$ which are generators os $C_0$-semigroups.
\begin{Theorem}\emph{(Hille–Yosida)}
Let \( \A : D(\A) \subset X \to X \) be a densely defined, closed linear operator on a Banach space \( X \).  
Then \( \A \) is the generator of a $C_0$ semigroup $T(t)$ of bounded linear operators with \( \|T(t)\| \leq M e^{\omega t} \) for some constants \( M \geq 1 \), \( \omega \in \mathbb{R} \), if and only if the following conditions hold:
\begin{enumerate}
  \item[$(i)$] $\A:D(\A)\to X$ is closed and $D(\A)$ is dense in $X$;
   \item[$(ii)$] \( \rho(\A) \) contains the half-line \( (\omega, \infty) \) and, for all \( \lambda > \omega \),
  \[
  \|R(\lambda, \A)^n\| \leq \frac{M}{(\lambda - \omega)^n}, \quad \forall n \in \mathbb{N}.
  \]
\end{enumerate}
\begin{proof}
    See for instance \cite{pazy1983semigroups}, Chapt. 1, Theorem 5.3. \end{proof}
\end{Theorem}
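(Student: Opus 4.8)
The plan is to prove the two implications separately, leaning heavily on the semigroup properties already collected in Theorem \ref{thm:propsemigr}. Necessity is essentially a corollary of that theorem: items (iv)--(v) there already give that $\A$ is closed and densely defined and that $(\omega_0,\infty)\subseteq\rho(\A)$ with the integral representation $R(\lambda,\A)x=\int_0^\infty e^{-\lambda t}T(t)x\,\ud t$. Starting from this formula I would differentiate $n-1$ times in $\lambda$ under the integral sign (legitimate by the exponential decay of the integrand for $\lambda>\omega$), using $\frac{\ud^{\,n-1}}{\ud\lambda^{n-1}}R(\lambda,\A)=(-1)^{n-1}(n-1)!\,R(\lambda,\A)^n$, to obtain
$$R(\lambda,\A)^n x=\frac{1}{(n-1)!}\int_0^\infty t^{n-1}e^{-\lambda t}T(t)x\,\ud t.$$
Estimating with $\|T(t)\|\le Me^{\omega t}$ (Proposition \ref{lem:Momega}) and $\int_0^\infty t^{n-1}e^{-(\lambda-\omega)t}\,\ud t=(n-1)!/(\lambda-\omega)^n$ then yields $\|R(\lambda,\A)^n\|\le M/(\lambda-\omega)^n$.

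For sufficiency I would first reduce to $\omega=0$ by replacing $\A$ with $\A-\omega I$, which multiplies the sought semigroup by $e^{-\omega t}$ and preserves all the hypotheses. The engine of the proof is the Yosida approximation $\A_\lambda:=\lambda\A R(\lambda,\A)=\lambda^2 R(\lambda,\A)-\lambda I\in\mathcal L(X)$ for $\lambda>0$. I would establish in order: (a) $\lambda R(\lambda,\A)x\to x$ as $\lambda\to\infty$, proved first for $x\in D(\A)$ via $\lambda R(\lambda,\A)x-x=R(\lambda,\A)\A x$ together with $\|R(\lambda,\A)\|\le M/\lambda$, then on all of $X$ by density and uniform boundedness; (b) hence $\A_\lambda x\to\A x$ for every $x\in D(\A)$; (c) the uniform bound $\|e^{t\A_\lambda}\|\le M$, obtained from $e^{t\A_\lambda}=e^{-\lambda t}\sum_{n\ge0}\frac{(t\lambda^2)^n}{n!}R(\lambda,\A)^n$ and the power estimate, which gives $\|e^{t\A_\lambda}\|\le Me^{-\lambda t}e^{\lambda t}=M$.

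The convergence of the approximating semigroups is where the genuine work lies. For $x\in D(\A)$ and $\lambda,\mu>0$ I would use the mutual commutation of $\A_\lambda,\A_\mu$ and the telescoping identity
$$e^{t\A_\lambda}x-e^{t\A_\mu}x=\int_0^t e^{(t-s)\A_\mu}e^{s\A_\lambda}(\A_\lambda-\A_\mu)x\,\ud s$$
to derive $\|e^{t\A_\lambda}x-e^{t\A_\mu}x\|\le M^2 t\,\|\A_\lambda x-\A_\mu x\|$. By (b) this shows $\{e^{t\A_\lambda}x\}$ is Cauchy uniformly on compact $t$-intervals; extending by density with the uniform bound (c) defines $T(t)x:=\lim_{\lambda\to\infty}e^{t\A_\lambda}x$ for all $x\in X$. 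I would then check directly the semigroup law, strong continuity, and $\|T(t)\|\le M$, so that $T(t)$ is a $C_0$-semigroup of the required type.

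The final and most delicate step is to identify the generator $B$ of $T(t)$ with $\A$. Passing to the limit in $e^{t\A_\lambda}x-x=\int_0^t e^{s\A_\lambda}\A_\lambda x\,\ud s$ for $x\in D(\A)$ (using (b), (c) and dominated convergence) gives $T(t)x-x=\int_0^t T(s)\A x\,\ud s$, whence $x\in D(B)$ and $Bx=\A x$, so $B$ extends $\A$. To force equality of domains I would fix $\lambda>0$: since $\lambda\in\rho(\A)$ the map $\lambda I-\A:D(\A)\to X$ is bijective, while $\lambda\in\rho(B)$ (by the already-proved necessity applied to $T(t)$) makes $\lambda I-B$ injective; as $\lambda I-B$ agrees with $\lambda I-\A$ on $D(\A)$ and is therefore surjective, injectivity forces $D(B)=D(\A)$ and thus $B=\A$. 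I expect this domain-identification argument, together with the justification of the Cauchy estimate for the Yosida semigroups, to be the principal obstacles; the reduction to $\omega=0$ and the elementary norm estimates are routine.
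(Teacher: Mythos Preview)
Your proposal is correct and follows precisely the classical Yosida-approximation argument that the paper defers to by citing Pazy, Chapter~1, Theorem~5.3; both the necessity part (via the integral representation of the resolvent from Theorem~\ref{thm:propsemigr}(v) and differentiation in $\lambda$) and the sufficiency part (reduction to $\omega=0$, Yosida approximants $\A_\lambda$, the Cauchy estimate via the telescoping integral, and the domain identification through a common resolvent point) are exactly the steps carried out there. There is nothing to add.
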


We recall that for an operator $\A$ on a Banach space $X$ with dual space $X'$, and $\langle \cdot,\cdot\rangle $ represents the duality pairing between $X,X'$, the \emph{adjoint operator} $\A^*$ is defined on $X'$ as follows. 
\begin{align}
\nonumber
D(\A^*) 
&=
\left\{ \varphi \in X' \;\middle|\; \exists C>0 
 \text{ such that } 
\forall x \in D(\A),\ \langle \varphi, \A x \rangle \le C | x | 
\right\}
\\
&=
\left\{ \varphi \in X' \;\middle|\; \exists \psi \in X' \text{ such that } 
\forall x \in D(\A),\ \langle \varphi, \A x \rangle = \langle \psi, x \rangle \right\}
\label{eq:defadjointdomain}
\end{align}

and for all \( \varphi \in D(\A^*) \), one sets $\A^* \varphi := \psi$, that is $\A^*$ is determined through the position \[
\langle \A^* \varphi, x \rangle = \langle \varphi, \A x \rangle \quad \text{for all } x \in D(\A)
\]

\begin{Definition}\emph{(Adjoint semigroup)}
\label{df:adjointsemigroup}
If $\A$ is the generator of a strongly continuous semigroup $T(t)$ on the Banach space $X$, then its adjoint $\A^*$ is the generator of the \emph{adjoint semigroup} $\{T(t)^*\}_{t \ge 0}$, which is a  semigroup on $X'$ defined by
\[
\langle T(t)^*\varphi, x \rangle := \langle \varphi, T(t)x \rangle
\quad \text{for all } \varphi \in X',\ x \in X.
\]
    \end{Definition}
 The adjoint semigroup $T(t)^*$ is not necessarily strongly continuous with respect to the topology of $X'$, but it is continuous with respect to the weak-* topology on $X'$, that is, for all \( \varphi \in X^* \) and \( x \in X \), the map
$
t \mapsto \langle T^*(t)\varphi, x \rangle
$
is continuous on \( [0, \infty) \).

However,   if \( X \) is a Hilbert space, and we identify \( X \cong X' \),  the duality is simply the inner product in $X$ and the following result holds.
\begin{Corollary}
\label{cr:adjointHilbert}
Assume $T(t)$ is a \( C_0 \)-semigroup on \( X \), with $X$ a Hilbert space.
    Then the adjoint semigroup \( T(t)^* \)  is a \( C_0 \)-semigroup on \( X \).
\end{Corollary}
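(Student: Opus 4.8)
The plan is to check the three conditions of Definition~\ref{def:scs} for $\{T(t)^*\}_{t\ge 0}$ on $X$, the only nontrivial one being strong continuity. Properties (i) and (ii) are purely formal: $T(0)^*=I^*=I$, and since $T(t+s)=T(s)T(t)$ we get $T(t+s)^*=(T(s)T(t))^*=T(t)^*T(s)^*$. Moreover $\|T(t)^*\|_{\mathcal L(X)}=\|T(t)\|_{\mathcal L(X)}$, so by Proposition~\ref{lem:Momega} one has $\|T(t)^*\|\le Me^{\omega t}$; in particular $M':=\sup_{t\in[0,1]}\|T(t)^*\|<\infty$, which will be used repeatedly. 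It remains to prove that $t\mapsto T(t)^*\varphi$ is norm-continuous for each $\varphi\in X$.

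I would first establish strong continuity at $t=0$ on the subspace $D(\A^*)$. Fix $\psi\in D(\A^*)$ and $x\in D(\A)$. By Theorem~\ref{thm:propsemigr}(ii)--(iii), $T(s)x\in D(\A)$ with $\frac{\ud}{\ud s}T(s)x=\A T(s)x$, so by \eqref{eq:defadjointdomain}, $\frac{\ud}{\ud s}\langle\psi,T(s)x\rangle=\langle\psi,\A T(s)x\rangle=\langle\A^*\psi,T(s)x\rangle$. Integrating on $[0,t]$ and pulling the continuous functional out of the (norm-continuous, hence Bochner) integral yields $\langle\psi,T(t)x-x\rangle=\big\langle\A^*\psi,\int_0^t T(s)x\,\ud s\big\rangle$, whence for $t\in[0,1]$
$$
|\langle T(t)^*\psi-\psi,x\rangle|=|\langle\psi,T(t)x-x\rangle|\le\|\A^*\psi\|\int_0^t\|T(s)x\|\,\ud s\le M' t\,\|\A^*\psi\|\,\|x\|.
$$
Since $D(\A)$ is dense in $X$ by Theorem~\ref{thm:propsemigr}(iv) and $T(t)^*\psi-\psi\in X$, taking the supremum over $x$ in the unit ball gives $\|T(t)^*\psi-\psi\|\le M' t\,\|\A^*\psi\|\to 0$ as $t\to 0^+$.

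Next I would promote this to all of $X$. Choosing $\lambda>\omega_0$, Theorem~\ref{thm:propsemigr}(v) gives $\lambda\in\rho(\A)$, hence $\lambda\in\rho(\A^*)$ with $R(\lambda,\A^*)=R(\lambda,\A)^*$, so $D(\A^*)$ is the range of $R(\lambda,\A^*)$. This is exactly where the Hilbert structure enters: if $z\in X\equiv X''$ annihilates $D(\A^*)$, then $\langle R(\lambda,\A)^*\varphi,z\rangle=\langle\varphi,R(\lambda,\A)z\rangle=0$ for all $\varphi\in X$, so $R(\lambda,\A)z=0$ and hence $z=0$ by injectivity of the resolvent; thus $D(\A^*)$ is dense in $X$. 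Then for arbitrary $\varphi\in X$ and $\psi\in D(\A^*)$ and $t\in[0,1]$,
$$
\|T(t)^*\varphi-\varphi\|\le(M'+1)\|\varphi-\psi\|+\|T(t)^*\psi-\psi\|,
$$
and choosing $\psi$ close to $\varphi$ and letting $t\to 0^+$ gives $T(t)^*\varphi\to\varphi$. Finally, continuity at a generic $t_0>0$ follows from the semigroup law and local boundedness: for $h>0$, $\|T(t_0+h)^*\varphi-T(t_0)^*\varphi\|\le\|T(t_0)^*\|\,\|T(h)^*\varphi-\varphi\|$, and for $-t_0\le h<0$, $\|T(t_0)^*\varphi-T(t_0+h)^*\varphi\|\le\|T(t_0+h)^*\|\,\|T(-h)^*\varphi-\varphi\|$, both of which vanish as $h\to 0$.

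The main obstacle is the density of $D(\A^*)$ in $X$: for a general Banach space the adjoint of a densely defined closed operator is only weak-$*$ densely defined, and the reflexivity of $X$ (i.e., that $X$ is a Hilbert space) is precisely what makes it norm-dense; every other step is routine. An essentially equivalent alternative is to invoke the classical result — consistent with the weak-$*$ continuity remark preceding the statement — that a locally bounded, weakly continuous semigroup on a reflexive space is automatically a $C_0$-semigroup.
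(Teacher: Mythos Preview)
Your proof is correct and self-contained. The paper takes a different, citation-based route: it invokes results from Pazy (Chapter~1, Theorems~10.1 and~3.1) together with the observation that in a Hilbert space the identification $X\cong X'$ is isometric, so $\|R(\lambda,\A^*)\|=\|R(\lambda,\A)^*\|=\|R(\lambda,\A)\|$; the idea is that $\A^*$ then inherits the Hille--Yosida resolvent estimates from $\A$, and (once density of $D(\A^*)$ is secured) generates a $C_0$-semigroup, which is identified with $T(t)^*$. Your argument instead verifies strong continuity directly, first on $D(\A^*)$ via the quantitative bound $\|T(t)^*\psi-\psi\|\le M't\,\|\A^*\psi\|$, and then on all of $X$ by density and local uniform boundedness. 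Both approaches hinge on the same Hilbert-space-specific ingredient --- the density of $D(\A^*)$ --- but yours stays entirely within the paper's own toolkit (Theorem~\ref{thm:propsemigr} and Proposition~\ref{lem:Momega}) and makes the mechanism explicit, whereas the paper's route is shorter precisely because it outsources the work to Pazy.
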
 
\begin{proof}
  It follows from  \cite{pazy1983semigroups}, Chapt.1, Theorem 10.1, and Theorem 3.1, and the fact that the identification $X=X'$ preserves the norm, implying $\|R(\lambda, \A^*)\| = \|R(\lambda, \A)\|$.
\end{proof}

\begin{Example}[Translation semigroup] \label{ex:trsem}
We here introduce the example of a semigroup and its generator that is used in some examples of the paper, the translation
semigroup. It can be defined in the Hilbert space $X=L^2(0,\bar s)$, for $\bar s\in (0,+\infty)$, as
\[
T(t)f(s) := 
\begin{cases}
f(s - t), & s > t \\
0, & 0 < s \leq t
\end{cases}
\quad \text{or equivalently} \quad
T(t)f(s) := f(s - t)\, \chi_{[t, \bar s]}(s).
\]
and it is  a \( C_0 \)-semigroup of contractions, meaning 
$\|T(t)\| \leq 1$, in $\mathcal{L}(X)$.

\noindent Its generator is the (negative) derivative operator:
$$D(\A) = \left\{ f \in H^{1}(0, \bar s) \;\middle|\; f(0) = 0 \right\}, \quad \A f = -f',$$
where \( H^{1}(0, \bar s) \) denotes the Sobolev space of weakly differentiable functions with derivative in \( L^2(0,\bar s)\).
 The resolvent  $R(\lambda, \A)$  is defined by solving in $X$ with respect to $f$ the Cauchy problem:
$$\lambda f(s) + f'(s) = g(s),\qquad f(0)=0,$$
whose unique solution is,    for $\text{Re}\lambda > 0$,
$$
f(s)=[R(\lambda, \A)g](s) = 
\int_0^s e^{-\lambda u} g(s-u) \, d u.
$$ 
Since $L^2(0,\bar s)$ is a Hilbert space, the adjoint semigroup is 
\[
[T(t)^*\varphi](s) = \begin{cases} \varphi(s+t), & s <\bar s -t,\\ 0, & s \geq \bar s-t, \end{cases}
\]
with generator 
\[
D(\A^*) = \left\{ \varphi \in H^{1}(0, \bar s) \;\middle|\; \varphi(\bar s) = 0 \right\}, \quad
\A^*\varphi = \varphi',
\]
and resolvent
\[ [R(\lambda, A^*)g](s) = \int_s^{\bar s}e^{\lambda(s-u)} g(u) \,\ud u. \vspace{-.7cm}\]

\qed
\end{Example}

{		}
\begin{spacing}{1.45}

\cite{*}

{\small 
\bibliographystyle{apalike}
\bibliography{biblio}
}

\end{spacing}

\end{document}